\begin{document}

\theoremstyle{plain} \numberwithin{equation}{section}

\def\attn{{\color{red} Do we need this ???}}    
\def\unf{{\color{red} Unfinished !!!}}    

\newtheorem{thm}{Theorem}[section]
\newtheorem{prop}[thm]{Proposition}
\newtheorem{cor}[thm]{Corollary}
\newtheorem{conj}[thm]{Conjecture}
\newtheorem{lemma}[thm]{Lemma}
\newtheorem{lemdef}[thm]{Lemma--Definition}
\newtheorem{ques}[thm]{Question}
\newtheorem{claim}[thm]{Claim}

\theoremstyle{definition} 

\newtheorem{defn}[thm]{Definition}
\newtheorem{ex}[thm]{Example}
\newtheorem{examples}[thm]{Examples}
\newtheorem{notn}[thm]{Notation} 
\newtheorem{note}[thm]{Note}
\newtheorem{question}[thm]{Question}
\newtheorem{obs}[thm]{Observation}
\newtheorem{rmk}[thm]{Remark}
\newtheorem{terminology}[thm]{Terminology}
\newtheorem{review}[thm]{}
\newtheorem{Empty}[thm]{}
\newcommand{\bi}{\begin{itemize}}  
\newcommand{\ei}{\end{itemize}}
\newcommand{\bp}{\begin{proof}}
\newcommand{\ep}{\end{proof}}

\def\Trick#1{\begin{Empty}\bf#1\end{Empty}}

\def\AA{\mathbb{A}}
\def\CC{\mathbb{C}}
\def\FF{\mathbb{F}}
\def\GG{\mathbb{G}}
\def\GGG{\mathbb{G}}
\def\PP{\mathbb{P}}
\def\cPP{\check{\mathbb{P}}}
\def\SS{\mathbb{S}}
\def\WW{\mathbb{W}}
\def\DD{\mathbb{D}}
\def\arrow#1{\mathop{\ra}\limits^{#1}}

\def\QQ{\mathbb{Q}}
\def\RR{\mathbb{R}}
\def\VVV{\mathbb{V}}
\def\ZZ{\mathbb{Z}}
\def\NN{\mathbb{N}}
\def\VV{\mathbb{V}}

\def\m{\mathfrak{m}}

\def\arr{\mathop{\longrightarrow}}

\def\ov{\overline}

\def\bG{\mathbb G}
\def\bP{\mathbb P}
\def\al{\alpha}
\def\be{\beta}
\def\de{\delta}
\def\eps{\epsilon}
\def\ga{\gamma}
\def\io{\iota}
\def\ka{\kappa}
\def\la{\lambda}
\def\na{\nabla}
\def\om{\omega}
\def\si{\sigma}
\def\th{\theta}
\def\ups{\upsilon}
\def\ve{\varepsilon}
\def\vp{\varpi}
\def\vt{\vartheta}
\def\ze{\zeta}

\def\De{\Delta}
\def\Ga{\Gamma}
\def\La{\Lambda}
\def\Om{\Omega}

\def\cA{\mathcal{A}}
\def\cB{\mathcal{B}}
\def\cC{\mathcal{C}}
\def\cD{\mathcal{D}}
\def\cE{\mathcal{E}}
\def\cF{\mathcal{F}}
\def\cG{\mathcal{G}}
\def\cH{\mathcal{H}}
\def\cI{\mathcal{I}}
\def\cJ{\mathcal{J}}
\def\cK{\mathcal{K}}
\def\cL{\mathcal{L}}
\def\LL{\tilde{L}}
\def\oL{\overline{L}}
\def\cM{\mathcal{M}}
\def\cN{\mathcal{N}}
\def\cO{\mathcal{O}}
\def\cP{\mathcal{P}}
\def\cQ{\mathcal{Q}}
\def\cT{\mathcal{T}}
\def\cTT{\tilde{\mathcal{T}}}
\def\coT{\overline{\mathcal{T}}}
\def\cS{\mathcal{S}}
\def\tcS{\tilde{\mathcal{S}}}
\def\cU{\mathcal{U}}
\def\cHom{\mathcal{H}\text{\it om}}

\def\ocM{\overline{\mathcal{M}}}

\def\tE{\tilde E}
\def\tN{\tilde N}
\def\tQ{\tilde Q}
\def\tX{\tilde X}
\def\tY{\tilde Y}
\def\tS{\tilde S}
\def\tl{\tilde l}

\def\bA{\overline{A}}
\def\bB{\overline{B}}
\def\bcC{\overline{\mathcal{C}}}
\def\bE{\overline{E}}
\def\bF{\overline{F}}
\def\bL{\overline{L}}
\def\bM{\overline{M}}
\def\bZ{\overline{Z}}
\def\bbf{\overline{f}}
\def\bg{\overline{g}}

\def\c{\text{c}}
\def\ch{\text{ch}}
\def\codim{\text{codim}}
\def\dim{\text{\rm dim}}

\def\pr{\text{pr}}

\def\Bl{\text{\rm Bl}}
\def\Eff{\text{\rm Eff}}
\def\Exc{\text{\rm Exc}}
\def\Ext{\text{\rm Ext}}
\def\Sym{\text{\rm Sym}}
\def\Aut{\text{\rm Aut}}
\def\Hom{\text{\rm Hom}}
\def\Coker{\text{\rm Coker}}
\def\Ker{\text{\rm Ker}}
\def\Tor{\text{\rm Tor}}
\def\Cone{\text{\rm Cone}}
\def\h{\text{h}}
\def\inf{\infty}
\def\p{\mathfrak{p}}
\def\HH{\text{H}}
\def\Id{\text{Id}}
\def\LM{\overline{\text{LM}}}
\def\M{\overline{\text{\rm M}}}
\def\N{\overline{\text{\rm N}}}
\def\NE{\overline{\text{NE}}}
\def\PGL{\text{\rm PGL}}
\def\GL{\text{\rm GL}}
\def\SL{\text{\rm SL}}
\def\Pic{\text{\rm Pic}}
\def\Proj{\text{\rm Proj}} 
\def\Spec{\text{Spec}}
\def\Supp{\text{\rm Supp}}
\def\X{\text{\rm X}}

\def\cV{\mathcal{V}}

\def\td{\text{td}}
\def\wt{\text{weight}}
\def\ba{{\bold a}}
\def\bb{{\bold b}}
\def\bbe{{\bold e}}
\def\bbu{{\bold u}}

\def\dra{\dashrightarrow}
\def\hra{\hookrightarrow}
\def\lra{\leftrightarrow}
\def\ra{\rightarrow}

\newcommand{\sslash}{\mathbin{/\mkern-6mu/}}

%%%%%%%%%%%%%%%%%%%%%%%%%%%%%%%%%%%%%%%%%%%%%%%%%%%%%%%%%%%%%%%%%%%%%%%%%
%%%%%%%%%%%%%%%%%%%%%%%%%%%%%%%%%%%%%%%%%%%%%%%%%%%%%%%%%%%%%%%%%%%%%%%%%

\title{Derived category of moduli of pointed curves - II}

\author{Ana-Maria Castravet and Jenia Tevelev}

\dedicatory{In memory of Yuri Manin}

\address{Universit\'e Paris-Saclay, UVSQ, CNRS, Laboratoire de Math\'ematiques de Versailles, 78000, Versailles, France}
\email{ana-maria.castravet@uvsq.fr }

\address{Department of Mathematics and Statistics, University of Massachusetts Amherst, 710 North Pleasant Street, Amherst, MA 01003} % and Laboratory of Algebraic Geometry and its Applications, HSE, Moscow, Russia}
\email{tevelev@math.umass.edu}

\subjclass[2020]{14D22, 14F08, 14H10, 14L24, 14L30, 18F30} 

\keywords{moduli spaces, derived categories, GIT quotients}

%\date{\today}
%\thanks{Mathematics Subject Classification: Primary } 
%\thanks{Keywords: }
\begin{abstract}
We prove a conjecture of Manin and Orlov: the moduli space $\ocM_{0,n}$ of stable rational curves with $n$ marked points admits a full exceptional collection which is equivariant under the action of the symmetric group $S_n$ permuting the marked points. In particular, its $K_0$-group with integral coefficients is a permutation $S_n$-lattice.

\end{abstract}

\maketitle

%%%%%%%%%%%%%%%%%%%%%%%%%%%%%%%%%%%%%%%%%%%%%%%%%%%%%%%%%%%%%%%%
%%%%%%%%%%%%%%%%%%%%%%%%%%%%%%%%%%%%%%%%%%%%%%%%%%%%%%%%%%%%%%%%

\section{Introduction}

This is the final paper in the sequence devoted to the derived category of moduli spaces of stable rational curves with marked points.
We will prove the following theorem conjectured by Manin and Orlov:

\begin{thm}\label{sfzfgafgafga}\label{MainTh}
The Grothendieck--Knudsen moduli space $\ocM_{0,n}$ of stable rational curves with $n$ marked points has a full exceptional collection equivariant under the action of the symmetric group $S_n$ permuting the marked points. In particular, the $K_0$-group of $\ocM_{0,n}$ with integer coefficients is a permutation $S_n$-lattice.
\end{thm}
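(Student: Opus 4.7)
The plan is to proceed by induction on $n$, with the base case $n=4$ or $n=5$ where $\ocM_{0,n}$ is low-dimensional and the result can be verified by hand (for instance $\ocM_{0,5}$ is a quintic del Pezzo, whose equivariant exceptional collection is classical). For the inductive step, I would connect $\ocM_{0,n}$ to a simpler variety whose $S_n$-equivariant derived category is already understood via a chain of birational morphisms that respect the full symmetric group, and then transport an exceptional collection along this chain while carefully tracking the $S_n$-action.

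The natural intermediate targets are Hassett moduli spaces $\overline{M}_{0,A}$ for symmetric weight vectors $A$, and ultimately the Losev--Manin space $\LM_n$, which is a smooth toric Fano variety carrying a well-known $S_n$-equivariant full exceptional collection (for example via Bondal's theorem). The Kapranov/Hassett morphism $\ocM_{0,n} \to \overline{M}_{0,A}$ factors into a sequence of blow-ups whose centers are unions of $S_n$-orbits of boundary strata of $\ocM_{0,n}$; each such stratum is a product of smaller moduli spaces $\ocM_{0,k}$ with $k<n$ and, by the inductive hypothesis, carries an equivariant full exceptional collection. Applying an equivariant version of Orlov's blow-up formula, each blow-up step contributes new blocks obtained from the derived categories of the centers, and inducing the $S_n$-representation up from the stabilizers of the orbits of strata gives the required permutation structure on these blocks.

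The main obstacle is to reconcile $S_n$-equivariance with the classical form of Orlov's decomposition: blowing up one stratum of a given $S_n$-orbit at a time would break the symmetry, but the simultaneous blow-up of the entire orbit is not transverse, since distinct boundary strata of $\ocM_{0,n}$ meet along deeper boundary strata. Overcoming this is precisely where the equivariant semiorthogonal machinery developed in Part~I of the series is needed, presumably a combination of "windows" on variations of GIT quotients, wall-crossing formulas between Hassett chambers, and a Fulton--MacPherson style iterated factorization that resolves the non-transversality. A secondary delicate point is semiorthogonality between the new blocks introduced at a given step and the blocks pulled back from earlier steps; this typically reduces to vanishing of cohomology of certain line bundle twists by normal bundle characters along boundary strata, a combinatorial computation in the Chow/K-theory of products of $\ocM_{0,k}$.

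Once the equivariant full exceptional collection is assembled, the second assertion is automatic: its classes form a $\ZZ$-basis of $K_0(\ocM_{0,n})$ which $S_n$ permutes up to the stabilizers of the orbits of the exceptional objects, so $K_0(\ocM_{0,n})$ splits as a direct sum of induced permutation modules $\ZZ[S_n/H_i]$, where $H_i$ is the stabilizer of a representative in the $i$-th orbit.
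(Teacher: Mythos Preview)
Your general framework --- reduce $\ocM_{0,n}$ to something simpler by a chain of $S_n$-equivariant Hassett wall-crossings and apply an equivariant Orlov formula at each step --- matches the paper's Section~\ref{reduction section}. But there are two genuine gaps that prevent the proposal from going through.

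\textbf{The Losev--Manin target is not $S_n$-equivariant.} The Losev--Manin space $\LM_n$ has two distinguished heavy points and $n$ light points; its symmetry group is $S_2\times S_n$, and there is no $S_n$-equivariant morphism $\ocM_{0,n}\to\LM_{n-2}$, since such a map requires singling out two of the $n$ marked points. The only $S_n$-equivariant endpoint of the Hassett chamber path is the symmetric weight vector, i.e.\ the Kirwan resolution $\M_n$ of the symmetric GIT quotient $\X_n=(\PP^1)^n\sslash\PGL_2$. This is not toric and has no ``well-known'' equivariant exceptional collection; producing one is the content of Theorems~\ref{symm} and~\ref{even 0}. More generally, the blow-up centers and their intersections are themselves Hassett spaces which, after further reduction, become the spaces $\M_{p,q}$ of Notation~\ref{fbzfbdfnz}. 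Constructing $(S_p\times S_q)$-equivariant full exceptional collections on \emph{these} building blocks (Theorems~\ref{symm}, \ref{dfkfvkfvjkfvjk}, \ref{asfffdvzsfvsfb}, \ref{asdvzsfvsfb}) is the main work of the paper, occupying Sections~\ref{stack section}--\ref{fullness p,q+1 section}, and uses windows into GIT quotients, the ``devil's trick'', and delicate score-based inductions. Your proposal contains nothing that would produce these collections.

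\textbf{The non-transversality obstacle you describe does not arise.} At each wall of the Hassett chamber decomposition one blows up the loci $\M_{\ba(t)}(J_i)$ where the points indexed by $J_i$ collide, for subsets $J_i$ with $\sum_{j\in J_i}a_j(t)=1$. Two such loci meet only when $J_i\cap J_j=\emptyset$, and in that case they meet transversally along the locus where both groups of points collide separately. Hence Lemma~\ref{BigDaddy} (the equivariant Orlov formula for transversal centers) applies directly, and no additional machinery is needed for this step. The windows technology is used not to repair a failure of transversality but to build the exceptional collections on the terminal blocks $\M_{p,q}$.

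Your final paragraph on the permutation-lattice consequence is correct and matches Lemma~\ref{asfvqg}.
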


A sequence of objects $E_1,\ldots,E_r$ in the bounded derived category $D^b(X)$ of a smooth projective variety $X$ over $\CC$ 
is called an {\it exceptional collection} if 
$R\Hom(E_i,E_i)=\CC$ for all $i$ and $R\Hom(E_i,E_j)=0$ for all $i>j$.
An exceptional collection is called {\it full} if $D^b(X)$ is the smallest full triangulated subcategory of $D^b(X)$ containing all $E_i$. 
If a full exceptional collection on $X$ exists, then $K_0(X)$  is freely generated by the classes $[E_1],\ldots,[E_r]$.
%of the objects in the collection. 

%\smallskip 

%In general, there are many examples of full exceptional collections (see for example \cite{BondalOrlov}, \cite{Huybrechts}, \cite{KuznetsovICM}). 

\medskip
\noindent
{\bf Motivation and discussion.} 
The existence of a full exceptional collection on $\ocM_{0,n}$ is a  corollary of 
Kapranov's %blow-up 
description \cite{Kapranov} of $\ocM_{0,n}$ as an iterated blow-up of $\PP^{n-3}$, 
Orlov's theorem on semi-orthogonal decompositions (s.o.d.) of 
blow-ups with smooth centers \cite{Orlov blow-up} and existence of a full exceptional collection
$\cO, \ldots, \cO(k)$  in $D^b(\PP^k)$
\cite{Beilinson}.
However, Kapranov's model is not equivariant and therefore objects in these collections are not permuted by $S_n$. 
Before asking the reader to embark on the long journey, we would like to offer some motivation for the study of of equivariant exceptional collections  and discuss potential directions for future research. 

\medskip
\noindent
{\bf Symmetric $\ocM_{0,n}$.} 
The Deligne--Mumford stack $[\ocM_{0,n}/S_n]$ is a compactification of the moduli space of $n$ unordered points in $\bP^1$.
It frequently appears in moduli theory, for example in the study of the ample cone of~$\ocM_g$, see \cite{GKM}.
We will show that our exceptional collection is $S_n$-equivariant in a  strong sense (see Definition~\ref{sDGASGASFGARH}:)
the $S_n$-orbits of the exceptional objects in Theorem~\ref{MainTh} induce 
a s.o.d.~
of the derived category of $[\ocM_{0,n}/S_n]$ into derived categories of representations of subgroups of $S_n$,
%and more generally, $\M_\ba/\Gamma$, 
see Lemma~\ref{asfvqg}.

\medskip
\noindent
{\bf Keel--Hassett program.} 
From a modern perspective,
$[\ocM_{0,n}/S_n]$ is the $r=1$ case of the Koll\'ar--Shepherd-Barron--Alexeev
compactification of the moduli space of smooth hypersurfaces $D\subset\bP^r$
of degree $n\ge r+2$, viewed as log canonical pairs $(\bP^r,D)$, see \cite{Hac}.
The Keel--Hassett program is the analysis of wall crossing
between moduli spaces of log canonical pairs $(\bP^r,\epsilon D)$ with
$\frac{r+1}{n}\le \epsilon\le1$.
Little is known about the cohomology or derived categories of these spaces but
the hope is that the Keel--Hassett program breaks them into pieces corresponding
to smaller~$\epsilon$ and to the centers of  blow-downs or flips.
For $r=2$, the first model 
($\epsilon=\frac{3}{n}$) was described by Hacking  \cite{Hac}.
For $r=1$, the first model (with $\epsilon=\frac{2}{n}$) is the quotient stack $[\X_n/S_n]$, where $\X_n$ is the symmetric GIT quotient $(\PP^1)^n\sslash\PGL_2$. Via~symplectic reduction, $\X_n$ can be identified with the well-studied moduli space of equilateral polygons in $\Bbb R^3$ \cite{Kly,KaMi,HK}.

The Keel--Hassett  program for $\ocM_{0,n}$,  completed by
Hassett \cite{Ha},  gives a presentation of $\ocM_{0,n}$ as an iterated
$S_n$-equivariant blow-up of $\X_n$.
This presentation, analyzed in detail in Section~\ref{reduction section}, yields an equivariant s.o.d.\ of
$D^b(\ocM_{0,n})$ into derived categories of simple moduli spaces $\M_{p,q}$,
parametrizing nodal curves with $p$ ``heavy'' and $q$ ``light'' points (see Notation \ref{precise weights}). 

Starting in Section~\ref{zxFbFFafsg}, the heavy/light moduli spaces $\M_{p,q}$, equipped
with a natural $S_p\times S_q$-action, become the main
focus of the paper.
We hope that these ``building blocks'' of $\ocM_{0,n}$, and their higher-dimensional
analogues, will have further applications in moduli theory.

\medskip
\noindent
{\bf $S_n$-character of $\HH^*(\ocM_{0,n},\QQ)$.} 
%The $S_n$-character of $\HH^*(\ocM_{0,n},\QQ)$ appears in the  theory of modular operads \cite{GetzlerKapranov,Getzler}.
%, as physical theories provide a representation of these operads in the category of dg vector spaces.
Getzler gave recursive formulas for the $S_n$-character using mixed Hodge theory \cite{Getzler}.
Our approach was inspired by the work of 
Bergstrom and Minabe \cite{BergstromMinabe}, who gave another  algorithm, 
using Hassett's moduli spaces of weighted stable curves \cite{Ha}.
They computed the length of the $S_n$-module 
$\HH^*(\ocM_{0,n},\QQ)$, improving, for genus~$0$, results of Faber and Pandharipande \cite{FP}. %on the length of the $S_n$-module $\HH^*(\ocM_{g,n},\QQ)$. 
%New restrictions on the irreducible representations appearing in the decomposition of the $S_n$-module $\HH^*(\ocM_{g,n},\QQ)$ appear in \cite{Tosteson}, using work of Sam and Snowden on  FS$^{op}$-modules \cite{SS}. Recent work on the $S_n$-module given by the top weight cohomology of $\cM_{g,n}$ appears in  \cite{CFGP}. 
Other work on the $S_n$-representations given by (pieces of) the cohomology of special cases of Hassett spaces, 
such as symmetric GIT quotients $\X_n$, appeared in \cite{HK}. 
All mentioned recursive formulas 
%computing the equivariant Poincar\'e-Serre polynomials of $\ocM_{0,n}$
are, however, not ``effective'' in the sense that the sums involve $\pm$ signs. The fact that 
the $S_n$-module $\HH^*(\ocM_{0,n},\QQ)$ is a permutation representation 
%(i.e., it has a basis that is permuted, as a set, by the action of $S_n$) 
is new and allows for a straightforward decomposition of this module into a sum
of irreducible $S_n$-representations. 

\medskip
\noindent
{\bf Modular vector bundles on $\M_{p,q}$.} A full exceptional collection in $D^b(X)$ allows us to
identify $D^b(X)$ with the bounded derived category of modules over the
endomorphism algebra of this collection \cite{Bon}
(equipped with its natural dg or
$A_\infty$-structure if the collection is not strong, see e.g.~\cite{Bod}). However,
computations of this algebra become hopelessly convoluted (and thus useless) unless
the exceptional objects can be described explicitly.

Exceptional collections on $\M_{p,q}$ contain  vector bundles $F_{l,E}$ of rank $l+1$
%in the exceptional collections on $\M_{p,q}$ are
%introduced  in later sections. They  are 
indexed by an integer $l\ge0$ and a subset $E\subseteq\{1,\ldots,p+q\}$ such that $l+e$ is even,
where $e=|E|$.
%They have the following properties:
%$F_{l,E}$ has rank $l+1$. 
For example, $F_{0,\emptyset}=\cO$.
At~a~point $[C]$ of $\M_{p,q}$ which corresponds to an irreducible curve $C\cong \bP^1$,
% with $n=p+q$ marked points,
the fiber of $F_{l,E}$ is equal to 
$$F_{l,E}|_{[C]}=H^0\bigl(C,\omega_C^{\otimes\frac{e-l}{2}}(E)\bigr),$$
where we identify $E$ with a subset of marked points.
The group $S_p\times S_q$ %, which acts naturally on $\M_{p,q}$ by permuting heavy and light points separately,
acts on the set of  bundles $\{F_{l,E}\}$ via its action on the set of subsets $\{E\}$.

The equivariant structure on objects in our exceptional collections is  either canonical (e.g.~the ideal sheaf of an equivariant Weil divisor) or obtained from a canonical structure via functors between equivariant derived categories (equivariant pushforward, equivariant pullback, etc.). However,  care is necessary when verifying the order of objects in these collections.

\medskip
\noindent
{\bf Symmetric GIT quotient.}
The following theorem is our model case for all subsequent calculations.
Recall that an exceptional collection $E_1,\ldots, E_r$ in  $D^b(X)$ is called {\it strong} if in addition $\Hom(E_i, E_j[k])=0$ for all $i<j$, $k\ne0$.

\begin{thm}\label{symm}
Let $p=2r+1$.
Then $\M_p:=\M_{p,0}$ is a symmetric GIT quotient $\X_p=(\PP^1)^p\sslash\PGL_2$.
The vector bundles $\{F_{l,E}\}$ form a full strong $S_p$-equivariant exceptional collection in $D^b(\M_p)$
under the following condition:
$$l+\min(e,p-e)\leq r-1\quad\hbox{\rm and}\quad l+e\quad\hbox{\rm is even.}$$
The $S_p$-orbits on the set $\{F_{l,E}\}$ are indexed by $l$ and $e$. In the exceptional collection, the objects from each orbit appear as a consecutive block $F_{l,e}$; these blocks are ordered by increasing $e$, and for fixed $e$ their order is arbitrary.
\end{thm}

By Lemma~\ref{asfvqg}, Theorem~\ref{symm} gives a semi-orthogonal decomposition of the bounded derived category of the Deligne--Mumford stack $[\M_p/S_p]$   with 
blocks given by the representation categories of the groups $S_e\times S_{p-e}$ for $l+\min(e,p-e)\leq r-1$ and $l+e$ even. The study of this derived category
may shed new light on the classical invariant theory of binary forms \cite{Dolgachev}.

\begin{examples}
For the first few values of $p=2r+1$, the space $\M_p$ and the reduction map
$\ocM_{0,p}\to\M_p$ can be described as follows. 
%We write $F_{l,e}$ for the $S_p$-orbit of the objects $F_{l,E}$ with $|E|=e$. 
\begin{itemize}
    \item[(p=3)]
$\ocM_{0,3}=\M_3$ is a point. The collection contains $1$ object, $F_{0,0}\cong\cO$.
    
    \item[(p=5)]
The reduction map $\ocM_{0,5}\to\M_5$ is an isomorphism (although the universal families are different).
The space $\M_5$ is a del Pezzo surface of degree~$5$.
The collection contains $7$ vector bundles: 
$F_{0,0}\cong\cO$, five line bundles $F_{0,4}$, which 
can be identified with $\pi_i^*(\cO(1))$ for each conic bundle $\pi_i:\,\M_{5}\to \bP^1$, 
and a rank $2$ vector bundle $F_{1,5}$.\break 
%, which can be identified with the log tangent bundle of $\ocM_{0,5}$.
The map given by the global sections  of $F_{1,5}$
%$H^0(\ocM_{0,5}, F_{1,5})=\CC^5$ 
provides a well-known embedding of $\M_5$ %(the del Pezzo surface of degree $5$) 
into the Grassmannian $G(2,5)$.
    
    \item[(p=7)]
The reduction map  $\ocM_{0,7}\to\M_7$ is the blow-up of $35$ planes $\bP^2\subset \M_7$
intersecting transversally in $70$ points.
The exceptional collection on $\M_7$ contains $38$ vector bundles:
$F_{0,0}, F_{2,0}, F_{1,1}, F_{0,2}, F_{0,6}, F_{1,7}$.
%As $p$ grows, the reduction morphism  $\ocM_{0,p}\to\M_p$ factors into more and more steps.
\end{itemize}
\end{examples}

%This process is analyzed in detail in Section \ref{reduction section}.

%The exceptional collection on $\M_9$ contains $187$ objects: 
%$$F_{0,0}\ F_{2,0}\ F_{1,1}\ F_{0,2}\ F_{0,6}\ F_{1,7}\ F_{0,8}\ F_{2,8}\ F_{1,9}\ F_{3,9}.$$
%The number of objects in the exceptional collections of $\M_{2r+1}$ is:
%\begin{equation}\label{formula_p_odd}
%r+(r-1)\cdot{2r+1 \choose 1}+(r-2)\cdot{2r+1 \choose 2}+\ldots+1\cdot {2r+1 \choose r-1}.
%\end{equation}

\medskip
\noindent
{\bf Arithmetic aspects.}
Permutation actions of $\Aut(X)$ on $K_0(X)$ seem rare; see Example~\ref{asgashawrb}
for a variety with an involution such that the corresponding $S_2$–action on $K_0(X)$
is not a permutation representation. A great insight of Yuri Manin was that $\ocM_{0,n}$ is
“defined over $\mathbb F_1$”. Thus one expects that $K_0(\ocM_{0,n})$, as an $S_n$–representation,
is also “defined over $\mathbb F_1$” and hence must be a permutation representation. Indeed,
one imagines that a vector space over $\mathbb F_1$ is  a finite set (whose elements become basis vectors after tensoring with an honest field or ring), a general linear group is a symmetric group, and thus a representation of $\Gamma$ is just a permutation representation.

In Theorem~\ref{MainHassettTh} we show that more general Hassett spaces carry exceptional
collections equivariant under their automorphism groups. Another standard example of a
variety “defined over $\mathbb F_1$” is a smooth projective toric variety~$X$, so it is
natural to ask whether $D^b(X)$ has a full $\Gamma$–equivariant exceptional collection,
and thus $K_0(X)$ is a permutation $\Gamma$–module, for every finite subgroup
$\Gamma\subset\Aut(X)$ normalizing the torus action. A similar question in the arithmetic
setting was raised by Merkurjev and Panin \cite{MP}, and we hope that our techniques  will be useful in addressing~it.
We also expect our results to extend to the arithmetic setting. The analogue
of Theorem~1.2 should hold for every form $\mathcal X$ of $\M_p$ defined over a non-algebraically
closed field $k$, or for the total space of a smooth morphism $\mathcal X\to B$ with
fibers isomorphic to $\M_p$. In both cases, we expect a $k$–linear (resp.\ $B$–linear) semi-orthogonal
decomposition of $D^b(\mathcal X)$ with blocks indexed by $l$ and $e$ as in Theorem~1.2.
For $p=5$, this result appears in \cite{ABer}.

\medskip
\noindent
{\bf Connection to quantum cohomology.}
The study of $D^b(\ocM_{0,n})$ was initiated in the
work of Manin and Smirnov \cite{ManinSmirnov1} %(see also \cite{Smirnov Thesis,ManinSmirnov2}) 
and of Ballard, Favero and Katzarkov \cite{BFK}. Part of the motivation in \cite{ManinSmirnov1} was to study 
the relationship between derived categories and quantum cohomology. We expect that our equivariant exceptional collections, via the Dubrovin/Gamma conjecture \cites{Dubrovin, GGI}, manifest themselves in the clustering behavior of the eigenvalues of the quantum multiplication by the Euler field. For concreteness, consider again the setup of Theorem~\ref{symm} rather than Theorem~\ref{sfzfgafgafga}. While the Dubrovin/Gamma conjecture predicts that the quantum spectrum  
consists of isolated eigenvalues for a general point $\tau\in\HH^*(\M_{p},\CC)$
(the base of the big quantum cohomology), we expect that the eigenvalues coalesce into groups indexed by $(l,e)$ as in Theorem~\ref{symm} for a general point in $\HH^*(\M_{p},\CC)^{S_p}$. 
Concretely, the quantum spectrum at the anticanonical point of the small quantum cohomology of $\M_p$ (with $p=2r+1$) should be equal to $\{2q(-1)^{s-1}\cos{2\pi t\over q}\}$
with $q=2s+1$, $s=1\ldots,r$, $t=1,\ldots,s$.
When $p=5$, this agrees with the calculation of Bayer and Manin \cite{BaMa}: the quantum spectrum consists of $\frac{5+5\sqrt{5}}{2}$ (which corresponds to $F_{0,0}=\cO$ in the notation of Theorem~\ref{symm}),
$\frac{5-5\sqrt{5}}{2}$
(which corresponds to $F_{1,5}$),
and the eigenvalue $-3$ with multiplicity $5$
(which corresponds to line bundles $F_{0,4}$.)

%\smallskip

%Even ignoring the $S_n$ action, 
%there has been a lot of work on cohomology %the Chow ring and the Poincar\'e polynomial 
%of $\ocM_{0,n}$.  
%Keel gave a presentation of the Chow ring and recursive formulas for the Betti numbers in %\cite{Keel}, with some further work by 
%Fulton and MacPherson \cite{FM} and Manin \cite{Manin}. 
%The Chow rings of Hassett spaces of weighted stable rational curves has been calculated in \cite{Ceyhan}.

\medskip
\noindent
{\bf Strategy of the proof of Theorem  \ref{sfzfgafgafga}.} 
In the rest of the introduction, we present our main results and explain the structure of the proof.
We follow a strategy outlined in \cite{CT_part_I} and 
inspired by the work of \cite{BergstromMinabe}. 

The first step, accomplished in Section~\ref{reduction section}, is to construct an equivariant s.o.d.\ of 
$D^b(\ocM_{0,n})$ from the equivariant s.o.d.'s of  $D^b(\M_{p,q})$ (for various $p$ and~$q$), where 
$\M_{p,q}$ is the Hassett space parametrizing $p$ ``heavy'' and $q$ ``light'' points on a nodal rational curve (see Notation \ref{precise weights}):

\begin{defn}\label{sgSgSgSgSgSRg}
$\M_{p,q}$ is defined by the following stability conditions:
\begin{itemize}
\item
When $p=2r+1$ is odd, at most $r$ of the heavy points may coincide, and moreover they may coincide with all the light points.
\item
When $p=2r$ is even, at most $(r-1)$ of the heavy points may coincide with each other and with all the light points.
Furthermore, if $q>0$ then $r$ heavy points may coincide, and they may further coincide with at most $\lfloor \frac{q-1}{2}\rfloor$ light points.
\end{itemize}
\end{defn}

The group $S_p\times S_q$ acts on $\M_{p,q}$ by permuting heavy and light points
separately. The bulk of the 
paper is devoted to the equivariant description of $D^b(\M_{p,q})$ in
%. The result for $\M_{p,q}$ is a combination of 
Theorems ~\ref{symm}, \ref{dfkfvkfvjkfvjk}, \ref{asfffdvzsfvsfb},
\ref{asdvzsfvsfb}, and \ref{even 0}.

When $p$ is odd, the space $\M_p$ is isomorphic to the symmetric GIT quotient
$(\PP^1)^p\sslash\PGL_2$, and we prove Theorem~\ref{symm} using 
the theory of windows into derived categories of GIT quotients
\cites{Teleman,DHL,BFK}, which we briefly review in
Section~\ref{windows section}. %, where we also study the derived category of the stack of $\bP^1$-bundles with $n$ sections. 
This section serves
as a model for the rest of the paper: the results are easy to state and
the proofs are quite transparent.\break
If $p$ is odd and $q>0$ then we have a morphism $\M_{p,q}\to \M_p$, which is
an iterated ($q$ times) universal $\bP^1$-bundle of $\M_p$.
By applying Orlov's theorem on the derived category of a projective bundle, we 
immediately construct an equivariant exceptional collection in this case
(see Theorem~\ref{symm+}.)
 
\medskip
\noindent
{\bf Torsion sheaves $\cT_{l,E}$.}
The bulk of the paper, starting in Section \ref{inequalities section}, is focussed on the difficult case of even $p$.
Some of the vector bundles $F_{l,E}$ in this case have to be replaced with new objects, torsion sheaves $\cT_{l,E}$ on $\M_{p,q}$:

\begin{notn}\label{T}
Let $P$ (resp.,~$Q$) be the set of heavy (resp., light) points.
For every subset $E\subseteq P\cup Q$, we denote by $E_p$ (resp., ~$E_q$)
its intersection with $P$ (resp., ~$Q$) and their cardinalities by $e_p$ and $e_q$.
For $p=2r\geq 4$, $q\geq 1$, $R\subseteq P$, $|R|=r$, 
let $i_R: Z_R\hra \M_{p,q}$ be the locus in $\M_{p,q}$ where the points from $R$ come together.  
Let $\pi_R: \cU_R\ra Z_R$ be the universal family over  $\M_{p,q}$ restricted to $Z_R$
and let 
$\si_u$ be the section of $\pi_R$ that corresponds 
%to the marked point~$u$. 
%If $u$ denotes the marking corresponding 
to the combined points of $R$.
% and $R':=P\setminus R$, then 
%$Z_R$ is isomorphic to the Hassett space with points marked by $u$, $R'$ and $Q$.
%Let $\pi_R: \cU_R\ra Z_R$ be the universal family over of $\M_{p,q}$ restricted to $Z_R$. 
For $l\geq 0$, $E\subseteq P\cup Q$ with $e=|E|$ such that $e+l$ is even and 
$E_p=R$, consider the following torsion sheaf on $\M_{p,q}$: 
$$\cT_{l,E}={i_R}_*\si_u^*\bigl(\omega_{\pi_R}^{\frac{e-l}{2}}(E)\bigr),$$
where we identify $E$ with a subset of sections of the universal family. 
%where $i_R: Z_R\hra \M_{p,q}$ is the inclusion map.
%5 and $\si_u$ is the section of the universal family $\pi_R$ that corresponds to the marked point~$u$.
%$\cV_{l,E}$ is the line bundle 
%$$\cV_{l,E}=\si_u^*\left(\omega_{\pi_R}^{\frac{e-l}{2}}(E)\right)=\frac{e-l}{2}\psi_u+\sum_{j\in E}\de_{ju}=\frac{e_q-r-l}{2}\psi_u+\sum_{j\in E_q}\de_{ju}.$$
%Note that when $q$ is odd, we have  
%$$\cV_{l,E}=-\left(\frac{r+l}{2}\right)\psi_u-\frac{1}{2}\sum_{j\in E_q}\psi_j.$$
\end{notn}

\begin{thm}\label{asfffdvzsfvsfb}\label{p,q case}
Let $p=2r\geq 4$, $q=2s+1\ge1$. 
A full $S_p\times S_q$ equivariant exceptional collection on $\M_{p,q}$
consists of the following objects parametrized by integers $l\geq0$ and subsets $E\subset P\cup Q$ such that $l+e$ is even:
\smallskip

(group $1A$)
The vector bundles $F_{l,E}$ for $l+\min(e_p, p+1-e_p)\le r-1$;

(group $2$)
The torsion sheaves  $\cT_{l,E}$ for 
$e_p=r$,  $l+\min(e_q, q-e_q)\le s-1$.

\noindent
We denote the $S_p\times S_q$ orbits of $\cT_{l,E}$ and $F_{l,E}$ by $\cT_{l,r,e_q}$ and $F_{l,e_p,e_q}$, respectively.
In the exceptional collection, the objects from each orbit appear as a consecutive block. We first order blocks by $e_q$ in the increasing order. For fixed $e_q$, 
the blocks $\cT_{l,r,e_q}$ come first, ordered by 
decreasing $l$. Then we put the blocks $F_{l,e_p,e_q}$, ordered  by increasing $e_p$, and, for fixed $e_p$, in an arbitrary order in $l$.
\end{thm}

\begin{rmk}\label{ajhsbvajhsbvakjhsbvks}
There are other ways to reorder this exceptional collection if we do not insist that objects from the same orbit appear consecutively. Namely, we can arrange these objects in blocks indexed by a subset $E_q$. %The order is as follows: 
Blocks are ordered by increasing $e_q$ and arbitrarily if $e_q$ is the same (but the set $E_q$ is different).
Within  each block with the same $E_q$ we can put the sheaves  $\{\cT_{l,E}\}$ first,
in arbitrary order if $E_p\neq E'_p$ and in order of decreasing $l$ when $E_p=E'_p$.
After the sheaves we put  the bundles $\{F_{l,E}\}$
in order of increasing  $e_p$ and, for a given $e_p$, arbitrarily. 

Put differently, the exceptional collection has many trivial $R\Hom$’s in the forward direction, which can be useful for describing its quiver.
\end{rmk}

%\begin{rmk}
%Note that we have an isomorphism of space $\M_{p,1}\cong\M_{p+1}$ but the collections from Theorem  \ref{p,q case} and Theorem ~\ref{symm} are \emph{not} the same. Furthermore,  

\begin{rmk}
Theorem~\ref{asfffdvzsfvsfb} and the next Theorem~\ref{asdvzsfvsfb}
remain true if $r=1$, i.e., when there are only $p=2$ heavy points.
With small modifications, the proofs are essentially the same.
For brevity, we do not include the case $p=2$ here, since it was already settled in \cite{CT_part_Ib}. 
In this case, the collection in Theorem~\ref{asfffdvzsfvsfb} contains the vector bundles $F_{0,E}$ with $E\subseteq Q$ arbitrary and the objects $\cT_{l,E}$ in group~2 (which are line bundles, since $Z_R$ is the whole moduli space). The dual collection (see Remark~\ref{dual collection} for the definition) was constructed in \cite{CT_part_Ib}*{Theorem~1.6, see also Remark~3.7}.
\end{rmk}

\medskip
\noindent
{\bf Subcategories $\cA$, $\cB$, and complexes $\tilde\cT_{l,E}$.}
For the case when the number of heavy and light points are both even, we need yet another type of objects. 

\begin{notn}\label{TT}
For inductive purposes
we always assume that $q=2s+1$ is odd and work on $\M_{p,q+1}$.
In particular, $|Q|=q+1$ in this case.
The space $\M_{p,q+1}$
is a  resolution of singularities of the asymmetric GIT quotient $\X_{p,q+1}$ (see Notation~\ref{precise weights})
with $\frac{1}{2}{p\choose r}{q+1\choose s+1}$  exceptional divisors $\delta=\bP^{r+s-1}\times\bP^{r+s-1}$.

Let 
$t=r+s-1$ and 
let $\cA\subset D^b(\M_{p,q+1})$ be the triangulated subcategory generated by the torsion sheaves 
$\cO_\delta(-a,-b)$
where either 
%$1\le a,b\le r+s-1$ or\break $a=0$, $1\le b\le {r+s-1\over2}$ or $b=0$, $1\le a\le {r+s-1\over 2}$.

$$
1\le a,b\le t
\quad \hbox{\rm or}\quad
a=0,\ 1\le b\le t/2
\quad \hbox{\rm or}\quad
b=0,\ 1\le a\le t/2.
$$

Let $\cB={}^\perp\cA=\{T\in D^b(\M_{p,q+1})\ |\ \Hom(T,A)=0\ \hbox{\rm for every}\ A\in\cA$\}.  
\end{notn}

We prove in Section~\ref{exceptional p,q+1} that
$\cA$ is an admissible $(S_p\times S_{q+1})$ equivariant subcategory and that 
$\cB$ is an  $(S_p\times S_{q+1})$ equivariant non-commutative resolution of singularities
of the GIT quotient $\X_{p,q+1}$ in the sense of \cite{KuLu}.
While $\X_{p,q+1}$ has many small resolutions related by flops obtained by contracting boundary divisors
$\delta$ onto one of the factors $\PP^{r+s-1}$, none of them
is  $(S_p\times S_{q+1})$ equivariant unlike the category $\cB$,
which in some sense is the ``minimal'' {\it equivariant} resolution. 
% Perhaps the biggest technical issue contributing to the complexity of this case
% is that $\cB$ is a non-commutative {\em strongly crepant} resolution only if $r+s$ is odd.
The vector bundles $F_{l,E}$ belong to $\cB$ (Proposition~ \ref{perpendicular general}) but 
the torsion objects have to be projected onto $\cB$:

\begin{defn}\label{sDGASRHADHA}
We define the objects in $\cB\subset D^b(\M_{p,q+1})$ by
$\cTT_{l,E}=(\cT_{l,E})_{\cB}$,
where $T\to T_{\cB}$ is a canonical functorial projection
and the torsion sheaf $\cT_{l,E}$ is defined as in Notation~\ref{T}
%Let $p=2r\geq 4$, $q=2s+1\geq 1$, $|P|=p$, $|Q|=q+1$, $R=\{1,\ldots, r\}\subseteq P$. 
for $E\subseteq P\cup Q$, $l\geq0$, $e+l$ even.
%we let as before $$\cT_{l,E}={i_R}_*\big(\frac{e_q-r-l}{2}\psi_u+\sum_{j\in E_q}\de_{ju}\big)$$
%where  $i_R: Z_R\hra \M_{p,q}$ is the inclusion map, $Z_R$ is the locus where the points in $R$ coincide, with $u$ the corresponding marking. 
\end{defn}

\begin{thm}\label{asdvzsfvsfb}\label{p,q+1 case}
Let $p=2r\geq 4$, $q=2s+1\ge1$.
The space $\M_{p,q+1}$ has a full $S_p\times S_{q+1}$ equivariant  exceptional  collection of 
torsion sheaves $\cO_\delta(-a,-b)$ %in subcategory $\cA$ 
(see Notation \ref{TT})
in decreasing order of $a+b$
followed by the following objects 
parametrized by integers $l\geq0$ and subsets $E\subset P\cup Q$ such that $l+e$ is even:
\smallskip

(group $1A$) The vector bundles $F_{l,E}$ for $l+\min(e_p, p+1-e_p)\le r-1$;

(group $2B$) The complexes  $\cTT_{l,E}$ for  $e_p=r$, 
$l+\min(e_q, q+2-e_q)\le s$.
  
  \smallskip
 \noindent
When combining group $2B$ with $1A$, the order is the same as in  
Theorem~\ref{asfffdvzsfvsfb} or Remark~\ref{ajhsbvajhsbvakjhsbvks}.
\end{thm}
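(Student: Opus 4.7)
The plan is to verify the theorem by establishing three sub-statements corresponding to the three blocks of generators and then assembling them into a single semi-orthogonal decomposition. First, I would show that the torsion sheaves $\cO_\delta(-a,-b)$ in the indicated range generate $\cA$ as a full exceptional collection. Each exceptional divisor $\delta\simeq\PP^{r+s-1}\times\PP^{r+s-1}$ carries a partial Beilinson-type collection, and since different $\delta$'s are either disjoint or meet only in higher codimension, the relevant $\Ext$ computations localize to a single $\delta$, reducing to the standard Beilinson exceptionality on $\PP^{r+s-1}\times\PP^{r+s-1}$ with the prescribed asymmetric range of twists.

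Second, I would show that the vector bundles $F_{l,E}$ from group $1A$ together with the complexes $\cTT_{l,E}$ from group $2B$ form an $S_p\times S_{q+1}$-equivariant exceptional collection in $\cB$. The $F_{l,E}$ lie in $\cB$ by Prop.~\ref{perpendicular general}, and their mutual $\Hom$ groups can be computed from the fiberwise description $F_{l,E}|_{[C]}=H^0(C,\omega_C^{(e-l)/2}(E))$ using arguments modelled on Theorem~\ref{symm}. Semi-orthogonality between $F_{l,E}$ and $\cTT_{l',E'}$ will follow from applying $\RHom(F_{l,E},-)$ to the defining triangle $(\cT_{l',E'})_{\cA}\to\cT_{l',E'}\to\cTT_{l',E'}$, reducing to computations with the original torsion sheaves $\cT_{l,E}$ from Notation~\ref{T} corrected by contributions from $\cA$. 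The self-exceptionality of the sub-collection $\{\cTT_{l,E}\}$, with the $\cTT$'s preceding the $F$'s inside each $E_q$-block, is the essential new content and requires a careful analysis of this projection triangle.

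Third, for fullness the combined collection must generate $D^b(\M_{p,q+1})$. The natural route is via Orlov's blow-up formula applied to the factorization $\M_{p,q+1}\to Y\to\X_{p,q+1}$, where $Y$ is one of the (non-equivariant) small resolutions of the GIT quotient. The exceptional divisors then contribute $\cA$ together with a complementary piece that we identify with the span of the $\cTT_{l,E}$, while the pullback of $D^b(Y)$ is generated by the $F_{l,E}$, which in turn can be reduced to the situation of Theorem~\ref{asfffdvzsfvsfb} on $\M_{p,q}$ via a morphism that forgets the extra light point (and contracts the unstable locus if needed). A final rank check against the permutation character of $S_p\times S_{q+1}$ on $K_0(\M_{p,q+1})$ will confirm that nothing is missed.

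The main obstacle will be the $\Hom$ computations for the projected complexes $\cTT_{l,E}$: because the projection onto $\cB$ is defined only implicitly, one will likely need an explicit presentation of each $\cTT_{l,E}$ as a cone on a canonical map between $\cT_{l,E}$ and an object of $\cA$, and then track how this cone behaves under $\RHom$ against every other generator. Respecting the intricate ordering rules within group $2B$ -- first by increasing $e_q$, then refining by $E_p$, and finally by decreasing $l$ when $E_p=E'_p$ -- demands delicate bookkeeping, somewhat more subtle than the analogous task in Theorem~\ref{asfffdvzsfvsfb} because here the extra exceptional divisors $\delta$ introduce additional cohomological corrections via the triangle above.
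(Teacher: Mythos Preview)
Your outline has the correct three-part skeleton (admissibility of $\cA$, exceptionality in $\cB$, fullness), but several of the key steps are either too vague to go through or diverge substantially from what the paper actually does.

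\textbf{Exceptionality of the $F_{l,E}$.} You say the mutual $\RHom$'s ``can be computed from the fiberwise description\ldots using arguments modelled on Theorem~\ref{symm}.'' This is the main gap. Theorem~\ref{symm} is a windows computation on a genuine GIT quotient, but when $p$ and $q{+}1$ are both even, $\M_{p,q+1}$ is the Kirwan blow-up and is \emph{not} an open substack of $\cM_n$; there is no global window. The paper's route is an elaborate two-step induction (``alpha game'' and ``beta game,'' \S\ref{induction2}--\ref{induction1}): one relates $F_{l,E}$ on $\M_{p,q+1}$ to $F_{l,E}$ on the $\PP^1$-bundle $\cU_{p,q}$ via the contraction $\beta$ (Prop.~\ref{sifjkdfjkwdjk}, Cor.~\ref{long cor}), and $\cU_{p,q}$ \emph{is} a GIT quotient where one can compute. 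The alpha game then bridges $\M_{p,q}$ and $\M_{p,q-1}$ via the universal family $W$ (Cor.~\ref{pass to W}). Each step produces correction terms supported on boundary, and one must show they land in $\cA$ using the score inequalities of \S\ref{inequalities section}. None of this is visible in your sketch.

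\textbf{Exceptionality involving $\cTT_{l,E}$.} Working directly with the defining triangle for $\cTT_{l,E}$ is painful because the projection is implicit. The paper instead proves an adjunction lemma (Lemma~\ref{K_lemma}) comparing the s.o.d.\ $\langle\cA,\cB\rangle$ with a \emph{second} s.o.d.\ $\langle\cC,\cD\rangle$ coming from a commutative small resolution $\M_{R'}$, and uses it (Prop.~\ref{reduction}) to transport every $\RHom$ computation involving $\cTT$'s to computations on $\M_{R'}$ involving the un-projected sheaves $\coT_{l,E}$. These in turn reduce to windows computations on the strata $Z_R$, $Z_{R'}$, $Y$ (Thm.~\ref{pairs on M_R}), aided by an explicit complex representing $\cTT_{l,E}$ (Lemma~\ref{supportT}) and the ``devil's trick'' (Rmk.~\ref{devil2}). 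Your proposed direct triangle-chasing would have to rediscover all of this.

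\textbf{Fullness.} Your Orlov-based approach through a small resolution $Y$ is different from the paper's and would need more: $Y=\M_{R'}$ is not $\M_{p,q}$, so you cannot simply cite Theorem~\ref{asfffdvzsfvsfb} for $D^b(Y)$, and a ``rank check'' presupposes an independent computation of $\mathrm{rk}\,K_0(\M_{p,q+1})$ that you have not supplied. The paper instead runs a score-and-Koszul induction (Thm.~\ref{cor refined induction}, Cor.~\ref{bad quotients}) to show the collection $\cC'_F$ generates every $\beta_z^*F_{l,E}$; then any $G$ orthogonal to the collection has $R\beta_{z*}G=0$ for \emph{every} light index $z$, hence is supported on the boundary, and a flop lemma (Lemma~\ref{DFlopos}) forces $G=0$. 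Finally one passes from $\cC'_F$ back to the actual collection $\cC$ via Thm.~\ref{replace}.
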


The same collection as in Theorem  \ref{p,q+1 case} works for the case of $s=-1$. We state it as a separate theorem because it is of independent interest.

\begin{thm}\label{even 0}
If $p=2r\ge4$, then $\M_p$
is the Kirwan resolution of singularities of the 
symmetric GIT quotient $\X_p=(\PP^1)^p\sslash\PGL_2$.
It admits a full $S_p$-equivariant exceptional collection consisting of the 
torsion sheaves
$\cO_{\bP^{r-2}\times\bP^{r-2}}(-a,-b)$, where 
$$
1\le a,b\le r-2
\quad \hbox{\rm or}\quad
a=0,\ 1\le b\le r/2-1
\quad \hbox{\rm or}\quad
b=0,\ 1\le a\le r/2-1,
$$
arranged in decreasing order of $a+b$,
followed by

\noindent
(group $1A$) 
the vector bundles $F_{l,E}$ with $l+e$ even and 
$l+\min(e, p+1-e)\le r-1$.

\smallskip

The order of blocks $F_{l,e}$ is first by increasing $e$, and, for a given $e$, arbitrary.

\end{thm}

\begin{ex}\label{M4} For
$\M_4\cong\ocM_{0,4}\cong\bP^1$,  $\cA$ is empty. The collection contains $2$ objects, 
$F_{0,0}\cong\cO$ and $F_{0,4}\cong\cO(1)$. In fact $F_{0,P}$ is always the pull-back of the GIT
polarization from the symmetric GIT quotient $\X_p$ (see Corollary~\ref{nbvcnbvcnvbc}).

The space $\X_6$ is the Segre cubic threefold in $\bP^4$. %with $10$ singularities.
The space $\M_6\cong\ocM_{0,6}$ 
is the blow-up of $10$ singularities of $\X_6$ with exceptional divisors $\bP^1\times\bP^1$.
%(this is the last case when $\M_p\cong\ocM_{0,p}$).
The category $\cA$ contains torsion sheaves $\cO_{\bP^1\times\bP^1}(-1,-1)$.
The category $\cB$ is a weakly crepant resolution of $\X_6$ with the full exceptional collection
of $24$ vector bundles of type 
$F_{0,0},  F_{2,0}, F_{1,1}, F_{0,2}, F_{0,6}$.
\end{ex}

\begin{rmk}\label{bjbhjf,ds,jbahs,bj}\label{SgSRgSRhSRh}\label{1B stuff}
In the setup of Theorems~\ref{p,q case}, \ref{p,q+1 case} and ~\ref{even 0}, 
we will also show that there is another full equivariant exceptional collection, obtained by 
swapping vector bundles from group 1A to vector bundles in
\smallskip

(group $1B$) Vector bundles $F_{l,E}$ for $l+\min(e_p+1, p-e_p)\le r-1$.

\smallskip
\noindent
The order is the same. The groups $1B$ and $1A$ are related by the operation
%taking the complement 
$E\mapsto E^c$. 
From this perspective, 
in Theorem  \ref{p,q+1 case},
we may also consider

\smallskip

(group $2A$) Complexes $\{\cTT_{l,E}\}$ for $e_p=r$, $l+\min(e_q+1, q+1-e_q)\le s$ 

\smallskip
\noindent
and combine them with vector bundles from groups $1A$ or $1B$.
The same proof as in Theorem  \ref{p,q+1 case}
shows that this collection is exceptional and of the expected length, but we did not attempt to prove fullness. 
\end{rmk}

%\noindent {\bf Connection with  \cite{CT_part_I}}. 
\noindent {\bf Cuspidal block}. 
In the first paper of this project \cite{CT_part_I} we found a full $S_2\times S_n$-equivariant exceptional collection on the Losev--Manin space $\LM_n$.
The approach we take is different for the two types of spaces. 
In \cite{CT_part_I} we proved that for both $\LM_n$ and $\ocM_{0,n}$ it suffices to find a full equivariant exceptional collection in 
the {\it cuspidal block} of the derived category (i.e., objects that push forward to $0$ by all the forgetful maps) and we found it for the Losev--Manin space. 
The existence of a full $S_n$-equivariant exceptional collection in $D^b_{\mathrm{cusp}}(\ocM_{0,n})$ is an interesting open problem.

\smallskip

\noindent {\bf Structure of the sections focusing on
$\M_{p,q}$ for even $p$}. 
In Section~\ref{inequalities section} we  study numerical functions
of pairs  $(l,E)$, including the {\em score} $S(l,E)$.\break 
%, which is crucial for proving fullness.
% and prove exhibit various inequalities involving the pairs $(l,E)$ in Theorem  \ref{p,q case} and Theorem  \ref{p,q+1 case}. %These will be crucial for all of the remaining sections. 
In~Section \ref{extend section} we extend the definition of the vector bundles $F_{l,E}$ to more general Hassett spaces, % (with non-empty boundary), 
while in Section~\ref{F vs boundary section} we give %sufficient 
conditions for these vector bundles %$F_{l,E}$ 
to be orthogonal to torsion sheaves supported on the boundary. 

The exceptionality of the ``$F_{l,E}$-part" of the collections in Theorem  \ref{p,q case}
%(
%$\M_{p,q}$, %$p$ even, $q$ odd) 
and Theorem  \ref{p,q+1 case} 
%($\M_{p,q}$, %$p$, 
%$q$ %both  even) 
is proven in Section~\ref{induction2} and Section \ref{induction1} by induction on 
the number of light points %$q$
and %. Note that the case $q$ odd and $q$ even 
require different arguments (hence, the two sections). We always assume that $q$ is odd
and introduce the \emph{alpha game} to go from $\M_{p,q-1}$ to $\M_{p,q}$ % when $q$ is odd
and the  \emph{beta game} to go from $\M_{p,q}$ to $\M_{p,q+1}$. 
%Both ``games"   compare orthogonality between $F_{l,E}$'s on different Hassett spaces related by reduction maps. 

We finish the proof of the exceptionality in
%of the collection on $\M_{p,q}$ ($p$ even, $q$ odd) of 
Theorem  \ref{p,q case}  (i.e., including the ``$\cT_{l,E}$ part") in Section \ref{exceptional p,q section} by reducing it to a windows calculation on subvarieties $Z_R\subseteq \M_{p,q}$ (the supports of the torsion sheaves 
$\cT_{l,E}$) % in the collection) 
and their intersections. 
%Note that a direct windows argument on $\M_{p,q}$ will not give exceptionality of most pairs in the collection, as the condition on weights is not satisfied. 
The exceptionality %of the collection  on $\M_{p,q+1}$ ($p$ even, $q$ odd)  
in Theorem  \ref{p,q+1 case} is finished in Section \ref{exceptional p,q+1 section},
%and requires yet new reduction maps in order to compare exceptionality of different pairs, by reducing again to a window calculation on subvarieties $Z_R\subseteq \M_{p,q+1}$ and their intersections. 
%It is 
where we compare commutative (not equivariant) and non-commutative 
(equivariant) small resolutions of the singular GIT quotient $\X_{p,q+1}$.
 
Fullness of the exceptional collections on all the spaces $\M_{p,q}$ is proved in the remaining sections:
 Section \ref{fullness p,q section} for the collections in Theorem  \ref{p,q case} and in Section \ref{fullness p,q+1 section} for the collections in Theorem  \ref{p,q+1 case}. 
%Even in the most basic case of Theorem  \ref{symm}, where the exceptional collection is contained
%in the Halpern-Leistner's "window", we were unable to use directly his main theorem (see  Theorem ~\ref{Kirwan}) to prove fullness.
%However, we were inspired by its proof utilizing Koszul resolutions of the unstable strata, although in
%our case we had to work on the universal family rather than on the moduli stack as in \cite{DHL}.
%We use several constructions based on the Koszul complex (\emph{Koszul games})  in order to prove fullness.
%One of them  ``replaces" the torsion sheaves $\cT_{l,E}$ (resp., complexes
%$\cTT_{l,E}$) with the bundle $F_{l,E}$ for the the same pair $(l,E)$. We emphasize that the collections obtained by replacing   $\cT_{l,E}$ (resp., $\cTT_{l,E}$) with the corresponding 
%$F_{l,E}$'s are \emph{not} exceptional in general (although we prove that they are full). 
%We find this phenomenon very interesting and worthy of further exploration even on toric varieties
%or whenever one has natural full but not exceptional collections of line or vector bundles.
%We remark that proving fullness in Section  \ref{fullness p,q+1 section} for the collection on $\M_{p,q+1}$ ($q$ odd) involves yet again both the \emph{alpha game} and the \emph{beta game} and relies on having
%proved already fullness for the collection on $\M_{p,q}$ and $\M_{p,q+2}$ (Section  \ref{fullness p,q section}). 
%\medskip

{\bf Acknowledgements.}
We thank Ben Bakker, Matt Ballard, Arend Bayer,
Daniel Halpern-Leistner,
Alexander Kuznetsov,  Chunyi Li, Emanuele Macr\`i,
Georg Oberdieck, Alex Perry, Olivier Schiffmann and Maxim Smirnov
for helpful conversations related to this work
and
Matt Ballard, Alex Duncan, Patrick McFaddin for sending us a preliminary version of their work related to
Theorem  \ref{symm}. We are very grateful to the referees for their careful reading of the paper and for  many thoughtful and insightful comments and suggestions.
%The first named author 
A.-M. C.~was supported by the NSF grant DMS-1701752, the ANR grant FanoHK and the Institut Universitaire de France. J. T.~ was supported by the NSF grants DMS-1701704 and DMS-2101726,
as well as Fulbright and Simons Fellowships.
%and the HSE University Basic Research Program and Russian Academic Excellence Project `5-100'. 

%%%%%%%%%%%%%%%%%%%%%%%%%%%%%%%%%%%%%%%%%%%%%%%%%%%%%%%%%%%%%%%%%%%%%%%%%
%%%%%%%%%%%%%%%%%%%%%%%%%%%%%%%%%%%%%%%%%%%%%%%%%%%%%%%%%%%%%%%%%%%%%%%%%

%\tableofcontents
\tableofcontents

\section{Reduction to the heavy/light moduli spaces $\M_{p,q}$'s}
\label{axfbzdfbafb}\label{reduction section}

%There are two natural notions how a collection can be ``permuted'' by the action of $\Gamma$:
The goal of this section is to reduce the study of derived category of $\ocM_{0,n}$ to the study of full equivariant exceptional collections on heavy/light moduli spaces $\M_{p,q}$ (see Notation~\ref{precise weights}.) In particular, we will prove our main Theorem~\ref{MainTh} conditionally on the validity of Theorems ~\ref{symm}, \ref{dfkfvkfvjkfvjk}, \ref{asfffdvzsfvsfb},
\ref{asdvzsfvsfb}, and \ref{even 0}, which will be proved in subsequent sections. In fact, the same analysis proves existence of full equivariant exceptional collections for a large range of Hassett  spaces,
see Theorem~\ref{MainHassettTh},
which includes 
$\ocM_{0,n}$.

We first recall some subtle differences between various notions of exceptional collections “permuted by a group”, as investigated by Elagin \cite{Elagin}.

\begin{defn}\label{swfwefawrgqwrgqw}
Let a finite group $\Gamma$ act on a smooth projective variety~$X$. We say that $\Gamma$ {\it permutes objects} of an exceptional collection
$\{\cE_\alpha^\bullet\}_{\alpha\in I}$ in $D^b(X)$ 
%An exceptional collection 
if, for every $\gamma\in \Gamma$ and $\alpha\in I$, there exists $\beta\in I$ such that
$\gamma^*\cE_\alpha^\bullet\cong\cE_\beta^\bullet$.
\end{defn}

\begin{lemma}
If $\Gamma$ permutes objects of a full 
exceptional  collection %$\{\cE_\alpha^\bullet\}_{\alpha\in I}$ in $D^b(X)$ 
then
\begin{itemize}
\item[(a)] 
$R\Hom(\cE_\alpha^\bullet, \cE_\beta^\bullet)=0
$
if $\cE_\alpha^\bullet$ and 
$\cE_\beta^\bullet$ are in the same $\Gamma$-orbit.
\item[(b)]
$K_0(X)$ is a permutation $\Gamma$-lattice.
\end{itemize}
\end{lemma}

\begin{proof}
In part (a), if $R\Hom(\cE_\alpha^\bullet,\gamma^*\cE_\alpha^\bullet)\ne0$ for some $\gamma\in\Gamma$, then iterating $\gamma$ produces a cycle
$
\cE_\alpha^\bullet,\ \gamma^*\cE_\alpha^\bullet,\ (\gamma^2)^*\cE_\alpha^\bullet,\ \ldots,\ (\gamma^p)^*\cE_\alpha^\bullet=\cE_\alpha^\bullet,
$
where $p$ is the order of $\gamma$, with non-trivial forward $R\Hom$'s, which is impossible in an exceptional collection. In part (b), since $K_0(X)$ is the Grothendieck $K$-group of $D^b(X)$, every full exceptional collection or s.o.d.~of $D^b(X)$ induces a direct sum decomposition of $K_0(X)$ compatible with the induced $\Gamma$-action.
\end{proof}

We will construct exceptional collections on $\ocM_{0,n}$ and $\M_{p,q}$ that are equivariant in a  stronger sense, which implies that the quotient stacks 
$[\ocM_{0,n}/S_n]$ and $[\M_{p,q}/S_p\times S_q]$ 
admit compatible semi-orthogonal decompositions.

\begin{defn}\label{sDGASGASFGARH}
In the setup of Definition~\ref{swfwefawrgqwrgqw}, an exceptional collection is called {\it $\Gamma$-invariant} if $\Gamma$ permutes its objects and the collection can be arranged in blocks so that the objects in each block are permuted by $\Gamma$. Equivalently, there exists a s.o.d.\ $\langle\cA_1,\ldots,\cA_\ell\rangle\subset D^b(X)$ such that each $\cA_j$ is generated by a $\Gamma$-orbit of the objects $\{\cE_\alpha^\bullet\}_{\alpha\in I}$.

A~$\Gamma$-invariant exceptional collection is called {\it $\Gamma$-equivariant} if every complex $\cE_\alpha^\bullet$ is quasi-isomorphic to a complex in $D^b(X/\Gamma_\alpha)\cong D^b_{\Gamma_\alpha}(X)$, the bounded derived category of $\Gamma_\alpha$-equivariant coherent sheaves on $X$, where $\Gamma_\alpha\subset \Gamma$ is the stabilizer of the isomorphism class of $\cE_\alpha^\bullet$.
\end{defn}

We refer to \cite{Thomason,DHL,Elagin} for background on equivariant sheaves and equivariant derived categories.

%Existence of a full $\Gamma$-invariant collection has two consequences:

\begin{lemma}\label{asfvqg}
If $D^b(X)$ admits a full $\Gamma$-invariant exceptional collection then the equivariant derived category $D^b_\Gamma(X)$
admits a s.o.d. with blocks isomorphic to representation categories 
(if the collection is equivariant) and twisted representation categories (if it is only invariant)
of subgroups $\Gamma_\alpha\subset\Gamma$ as in Definition~\ref{sDGASGASFGARH}.
\end{lemma}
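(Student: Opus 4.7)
The plan is first to observe that the permutation-lattice statement is immediate: since the collection is full and exceptional, the classes $[\cE^\bullet_\alpha]_{\alpha\in I}$ form a $\ZZ$-basis of $K_0(X)$, and invariance of the collection means that $\gamma^*$ permutes these basis elements, so $K_0(X)$ is a permutation $\Gamma$-lattice whose orbits are indexed by the $\Gamma$-orbits on $I$.

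For the semi-orthogonal decomposition of $D^b(X/\Gamma)$, my strategy would be to first reorganize the exceptional collection into orbit blocks, then descend to the equivariant category, and finally identify each descended block as a (twisted) representation category. Partition $I$ into $\Gamma$-orbits $O_1, \ldots, O_m$. Using $\Gamma$-equivariant mutations — available because the action permutes the collection, so any elementary mutation of a neighboring pair $(\cE^\bullet_\alpha, \cE^\bullet_\beta)$ can be performed simultaneously on all $\Gamma$-translates of that pair — I would rearrange the collection so that each orbit $O_k$ occupies a contiguous range of positions. Then the triangulated subcategory $\cD_k \subset D^b(X)$ generated by $\{\cE^\bullet_\alpha : \alpha \in O_k\}$ is $\Gamma$-invariant, and the resulting $D^b(X) = \langle \cD_1, \ldots, \cD_m\rangle$ is a $\Gamma$-invariant semi-orthogonal decomposition.

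Applying Elagin's descent theorem for $\Gamma$-invariant s.o.d.'s then produces $D^b(X/\Gamma) = \langle \cD_1^\Gamma, \ldots, \cD_m^\Gamma\rangle$, where $\cD_k^\Gamma$ is the $\Gamma$-equivariant version of $\cD_k$. To identify the blocks, fix a representative $\alpha_k \in O_k$ with stabilizer $\Gamma_{\alpha_k}$. In the equivariant case, the object $\cE^\bullet_{\alpha_k}$ carries a genuine $\Gamma_{\alpha_k}$-equivariant structure, and the equivariant endomorphism algebra of the induced object $\bigoplus_{\gamma \in \Gamma/\Gamma_{\alpha_k}} \gamma^* \cE^\bullet_{\alpha_k}$ equals the group algebra $\CC[\Gamma_{\alpha_k}]$, so $\cD_k^\Gamma \simeq D^b(\mathrm{Rep}(\Gamma_{\alpha_k}))$. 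In the merely invariant case, the chosen isomorphisms $\gamma^* \cE^\bullet_{\alpha_k} \simeq \cE^\bullet_{\alpha_k}$ for $\gamma \in \Gamma_{\alpha_k}$ form only a projective representation whose obstruction to lifting is a Schur multiplier class $\omega \in H^2(\Gamma_{\alpha_k}, \CC^*)$; the endomorphism algebra becomes the twisted group algebra $\CC^\omega[\Gamma_{\alpha_k}]$, and $\cD_k^\Gamma$ is the corresponding twisted representation category.

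The main difficulty will be the first step: systematically performing the $\Gamma$-equivariant mutations to bring each orbit into contiguous position while preserving both exceptionality and the $\Gamma$-action. Once that reorganization is in place, the descent is a direct application of Elagin's theorem and the identification of blocks follows from the standard dictionary between equivariant objects and modules over (twisted) group algebras.
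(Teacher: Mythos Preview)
Your argument for the permutation-lattice statement is correct and is exactly what the paper does. For the s.o.d.\ statement the paper gives no argument at all: it simply cites \cite[Theorem~2.3]{Elagin}. Your sketch is essentially an outline of Elagin's proof, and the overall strategy (group the collection into $\Gamma$-orbit blocks, descend the resulting $\Gamma$-invariant s.o.d.\ to $D^b(X/\Gamma)$, identify each block with a (twisted) representation category of the stabilizer) is correct.

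The one point that needs correction is the step you single out as the ``main difficulty'': your proposed mechanism of simultaneous $\Gamma$-equivariant mutations does not work as stated, because the $\Gamma$-translates of an adjacent pair $(\cE^\bullet_\alpha,\cE^\bullet_\beta)$ need not themselves be adjacent, and mutation changes the objects so $\Gamma$-invariance would have to be re-established. Fortunately this step requires no mutations whatsoever. First, any two objects in the same $\Gamma$-orbit are automatically \emph{fully} orthogonal: if $\beta=\gamma\cdot\alpha$ then $R\Hom(\cE^\bullet_\alpha,\cE^\bullet_\beta)\cong R\Hom(\cE^\bullet_{\gamma^k\cdot\alpha},\cE^\bullet_{\gamma^{k+1}\cdot\alpha})$ for every $k$, and since the finite cyclic $\langle\gamma\rangle$-orbit cannot be monotone in the exceptional order, some such pair is decreasing, forcing vanishing. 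Second, the induced directed graph on orbits (edge $O\to O'$ whenever $R\Hom(\cE^\bullet_\alpha,\cE^\bullet_{\beta})\ne 0$ for some $\alpha\in O$, $\beta\in O'$) is acyclic: a directed cycle among orbits would, after repeatedly translating edges by $\Gamma$ to concatenate them, yield an arbitrarily long directed path in the original Hom-graph on $I$, impossible since the exceptional ordering is a finite linear extension. Hence you can simply \emph{reorder} (not mutate) so that each orbit is a contiguous block, and your Steps~2--3 then apply verbatim.
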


\begin{proof}
We refer to \cite[Theorem 2.3]{Elagin} for a  precise formulation and a proof.
He only considers exceptional collections of equivariant coherent sheaves, but the proof for equivariant complexes of coherent sheaves is the same. The s.o.d.~of $D^b_\Gamma(X)$ is induced by the s.o.d.~of $D^b(X)$ from Definition~\ref{sDGASGASFGARH}, with the admissible subcategory $\cA_j\subset D^b(X)$ inducing an admissible subcategory $\tilde \cA_j\subset D^b_\Gamma(X)$ equivalent to $D^b([\bullet/\Gamma_\alpha])$, the bounded derived category of $\Gamma_\alpha$-modules, if the collection is equivariant.
\end{proof}

\begin{ex}\label{asgashawrb}
Let us give an %simple 
example of a variety with an involution such that 
the corresponding action of $S_2$ on its $K_0$-group is not a permutation representation.
Consider a del Pezzo surface $S$ of degree~$2$, the double cover of $\PP^2$ ramified along a smooth quartic curve, 
with the usual involution~$\sigma$ of the double cover. %For a generic $S$, $\Aut(S)=\langle\sigma\rangle\cong S_2$. 
Then $S$ has $56$ $(-1)$-curves that 
come in pairs interchanged by $\sigma$. % (and mapped to $28$ bitangents). 
As $\sigma$ preserves $K_S$, it
acts by $-1$ on the orthogonal complement $K_S^{\perp}\subset\Pic(S)$. %, the root lattice of type $E_7$.
Therefore the action of $S_2$ on both the total cohomology $H^*(S,\ZZ)$ and on $K_0(S)$ is of type $(3,7)$,
i.e.\ it is diagonalizable with $3$ eigenvalues $1$ and $7$ eigenvalues $-1$.
However, any diagonalizable permutation representation
of $S_2$ of type $(a,b)$ clearly has
$a\ge b$.
\end{ex}

We need a stronger version of the equivariant Orlov blow-up lemma \cite[Lemma~7.2]{CT_part_I} to prove that our collections are $\Gamma$-equivariant.

Let $X$ be a smooth projective variety and let 
$Y_1,\ldots,Y_n\subset X$ be smooth transversal subvarieties of codimensions $l_1,\ldots,l_n$. 
Let $\Gamma$ be a finite group acting on $X$ and permuting $Y_1,\ldots,Y_n$.
For every subset $I\subset\{1,\ldots,n\}$, 
we denote by $Y_I$ the intersection $\cap_{i\in I}Y_i$. In particular, $Y_\emptyset=X$.

Let $q:\,\tX\to X$ be the iterated blow-up of the (proper transforms of) $Y_1,\ldots,Y_n$.
Since the intersection is transversal, the blow-ups can be performed in any order.
More canonically, the iterated blow-up is isomorphic to the blow-up of the ideal sheaf
$\cI_{Y_1}\cdot\ldots\cdot\cI_{Y_n}$.
Let $E_i$ be the exceptional divisor over $Y_i$ for every $i=1,\ldots,n$.
For any subset $I\subset\{1,\ldots,n\}$, let 
$E_I=q^{-1}(Y_I)=\cap_{i\in I}E_i$, in particular, $E_\emptyset=\tX$.
Let $i_I:\,E_I\hookrightarrow\tX$ be the inclusion and let 
$\Gamma_I\subset \Gamma$ be the subgroup of elements $g\in\Gamma$ such that $g.Y_I=Y_I$.
The group $\Gamma$ acts on $\tX$ and the morphism $q$ is $\Gamma$-equivariant.

\begin{lemma}\label{BigDaddy}
Let  
$\{F_I^\beta\}$ be  a (full) $\Gamma_I$-invariant (resp.~equivariant) 
exceptional collection in $D^b(Y_I)$ for every subset $I\subset\{1,\ldots,n\}$.
Choosing representative of $\Gamma$-orbits on the set~$\{Y_I\}$, we can assume that if 
$Y_I=gY_{I'}$ for some $g\in \Gamma$ then $\{F_I^\beta\}=g^*\{F_{I'}^\beta\}$.
Then there exists a (full) $\Gamma$-invariant (resp.~equivariant) exceptional collection in $D^b(\tilde X)$ with blocks
$B_{I,J}=(i_I)_*\left[(Lq|_{E_I})^*(F_I^\beta)\left(\sum_{i=1}^n J_iE_i\right)\right]$
for every subset $I\subset\{1,\ldots,n\}$ (including the empty set) and for every $n$-tuple of integers $J_i$
such that $J_i=0$ if $i\not\in I$ and $1\le J_i\le l_i-1$ for $i\in I$.

The blocks are ordered in any linear order which respects the  following partial order:
$B_{I^1,J^1}$ precedes $B_{I^2,J^2}$ if $\sum\limits_{i=1}^n J^1_iE_i\ge \sum\limits_{i=1}^n J^2_iE_i$ (as effective divisors).
\end{lemma}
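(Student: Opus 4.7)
The plan is to produce the described semi-orthogonal decomposition first in the non-equivariant setting, by iterated application of Orlov's blow-up theorem with an arbitrary total order of the centers, and then verify $\Gamma$-invariance (and equivariance) by naturality of the construction. Fullness will propagate through each Orlov step.

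Concretely, order the centers $Y_1,\ldots,Y_n$ arbitrarily and blow them up one at a time to produce a chain $X=X_0\leftarrow X_1\leftarrow \cdots \leftarrow X_n = \tilde X$. Transversality ensures that at each step the (proper transform of the) center is still smooth of its original codimension and transversal to all previously created exceptional divisors, so Orlov's theorem applies at every stage. An induction on $n$ identifies the cumulative s.o.d.~on $\tilde X$ with the claimed one: at each stage the newly inserted blocks combine with the earlier blocks into blocks $B_{I,J}^\beta$ indexed by subsets $I\subseteq\{1,\ldots,n\}$ and multi-indices $J$ with $1\le J_i\le l_i-1$ for $i\in I$ and $J_i=0$ otherwise. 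One uses that by transversality, $E_I=q^{-1}(Y_I)$ is smooth, realized as a fiber product of projective bundles over $Y_I$ with typical fiber $\prod_{i\in I}\PP^{l_i-1}$, and the restrictions $\cO(E_i)|_{E_I}$ give the corresponding tautological classes. The inductively produced order corresponds to any linear extension of the stated partial order in which $B_{I^1,J^1}$ precedes $B_{I^2,J^2}$ when $\sum J^1_iE_i \ge \sum J^2_iE_i$; this reflects the convention in Orlov's theorem that higher-twist blocks come before lower-twist ones.

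For the group action: though the iterated blow-up was carried out in an order not respecting $\Gamma$, the final variety $\tilde X = \Bl_{\cI_{Y_1}\cdots\cI_{Y_n}}X$ is the blow-up of a $\Gamma$-invariant ideal and hence carries a canonical $\Gamma$-action making $q$ equivariant, with $g\in\Gamma$ permuting the $E_i$ and $E_I$ in parallel with the $Y_i$ and $Y_I$. Thus $g^*B_{I,J}^\beta\simeq B_{g^{-1}(I),g^{-1}(J)}^\beta$, using our chosen compatibility $\{F_I^\beta\} = g\{F_{g^{-1}(I)}^\beta\}$ between orbit representatives; this gives $\Gamma$-invariance. If the inputs are $\Gamma_I$-equivariant, then the construction functor $F\mapsto (i_I)_*\bigl[(Lq|_{E_I})^*(F)\otimes \cO\bigl(\sum J_iE_i\bigr)\bigr]$ is equivariant under the stabilizer $\Gamma_{I,J,\beta}$ of $B_{I,J}^\beta$, so the output is $\Gamma_{I,J,\beta}$-equivariant, yielding the desired $\Gamma$-equivariant collection.

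The main obstacle I expect is the bookkeeping in the induction on $n$: one must verify stepwise that the successive applications of Orlov's pushforward/pullback functors compose to give the uniform formula $(i_I)_*[(Lq|_{E_I})^*(-)\otimes \cO(\sum J_iE_i)]$ on every block. This relies on a projection formula argument combined with transversality (to ensure that $E_I$ is the expected fibered product of projective bundles and that the line bundles $\cO(E_i)$ restrict correctly). A related subtlety is independence of the choice of blow-up order: different orders must produce isomorphic s.o.d.~blocks, which can be checked via standard compatibilities of Orlov's decomposition under commuting transversal blow-ups.
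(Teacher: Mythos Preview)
Your proposal is correct and follows essentially the same approach as the paper. The paper's own proof is very brief: it cites \cite[Lem.~7.2]{CT_part_I} for exceptionality, fullness, and invariance (which is precisely the iterated-Orlov argument you sketch), and then adds only the equivariance statement, handled exactly as you do by endowing the line bundles $\cO(\sum J_iE_i)$ with linearizations for the stabilizer of the divisor and using equivariant pull-back and push-forward.
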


\begin{proof}
Exceptionality and fullness of the collection, as well as the fact that $\Gamma$ permutes its objects, were proved in \cite[Lemma~2.1]{CT_part_I}. To prove equivariance, we use equivariant pull-back and push-forward and equip the line bundles $\cO\left(J_1E_1+\ldots+J_nE_n\right)$ with linearizations with respect to the stabilizer of the divisor $J_1E_1+\ldots+J_nE_n$ coming from the canonical equivariant structure on the ideal sheaf of an invariant subscheme.

Finally, we need to check that the collection can be reordered in blocks permuted by $\Gamma$. We first order the objects by decreasing value of $\sum J_i$ and observe that objects $B^{\beta_1}_{I^1,J^1}$ and $B^{\beta_2}_{I^2,J^2}$ with the same value of this sum are orthogonal unless $J^1=J^2$, which in turn forces $I^1=I^2$. Therefore, we simply arrange the objects with the same sum $\sum J_i$ into the same groups and in the same order as the objects in the collections $\{F_I^\beta\}$, grouping together the objects permuted by $\Gamma$.
\end{proof}

Next, we need to review Hassett moduli spaces in genus $0$.

\begin{notn}
Fix positive rational weights $\bold a=(a_1,\ldots,a_n)$ with each $a_i\le1$ and $\sum a_i>2$.
Let $\M_\ba$ be the Hassett  space %$\ocM_{0;a_1,\ldots,a_n}$
of weighted pointed stable rational curves \cite{Ha}, i.e.,~pairs $(C,\sum a_ip_i)$ such that $C$ is a nodal genus~$0$ curve, the points $p_i$ are smooth, the  $\QQ$-line bundle $\omega_C(\sum a_ip_i)$ is ample, and the points $\{p_i\,:\,i\in I\}$ can become equal on $C$ if and only if $\sum\limits_{i\in I}a_i\le 1$.
For~example,  $\ocM_{0,n}=\M_{(1,\ldots,1)}$.
Every $\M_\ba$ is a smooth projective variety.
The polytope of 
weights has a wall-and-chamber structure with walls 
\begin{equation}\label{zxfbzxfg}
\sum\limits_{i\in I}a_i=1\quad\hbox{\rm for every subset $I\subset\{1,\ldots,n\}$}.
\end{equation}
Moduli spaces within the interior of each chamber are isomorphic and carry the same universal family.
Unlike in the variation of GIT quotients, there are no moduli spaces “on the walls”: reducing all weights slightly yields an isomorphic Hassett space (and universal family) in the interior of an adjacent chamber. 
There exist birational reduction morphisms $\M_\ba\to\M_{\ba'}$ every time the weight vectors 
are such that $a_i\ge a_i'$ for every~$i$. \end{notn}

\begin{notn}\label{sRGBSFBDFNA}
Now fix positive rational weights $\bold a=(a_1,\ldots,a_n)$ with each $a_i\le1$ and $\sum a_i=2$.
Let 
\[
\X_\ba=(\PP^1)^n\sslash_\ba\PGL_2
\]
be the GIT quotient of $(\PP^1)^n$ by the diagonal action of $\PGL_2$
with respect to the fractional polarization $\cO(a_1,\ldots,a_n)$.
The polytope of GIT weights can be identified with the boundary face $\sum a_i=2$ of the polytope of Hassett weights
and inherits its chamber structure with walls \eqref{zxfbzxfg},
which encodes the variation of GIT (see \cite{DolgachevHu}).
Polarizations within the interior of each chamber have no strictly semistable points
and carry a universal $\PP^1$-bundle with $n$ sections. 
More generally, there always exists a morphism
\[
\M_\ba:=\M_{(a_1+\eps,\ldots,a_n+\eps)}\to \X_\ba,
\]
for some $0<\eps\ll1$ depending on $\ba$, 
which is an isomorphism if there are no strictly semistable points (i.e., away from the walls) \cite[Theorem~8.2]{Ha}. 
In the presence of strictly semistable points, $\X_\ba$ can acquire isolated singularities (cones over $\PP^r\times\PP^s$),
and $\M_\ba$ is the blow-up of these singularities (the “Kirwan resolution”).
In any case, the universal family of a Hassett space $\M_\ba$ 
“near the GIT face”
has fibers with at most $2$ irreducible components.
\end{notn}

\begin{lemma}\label{MonGeneralBrendanHassett}
A Hassett space $\M_\ba$ is a GIT quotient if its universal family $\cU$
is a $\bP^1$-bundle. 
%if in addition $\ba$ is such that $\sum_{i\in I} a_i\neq1$ for all $|I|\geq1$, then 
In this case $\cU$ is also a GIT quotient (and a Hassett space).
\end{lemma}

\bp
Suppose that $\cU\to \M_\ba$ is a $\PP^1$-bundle.
We claim that reducing all weights $a_i$ to $b_i$ such that $a_i>b_i$ for all $i$ and $\sum b_i=2$ does not change stability. 
Indeed, suppose that $\sum\limits_{i\in I}a_i>1\geq\sum\limits_{i\in I}b_i$ for some subset~$I$.
Then  
\[
\sum\limits_{i\in I^c}a_i>\sum\limits_{i\in I^c}b_i=2-\sum\limits_{i\in I}b_i\geq1.
\]
So curves with two components, with points labeled by $I$ (resp., $I^c$)
on the corresponding components, are stable, which contradicts the assumption. 

We can reduce the weights $a_i$ slightly so that no partial sum is equal to $1$.
By \cite[Prop.~5.4]{Ha}, the universal family $\cU$ is the Hassett space $\M_{(a_1, \ldots, a_n, \epsilon)}$ for $\epsilon\ll 1$. %(see also Note \ref{} To prove the statement about the universal family $\cU$, 
Thus the universal family over $\cU$ remains a $\PP^1$-bundle, and hence $\cU$ is also a GIT quotient.
\ep

\begin{notn}\label{fbzfbdfnz}\label{precise weights}
For $p\ge 3$, $q\ge 0$, we let $\X_{p,q}$, resp., $\M_{p,q}$, denote the spaces $\X_\ba$, resp.,  $\M_\ba$, 
from Notation~\ref{sRGBSFBDFNA}
with the weights 
$a=a_1=\ldots=a_p$, $b=a_{p+1}=\ldots=a_{p+q}$ satisfying the following conditions.
If  $q=0$ then  $a=2/p$. 
If $q>0$ then $a=\frac{2}{p}-\epsilon$, $b=\frac{p\epsilon}{q}$, where $0<\epsilon<\frac{1}{(2r+1)(r+1)}$ 
if $p=2r+1$ and $0<\epsilon<\frac{1}{r(r+1)}$ if $p=2r$.
We call the points with the weight $a$ {\it heavy} and the remaining points {\it light}. 
This is equivalent to  Definition~\ref{sgSgSgSgSgSRg}.
%Recall that this means the following:  
%\begin{itemize}
%\item
%When $p=2r+1$ is odd, at most $r$ of the heavy points may coincide, and moreover they may coincide with all the light points.
%\item
%When $p=2r$ is even, at most $(r-1)$ of the heavy points may coincide with each other and with all the light points. Furthermore, if $q>0$ then $r$ heavy points may coincide, and they may further coincide with at most $\lfloor \frac{q-1}{2}\rfloor$ light points.
%\end{itemize}

We denote $\M_n:=\M_{n,0}$. Note that 
$\X_{p,q}\cong\M_{p,q}$ if and only if $p$ or $q$ is odd. % (as no partial sum of the weights equals $1$ when either $p$ or $q$ is odd). 
If  $p=2r$ and $q=2s$, then  
$\M_{p,q}$ is a divisorial ``Kirwan resolution'' of singularities of $\X_{p,q}$. 
It has exceptional divisors
$\bP^{r+s-2}\times\bP^{r+s-2}$ with normal bundle $\cO(-1,-1)$ over each of the ${1\over 2}{p\choose r}{q\choose s}$ singular points of $\X_{p,q}$.
\end{notn}

We will reduce the proof of Theorem~\ref{sfzfgafgafga} to 
the analysis of $\M_{p,q}$'s.
We emphasize that all spaces $\M_{p,q}$ are needed to prove Theorem~\ref{sfzfgafgafga} for $n\gg0$.
The same argument also proves the following more general result.

\begin{thm}\label{MainHassettTh} Let $\ba$
be a Hassett weight such that $a_1\ge\ldots\ge a_n$ and $\sum a_i>2$.
%=(a_1,\ldots,a_n)$
%be a Hassett weight. Choose $k\leq n$ such that 
%$$a_1=\ldots= a_k> a_{k+1}\ge\ldots\ge a_n$$
%(recall that $\sum a_i>2$).
Suppose further that either 
 $a_1=1$ or  $a_j> 2/j$ for some $j$.
Then $\M_\ba$ has a full $\Gamma_\ba$-equivariant exceptional collection,
where $\Gamma_\ba\subseteq S_n$ is the stabilizer of the vector~$\ba$.
\end{thm}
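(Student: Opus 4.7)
The plan is to prove Theorem~\ref{MainHassettTh} by induction on $n$, extending the strategy of
Section~\ref{reduction section} from the symmetric weight $\ba=(1,\ldots,1)$ to general Hassett weights.
The base case $n\le 3$ is trivial since $\M_\ba$ is then a point.

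For the inductive step, I would use the hypothesis to produce a $\Gamma_\ba$-invariant ``heavy block''
$P\subseteq\{1,\ldots,n\}$ of size $p=|P|$ such that $\sum_{i\in P}a_i>2$, the weights within $P$ are
among the largest, and $P$ is a union of $\Gamma_\ba$-orbits. If $a_j>2/j$ for some $j$, I take $P$ to
consist of the first $p\ge j$ entries, with $p$ chosen as the smallest index compatible with the
orbit structure (so that $a_p>a_{p+1}$ or $p=n$); if instead $a_1=1$, I take $P$ to be initially the set
of weight-$1$ points and then enlarge it until the total weight first exceeds $2$, breaking at the next
orbit boundary. Next I perform a $\Gamma_\ba$-equivariant sequence of wall-crossings
(reduction morphisms between Hassett spaces) that decrease the weights of points outside $P$ to a small
common value $\varepsilon\ll 1$ within each $\Gamma_\ba$-orbit, while leaving the weights of points in
$P$ essentially unchanged. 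The terminal space $\M_{\ba^{(0)}}$ so obtained fits into the $\M_{p,q}$ regime
treated in Section~\ref{reduction section}, possibly after an additional symmetric wall-crossing inside
the heavy block to equalize the heavy weights across $\Gamma_\ba$-orbits.

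Each wall-crossing in the sequence factors as a simultaneous $\Gamma_\ba$-equivariant blow-up of a
transversal family of smooth centers (Hassett's wall-crossing description). Each center is a
product of two smaller Hassett spaces $\M_{\ba'}\times\M_{\ba''}$ with strictly fewer marked points,
which by the inductive hypothesis carries a full $\Gamma_{\ba'}\times\Gamma_{\ba''}$-invariant
exceptional collection (as an external tensor product of the collections on each factor).
Choosing these collections in a $\Gamma_\ba$-compatible way across each orbit of centers and applying
Lemma~\ref{BigDaddy} at each wall-crossing, I would assemble a full $\Gamma_\ba$-invariant exceptional
collection on $\M_\ba$ from the collection on $\M_{\ba^{(0)}}$ (provided by
Theorems~\ref{symm}, \ref{p,q case}, \ref{p,q+1 case}, and \ref{even 0}) together with the collections
on the exceptional divisors.

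The main obstacle will be the combinatorial bookkeeping: verifying that the centers at each
wall-crossing really are $\Gamma_\ba$-transversal products of smaller Hassett spaces to which induction
applies (i.e., whose factor weights satisfy the hypothesis $a_1=1$ or $a_j>2/j$ of
Theorem~\ref{MainHassettTh}, or which are directly of the form $\M_{p,q}$ or Losev--Manin-type
as in \cite{CT_part_I}), and organizing the inductive choices of exceptional collections coherently
across $\Gamma_\ba$-orbits of centers so that Lemma~\ref{BigDaddy} applies cleanly. A secondary check
is that the reduction can actually land in a single $\M_{p,q}$ (rather than in a space with genuinely
mixed heavy weights that falls outside the handled cases); this is precisely where the assumption
$a_1=1$ or $a_j>2/j$ plays its decisive role, by guaranteeing the existence of a well-defined
``heaviest'' $\Gamma_\ba$-invariant block with total weight exceeding $2$.
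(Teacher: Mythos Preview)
Your overall strategy matches the paper's: reduce $\M_\ba$ to an $\M_{p,q}$ by a $\Gamma_\ba$-equivariant sequence of Hassett wall-crossings, and at each wall apply Lemma~\ref{BigDaddy} with induction on the centers. Two points deserve correction, one of which dissolves the obstacle you flagged.

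First, the centers blown up at a wall are \emph{not} products $\M_{\ba'}\times\M_{\ba''}$. At the wall $\sum_{i\in J}a_i(t)=1$, the center $\M_{\ba(t)}(J)$ is the locus where the markings in $J$ coincide on an (irreducible) curve; this is a single Hassett space with the markings in $J$ replaced by one combined marking. You may be thinking of boundary strata in $\ocM_{0,n}$, which are products, but here the curve has not yet broken. (Intersections of several such centers, for pairwise disjoint $J_i$, are again single Hassett spaces with several combined markings.)

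Second, and this resolves your ``main obstacle'': each combined marking carries weight $\sum_{i\in J}a_i(t)=1$. Hence every center, and every intersection of centers, is automatically a Hassett space with at least one weight-$1$ point, so it satisfies the hypothesis $a_1=1$ of Theorem~\ref{MainHassettTh} in strictly smaller dimension. There is nothing to verify about $a_j>2/j$ for the centers. This is exactly how the paper closes the induction: the centers fall under the $a_1=1$ case (which in turn is handled either by the Losev--Manin result of \cite{CT_part_I} when there are only two weight-$1$ points, by reduction to $\PP^{n-3}$ when there is a single weight-$1$ point, or by reduction to $\M_{p,q}$ when there are at least three). Your more elaborate block-building and the worry about ``mixed heavy weights'' are unnecessary: a straight-line homotopy from $\ba$ to the target $\M_{p,q}$-weight already gives a $\Gamma_\ba$-equivariant sequence of walls, and the centers take care of themselves.
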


We note that full, exceptional collections on some of the Hassett spaces $\M_{\ba}$ have been constructed in 
\cite{BFK}. More generally, full, exceptional collections on many GIT quotients  have been constructed in 
\cite{DHL,BFK}. 
These collections do not satisfy any invariance requirements.

In the remainder of this section we discuss the proof of Theorem~\ref{MainHassettTh}. 
All along we assume that the spaces $\M_{p,q}$ have a full $S_p\times S_q$-equivariant exceptional collection,
which will be proved in the subsequent sections.

%\medskip
%%It is clear that Theorem~\ref{sfzfgafgafga} follows from Theorem~\ref{MainTh}.

\bp[Proof of Theorem~\ref{MainHassettTh}]
%Let $\M_\ba$ be a Hassett space as in Theorem~\ref{MainTh}. 
Let $\Gamma_\ba$ be the stabilizer of the vector $\bold a$ in the group~$S_n$. 
Choose $p\le n$ such that $a_1=\ldots=a_p>a_{p+1}$.
We consider $4$ cases
$$
\hbox{\bf (i)}\  a_1=1, p\geq2;\ 
\hbox{\bf (ii)}\ a_1<1, pa_1\geq2;\  
\hbox{\bf (iii)}\ a_1=1, p=1;\ 
\hbox{\bf (iv)}\ a_j\geq{2\over j}\ \hbox{\rm for some $j$};$$
%\begin{itemize}
%\item[(i) ] $a_1=1$, $p\geq2$, 
%\item[(ii) ] $a_1<1$, $pa_1\geq2$. 
%\item[(iii) ] $a_1=1$, $p=1$,
%\item[(iv) ] $a_j>2/j$ for some $j$,
%\end{itemize}
and the following claim:
\begin{claim}\label{(v)}
Consider weight vectors $\ba=(a_1,\ldots,a_n)$, $\ba'=(a'_1,\ldots,a'_n)$
such that $a_i\ge a_i'$ for every $i$. Suppose $\Gamma_\ba\subset\Gamma_{\ba'}$.
Then if $\M_{\ba'}$ admits a $\Gamma_\ba$-equivariant exceptional collection then so does $\M_{\ba}$.
\end{claim}

We prove (i)--(iv) and the Claim \ref{(v)} simultaneously by induction on dimension of $\M_\ba$ using existence
of full $S_p\times S_q$-equivariant exceptional collections on $\M_{p,q}$. Note that all statements are clear in dimension $1$ as in this case $\M_{\ba}\cong \PP^1$. 

{\bf Proof of the Claim \ref{(v)}}.
We connect the weights by a $\Gamma_\ba$-invariant homotopy 
$\ba(t)=t\ba'+(1-t)\ba$.
The Hassett chamber structure is semi-continuous in the following sense: the reduction map $\M_{b_1,\ldots,b_n}\to \M_{b_1-\eps,\ldots,b_n-\eps}$
is an isomorphism for $0<\eps\ll1$. It follows that
our reduction map  $\M_{\ba(0)}=\M_\ba\to \M_{\ba'}=\M_{\ba(1)}$
factors into the sequence of $\Gamma_\ba$-equivariant reduction maps of the following form:
$f_t:\,\M_{\ba(t-\eps)}\to \M_{\ba(t)}$ for $0<\eps\ll1$
whenever there is a subset of indices $J\subset\{1,\ldots,n\}$ such that 
\begin{equation}\label{sfdvfvwfv}
\sum_{j\in J}a_j(0)>1=\sum_{j\in J}a_j(t).
\end{equation}
Arguing by induction on the number of wall crossings, it suffices to analyze a single  map $f_t$,
descibed in
%The following description of $f_t$ is from 
\cite{Ha}, see also \cite{BergstromMinabe}.
Let $J_1,\ldots,J_s$ be a full list of subsets satisfying \eqref{sfdvfvwfv}. The group $\Gamma_\ba$ permutes $J_i$'s.
The reduction map $f_t$ blows up the loci $\M_{\ba(t)}(J_i)$ in $\M_{\ba(t)}$ 
where the points in $J_i$ come together. Since $\sum_{j\in J_i}a_j(t)=1$, the loci 
 $\M_{\ba(t)}(J_i)$ intersect transversely: $\M_{\ba(t)}(J_i)\cap \M_{\ba(t)}(J_j)\neq\emptyset$ if and only if $J_i\cap J_j=\emptyset$, in which case, the intersection is the locus where 
 points in $J_i$, respectively $J_j$, coincide. Clearly,  the loci $\M_{\ba(t)}(J_i)$ and their intersections are themselves
 Hassett spaces of the form 
 $\M_{\ba''}$, with the vector $\ba''$ having at least one weight $1$ (for marked points that correspond to combined points indexed by 
 subsets $J_1,\ldots,J_s$).
 These spaces have equivariant exceptional collections by cases (i), (iii) in smaller dimension. The theorem follows by Lemma~\ref{BigDaddy}.

{\bf Cases (i) and (ii)}.
If $p=2$ (case (i)), we apply Claim \ref{(v)} to the weight vector $\ba'=(1,1,\eps,\ldots,\eps)$  of the Losev--Manin space and use the main result of \cite{CT_part_I}.
Let $p>2$. In both (i) and (ii) we apply Claim \ref{(v)} to the weight vector $\ba'=(a+\epsilon_2, \ldots, a+\epsilon_2, b+\epsilon_2, \ldots, b+\epsilon_2)$ corresponding to $\M_{p,q}$ with $q=n-p$, $0<\epsilon_2\ll 2$ (see Notation \ref{precise weights}). Concretely, 
if $q=0$ then $a=\frac{2}{p}$ and we choose $\epsilon_2$ such that $a+\epsilon_2=\frac{2}{p}+\epsilon_2<a_1$, while if $q>0$ then 
$a=\frac{2}{p}-\epsilon_1$, $b=\frac{p\epsilon_1}{q}$ ($0<\eps_1\ll 1$)
and we choose $\epsilon_2$ such that $a+\epsilon_2<a_1$ and $b+\epsilon_2<a_n$. 

{\bf Case (iii)}. Let $A=a_2+\ldots+a_n>1$. Consider $a_1'=1$, $a_i'={a_i\over A-\eps}$ for $i\ge2$ for some fixed 
$0<\eps<\min(a_n,A-1)$. By Claim \ref{(v)}, it suffices to show that $\M_{\ba'}$ admits a $\Gamma_\ba$-equivariant exceptional collection.
Note that $\M_{\ba'}\cong\PP^{n-3}$ by \cite[Section 6.2]{Ha} %- and this is an observation we will use repeatedly - 
since
$\sum_{i\geq 2, i\neq j} a'_i\leq\sum_{i=2}^{n-1} a'_i={A-a_n\over A-\eps}< 1$ for all $j\geq2$. Note that the standard collection $\cO,\cO(1),\ldots,\cO(n-3)$ on $\PP^{n-3}$ is invariant under  any  group. For equivariance, we need a bit more. All appearing groups are contained in the symmetric group $S_{n-1}$ which acts on $\bP^{n-3}$ by permuting $n-1$ fixed  points in general linear position. The homomorphism $S_{n-1}\to\PGL_{n-2}$ can be factored through $\GL_{n-2}$, and therefore $\cO(1)$ is a $S_{n-1}$-linearized line bundle. The corresponding action on $k^{n-2}$ is   the irreducible $(n-2)$-dimensional representation
of $S_{n-1}$, \cite[Lemma 2.3]{KeelTevelev}.

{\bf Case (iv)}. We can assume without loss of generality that $j$ is the largest index with the property $a_j\ge2/j$. The statement follows from (i) if $a_j=1$. If $a_j<1$, we  reduce to (ii) using Claim \ref{(v)}
by taking the second weight vector $a_1'=\ldots =a_j'=a_j$, $a_i'=a_i$ for $i>j$.
\ep

\section{Derived category of the moduli stack $\cM_n$ of $n$ points on $\bP^1$}\label{zxFbFFafsg}\label{stack section}\label{fullness odd p}\label{fullness odd p section}
%\section{Windows into %derived categories of  $\cM_{p,q}$ and proof of Theorem~\ref{symm}}
\label{windows}\label{windows section}

The goal of this section is to prove Theorem~\ref{symm}, which gives an equivariant full exceptional collection in $D^b(\M_n)$ for $n$ odd.
As a GIT quotient without strictly semistable points, $\M_n$ is isomorphic to an open substack
of the quotient stack $\cP_n=[(\bP^1)^n/\PGL_2]$ of $\bP^1$-bundles with $n$ sections. 

The proof relies on the  {\it windows theorem} of Halpern--Leistner:

\begin{thm}\cite{DHL}\label{Kirwan}
Let $[X/G]$ be the stack quotient of a smooth projective variety by a reductive group $G$ and let $[X^{ss}/G]$ the open substack corresponding to the semistable locus $X^{ss}$
with respect to a choice of polarization and linearization. 

For a choice of a Kempf--Ness (KN) stratification  of the unstable locus $X^{us}$ with data
$Z_i$, $S_i$, $\la_i$, $\sigma_i:Z_i\hra S_i$, define the integers 
$$\eta_i=\wt_{\la_i}\det\big(N^\vee_{S_i|X}\big)_{|Z_i}>0$$
and choose  integers $w_i$. Define the full subcategory $\cG_w$  of all objects $F\in D^b[X/G]$ such that 
the cohomology sheaves of $\si_i^*F$ have weights in $[w_i, w_i+\eta_i)$
for all $i$. Then the restriction functor $i^*:\cG_w\ra D^b[X^{ss}/G]$ is an equivalence of categories. 
\end{thm} 

\begin{rmk}
There are two sign conventions for weights used in the literature. Above we follow \cite{DHL}
where the ample polarization of the GIT quotient has negative weights on the unstable locus, see e.g.~\eqref{DHLconvention}. 
However, starting with \S\ref{asjbqlkwjbfkjwb}, we use the opposite weight convention from \cite{Teleman}, as this will be more convenient for calculations.
\end{rmk}

In order to apply the windows theorem, we will have to 
\begin{enumerate}
 \item compute a full exceptional collection in the derived category of the quotient stack $[(\bP^1)^n/\PGL_2]$ (Theorem~\ref{stackofbundles});
 \item compute the Kempf--Ness stratification and all relevant data in our setup for a wide range of GIT quotients (Theorem~\ref{Kirwan_special_case});
\item choose integers $w_K$ wisely to squeeze as many objects as possible from the exceptional collection on the quotient stack $[(\bP^1)^n/\PGL_2]$ into the window $\cG_w$ and show that the remaining objects of the exceptional collection, when restricted to the semi-stable locus, are generated by objects in the exceptional collection of the window. This is done by induction on the {\it score} \eqref{firstscore},
a useful concept that will be refined and generalized in subsequent sections.
\end{enumerate}

Relatively simple calculations in this section serve as a model for  more convoluted applications of the windows theory in the subsequent sections.

\medskip

In what follows $n$ is arbitrary and $\cP_n=[(\bP^1)^n/\PGL_2]$. 
We first analyze the derived category of $\cP_n$.
An associated $\bP^1$-bundle of a $\PGL_2$-torsor is the ``universal $\bP^1$-bundle'', i.e., a representable morphism $\pi=\pi_{n+1}:\,\cP_{n+1}\to\cP_n$
induced by the first projection $(\bP^1)^{n+1}=(\bP^1)^n\times \bP^1\to(\bP^1)^n$. 

We denote $\sigma_1,\ldots,\sigma_n:\,\cP_n\to\cP_{n+1}$ the universal sections. They are representable morphisms
induced by the big diagonals $(\bP^1)^{n}\cong\Delta_{i,n+1}\hookrightarrow(\bP^1)^{n+1}$, which are defined by $x_i=x_{n+1}$, for $i=1,\ldots,n$. We identify the category of coherent sheaves on $\cP_n$ with the category of 
$\PGL_2$-equivariant coherent sheaves on $(\bP^1)^n$ and likewise for their bounded derived categories.
For $(i_1,\ldots,i_n)\in\ZZ^n$ with $\sum i_k$ even, the line bundle $\cO_{(\bP^1)^n}(i_1,\ldots,i_n)$
has a unique $\PGL_2$- linearization and thus descends to $\cP_n$. 

\begin{defn}\label{FlE}
Let  $N$ be the set $\{1,\ldots,n\}$. 
Fix a subset $E\subset N$ with $e=|E|$ and an integer $l\ge0$ such that $e+l$ is even.
For $j=1,\ldots,n$, let $i_j$ be $1$ if $j\in E$ and~$0$ otherwise.
%For $Let $i_1,\ldots,i_n$ be a sequence such that $i_j$ is $1$ if $j\in E$ and~$0$ otherwise.
Let 
$$F_{l,E}=\pi_*N_{l,E},$$
where 
$N_{l,E}=\cO(i_1,\ldots,i_n,l)$
is a  line bundle 
on $\cP_{n+1}$.
\end{defn}

\begin{lemma}\label{akjsdhfkjsgf}
$F_{l,E}$ is a rank $l+1$ vector bundle on $\cP_n$. 
As an $\SL_2$-equivariant vector bundle on~$(\bP^1)^n$, we have
$F_{l,E}\mathop{\cong}\limits_{\SL_2} \cO(i_1,\ldots,i_n)\otimes V_l$,
where $V_l$ is an $(l+1)$-dimensional irreducible $\SL_2$-module. In particular,
$F_{0,E}\cong\cO(i_1,\ldots,i_n)$.
\end{lemma}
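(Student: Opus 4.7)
The plan is to unwind the definition in terms of $\SL_2$-equivariant sheaves on $(\bP^1)^n$ and then apply flat base change along the first projection $\mathrm{pr}:\,(\bP^1)^{n+1}=(\bP^1)^n\times\bP^1\to(\bP^1)^n$, which is the $\SL_2$-equivariant model of $\pi:\cM_{n+1}\to\cM_n$. Under the identification of $\mathrm{Coh}(\cM_n)$ with $\SL_2$-equivariant coherent sheaves on $(\bP^1)^n$ on which the center $\{\pm I\}$ acts trivially, the line bundle $N_{l,E}$ corresponds to $\cO(i_1,\ldots,i_n)\boxtimes\cO_{\bP^1}(l)$ with its canonical $\SL_2$-linearization, and $\pi_*$ corresponds to $\mathrm{pr}_*$.

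First I would note that since $l\ge0$, $R^1\mathrm{pr}_*\cO_{\bP^1}(l)=0$ and $\mathrm{pr}_*\cO_{\bP^1}(l)=H^0(\bP^1,\cO(l))\otimes \cO_{(\bP^1)^n}$ is a trivial bundle of rank $l+1$. By the projection formula,
\[
\mathrm{pr}_*\bigl(\cO(i_1,\ldots,i_n)\boxtimes\cO_{\bP^1}(l)\bigr)\;\simeq\;\cO(i_1,\ldots,i_n)\otimes H^0(\bP^1,\cO(l)).
\]
As an $\SL_2$-module, $H^0(\bP^1,\cO(l))$ is the $(l+1)$-dimensional irreducible $V_l$; the $\SL_2$-linearization on the right-hand side of the projection formula is the tensor product of the standard linearization on $\cO(i_1,\ldots,i_n)$ and the natural one on $V_l$. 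This already gives the claimed $\SL_2$-equivariant isomorphism and the rank, and the case $l=0$ follows since $V_0$ is the trivial representation.

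Finally I would verify descent to the $\PGL_2$-quotient stack $\cM_n$: the center $-I\in\SL_2$ acts on $\cO(i_1,\ldots,i_n)$ by $(-1)^{i_1+\cdots+i_n}=(-1)^e$ and on $V_l$ by $(-1)^l$, so the total scalar is $(-1)^{e+l}=1$ by the parity hypothesis $e+l\in2\ZZ$. Hence the $\SL_2$-linearization factors through $\PGL_2$, so the equivariant sheaf descends to a rank $l+1$ vector bundle on $\cM_n$, which is $F_{l,E}=\pi_*N_{l,E}$.

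There is no real obstacle here; the only thing to keep straight is the bookkeeping between the $\SL_2$- and $\PGL_2$-pictures, which is precisely why the parity condition $e+l$ even is built into the definition.
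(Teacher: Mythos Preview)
Your proof is correct. The paper gives no proof of this lemma, treating the projection-formula computation and the parity-based $\PGL_2$-descent as immediate from the definitions; your argument is exactly the natural elaboration of those details.
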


\bp
This follows from the $\SL_2$-equivariant projection formula applied to the projection 
morphism $(\bP^1)^{n+1}=(\bP^1)^n\times \bP^1\to(\bP^1)^n$. 
\ep

\begin{cor}\label{laksjrhfgkjsrG}\label{complement}
There are isomorphisms of $\SL_2$-equivariant vector bundles 
$$F_{l,E}^\vee\cong \pi_*\cO(-i_1,\ldots,-i_n,l)\cong \cO(-i_1,\ldots,-i_n)\otimes V_l.$$ 
If $n$ is even, then $F_{l,E}\cong F_{l,E^c}^\vee\otimes F_{0,N}$, where $E^c=N\setminus E$. 
\end{cor}

Here (and everywhere) $F^\vee$ denotes the dual complex of  $F\in D^b(X)$. 

\begin{thm}\label{stackofbundles}
$D^b(\cP_n)$ has a  full strong 
$S_n$-equivariant exceptional collection
$$\{ F_{l,E} : \hbox{\rm $l+e$ even}\}$$
of infinitely many vector bundles.
The collection is arranged into blocks $F_{l,e}$
given by the $S_n$-orbits, which are ordered by increasing $e$, and for fixed $e$, arbitrarily.

% s.o.d. with $2^n$ blocks $D^b(\cP_n)_E$ indexed by subsets $E\subseteq N$ and ordered by increasing $e=|E|$ (different blocks with the same $e$ are mutually orthogonal, i.e., $R\Hom$ between blocks is $0$).The block $D^b(\cP_n)_E$ has a full exceptional collection $\{ F_{l,E} : \hbox{\rm $l+e$ even}\}$ of infinitely many mutually orthogonal vector bundles. The combined infinite exceptional collection $\{F_{l,E}\}$ in $D^b(\cP_n)$ is strong and $S_n$-equivariant.

The forgetful map $\pi_i:\,\cP_{n+1}\to\cP_{n}$ has the following properties:
$$(L\pi_i)^*F_{l,E}\cong F_{l,E},\qquad (R\pi_i)_*F_{l,E}^\vee\cong
\begin{cases}
0 & i\in E\cr
F_{l,E}^\vee &i\not\in E\cr
\end{cases}.$$
\end{thm}

\begin{proof}
The line bundles $\cO(i_1,\ldots,i_n)$ form a strong $S_n$-equivariant  exceptional collection in $D^b((\bP^1)^n)$, 
where each $i_j=0$ or $1$. We will denote them by $\{L_k\}$, $k=1,\ldots, 2^n$ (in some order). 
Schur's lemma and the fact that $R\Gamma(\bP^1,\cO(-1))=0$
imply
 that $\{ F_{l,E}\}$ with $l+e$ even
is an exceptional collection in $D^b(\cP_n)$ with required properties. 

It remains to prove fullness, i.e., that any complex $F\in D^b(\cP_n)$
can be obtained by finitely many extensions starting with objects in the exceptional collection $\{F_{l,E}\}$. 
This is a special case of \cite[Theorem~ 2.10]{Elagin}. Alternatively, we give a simple ad hoc argument.

Viewing $F\in D^b(\cP_n)$ as a bounded complex of coherent sheaves on $(\bP^1)^n$, let $k$ be the maximum index such that (in $D^b((\bP^1)^n)$) we have 
$$R\Hom(L_k,F)\ne0$$ or we let $k=0$ if $R\Hom(L_j,F)=0$ for every $j$. 
We argue by induction on $k$. If $k=0$ then 
$F\cong 0$ because $\cO(i_1,\ldots,i_n)$ form a full exceptional collection in $D^b((\bP^1)^n)$.
Otherwise, consider the left mutation triangle 
$F'\to R\Hom(L_k,F)\otimes L_k\to F\to$
in $D^b((\bP^1)^n)$. We first prove that this is an exact triangle in $D^b(\cP_n)$. Since
 $F$ is an $\SL_2$-equivariant complex and $L_k$ is an $\SL_2$-equivariant line bundle, it suffices to prove that 
  $-\Id\in\SL_2$ acts trivially on $R\Hom(L_k,F)\otimes L_k$. This is clear since $-\Id\in\SL_2$ acts either trivially or by multiplication by $-1$ on 
  both terms of the tensor product. It follows that this triangle is $\PGL_2$-equivariant.  
To finish the proof, we need to prove that $F'$ and $R\Hom(L_k,F)\otimes L_k$ are  generated by $F_{l,E}$'s. 
For $F'$ this follows by induction since $R\Hom(L_k,F')=0$. 
, $R\Hom(L_k,F)\otimes L_k$ is isomorphic to 
a direct sum of the vector bundles $F_{l,E}$: 
the $\SL_2$-representation $R\Hom(L_k, F)$ is a direct sum of irreducible representations $V_{l_i}$, for integers $l_i\geq0$.  Since $R\Hom(L_k, F)\otimes L_k$ is a $\text{PGL}_2$-equivariant vector bundle, we must have $\sum_{i=1}^r l_i+e$ is even and we are done by Lemma \ref{akjsdhfkjsgf}. 
 \end{proof}
 
\begin{prop}\label{exact ones} 
On $\cP_n$, we have exact sequences
$$0\to F_{l-1,E\setminus k}\to F_{l,E}\to Q^k_{l,E}\to 0\qquad \hbox{\rm for $k\in E$},$$
$$0\to F^\vee_{l-1,E\cup\{k\}}\to F^\vee_{l,E}\to S^k_{l,E}\to 0\qquad \hbox{\rm for $k\not\in E$},$$
where $Q^k_{l,E}=\cO(i_1,\ldots,i_k+l,\ldots,i_n)$ and $S^k_{l,E}=\cO(-i_1,\ldots,-i_k+l,\ldots,-i_n)$.
\end{prop}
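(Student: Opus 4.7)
My plan is to obtain both sequences by pushing forward along $\pi:\cM_{n+1}\to\cM_n$ the ``restriction-to-section'' short exact sequence associated to the divisor $\sigma_k\hookrightarrow \cM_{n+1}$. Recall that the class of $\sigma_k$ on $(\bP^1)^{n+1}$ equals $\cO(e_k+e_{n+1})$, and that $\sigma_k$ itself is the big diagonal $(x_1,\ldots,x_n)\mapsto(x_1,\ldots,x_n,x_k)$, so pulling a line bundle $\cO(a_1,\ldots,a_{n+1})$ back along $\sigma_k$ combines the $k$th and $(n+1)$st weights into $a_k+a_{n+1}$.

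\textbf{Case $k\in E$.} Tensoring $0\to \cO(-\sigma_k)\to\cO\to \sigma_{k*}\cO_{\sigma_k}\to 0$ with $N_{l,E}=\cO(i_1,\ldots,i_n,l)$ gives
\[
0\to N_{l,E}(-\sigma_k)\to N_{l,E}\to \sigma_{k*}\sigma_k^* N_{l,E}\to 0.
\]
A direct divisor-class calculation identifies the outer terms: subtracting $e_k+e_{n+1}$ drops the $k$th coordinate from $1$ (since $k\in E$) to $0$ and the last from $l$ to $l-1$, so $N_{l,E}(-\sigma_k)=N_{l-1,E\setminus k}$; pulling back along $\sigma_k$ gives $\sigma_k^* N_{l,E}=\cO(i_1,\ldots,i_k+l,\ldots,i_n)=Q^k_{l,E}$. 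I then apply $R\pi_*$: the identity $\pi\sigma_k=\Id$ turns the third term into $Q^k_{l,E}$, while $R^1\pi_* N_{l-1,E\setminus k}=0$ because $H^1(\bP^1,\cO(d))=0$ for $d\geq -1$ and the relative degree here is $l-1\geq-1$. This gives the claimed sequence.

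\textbf{Case $k\notin E$.} Run the same argument with the line bundle $M_{l,E}:=\cO(-i_1,\ldots,-i_n,l)$, whose pushforward is $F_{l,E}^\vee$ by Corollary~\ref{laksjrhfgkjsrG}. Twisting by $-\sigma_k$ now turns the $k$th coordinate from $0$ into $-1$ (matching $-i_k'=-1$ for $E\cup k$) and the last from $l$ into $l-1$, so $M_{l,E}(-\sigma_k)=M_{l-1,E\cup k}$; pulling back along $\sigma_k$ yields $\sigma_k^* M_{l,E}=\cO(-i_1,\ldots,-i_k+l,\ldots,-i_n)=S^k_{l,E}$. Pushing forward and using the same $R^1$-vanishing produces the second sequence.

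I do not expect any genuine obstacle here: the argument is essentially a projection-formula computation on the universal $\bP^1$-bundle, and the only mild subtlety is the low-$l$ boundary ($l=0$, where $F_{-1,\bullet}=0$ since $\pi_*\cO(-1)=0$); in that case both sequences degenerate to the tautological isomorphisms $F_{0,E}=Q^k_{0,E}$ and $F^\vee_{0,E}=S^k_{0,E}$, which hold since $i_k$ takes the appropriate value. All line bundles in play have even total degree on $(\bP^1)^{n+1}$, so everything descends canonically and $\PGL_2$-equivariantly from $(\bP^1)^{n+1}$ to $\cM_{n+1}$, making the entire construction $S_n$-equivariant in the obvious way.
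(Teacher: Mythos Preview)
Your proof is correct and is essentially identical to the paper's: the paper also tensors the sequence $0\to\cO(-e_k-e_{n+1})\to\cO\to\cO_{\Delta_{k,n+1}}\to 0$ with $\cO(i_1,\ldots,i_n,l)$ (resp.\ $\cO(-i_1,\ldots,-i_n,l)$) and applies $R\pi_*$. You have supplied more detail than the paper does (the explicit identification of the twisted terms, the $R^1$-vanishing, and the $l=0$ degeneration), but the underlying argument is the same.
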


\begin{proof}
Apply $R\pi_*$ to  an exact sequence $0\!\to\!\cO(-\De_{k,n+1})\!\to\!\cO\!\to\!\cO_{\Delta_{k,n+1}}\!\to\!0$ 
on $\cP_{n+1}$ 
tensored with $\cO(i_1,\ldots,i_n,l)$ (resp.,~$\cO(-i_1,\ldots,-i_n,l)$).
\end{proof}

\begin{prop}\label{fkjnvekfjnc}\label{tensor}
$F_{l,E}\otimes F_{l',E'}\cong F_{l+l',E\cup E'}\oplus F_{l+l'-2,E\cup E'}\oplus\ldots\oplus F_{|l-l'|,E\cup E'}$
if $E\cap E'=\emptyset$.
In particular,
$F_{l,E}\otimes F_{0,E'}\cong F_{l,E\cup E'}$.
Here we assume that all these bundles are defined, i.e., all parity conditions are satisfied.
\end{prop}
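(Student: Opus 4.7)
The strategy is to reduce everything to a Clebsch--Gordan decomposition of $\SL_2$-representations, via the $\SL_2$-equivariant description supplied by Lemma~\ref{akjsdhfkjsgf}, and then to verify that the resulting $\SL_2$-bundles descend correctly to the quotient stack $\cM_n=[(\bP^1)^n/\PGL_2]$.

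First, I would pull everything back to $(\bP^1)^n$. By Lemma~\ref{akjsdhfkjsgf}, as $\SL_2$-equivariant sheaves we have
$$F_{l,E}\simeq_{\SL_2}\cO(i_1,\ldots,i_n)\otimes V_l,\qquad F_{l',E'}\simeq_{\SL_2}\cO(i'_1,\ldots,i'_n)\otimes V_{l'},$$
where $i_j$ (resp.~$i'_j$) is the indicator of $j\in E$ (resp.~$j\in E'$), and $V_k$ denotes the $(k+1)$-dimensional irreducible $\SL_2$-module. Tensoring gives
$$F_{l,E}\otimes F_{l',E'}\simeq_{\SL_2}\cO(i_1+i'_1,\ldots,i_n+i'_n)\otimes(V_l\otimes V_{l'}).$$
Because $E\cap E'=\emptyset$, the exponents $i_j+i'_j$ are $0$ or $1$ and form the indicator vector of $E\cup E'$.

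Next, I would apply the classical Clebsch--Gordan formula
$$V_l\otimes V_{l'}=V_{l+l'}\oplus V_{l+l'-2}\oplus\cdots\oplus V_{|l-l'|}$$
and distribute the line bundle factor across the summands. Each summand is $\SL_2$-equivariantly isomorphic to a bundle of the shape described in Lemma~\ref{akjsdhfkjsgf}, i.e.~the $\SL_2$-avatar of $F_{k,E\cup E'}$ for $k\in\{|l-l'|,|l-l'|+2,\ldots,l+l'\}$.

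Finally, I would check descent to $\cM_n$. For a bundle of the form $\cO(\text{indicator of }E\cup E')\otimes V_k$ to descend to $[(\bP^1)^n/\PGL_2]$, the central element $-\Id\in\SL_2$ must act trivially; it acts by $(-1)^{|E\cup E'|+k}=(-1)^{e+e'+k}$. The hypotheses $e+l\equiv 0$ and $e'+l'\equiv 0\pmod{2}$ force $e+e'\equiv l+l'\pmod{2}$, hence $e+e'+k\equiv 0\pmod{2}$ for every $k$ appearing in the Clebsch--Gordan range (which differs from $l+l'$ by an even integer). So the parity conditions are satisfied and each summand descends to $F_{k,E\cup E'}$ on $\cM_n$. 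The decomposition on $\cM_n$ follows since the descent of a $\PGL_2$-equivariant direct sum is the direct sum of descents. The special case $l'=0$ recovers $F_{l,E}\otimes F_{0,E'}=F_{l,E\cup E'}$.

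There is no real obstacle here: the only point that requires care is the parity/descent bookkeeping, which is automatic once the two conditions $l+e$, $l'+e'$ even are used together with the disjointness $E\cap E'=\emptyset$.
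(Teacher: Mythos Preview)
Your proof is correct and follows exactly the same route as the paper, which simply writes ``Clebsch--Gordan formula'' and leaves the details you have spelled out (the $\SL_2$-description from Lemma~\ref{akjsdhfkjsgf}, the disjointness giving the indicator of $E\cup E'$, and the parity check for descent) to the reader.
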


\begin{proof}
This follows from the Clebsch--Gordan formula 
(Lemma \ref{CG}).
\end{proof}

\begin{lemma}\label{CG}(Clebsch-Gordan)
$(V_l\otimes V_{l'})=V_{l+l'}\oplus V_{l+l'-2}\oplus\ldots\oplus V_{|l-l'|}$. In particular, if $l>l_1+\ldots+l_r$, then 
$(V_l\otimes V_{l_1}\otimes\ldots\otimes  V_{l_r})^{\SL_2}=0$. 
\end{lemma}

\begin{rmk}\label{dictionary}
In the next sections we will consider Hassett moduli spaces of pointed curves which do not always admit a  quotient stack interpretation. 
Therefore, it is useful to relate the vector bundles $F_{l,E}$ to tautological line bundles on the stack of  $\bP^1$-bundles with $n$ sections. 

On $(\PP^1)^n\times\PP^1$ we have 
$\omega_\pi=\cO(0,\ldots, 0, -2)$ and 
$$\cO(\sigma_j)=\cO(0,\ldots, 0, 1, 0,\ldots,0,1)$$ with $1$ in position $j$ and $n+1$, since the section $\sigma_j$ is the $\Delta_{j, n+1}$ diagonal. For $i\neq j$, we have on $(\PP^1)^n$  
$$\sigma_i^*\cO(\sigma_j)=\cO(0,\ldots, 0, 1, 0,\ldots, 0, 1, 0,\ldots,0),$$ 
with $1$ in position $j$ and $j$, while when $i=j$ we have 
$$\sigma_j^*\cO(\sigma_j)=\cO(0,\ldots, 0, 2, 0,\ldots, 0)$$ with $2$ in position $j$. Hence, 
the tautological line bundle $\psi_i=\sigma_i^*\omega_\pi$ on $\cP_n$ is identified 
with the $\PGL_2$-equivariant line bundle $\cO(0,\ldots,0,-2,0,\ldots,0)$, where $-2$ is in position $i$. 
The line bundle $\de_{ij}=\sigma_i^*\cO(\sigma_j)$ is  identified 
with the $\PGL_2$-equivariant line bundle 
$\cO(0,\ldots,0,1,0,\ldots,0,1,0,\ldots,0)$, where $1$ is in position $i$ and $j$. 
Furthermore,  on $\cP_{n+1}$, 
$$N_{l,E}\cong \omega^{\otimes {(\frac{e-l}{2}})}_{\pi}(E),$$
where, by abuse of notation, we let $\cO(E)=\cO(\sum_{j\in E}\sigma_j)$. 
\end{rmk}

\begin{prop}\label{sRGASRGASRGASG}\label{F negative}
More generally, we let $N_{l,E}:=\om_{\pi}^{\frac{e-l}{2}}(E)$ for any $l$, positive or negative,
and 
$F_{l,E}:=R\pi_*N_{l,E}$.
When $l=-1$, we have $F_{l,E}=0$. When $l\leq -2$, we have
$F_{l,E}\cong F_{-l-2,E}[-1]$.
\end{prop}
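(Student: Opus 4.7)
The plan is to trace the definition of $N_{l,E}$ through the $\PGL_2$-presentation of $\cM_{n+1}$ used in Section~\ref{stack section} and reduce the computation of $R\pi_* N_{l,E}$ to the standard cohomology of line bundles on~$\bP^1$. First I would identify $\cM_{n+1}$ with $[(\bP^1)^{n+1}/\PGL_2]$ so that $\pi$ corresponds to the projection to the first $n$ factors. The relative dualizing sheaf $\omega_\pi$ lifts to the $\PGL_2$-equivariant line bundle $\cO(0,\ldots,0,-2)$, and the $i$-th section $\sigma_i$ is cut out by the $(i,n+1)$-diagonal, which has class $\cO(\be_i + \be_{n+1})$. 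Consequently $N_{l,E} = \omega_\pi^{(e-l)/2}(E)$ equals the $\SL_2$-equivariant line bundle $\cO(i_1,\ldots,i_n, l)$ on $(\bP^1)^{n+1}$ for any integer~$l$ with $l+e$ even, extending the computation in Definition~\ref{FlE} verbatim to negative~$l$.

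The next step is to apply the projection formula and K\"unneth to obtain
$$R\pi_* N_{l,E} \simeq \cO(i_1,\ldots,i_n) \otimes_{\CC} R\Gamma(\bP^1, \cO(l))$$
in the $\SL_2$-equivariant derived category of $(\bP^1)^n$. The claim for $l=-1$ is then immediate from $R\Gamma(\bP^1, \cO(-1)) = 0$. For $l \leq -2$, one has $R\Gamma(\bP^1, \cO(l)) \simeq H^1(\bP^1, \cO(l))[-1]$, and I would identify $H^1(\bP^1, \cO(l))$ with $V_{-l-2}$ as an $\SL_2$-module via Serre duality on $\bP^1$, noting that finite-dimensional irreducible $\SL_2$-representations are self-dual so $V_{-l-2}^\vee \simeq V_{-l-2}$. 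Combined with the identification $F_{l', E} \simeq_{\SL_2} \cO(i_1,\ldots,i_n) \otimes V_{l'}$ from Lemma~\ref{akjsdhfkjsgf} applied with $l' = -l-2 \geq 0$, this yields $F_{l,E} \simeq F_{-l-2, E}[-1]$.

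There is essentially no real obstacle: once $N_{l,E}$ is spelled out as an $\SL_2$-linearized line bundle on $(\bP^1)^{n+1}$, the whole statement reduces to Serre duality on~$\bP^1$ plus self-duality of irreducible $\SL_2$-representations. The only minor bookkeeping point concerns parity; since $-l-2$ has the same parity as $l$, the hypothesis that $l+e$ is even automatically ensures $(-l-2)+e$ is also even, so both $F_{l,E}$ and $F_{-l-2, E}$ descend to $\cM_n$ and the isomorphism is canonically $\PGL_2$-equivariant.
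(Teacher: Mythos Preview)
Your argument is correct and is essentially the same as the paper's: both reduce to Serre duality along the fibers of $\pi$. The paper phrases this as Grothendieck--Verdier duality, obtaining $R\pi_*(\cO(-E,-l-2))\simeq F_{l,E}^\vee[-1]$ and then invoking Corollary~\ref{complement} to identify the left-hand side with $F_{-l-2,E}^\vee$, whereas you carry out the same computation explicitly via K\"unneth and Serre duality on~$\bP^1$ together with self-duality of $V_{-l-2}$; the two routes are equivalent here, yours being slightly more hands-on and the paper's slightly more compact by reusing Corollary~\ref{complement}.
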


\bp
By Grothendieck--Verdier duality (see Remark \ref{GV general}), we have 
\begin{equation}\label{GV duality}
R\pi_*\cO(-i_1,\ldots,-i_n,-l-2)\cong F_{l,E}^\vee[-1],
\end{equation}
where 
 $i_j=1$ if $j\in E$ and~$i_j=0$ otherwise. 
But by Corollary  \ref{complement} when $-l-2\geq 0$,
$F_{-l-2,E}^\vee\cong R\pi_*\cO(-i_1,\ldots,-i_n,-l-2)$.
\ep

\begin{rmk}\label{GV general}
We recall  the Grothendieck--Verdier duality \cite[Theorem  3.34]{Huybrechts} in a particular case that we will use often: for a morphism $\al: X\ra Y$ of relative dimension $1$, with $X$, $Y$ smooth varieties, if $N$ is a line bundle on $X$, then $R\al_*\big(N^\vee\otimes \om_{\al}[1]\big)=\big(R\al_*N\big)^\vee$. 
\end{rmk}

%%%%%%%%%%%%%%%%%%%%%%%%%%%%%%%%%%%%%%%%%%%%%%%%%%%%%%%%%%%%%%%%%%%
%%%%%%%%%%%%%%%%%%%%%%%%%%%%%%%%%%%%%%%%%%%%%%%%%%%%%%%%%%%%%%%%%%%
In the remainder of this section we study derived categories of open substacks of $\cP_n$, with an eye towards
proving Theorem~\ref{symm}.

\begin{prop}\label{sjhfgkjshgfqkjhw}
 Let $\cU\subseteq\cP_n$ be an open substack. 
 We abuse notations and denote $F_{l,E}|_{\cU}$ by $F_{l,E}$. 
% as restrictions of the corresponding vector bundles on $\cM_n$.
 Then $D^b(\cU)$ is generated by the $F_{l,E}$'s (equivalently, by $F_{l,E}^\vee$'s, see Remark \ref{dual collection}).
All results about $\cP_n$ proved previously, except Theorem~\ref{stackofbundles}, are valid on $\cU$.
\end{prop}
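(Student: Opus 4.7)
The plan is to split the statement into the generation claim and the ``other results'' from Section~\ref{stack section}, handling them separately. For Lemma~\ref{akjsdhfkjsgf}, Corollary~\ref{complement}, Proposition~\ref{exact ones}, Proposition~\ref{tensor} and Proposition~\ref{F negative} there is nothing of substance to prove: each is a local identity of sheaves on the total space of $\pi:\cM_{n+1}\to\cM_n$ (a tensor decomposition, a short exact sequence, or a derived pushforward computation), and every construction involved restricts compatibly along the open immersion $\cU\hra\cM_n$ and its pullback $\pi^{-1}(\cU)\hra\cM_{n+1}$ by flat base change. I would simply note this compatibility and move on.

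The genuine content is the generation claim, which I would reduce to Theorem~\ref{stackofbundles} by extending objects from $\cU$ to $\cM_n$. Given $F\in D^b(\cU)$, I would first construct $\tilde F\in D^b(\cM_n)$ with $\tilde F|_{\cU}\simeq F$. Using the presentation $\cM_n=[(\bP^1)^n/\PGL_2]$, this amounts to a $\PGL_2$-equivariant extension problem for a bounded complex of coherent sheaves on an open subscheme of the Noetherian scheme $(\bP^1)^n$. The standard Noetherian extension principle extends each cohomology sheaf $\cH^i(F)$ to a $\PGL_2$-equivariant coherent sheaf on $(\bP^1)^n$, and one then splices these extensions together along the finitely many nonzero cohomological degrees, lifting one extension class at a time. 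A more categorical formulation is to identify $D^b(\cU)$ with the Verdier quotient $D^b(\cM_n)/D^b_Z(\cM_n)$ for $Z=\cM_n\setminus\cU$, which makes restriction essentially surjective by construction.

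Once $\tilde F$ is in hand, Theorem~\ref{stackofbundles} expresses it as an iterated cone of shifts of finitely many $F_{l,E}$'s. Since the restriction functor $D^b(\cM_n)\to D^b(\cU)$ is triangulated, $F=\tilde F|_{\cU}$ is the iterated cone of the corresponding $F_{l,E}|_{\cU}$'s, and so the latter generate $D^b(\cU)$. For the equivalence with the dual collection $\{F_{l,E}^\vee|_{\cU}\}$ I would invoke smoothness of $\cU$: the functor $R\cHom(-,\cO_\cU)$ is then a contravariant self-equivalence of $D^b(\cU)$ exchanging the two generating sets. Alternatively, repeated use of the short exact sequences in Proposition~\ref{exact ones} lets one pass between the $F_{l,E}$'s and the $F_{l,E}^\vee$'s inside the triangulated subcategory they generate.

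The only step requiring real care is the extension in the second paragraph: it must be performed in the $\PGL_2$-equivariant setting, and the Postnikov-tower assembly of $\tilde F$ must remain bounded. Because $\cM_n$ is Noetherian and $F$ has only finitely many nonzero cohomology sheaves, this is routine, so I do not expect a serious obstacle.
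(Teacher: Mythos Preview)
Your proposal is correct and follows the same strategy as the paper: extend from $\cU$ to $\cM_n$, apply Theorem~\ref{stackofbundles}, and restrict the remaining identities. The paper is slightly more economical on the generation step: rather than extending complexes (via Postnikov towers or the Verdier quotient), it observes that it suffices to show every equivariant coherent \emph{sheaf} on $U$ lies in the subcategory generated by the $F_{l,E}$'s, since such sheaves already generate $D^b(\cU)$; extending a single sheaf equivariantly is then a direct citation of Thomason, with no splicing needed.
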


\begin{rmk}\label{dual collection}
 If $X$ is a smooth projective variety, the double dual $F^{\vee\vee}$ of a complex $F\in D^b(X)$ is canonically isomorphic to $F$ (see \cite[p.84]{Huybrechts}). Since taking duals is an exact functor, it follows that $D^b(X)$ is generated by a collection of objects $\{F_\al\}_{\al}$ if and only if it is generated by the dual collection $\{{F_\al}^\vee\}_{\al}$. 
\end{rmk}

\begin{proof}[Proof of Proposition \ref{sjhfgkjshgfqkjhw}]
We have $\cU=[U/\PGL_2]$ for an open equivariant subset $U\subset(\bP^1)^n$.
 It is enough to show that any equivariant coherent sheaf $F$ on $U$ can be obtained
 by a finite number of equivariant extensions starting with restrictions of the vector bundles 
 $F_{l,E}$. 
 This follows from Theorem~\ref{stackofbundles} since
 $F$ is a restriction of 
 an equivariant coherent sheaf on $(\bP^1)^n$ (see e.g.~\cite{Thomason}).
For the rest of the proposition, we restrict isomorphisms on $\cP_n$ to $\cU$.
\end{proof}

Open substacks $\cU\subseteq\cP_n$
include all Hassett spaces when the universal family is a $\bP^1$-bundle.
More precisely, fix weights $\ba=(a_1,\ldots,a_n)$ with $\sum a_i=2$.
Recall from the introduction that %$\M_\ba$ is the Hassett moduli space with weights given by $\ba$ and 
the  GIT quotient $\X_\ba$ is a quotient of the semi-stable locus $(\PP^1)^n_{ss}$
for the fractional $\PGL_2$-polarization given by $\ba$. We denote by $\cM_\ba$ the stack quotient
$[(\PP^1)^n_{ss}/\PGL_2]$, which is an open substack of $\cP_n$. 
If there are no strictly semistable points then $\PGL_2$ acts on $(\PP^1)^n_{ss}$ freely and $\cM_\ba\cong\M_\ba\cong\X_\ba$.
In particular, 
for every partition $P\coprod Q=\{1,\ldots,n\}$ with $p=|P|\ge 2$, $q=|Q|$, we define an open substack
$\cM_{p,q}\subset \cP_n$ of $\bP^1$ bundles such that 
at most $p/2$  sections indexed by $P$ (heavy points) are allowed to coincide
and if $p=2r$ then $r$ heavy points are allowed to coincide with at most $q/2$  points indexed by $Q$ (light points).
The corresponding Hassett space is $\M_{p,q}$ and the GIT quotient is $\X_{p,q}$ (see Notation~\ref{fbzfbdfnz}).
We have $\cM_{p,q}=\M_{p,q}=\X_{p,q}$ unless both $p$ and $q$ are even.
We use notation $\cM_n$, $\M_n$ and $\X_n$ if $q=0$. 
%We also use the following notation: for every subset $E\subset\{1,\ldots,n\}$, we denote by $E_p$ (resp.~$E_q$) its intersection with $P$ (resp.~$Q$).

%We would like to show the following:
\begin{thm}\label{Kirwan_special_case}
Consider a collection of vector bundles $\{F _{l,E}\}$ on $\cM_\ba$
for some set of pairs $(E,l)$. %of $\PGL_2$-equivariant vector bundles on~$(\PP^1)^n$.
Order them by increasing $e=|E|$ (and arbitrarily when $e$ is the same). Assume that there exists an integer $w_K$ 
for every subset $K\subseteq N$ such that $\sum\limits_{i\in K} a_i> 1$ and suppose that
$w_K\le -l+(e_0-e_{\infty})$ and $l+(e_0-e_{\infty})<w_K+2|K|-2$
for all bundles in the collection and all subsets as above,
where 
$e_{\infty}=|E\cap K|$ and $e_0=|E\cap K^c|$, for $K^c=N\setminus K$.
Then $\{F_{l,E}\}$ is a strong exceptional (but not necessarily full) collection in $D^b(\cM_\ba)$.
\end{thm}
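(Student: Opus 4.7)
The plan is to reduce to Theorem~\ref{stackofbundles} via the window theorem for GIT quotients \cite{Teleman,DHL,BFK}. I view $\cM_\ba = [(\PP^1)^n_{ss}/\PGL_2]$ as an open substack of $\cM_n = [(\PP^1)^n/\PGL_2]$, so the bundles $F_{l,E}$ on $\cM_\ba$ are just the restrictions of the corresponding bundles on $\cM_n$, which by Theorem~\ref{stackofbundles} already form a strong $S_n$-equivariant exceptional collection upstairs. It therefore suffices to place every $F_{l,E}$ in the collection inside a common ``magic window'' into $D^b(\cM_\ba)$; the window theorem will then guarantee that restriction preserves $R\Hom$'s on these objects, so the strong exceptionality and its ordering transport from $\cM_n$ to $\cM_\ba$.

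The content is then a weight computation at each Kempf--Ness stratum. The unstable strata of $(\PP^1)^n$ for the fractional polarization $\cO(a_1,\ldots,a_n)$ are in bijection with subsets $K \subseteq \{1,\ldots,n\}$ with $\sum_{i \in K} a_i > 1$; after conjugation the destabilizing one-parameter subgroup may be taken to be $\lambda_K(t) = \mathrm{diag}(t, t^{-1})$, and its fixed center $Z_K$ is the single configuration with $p_i = [1\!:\!0]$ for $i \in K$ and $p_i = [0\!:\!1]$ for $i \in K^c$. Since $T_{[1:0]}\PP^1$ has $\lambda_K$-weight $-2$ and $T_{[0:1]}\PP^1$ has weight $+2$, and exactly one negative-weight direction at $Z_K$ integrates to the $\mathfrak{n}^-$-action on $G \cdot Z_K$, the weight of the conormal determinant is
\[
\eta_K \;=\; \mathrm{wt}_{\lambda_K}\bigl(\det N^\vee_{S_K/X}\big|_{Z_K}\bigr) \;=\; 2(|K|-1).
\]
Using the isomorphism $F_{l,E} \simeq \cO(i_1,\ldots,i_n) \otimes V_l$ of $\SL_2$-equivariant bundles from Lemma~\ref{akjsdhfkjsgf}, together with the fact that $\cO_{\PP^1}(1)$ has $\lambda_K$-weight $-1$ at $[1\!:\!0]$ and $+1$ at $[0\!:\!1]$, the $\lambda_K$-weights of $F_{l,E}|_{Z_K}$ are the arithmetic progression
\[
(e_0 - e_\infty) - l,\ (e_0 - e_\infty) - l + 2,\ \ldots,\ (e_0 - e_\infty) + l.
\]
The two inequalities in the hypothesis of the theorem say precisely $w_K \leq (e_0 - e_\infty) - l$ and $(e_0 - e_\infty) + l < w_K + \eta_K$, i.e.\ that all of these weights lie in the half-open window $[w_K,\, w_K + \eta_K)$.

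With the window condition verified at every KN stratum $S_K$, the window theorem \cite{Teleman,DHL,BFK} supplies a fully faithful embedding of the magic window into $D^b(\cM_\ba)$, so that
\[
R\Hom_{\cM_\ba}(F_{l,E},\, F_{l',E'}) \;=\; R\Hom_{\cM_n}(F_{l,E},\, F_{l',E'})
\]
for every pair in the collection, and the strong $S_n$-equivariant exceptional collection from Theorem~\ref{stackofbundles} restricts to a strong exceptional collection on $\cM_\ba$ with the same ordering by increasing $e$. The main technical hurdle is the conormal weight identity $\eta_K = 2(|K|-1)$: one must check that exactly one of the naively-counted $|K|$ negative-weight tangent directions at $Z_K$ is absorbed into $T(G\cdot Z_K)$ via $\mathfrak{n}^-$, and secondarily that sign conventions for $\lambda_K$ align so the Hilbert--Mumford walls $\sum_{i\in K}a_i = 1$ cut out exactly the strata $S_K$ described above.
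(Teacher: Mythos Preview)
Your proposal is correct and follows essentially the same approach as the paper: identify the KN strata as the diagonals $S_K$ indexed by unstable subsets $K$, compute $\eta_K=2(|K|-1)$, compute the $\lambda_K$-weights of $F_{l,E}$ at the fixed point $z_K$ as the arithmetic progression $(e_0-e_\infty)-l,\ldots,(e_0-e_\infty)+l$ via $F_{l,E}\simeq\cO(i_1,\ldots,i_n)\otimes V_l$, and apply Halpern--Leistner's window theorem together with Theorem~\ref{stackofbundles}. The only cosmetic difference is that the paper verifies $\eta_K=2(|K|-1)$ by an explicit computation of $\det N^*_{\Delta_K|(\PP^1)^n}$ (writing $\Delta_K$ as a complete intersection of big diagonals $\Delta_{jn}$), whereas you argue by counting negative-weight tangent directions and subtracting the one coming from $\mathfrak{n}^-$; both give the same answer and neither is meaningfully easier.
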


\bp
We apply the windows theorem~\ref{Kirwan} for $X=(\PP^1)^n$ and $G=\PGL_2$. Up to conjugation, a $1$- parameter subgroup $\lambda$ has the form 
$\lambda(t)=
\left[\begin{array}{cc}
t & 0\\
0 & t^{-1}
\end{array} \right]$.
For every subset $K\subseteq N$, consider the $\la$-invariant point
$z_K=(p_1,\ldots, p_n)$,  with $ p_i=0=[0:1]$ if $i\notin K$
and $p_i=\infty=[1:0]$ otherwise. Then $\wt_{\la}\cO(1)_{|x}=1$ if $x=0$ and $\wt_{\la}\cO(1)_{|x}=-1$ if $x=\infty$. %Let $k=|K|$.
 If $\cL=\cO(a_1,\ldots, a_n)$, it follows that 
\begin{equation}\label{DHLconvention}
\wt_{\la}\cL_{|z_K}=\sum_{i\in K^c} a_i-\sum_{i\in K} a_i.
\end{equation}
%Here we follow the convention of \cite{DHL},
%where the ample polarization of the GIT quotient has negative weights on the unstable locus.

Let $\De_K$ be a diagonal in $(\PP^1)^n$ consisting of points $(p_1,\ldots, p_n)$ such that for all $i\in K$ the points $p_i$ are equal.
Let $S_K\subset\De_K$ be the complement to the union of smaller diagonals.
The KN strata of $X^{us}$ are given by $Z_K=\{z_K\}$, $S_K$ and the canonical inclusions 
$\sigma_K:\,Z_K\hookrightarrow S_K$, 
for all $K\subseteq N$ such that $\sum\limits_{i\in K} a_i> 1$. 
The destabilizing 1-PS  is $\lambda$.
Next we compute $\det\big(N^\vee_{\De_K|(\PP^1)^n}\big)$. We may assume %without loss of generality 
that $n\in K$. 
As $\De_K$ is a complete intersection in $X$ of big diagonals $\De_{jn}$, for all $j\in K\setminus\{n\}$, it follows that 
$$N^\vee_{\De_K|(\PP^1)^n}\cong\bigoplus\limits_{j\in K\setminus\{n\}}\cO(-\De_{jn})_{|\De_K},\ \ 
\det\big(N^\vee_{\De_K|(\PP^1)^n}\big)\cong\cO\Bigl(-\sum\limits_{j\in K\setminus\{n\}}\De_{jn}\Bigr)_{|\De_K}.$$ %$\cong\cO(0,\ldots,0, -2(|K|-1))$
%via the isomorphism  $\De_K\cong (\PP^1)^{n-k}\times\PP^1$ given by the $n$-th marking.  
It~follows from (\ref{DHLconvention}) that
%the weights of $\det\big(N^*_{\De_K|(\PP^1)^n}\big)_{|z_K}$ depend only on $|K|$ and
\begin{equation}\label{eta weights}
\eta_K:=\wt_{\la}{\det\big(N^\vee_{\De_K|(\PP^1)^n}\big)_{|z_K}}=2(|K|-1). 
\end{equation}
Theorem~\ref{Kirwan_special_case} then follows from Theorem \ref{stackofbundles}, Theorem~\ref{Kirwan}, and the following
Lemma~\ref{dvdvDV}.
\ep

\begin{lemma}\label{dvdvDV}
%For subsets $E, K\subseteq\{1,\ldots, n\}$ with $e=|E|$, $k=|K|$, we denote
%$e_{\infty}=|E\cap K|$, $ e_0=|E\cap K^c|$.
%Note that $e_{\infty}+e_0=e$. 
${F_{l,E}}_{|z_K}$ has weights $m+(e_0-e_{\infty})$ for 
$m=l, l-2, \ldots,2-l, -l$.
\end{lemma}

\bp
For this calculation we can view $F_{l,E}$ as an $\SL_2$- (rather than $\PGL_2$-) equivariant bundle.
By Lemma \ref{akjsdhfkjsgf} and in the notations of Definition \ref{FlE}, we have that 
$F_{l,E}=\cO(i_1,\ldots, i_n)\otimes V_l$. Note that $V_l=\Sym^lV_1$,
where $V_1$ is a trivial rank $2$ vector bundle with the standard $\SL_2$-action.
The weights of  $V_l$ (at any $\lambda$-fixed point) are 
$l,\ l-2,\ l-4,\ldots,\ -l$ and the formula follows.
\ep

\bp[Proof of Theorem~\ref{symm}]
%We show that the vector bundles $F_{l,E}$ satisfy the conditions in Theorem \ref{Kirwan_special_case}. 
We consider the following {\it score} function, 
\begin{equation}
S(l,E)=l+\min(e,p-e).\label{firstscore}
\end{equation}
We need to prove that the vector bundles $F_{l,E}$ with $S(l,E)\leq r-1$ form a full exceptional collection. 
First we prove exceptionality. We prove that condition $S(l,E)\leq r-1$ implies that the weights of the bundles $F_{l,E}$ fit in a window as in Theorem~\ref{Kirwan}. 
Lemma~\ref{dvdvDV} implies that the maximum weight of $F_{l,E}$ over all subsets $K$ with fixed $|K|=k$ is %$l+e$ if $e\leq n-k$ and $l+2n-2k-e$ if $e>n-k$. Equivalently,
%the maximum weight of $F_{l,E}$ is 
$\text{min}\{l+e, l+2p-2k-e\}$.
Similarly, the minimum weight of  $F_{l,E}$ is 
%$-(l+e)$ if $e\le k$ and $-(l+2k-e)$ if $e>k$, or equivalently, the minimum weight of $F_{l,E}$ is 
$-\text{min}\{l+e, l+2k-e\}$.
The conditions in Theorem \ref{Kirwan_special_case} for existence of a window are equivalent to requiring that for any pairs $(l,E)$, $(l', E')$, if we let $e=|E|$ and  $e'=|E'|$
$$\text{min}\{l+e, l+2k-e\}+\text{min}\{l'+e', l'+2(p-k)-e'\}<\eta_k=2(k-1),$$  
for all $k\ge r+1$. 
We now prove that this is the case for the list of pairs $(l,E)$ in Theorem \ref{symm}. We consider three cases.

{\bf Case I: $e,e'\le r$. } By assumption $l+e, l'+e'\le r-1$. Then 
$\text{min}\{l+e, l+2k-e\}+\text{min}\{l'+e', l'+2(p-k)-e'\}
\leq(l+e)+(l'+e')\leq 2r-2<2(k-1)$ for all $k\ge r+1$.
 
{\bf Case II: $e,e'\geq r+1$. } By assumption, $l+p-e, l'+p-e'\leq r-1$. We have: 
$\text{min}\{l+e, l+2k-e\}+\text{min}\{l'+e', l'+2(p-k)-e'\}\leq(l+2k-e)+(l'+2(p-k)-e')\leq2(r-1)<2(k-1) $
 for all $k\ge r+1$. 

{\bf Case III: $e\le r$ and $e'\geq r+1$ (or the opposite). } By assumption 
$l+e\leq r-1$, $l'+p-e'\leq r-1$.
It follows that 
$\text{min}\{l+e, l+2k-e\}+\text{min}\{l'+e', l'+2(p-k)-e'\}
\leq(l+e)+(l'+2(p-k)-e')\leq 2(r-1)<2(k-1)$  for all $k\ge r+1$. 
We finish by applying Theorem~\ref{stackofbundles}.

\smallskip

Next we prove fullness.
By~Proposition~\ref{sjhfgkjshgfqkjhw}, it suffices to prove the following claim.

\begin{claim}\label{sgsgsgsg}
Let $p=2r+1$. Every vector bundle $F_{l,E}$ (with $l+e$ is even) on $\M_p$ is generated by the 
vector bundles in the collection in Theorem~\ref{symm}. 
\end{claim}

%We consider the following {\em score} function, $S(l,E)=l+\min(e,p-e)$.

For simplicity, let $\cC$ be the collection of vector bundles $F_{l,E}$ in Theorem~\ref{symm}, i.e., 
those in the range  $S(l,E)\le r-1$. We will prove the equivalent dual statement that every vector bundle $F^\vee_{l,E}$ on $\M_p$
is generated by the dual collection $\cC^\vee$. 
We argue by induction on the score and for a fixed score, by induction on $l$. 
Clearly, the statement holds when $S(l,E)\leq r-1$. 
Let $a\geq r$ and assume that 
all the bundles $F^\vee_{l,E}$ with $S(l,E)<a$ are generated by $\cC^\vee$. Let $F^\vee_{l,E}$ be a bundle such that
$S(l,E)=a$. We consider two cases: $e=|E|\leq r$ and $e=|E|\geq r+1$. 

\underline{Assume  $e\leq r$.}  
Let $I\subseteq\{1,\ldots, p\}$ with $I\cap E=\emptyset$ and $|I|=r+1$. 
We consider the Koszul complex for the diagonal 
$\De=\Delta_{I\cup\{x\}}\subseteq (\bP^1)^p\times \PP^1_x$,
$$
0\leftarrow\cO_{\De}\leftarrow\cO\leftarrow\bigoplus_{i\in I}\cO(-{\bf e_i}-{\bf e_x})\leftarrow\bigoplus_{i,k\in I}\cO(-{\bf e_i}-{\bf e_k}-2{\bf e_x})\leftarrow\ldots$$
$$\leftarrow\cO(-\sum_{i\in I}{\bf e_i}-(r+1){\bf e_x})\leftarrow 0.$$
Here $\{\bf e_i\}$ for $i\in P$ is the standard basis of $\Pic(\PP^1)^{p}\cong\ZZ^{p}$.

We tensor this resolution
with $\cO(-\sum_{j\in E}{\bf e_j}+l {\bf e_x})$
and  take derived push-forwards of its terms via  the projection map $\pi:\,(\bP^1)^{p+1}\to(\bP^1)^p$, 
which we then restrict to the semistable locus in $(\bP^1)^p$.
Since $\pi(\De)$ is in the unstable locus, we obtain the following objects:
$$0\quad F^\vee_{l, E}\quad \bigoplus_{i\in I} F^\vee_{l-1, E\cup\{i\}}\quad\ldots\quad \bigoplus_{J\subseteq I, |J|=j} F^\vee_{l-j, E\cup J}\quad\ldots\quad 
\bigoplus_{J\subseteq I, |J|=l} F^\vee_{0, E\cup J}\quad 0$$
$$\quad \bigoplus_{J\subseteq I, |J|=l+2} F^\vee_{0, E\cup J}[-1]\quad
\ldots\quad \bigoplus F^\vee_{j-l-2, E\cup J}[-1]\quad\ldots\quad F^\vee_{r-l-1, E\cup I}[-1],$$
where the terms $F^\vee_{l-j, E\cup J}$ appear as $R^0\pi_*$ as long as $0\leq j\leq l$, the $0$ term appears when $j=l+1$ (which happens if $l\leq r$; 
if $l>r$ then the last term that appears is $F^\vee_{l-r-1, E\cup I}$), while the terms $F^\vee_{j-l-2, E\cup J}$ 
appear as $R^1\pi_*$ in the range $l+2\leq j\leq r+1$ by Grothendieck-Verdier duality (see \ref{GV duality}). 

We claim that the first object $F^\vee_{l, E}$ is generated by the remaining objects, 
which are all generated by $\cC^\vee$ by induction. 
It will follow that $F^\vee_{l, E}$ is generated by $\cC^\vee$. 
The first claim
%, we can apply the standard spectral sequence \cite[2.66]{Huybrechts}  to get an exact sequence:
%$$0\leftarrow F^\vee_{l, E}\leftarrow \bigoplus_{i\in I} F^\vee_{l-1, E\cup\{i\}}\leftarrow\ldots\leftarrow \bigoplus_{J\subseteq I, |J|=j} F^\vee_{l-j, E\cup J}\leftarrow\ldots\leftarrow \bigoplus_{J\subseteq I, |J|=l} F^\vee_{0, E\cup J}$$
%$$\leftarrow \bigoplus_{J\subseteq I, |J|=l+2} F^\vee_{0, E\cup J}\leftarrow\ldots\leftarrow \bigoplus_{J\subseteq I, |J|=j} F^\vee_{j-l-2, E\cup J}\leftarrow\ldots\leftarrow \bigoplus F^\vee_{r-l-1, E\cup I}\leftarrow0.$$
%The $0$ term in the middle disappears as it is bypassed by the differential $d_2$.
is a special case of the following  lemma, which will be used
repeatedly in the remainder of the paper:

\begin{lemma}\label{obviosddd}
Let $F:\,D(\cA)\to \cT$ be an exact functor of triangulated categories and let $0\to A_1\to\ldots \to A_n\to 0$
be an exact sequence in an abelian category $\cA$. Then any of the objects $F(A_1),\ldots,F(A_n)$
belong to the triangulated subcategory of $\cT$ generated by the remaining objects.
\end{lemma}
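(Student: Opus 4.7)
The plan is to split the length-$n$ exact sequence into a chain of short exact sequences via the intermediate image/kernel objects, pass to distinguished triangles in $D(\cA)$, and transport everything to $\cT$ through the exact functor $F$. A two-sided induction through these triangles will then express any chosen $F(A_j)$ in terms of the remaining ones.

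First I would set $K_1 = 0$, $K_{n+1} = 0$, and for $2 \le i \le n$ define $K_i = \ker(A_i \to A_{i+1}) = \mathrm{im}(A_{i-1} \to A_i)$ in $\cA$. Exactness of the original sequence yields short exact sequences $0 \to K_i \to A_i \to K_{i+1} \to 0$ for $i = 1, \ldots, n$, each giving rise to a distinguished triangle in $D(\cA)$. Since $F$ is exact, we obtain distinguished triangles
\[ F(K_i) \to F(A_i) \to F(K_{i+1}) \to F(K_i)[1] \]
in $\cT$ for every $i$.

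Fix an index $j$ and let $\cS \subset \cT$ denote the triangulated subcategory generated by $\{F(A_i) : i \neq j\}$. An upward induction on $i$ starting from $F(K_1) = 0 \in \cS$ shows that $F(K_i) \in \cS$ for all $i \le j$: given $F(K_i) \in \cS$ with $i < j$, the triangle above together with $F(A_i) \in \cS$ (using $i \neq j$) forces $F(K_{i+1}) \in \cS$. A symmetric downward induction starting from $F(K_{n+1}) = 0$ gives $F(K_{j+1}) \in \cS$. The triangle $F(K_j) \to F(A_j) \to F(K_{j+1}) \to$ then exhibits $F(A_j)$ as an extension of two objects of $\cS$, so $F(A_j) \in \cS$, as desired.

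There is no substantive obstacle here: the only classical input is that an exact sequence in an abelian category embeds into $D(\cA)$ as a chain of distinguished triangles glued via the connecting morphisms, and that exact functors of triangulated categories preserve distinguished triangles. The mild technical point is simply bookkeeping the two inductions so that neither one touches the distinguished index $j$.
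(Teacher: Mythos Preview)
Your proof is correct and follows essentially the same approach as the paper: both arguments reduce to the chain of short exact sequences $0 \to K_i \to A_i \to K_{i+1} \to 0$ arising from the original sequence. The paper's proof is simply a one-line sketch (``this is clear by considering the short exact sequences''), whereas you have spelled out the two-sided induction explicitly.
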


\bp
It suffices to prove that each $A_i$ is generated by the remaining ones in $D(\cA)$. This is clear by converting the 
short exact sequences arising from  $0\to A_1\to\ldots \to A_n\to 0$ into exact triangles in $D(\cA)$. 
\ep

We now prove that the remaining terms are generated by $\cC^\vee$ by induction. Consider the two types of terms:

(a) The terms $F^\vee_{\tl,\tE}=F^\vee_{l-j, E\cup J}$ have scores
$S(\tl,\tE)=(l-j)+\min\{e+j,p-e-j\}\leq (l-j)+(e+j)=l+e=S(l,E)=a$,
and we are done by induction, since $\tl\leq l$, with equality if and only if $(\tl,\tE)=(l,E)$.

(b) The terms $F^\vee_{\tl,\tE}=F^\vee_{j-l-2, E\cup J}$ ($l+2\leq j\leq r+1$) have scores
$S(\tl,\tE)=(j-l-2)+\min\{e+j,p-e-j\}\leq (j-l-2)+(p-e-j)=p-e-l-2=p-a-2$
since $S(l,E)=l+e=a$. But $p-a-2<a$ since 
since $a\geq r$. It follows that these terms are  generated by $\cC^\vee$ by induction. 

\underline{Assume  $e\geq r+1$.}  Let $I\subseteq E$, $|I|=r+1$ and let $E'=E\setminus I$. 
As before, we tensor the above Koszul resolution of $\De_{I\cup\{x\}}\subseteq (\PP^1)^p\times\PP^1$ with 
$\cO\left(-\sum_{k\in E'}{\bf e_k}+(r-1-l) {\bf e_x}\right)$,
take derived push-forwards via  the projection map $\pi$ and apply Lemma~\ref {obviosddd}:

$$0\quad F^\vee_{r-1-l, E'}\quad 
%\bigoplus_{i\in I} F^\vee_{r-2-l,E'\cup\{i\}}\quad
\ldots\quad 
\bigoplus_{J\subseteq I, |J|=j} F^\vee_{r-1-l-j, E'\cup J}\quad\ldots \bigoplus_{J\subseteq I, |J|=r-1-l} F^\vee_{0, E'\cup J}\quad 0$$
$$\bigoplus_{J\subseteq I, |J|=r+1-l} F^\vee_{0, E'\cup J}\quad\ldots\bigoplus_{J\subseteq I, |J|=j} F^\vee_{l+j-r-1, E'\cup J}\quad\ldots\quad F^\vee_{l, E},$$
where the terms $F^\vee_{r-1-l-j, E'\cup J}$ appear as $R^0\pi_*$ for $0\leq j< r-l$, 
the $0$ term appears when $j=r-l$, and the terms $F^\vee_{l+j-r-1, E'\cup J}$ appear as $R^1\pi_*$ for 
$r+1-l\leq j< r+1$. 
Consider the two types of terms and we compare theirs scores with 
$a=S(l,E)=l+p-e$.

(a) The terms $F^\vee_{\tl,\tE}=F^\vee_{r-1-l-j, E'\cup J}$ have scores
$S(\tl,\tE)=(r-1-l-j)+\min\{e'+j,p-e'-j\}\leq (r-1-l-j)+(e'+j)=e-l-2$
(where $e'=|E'|=e-r-1$). 
But since we assume $a=l+p-e\geq r$, we have $e-l\leq r+1$, hence, $S(\tl,\tE)\leq r-1$, i.e., $F^\vee_{\tl,\tE}$ is in $\cC^\vee$. 
%$l+p-e>e-l-2$ and we are done by induction. 

(b) The terms $F^\vee_{\tl,\tE}=F^\vee_{l+j-r-1, E'\cup J}$ have scores 
$S(\tl,\tE)=(l+j-r-1)+\min\{e'+j,p-e'-j\}\leq (l+j-r-1)+p-e'-j=l+p-e=S(l,E),$
and we are done by induction, since $\tl=l+j-r-1\leq l$ (as $j\leq r+1$) with equality if and only if $j=r+1$, i.e., $J=I$ (that is $(\tl,\tE)=(l,E)$). 
\ep

\begin{cor}\label{asgadhad}
Let $p=2r$. A  $S_p$-equivariant full exceptional collection in $D^b(\M_{p,1})$ is given by 
$\{F_{l,E}\}$ with $l+\min\{e_p,p-e_p\}\leq r-1$. 
\end{cor}

\bp
We have
$\M_{p,1}\cong\M_{({1\over r}, \ldots,{1\over r},\epsilon)}\cong \M_{p+1}$.
Indeed, the stability condition is the same: no $r+1$ points are allowed to collide. The statement now follows from Theorem \ref{symm}. 
\ep

%%%%%%%%%%%%%%%%%%%%%%%%%%%%%%%%%%%%%%%%%%%%%%%%%%%%%%%%%%%%%%%%%%%%%%%%%
%%%%%%%%%%%%%%%%%%%%%%%%%%%%%%%%%%%%%%%%%%%%%%%%%%%%%%%%%%%%%%%%%%%%%%%%%

\begin{thm}\label{dfkfvkfvjkfvjk}\label{symm+}
Let $p=2r+1$, $q>0$. The vector bundles $F_{l,E}$ form a full strong $(S_p\times S_q)$-equivariant exceptional collection on $\M_{p,q}$
for subsets $E_p\subseteq P$, $E_q\subseteq Q$ such that $l+e$ is even and 
$l+\min(e_p,p-e_p)\leq r-1$.

The order of the blocks given by the $S_p\times S_q$-orbits $F_{l,e_p,e_q}$ is first by increasing $e_q$, then by increasing $e_p$, and finally arbitrary in $l$.
\end{thm}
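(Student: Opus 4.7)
The plan is to proceed by induction on $q$, with the base case $q=0$ being Theorem~\ref{symm}. Since $p=2r+1$ is odd, $\M_{p,q}=\X_{p,q}$ is smooth, and the forgetful map $\pi_n\colon\M_{p,q}\to\M_{p,q-1}$ dropping the last light point $n=p+q$ is the universal family over $\M_{p,q-1}$, i.e.\ a Zariski-locally trivial $\mathbb{P}^1$-bundle with $n-1$ tautological sections.

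By Orlov's projective bundle theorem,
\[
D^b(\M_{p,q})=\langle\pi_n^{*}D^b(\M_{p,q-1}),\ \pi_n^{*}D^b(\M_{p,q-1})\otimes L\rangle
\]
for any relative degree-one line bundle $L$. The inductive hypothesis provides a full strong $(S_p\times S_{q-1})$-equivariant exceptional collection $\{F_{l,E'}\}$ on $\M_{p,q-1}$, and Orlov's theorem yields two blocks on $\M_{p,q}$: the untwisted pullbacks and their twists by $L$. By Theorem~\ref{stackofbundles}, $\pi_n^{*}F_{l,E'}=F_{l,E'}$ on $\M_{p,q}$, giving the subcollection with $n\notin E$. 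For the twisted block, I choose $L=\cO(D_j)$ where $D_j$ is the image of the $j$-th section; under the embedding $\M_{p,q}\subseteq\cM_n$ this is $\cO(e_j+e_n)=F_{0,\{j,n\}}$ on the stack, and Proposition~\ref{tensor} then identifies $\pi_n^{*}F_{l,E'}\otimes L\simeq F_{l,E'\cup\{j,n\}}$ whenever $j\notin E'$. Taking $j\in Q\setminus\{n\}$ (possible when $q\ge 2$) ensures that the twist only enlarges $E_q$, preserving the range condition $l+\min(e_p,p-e_p)\le r-1$; the case $q=1$ is handled separately by choosing $j\in P$ and using the mutation argument sketched below. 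After $q$ iterations, the resulting bundles exhaust the collection in the statement, with the Orlov ordering (untwisted before twisted at each step) refining to the ordering by increasing $e_q$.

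The main obstacle is identifying the twisted block when $j\in E'$: the tensor $\pi_n^{*}F_{l,E'}\otimes\cO(e_j+e_n)$ acquires a degree-$2$ entry in the $j$-th coordinate and is not itself a standard $F_{l'',E''}$, though it lies in the triangulated subcategory generated by such bundles via Proposition~\ref{exact ones} and the Koszul generation argument of Lemma~\ref{sgsgsgsg}. A cleaner alternative bypasses Orlov altogether and applies the windows theorem~\ref{Kirwan_special_case} directly: for sufficiently small $\epsilon$, the unstable KN strata of the GIT problem defining $\M_{p,q}$ are indexed by subsets $K\subseteq\{1,\dots,n\}$ with $|K\cap P|\ge r+1$ (the light weights $b=p\epsilon/q$ contribute negligibly), with $\eta_K=2(|K|-1)$; the weight computation of Lemma~\ref{dvdvDV} is unchanged and the three-case analysis proving exceptionality in Theorem~\ref{symm} goes through verbatim, since the range condition only constrains $e_p$. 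Fullness follows from Proposition~\ref{sjhfgkjshgfqkjhw} together with a mutation argument parallel to Lemma~\ref{sgsgsgsg}, and full $(S_p\times S_q)$-equivariance is immediate from the combinatorial description of the collection.
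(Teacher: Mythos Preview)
Your inductive Orlov approach is on the right track and close in spirit to the paper's, but the choice of relative $\cO(1)$ is the crux and your choice $L=F_{0,\{j,n\}}$ does not work: as you note, when $j\in E'$ the twisted bundle is no longer an $F_{l'',E''}$, and you never close this gap. The paper avoids the issue by a different choice: it takes $L_j=F_{0,P\cup\{j\}}$ for each $j\in Q$ and runs Orlov for the iterated bundle $f\colon\M_{p,q}\to\M_p$ all at once, obtaining $2^q$ blocks indexed by $J\subseteq Q$. In the $J$-th block one uses the full collection $\{F_{l,E_p}\}$ on $\M_p$ (or its dual, reindexed by $E_p\mapsto E_p^c$, when $|J|$ is odd), tensored by $F_{0,P}^{\otimes -2\lfloor|J|/2\rfloor}$; a direct $\SL_2$-computation with Proposition~\ref{tensor} then identifies this with $\{F_{l,E_p\cup J}\}$. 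The point is that $E_p$ and $J$ are automatically disjoint, so the clash you encountered never arises; the complementation $E_p\mapsto E_p^c$ fixes the parity $l+e$ when $|J|$ is odd (since $p$ is odd).

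Your fallback windows argument is genuinely wrong, not just sketchy. For $\M_{p,q}$ the KN strata are indeed the $K$ with $|K\cap P|\ge r+1$, but consider $K=K_p\subseteq P$ with $|K_p|=r+1$ and $K\cap Q=\emptyset$: then $\eta_K=2r$. The bundles $F_{0,\emptyset}$ and $F_{0,Q}$ (or $F_{1,Q\cup\{i\}}$ if $q$ is odd) are both in the collection, with $\lambda$-weights at $z_K$ equal to $0$ and $q$ respectively. No integer $w_K$ satisfies both $w_K\le 0$ and $q<w_K+2r$ once $q\ge 2r$, so the collection does not fit in any window at this stratum. The three-case analysis of Theorem~\ref{symm} does \emph{not} go through verbatim, because now $e=e_p+e_q$ enters the weights while the range condition constrains only $e_p$.
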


\bp
We have a morphism $f: \M_{p,q}\to \M_p$, an iterated ($q$ times) universal $\bP^1$-bundle of $\M_p$.
It is induced by the projection  $(\bP^1)^{p+q}\to (\bP^1)^p$ which maps $(\bP^1)^{p+q}_{ss}$ to $(\bP^1)^p_{ss}$.
Using Theorem \ref{stackofbundles}, we can identify the pull-back $f^*\{F_{l,E_p}\}$ of the collection of Theorem~\ref{symm}
with a collection $\{F_{l,E_p}\}$ on $\M_{p,q}$ when $E_p$ is  a subset of $P$.
For every $j\in Q$, let $L_j=F_{0,\{1,\ldots,p,j\}}$. 
% be a choice of an $S_p$-equivariant relative $\cO_{\pi_j}(1)$ for the $j$-th copy of the universal $\bP^1$-bundle.
For every subset $J\subset Q$, let $L_J=\bigotimes_{j\in J} L_j$.
From Orlov's theorem on the derived category of a projective bundle \cite{Orlov blow-up}, we have a s.o.d.~ of $D^b(\M_{p,q})$ into $2^q$ blocks equivalent to $D^b(\M_{p})$ via functors 
$D^b(\M_{p})\to D^b(\M_{p,q})$, $ E\mapsto Lf^*(E)\otimes L_J$
for every subset $J\subset Q$.
In blocks with $|J|=2s$ even, we introduce an exceptional collection
$\{F_{l,E_p}\}\otimes F_{0,\{1,\ldots,p\}}^{\otimes -2s}\otimes L_J=\{F_{l,E_p\cup J}\}$ (use Proposition \ref{tensor}). 
In blocks with $|J|=2s+1$ odd, we use an exceptional collection
$\{F_{l,E_p^c}^\vee\}\otimes F_{0,\{1,\ldots,p\}}^{\otimes -2s}\otimes L_J\cong\{F_{l,E_p\cup J}\}$,
where $E_p^c=P\setminus E_p$.
\ep

%Recall that Corollary ~\ref{asgadhad} and Theorem ~\ref{dfkfvkfvjkfvjk} follow from Theorem  \ref{symm}, 
%as proved in Section~\ref{windows}. In this section we prove that exceptional collection of %Theorem ~\ref{symm}, call it $\cC$, is full.

%%%%%%%%%%%%%%%%%%%%%%%%%%%%%%%%%%%%%%%%%%%%%%%%%%%%%%%%%%%%%%%%%%%%%%%%%%%%

%%%%%%%%%%%%%%%%%%%%%%%%%%%%%%%%%%%%%%%%%%%%%%%%%%%%%%%%%%%%%%%%%%%%%%%%%%%%

\section{Functions of subsets, score and inequalities for pairs $(l,E)$}\label{inequalities section}

In this section we collect numerical functions of subsets and inequalities used throughout the paper.
%to relate exceptional collections on different Hassett spaces. 
The reader may wish to read Notation~\ref{numerics} and ~\ref{asgasfgasgsg}
%, and Terminology~\ref{wrgqergaerhaethqeth} 
and skip the rest of this section, % on a first reading, 
referring back to it as needed.

\smallskip
%The proofs are mundane. 

Let $l\in\ZZ$ and let $E, T$ be subsets of the set $N=\{1,\ldots, n\}$. Unless otherwise noted, in this section we do not impose any condition on the parity of $l+e$. In all the sections that follow we will always assume $e+l$ is even as all the considered pairs $(l,E)$ belong to one of the groups $1A$, $1B$, $2$ or $2A$, $2B$. The reason we do not impose 
parity conidtions on $l+e$ in this section is for induction purposes (proof of Corollary \ref{dfbdfbdfb})

\begin{notn}\label{numerics}
We denote
\begin{equation}\label{peljfeljkljk}
f_{T,E,l}=|E\cap T|-{e-l\over 2}={|E\cap T|-|E\cap T^c|+l\over 2},\quad T^c=N\setminus T,
\end{equation}
\begin{equation}\label{Sdgsgasrh}
\al_{T,E,l}=\max\{0,-f_{T,E,l}\}.
%\begin{cases}
%0 & \hbox{\rm if}\quad |E\cap T|\geq \frac{e-l}{2}\cr
%\frac{e-l}{2}-|E\cap T| & \hbox{\rm if}\quad |E\cap T|<\frac{e-l}{2}.\cr
%\end{cases}
\end{equation}
\begin{equation}\label{sgasrgarga}
m_{T,E,l}=\max\{0,f_{T,E,l}\}.
%\begin{cases}
%0 & \hbox{\rm if}\quad |E\cap T|\geq \frac{e-l}{2}\cr
%\frac{e-l}{2}-|E\cap T| & \hbox{\rm if}\quad |E\cap T|<\frac{e-l}{2}.\cr
%\end{cases}
\end{equation}
We use notation $f_T$, $\al_T$, $m_T$ when $E$ and $l$ are clear from the context.
\end{notn}

\begin{rmk}\label{f_T positive}
Note that $f_{T}+f_{T^c}=l$, so when $e+l$ is even and $l\ge-1$, we have that at least one of $f_{T}, f_{T^c}$ is non-negative.
Typically $l\ge0$, $e+l$ is even and so if $A\sqcup B=N\setminus\{z\}$ 
then at least one of the integers $f_{A}$ or $f_B$ is $\ge0$.
\end{rmk}

\begin{notn}\label{asgasfgasgsg}
Let $P$ be the set of heavy points, with $|P|=p=2r\ge 4$. We~write $Q$ for the set of light points when their number is odd, and $\tilde{Q}$ when their number is even. The reason is that we often pass between these two setups by adding an extra light marking, denoted $y$ or $z$. 

In the first case we have $|Q|=q=2s+1\ge 1$, and in the second case we have $|\tilde{Q}|=q+1=2s+2\ge 0$.
In paticular, $q$ is always odd and can be equal to $-1$ in the second case.

For $l\in\ZZ$,  $E\subseteq P\cup Q$ (resp., $E\subseteq P\cup \tQ$)
we define the {\it score} %of a pair $(l,E)$  as
$$S(l,E)=l+\min\{e_p,p-e_p\}+\min\{e_q,q-e_q\}$$ 
$$(\hbox{\rm resp.,}\ S'(l,E)=l+\min\{e_p,p-e_p\}+\min\{e_q,q+1-e_q\}).$$
%The score  is an even number that stays the same if we exchange either $E_p\leftrightarrow P\setminus E_p$ or 
% $E_q\leftrightarrow \tQ\setminus E_q$. 

We use different notation for $S(l,E)$ and $S'(l,E)$ because sometimes $E$ can be regarded as a subset in both $Q$ or $\tilde{Q}$.  We emphasize that whenever we consider $E\subseteq P\cup Q$ we assume $q=2s+1\geq1$, while whenever we consider $E\subseteq P\cup \tQ$ we assume $q=2s+2\geq0$.
\end{notn}

For the purposes of this section, we introduce the following. 

\begin{terminology}\label{wrgqergaerhaethqeth}
For $l\geq0$,  $E\subseteq P\cup Q$ (resp., $E\subseteq P\cup \tQ$)  we say that $(l,E)$ 
{\it satisfies} condition $1A$, or $1B$, or $2$ (resp. condition $1A$, or $1B$, or $2A$, or $2B$) if it satisfies the inequalities in Theorem \ref{p,q case} or Remark~\ref{SgSRgSRhSRh} (resp., Theorem~\ref{asdvzsfvsfb} or  Remark~\ref{SgSRgSRhSRh}), {\it regardless of the parity of $l+e$}. Recall that when we impose in addition the condition that $l+e$ is even, we say that $(l,E)$ {\it is in group}  $1A$, or $1B$, or $2$ (resp., group $1A$, or $1B$, or $2A$, or $2B$). 
\end{terminology}

\begin{lemma}\label{basic} 
The score functions satisfy the following:
\bi
\item[(i) ] For $l\geq0$, $E\subseteq P\cup Q$ and $(l,E)$  satisfying conditions $1A$, or $1B$, or $2$ of Theorem \ref{p,q case} and Remark~\ref{SgSRgSRhSRh}, we have $S(l,E)\leq r+s-1$. 
\item[(ii) ]  For  $l\geq0$, $E\subseteq P\cup \tQ$ and $(l,E)$ satisfying conditions  $1A$, or $1B$, or $2A$, or $2B$ of Theorem~\ref{asdvzsfvsfb} and Remark~\ref{SgSRgSRhSRh}, 
we have $S'(l,E)\leq r+s$.  
\ei
Moreover equality in (i)  holds if and only if either $l+e_p=r-1$, $e_q=s$ (group $1A$), or 
 $l+e_p=r-1$, $e_q=s+1$ (group $1A$), or $l+(p-e_p)=r-1$, $e_q=s$  (group $1B$), or
 $l+(p-e_p)=r-1$, $e_q=s+1$  (group $1B$), or $e_p=r$, $l+e_q=s-1$  (group $2$), or $e_p=r$, $l+(q-e_q)=s-1$ (group $2$).  

Similarly, equality in (ii)  holds if and only if either $l+e_p=r-1$, $e_q=s+1$ (group $1A$), or $l+(p-e_p)=r-1$, $e_q=s+1$ (group $1B$), or 
$e_p=r$, $l+q+1-e_q=s$ (group $2A$), or $e_p=r$, $l+e_q=s$ (group $2B$). 
\end{lemma}

\bp
For groups  $1A$, $1B$ (on $P\cup Q$ and $P\cup \tQ$) it follows from the definitions that 
$l+\min\{e_p,p-e_p\}\leq r-1$. In this case (i), resp., (ii), follow from $\min\{e_q,q-e_q\}\leq s$, resp., 
$\min\{e_q,q+1-e_q\}\leq s+1$. For group $2$ (on $P\cup Q$) it follows from the definitions that  $l+\min\{e_q,q-e_q\}\leq s-1$, while for groups  $2A$, $2B$  (on $P\cup \tQ$) we have that  $l+\min\{e_q,q+1-e_q\}\leq s$, and (i), (ii) follow from $\min\{e_p,p-e_p\}\leq r$. 
\ep

\begin{lemma}\label{bounds+}
Let 
$l\in\ZZ$ (positive or negative), 
$E\subseteq P\cup \tQ$.  Then 
\begin{equation}\label{First}
\max_{P\cup \tQ=T\sqcup T^c} f_{T,E,l}=S'(l,E)/2,
\end{equation}
\begin{equation}\label{Second}
\max_{P\cup \tQ=T\sqcup T^c}- f_{T,E,l}=S'(l,E)/2-l.
\end{equation}
Here we assume that 
$T=T_p\coprod T_q$ (so $T^c=T^c_p\coprod T^c_q$, where $T^c_p=P\setminus T_p$, $T^c_q=Q\setminus T_q$) is a subset of markings split into heavy and light markings such that $|T_p|=r$ and 
$|T_q|=s+1$.
% $|T_q|=s$.
Equality in (\ref{First}) is attained if and only if 
$(T_p\subseteq E_p$ or $E_p\subseteq T_p)$ and $(T_q\subseteq E_q$ or $E_q\subseteq T_q)$, 
while equality in (\ref{Second}) is attained if and only if 
$(T^c_p\subseteq E_p$ or $E_p\subseteq T^c_p)$ and $(T^c_q\subseteq E_q$ or $E_q\subseteq T^c_q)$.
\end{lemma}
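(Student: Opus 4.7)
The plan is to reduce the maximization over $T$ to two independent maximizations, one over $T_p$ with $|T_p|=r$ and one over $T_q$ with $|T_q|=s+1$, by writing
\[
|E\cap T|=|E_p\cap T_p|+|E_q\cap T_q|,\qquad f_{T,E,l}=|E\cap T|-\tfrac{e_p+e_q-l}{2}.
\]
Both pieces are now standard: for any fixed $Y\subseteq N$, the maximum of $|X\cap Y|$ over subsets $X\subseteq N$ of fixed size $k$ is $\min(|Y|,k)$, with equality iff $X\subseteq Y$ or $Y\subseteq X$.

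Applying this with $k=r$ and $|P|=p=2r$ gives $\max|E_p\cap T_p|=\min(e_p,r)$, and with $k=s+1$ and $|\tQ|=q+1=2s+2$ gives $\max|E_q\cap T_q|=\min(e_q,s+1)$. The key algebraic simplification is the identity
\[
2\min(e_p,r)-e_p=\min(e_p,\,p-e_p),\qquad 2\min(e_q,s+1)-e_q=\min(e_q,\,q+1-e_q),
\]
which follows by splitting into cases $e_p\le r$ versus $e_p\ge r$ (and similarly in $q$) and using $p=2r$, $q+1=2s+2$. Doubling $\max f_{T,E,l}$ and substituting, the right-hand side collapses exactly to $S'(l,E)$, proving \eqref{First}. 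The equality description is the conjunction of the two equality conditions in the basic intersection fact.

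For \eqref{Second} I will use the observation, noted right after \eqref{peljfeljkljk}, that $f_{T,E,l}+f_{T^c,E,l}=l$, which gives $-f_{T,E,l}=f_{T^c,E,l}-l$. Since the cardinality constraints $|T_p|=r$ and $|T_q|=s+1$ are preserved under complementation (as $p-r=r$ and $(q+1)-(s+1)=s+1$), the map $T\mapsto T^c$ is a bijection on the set of admissible partitions. Therefore
\[
\max_{T}(-f_{T,E,l})=\max_{T^c}f_{T^c,E,l}-l=S'(l,E)/2-l,
\]
and the equality condition in \eqref{Second} is obtained from the one in \eqref{First} by replacing $T$ with $T^c$.

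There is no real obstacle here; the only thing to handle carefully is the equality case when one of the numbers $\min(e_p,r)$, $\min(e_q,s+1)$ is achieved by both alternatives simultaneously (e.g. $e_p=r$), but this is absorbed into the stated ``or'' conditions and requires no separate argument.
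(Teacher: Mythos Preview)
Your proof is correct and follows essentially the same approach as the paper: both decompose $|E\cap T|$ into heavy and light parts, maximize each via $\min(e_p,r)+\min(e_q,s+1)$, and then use the identity $2\min(e_p,r)-e_p=\min(e_p,p-e_p)$ (and its light analogue) to recognize $S'(l,E)$. Your treatment of \eqref{Second} via the bijection $T\mapsto T^c$ and the relation $-f_{T,E,l}=f_{T^c,E,l}-l$ is a clean rephrasing of the paper's direct minimization of $|E\cap T|$, and you are more explicit than the paper about the equality cases, but the substance is the same.
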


\bp
%Indeed, 
$|E\cap T|$ is maximized when
$2|E\cap T|=2\left(\min(r,e_p)+\min(s+1,e_q)\right)=\min(p,2e_p)+\min(q+1,2e_q)$
and so
$2 f_{T,E,l}=2|E\cap T|-(e-l)=\min(p-e_p,e_p)+\min(q+1-e_q,e_q)+l=S'(l,E)$.
Similarly, $-|E\cap T|$ is maximized when
$2|E\cap T^c|-(e-l)=S'(l,E)$
and so 
$(e-l)-2|E\cap T|=(e-l)+2|E\cap T^c|-2e=S'(l,E)-2l$, 
which proves the lemma.
\ep

\begin{lemma}\label{jhvvmgvmv}\label{general p,q+1}\label{critical}\label{bound1}
Let $E\subseteq P\cup \tQ$.  Suppose $(l,E)$ satisfies conditions $1A$, or $1B$, or $2A$, or $2B$ of Theorem~\ref{asdvzsfvsfb} and Remark~\ref{SgSRgSRhSRh}.
Then  
\begin{equation}\label{msnfbas}
m_{T,E,l}=\max\{0,f_{T,E,l}\}\leq (r+s)/2,
\end{equation}
for every subset of markings 
$T=T_p\coprod T_q$ split into heavy and light markings such that $|T_p|=r$, 
$|T_q|=s+1$.  %$|T_q|=s$.
We call $T$ \underline{critical} for $(l,E)$ if equality holds in~(\ref{msnfbas}). 

Then $T$~is critical if and only if 
%The inequality (\ref{msnfbas}) is strict unless 
we have one of the  cases listed in the following table, where we also list critical subsets:
%\bi[leftmargin=0.8in]
%\item[(group $1A$)] $l+e_p=r-1$, $e_q=s+1$, $E_p\subseteq T_p$, $E_q=T_q$;
%\item[(group $2A$)] $e_p=r$, $l+q+1-e_q=s$, $E_p=T_p$, $T_q\subseteq E_q$;
%\item[(group $1B$)] $l+(p-e_p)=r-1$, $e_q=s+1$, $T_p\subseteq E_p$, $E_q=T_q$;
%\item[(group $2B$)] $e_p=r$, $l+e_q=s$,  $E_p=T_p$, $E_q\subseteq T_q$.
%\ei

\smallskip
\begin{tabular}{ccc}
(group $1A$) & $l+e_p=r-1$,\quad $e_q=s+1$       & $E_p\subseteq T_p$, $E_q=T_q$;\cr
(group $1B$) & $l+(p-e_p)=r-1$,\quad $e_q=s+1$ & $T_p\subseteq E_p$, $E_q=T_q$;\cr
(group $2A$) & $e_p=r$,\quad $l+q+1-e_q=s$       & $E_p=T_p$, $T_q\subseteq E_q$;\cr
(group $2B$) & $e_p=r$,\quad $l+e_q=s$              & $E_p=T_p$, $E_q\subseteq T_q$.\cr
\end{tabular}
\smallskip

In addition, we have: 
\begin{equation}\label{snbnfbas}
f_{T^c,E,l}\ge -(r+s)/2.
\end{equation}
The inequality \eqref{snbnfbas} is strict unless $l=0$ and $T$ is critical for $(l,E)$, in particular, it appears in the table above.

When in addition $l+e$ is even, a set $T$ is critical for $(l,E)$ if and only if  $r+s$ is even and we are in one of the cases listed in the table above.

% and we have one of the following cases:
%\bi
%\item $e_p=r-1$, $e_q=s+1$, $l=0$, $E\subseteq T$  (group $1A$);
%\item $e_p=r+1$, $e_q=s+1$, $l=0$, $T\subseteq E$ (group $1B$);
%\item $e_p=r$, $e_q=s+2$, $l=0$, $T\subseteq E$  (group $2A$);
%\item $e_p=r$, $e_q=s$, $l=0$, $E\subseteq T$   (group $2B$).
%\ei
%This is the same list as for (\ref{msnfbas}) but with $l=0$. 
%In particular, $T$ is critical. 
\end{lemma}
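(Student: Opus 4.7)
The main observation is that Lemma~\ref{bounds+} already computes $\max_T f_{T,E,l} = S'(l,E)/2$ (where the maximum ranges over subsets $T=T_p\sqcup T_q$ with $|T_p|=r$, $|T_q|=s+1$), so the first inequality \eqref{msnfbas} reduces to the purely numerical claim that
$$S'(l,E) = l + \min(e_p, p-e_p) + \min(e_q, q+1-e_q) \le r+s$$
for every pair $(l,E)$ in groups $1A$, $1B$, $2A$, or $2B$. I would verify this group by group. The key point is that each "shifted" minimum appearing in the group conditions dominates its non-shifted counterpart: $\min(e_p,p+1-e_p)\ge\min(e_p,p-e_p)$, $\min(e_p+1,p-e_p)\ge\min(e_p,p-e_p)$, and analogously for the $e_q$ minima. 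Consequently, the defining inequality of each group bounds the relevant summand of $S'(l,E)$ by $r-1$ (groups $1A$, $1B$) or by $s$ (groups $2A$, $2B$), while the complementary summand is bounded by $s+1$ or $r$ respectively; the totals add up to at most $r+s$.

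To pinpoint when equality $S'(l,E)=r+s$ occurs, I would split each group into the cases $e_p\le r$ versus $e_p\ge r+1$ (and similarly $e_q\le s+1$ versus $e_q\ge s+2$). A short arithmetic check rules out the "wrong" half of each split: for instance, in group $1A$ the case $e_p\ge r+2$ gives $l+(2r-e_p)\le r-2$, strictly below the value $r-1$ needed for $S'(l,E)=r+s$. What remains is precisely the boundary profile recorded in the table: $l+e_p=r-1$, $e_q=s+1$ for group $1A$; and the three dual statements for groups $1B$, $2A$, $2B$. The containment descriptions of $T$ then drop out of the equality clause of Lemma~\ref{bounds+} combined with the explicit values of $e_p, e_q$ in each group (for example $e_p=r-1-l\le r=|T_p|$ forces $E_p\subseteq T_p$, while $e_q=s+1=|T_q|$ forces $E_q=T_q$). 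The parity requirement "$r+s$ even" comes for free: since $e+l$ is even, $f_{T,E,l}$ is always an integer, so it can only equal $(r+s)/2$ when $r+s$ is even.

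The second inequality \eqref{snbnfbas} then follows immediately from the identity $f_{T,E,l}+f_{T^c,E,l}=l$ noted in the definition: indeed,
$$f_{T^c,E,l}=l-f_{T,E,l}\ge l-(r+s)/2\ge -(r+s)/2,$$
using $l\ge0$ and the first part. Strictness can fail in the last step only when $l=0$, and in the first step only when $f_{T,E,l}=(r+s)/2$, i.e.\ when $T$ is critical (which as above forces $r+s$ even).

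The proof is essentially bookkeeping once Lemma~\ref{bounds+} is in hand; the only obstacle is the four-fold case analysis, and I do not anticipate substantive difficulty beyond keeping careful track of which side of each combinatorial minimum is attained in each group.
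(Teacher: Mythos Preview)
Your proposal is correct and follows essentially the same route as the paper: both reduce \eqref{msnfbas} to the inequality $S'(l,E)\le r+s$ via Lemma~\ref{bounds+}, verify it and its equality cases by a short case split on the groups, and deduce \eqref{snbnfbas} from $f_T+f_{T^c}=l\ge0$. The only cosmetic difference is that the paper uses the symmetry $E\mapsto E^c$ (which swaps groups $A\leftrightarrow B$ and preserves $S'$) to halve the case analysis, whereas you handle all four groups directly through your ``shifted minimum dominates'' observation; both organizations work.
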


\begin{proof}  %[Proof of Lemma \ref{critical}]
The first statement is a direct consequence of Lemmas \ref{basic} and \ref{bounds+}. 
%By Lemma~\ref{bounds+}, $\max_Tf_{T,E,l}=S'(l,E)/2$.
%If $(l,E)$ is in group $A$ then $(l,E^c)$ is in group $B$ and 
%$S'(l,E)=S'(l,E^c)$.
%Since the conclusion of the lemma is symmetric under
%this operation, we can assume without loss of generality that  $(l,E)$ is in group $1A$ or $2A$.
%%We first prove (\ref{msnfbas}) and classify critical sets using Lemma \ref{bounds+}. 
%
%If $e_p\le r$, $e_q\le s+1$ then
%$S'(l,E)=l+e_p+e_q=l+e\leq r+s$ for  groups $1A$ and $2A$.
%The inequality is strict except for the first row of the table. 
%
%If $e_p>r$, $e_q\le s+1$ (possible only in case 1A)) then 
%$S'(l,E)=l+(p-e_p)+e_q$.
%Since $l+(p+1-e_p)\leq r-1$ for group $1A$, we have $S'(l,E)<r+s$. 
%
%If $e_p\le r$, $e_q>s+1$ then
%$S'(l,E)=l+e_p+(q+1-e_q)\leq r+s$ for  groups $1A$ and $2A$.
%The inequality is strict except for  the second row in the table.
%
%Finally, if $e_p>r$,  $e_q>s+1$ (possible only in case 1A)) then
%$S'(l,E)=l+(p-e_p)+(q+1-e_q)$,
%which is $<r+s$ for $1A$.
The second claim follows from the first, since  $l\geq0$, 
$f_{T^c,E,l}\geq -f_{T,E,l}\geq-(r+s)/2$
with the first inequality becoming an equality iff $l=0$.
 \end{proof}
 
\begin{cor}\label{AA}
Let $E\subseteq P\cup\tQ$.
If $(l,E)$ satisfies conditions  $2A$ or $2B$
then for $I\subseteq \tQ$, $|I|=s+1$, we have
(i) $f_{I,E_q,l}\le s/2$,
with equality iff either $e_q=l+s+2$, $I\subseteq E_q$ (group $2A$) or $l+e_q=s$, $E_q\subseteq I$ (group $2B$); 
(ii) $f_{I^c,E_q,l}\ge -s/2$.
\end{cor}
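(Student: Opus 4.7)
The plan is to deduce Corollary~\ref{AA} directly from Lemma~\ref{bound1} by a choice-of-$T$ trick. Since we are in group $2A$ or $2B$, we have $e_p=r$, so $|E_p|=r=|T_p|$ for any admissible $T_p$. I will take $T_p:=E_p$ and $T_q:=I$ in the lemma, producing a subset $T=E_p\sqcup I\subseteq P\sqcup\tQ$ of the required type (namely $|T_p|=r$, $|T_q|=s+1$).

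The key identity is
\[
f_{T,E,l}\;=\;|E_p\cap E_p|+|E_q\cap I|-\tfrac{e_p+e_q-l}{2}\;=\;\tfrac{r}{2}+f_{I,E_q,l},
\]
which uses $e_p=r$ in the second step. The bound $f_{T,E,l}\le(r+s)/2$ of Lemma~\ref{bound1} then immediately yields $f_{I,E_q,l}\le s/2$.

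For the equality case I read off the criticality table of Lemma~\ref{bound1}. The rows for groups $1A$ and $1B$ are irrelevant since we are in group $2A$ or $2B$. In group $2A$, criticality of $T$ demands $E_p=T_p$ (automatic from the construction), $T_q\subseteq E_q$ (i.e.\ $I\subseteq E_q$), and $l+q+1-e_q=s$, which unwinds to $e_q=l+s+2$ via $q=2s+1$. In group $2B$, criticality forces $E_p=T_p$ (again automatic), $E_q\subseteq T_q$ (i.e.\ $E_q\subseteq I$), and $l+e_q=s$. These are exactly the conditions stated in the corollary.

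For the inequality $f_{I^c,E_q,l}\ge-s/2$, the general identity $f_{I,E_q,l}+f_{I^c,E_q,l}=l$ (immediate from $|E_q\cap I|+|E_q\cap I^c|=e_q$) combined with the bound just established and the hypothesis $l\ge 0$ gives $f_{I^c,E_q,l}=l-f_{I,E_q,l}\ge l-s/2\ge-s/2$. No substantial obstacle is anticipated; the only point requiring a moment of care is matching the criticality conditions of Lemma~\ref{bound1} with those in the corollary, but this is straightforward bookkeeping once the translation $l+q+1-e_q=s\Leftrightarrow e_q=l+s+2$ is noted.
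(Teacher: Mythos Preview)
Your proof is correct and is essentially the same as the paper's: both apply Lemma~\ref{bound1} (which is the same as Lemma~\ref{critical}) with $T_p=E_p$ and $T_q=I$, and then subtract off the $r/2$ contribution from the heavy part. The only cosmetic difference is that for the second inequality you use $f_I+f_{I^c}=l\ge 0$ directly, whereas the paper invokes the second clause of the lemma, but these amount to the same computation.
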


\bp
Since $|E_p|=r$, both inequalities follow from Lemma \ref{critical} for a set $T$ with heavy indices $T_p=E_p$ and light indices $T_q=I$. 
\ep

We use analogues of Lemmas \ref{bounds+}, \ref{bound1} for $P\cup Q$ as follows:
\begin{lemma}\label{bounds+variant}
Let $l\in\ZZ$ (positive or negative), 
$E\subseteq P\cup Q$, $y\in Q$.  Then 
\begin{equation}\label{First variant}
\max_{P\cup (Q\setminus\{y\})=T\sqcup T^c} f_{T,E,l}\leq S'(l,E\setminus\{y\})/2. 
\end{equation}
%\begin{equation}\label{Second}
%\max_{P\cup \tQ=T\sqcup T^c}- f_{T,E,l}=S'(l,E)/2-l.
%\end{equation}
Here we assume that 
$T=T_p\coprod T_q$ (so $T^c=(P\setminus T_p)\coprod (Q\setminus T_q)$) 
is a subset of markings split into heavy and light markings such that $|T_p|=r$ and 
$|T_q|=s$ and the score function $S'(l,E\setminus\{y\})$ is considered on $P\cup (Q\setminus\{y\})$ (i.e., 
$\tQ=(Q\setminus\{y\})$ in Notation \ref{asgasfgasgsg}).
% $|T_q|=s$.

Equality in (\ref{First variant}) is attained if and only if $y\notin E$. In this case, equality is attained for some $f_{T,E,l}$ if and only if 
$(T_p\subseteq E_p$ or $E_p\subseteq T_p)$ and $(T_q\subseteq E_q$ or $E_q\subseteq T_q)$. 
%while equality in (\ref{Second}) is attained if and only if 
%$(T^c_p\subseteq E_p$ or $E_p\subseteq T^c_p)$ and $(T^c_q\subseteq E_q$ or $E_q\subseteq T^c_q)$.
\end{lemma}

\bp
Let $E':=E\setminus\{y\}$. Then $E\cap T=E'\cap T$, and $f_{T,E,l}=f_{T, E',l}$ if $y\notin E$ and $f_{T,E,l}=f_{T, E',l}-\frac{1}{2}$ if $y\in E$.  The result now follows from Lemma \ref{bounds+}.  
%This is the same proof as for Lemma  \ref{critical}, as $E\cap T=(E\setminus\{y\})\cap T$. 
\ep

\begin{lemma}\label{critical on W}
Let $E\subseteq P\cup Q$. Suppose $(l,E)$ is 
satisfies conditions $1A$, or $1B$, or $2$ of Theorem~\ref{asfffdvzsfvsfb} and Remark~\ref{SgSRgSRhSRh}.
Let $y\in Q$ and $T=T_p\coprod T_q\subset P\cup (Q\setminus\{y\})$ be 
split into heavy and light markings 
such that $|T_p|=r$ and 
$|T_q|=s$.
Then
\begin{equation}\label{critical W}
m_{T,E,l}=\max\{0,f_{T,E,l}\}\le (r+s-1)/2.
\end{equation}
We call $T$ \underline{critical} if equality holds in (\ref{critical W}),
which happens if and only if $y\notin E$ and 
we are in one of the following cases:

\smallskip
\begin{tabular}{ccc}
(group $1A$) &  $l+e_p=r-1$, $e_q=s$, & $E_p\subseteq T_p$, $E_q=T_q$;\cr
(group $1B$) & $l+(p-e_p)=r-1$, $e_q=s$, & $T_p\subseteq E_p$, $E_q=T_q$;\cr
(group $2$) & $e_p=r$, $l+e_q=s-1$,   & $E_p=T_p$, $E_q\subseteq T_q$.\cr
\end{tabular}
\smallskip

\noindent
In particular, $m_{T,E,l}\le {r+s\over 2}-|E\cap\{y\}|$.

\smallskip

If in addition, $l+e$ is even and $T$ is critical, we must have that $r+s$ is odd.
%Also, we have 
%\begin{equation}%\label{critical WW}
%$f_{T^c,E,l}\geq -(r+s)/2$.
%\end{equation}
%The inequality is strict, unless $r+s$ is even, $l=0$, $y\in E$, and we have one the following cases:
%\smallskip
%\begin{tabular}{ccc}
%(group $1A$) & $l=0$, $e_p=r-1$, $e_q=s+1$ , & $E_p\subseteq T_p$, $E_q=T_q\cup\{y\}$;\cr
%(group $1B$) & $l=0$, $e_p=r+1$, $e_q=s+1$, & $T_p\subseteq E_p$, $E_q=T_q\cup\{y\}$;\cr
%(group $2$) & $l=0$, $e_p=r$, $e_q=s+2$,   & $E_p=T_p$, $T_q\subseteq E_q$, $y\in E_q$.\cr
%\end{tabular}
\end{lemma}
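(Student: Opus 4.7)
\textbf{Proof plan for Lemma \ref{critical on W}.} The plan is to mimic the argument of Lemma~\ref{bound1} (=Lemma~\ref{critical}), adjusted for the fact that our subset $T$ lies in $P\cup(Q\setminus\{y\})$ with $|T_q|=s$ rather than $|T_q|=s+1$. Writing $\eta=|E\cap\{y\}|\in\{0,1\}$, the starting observation is the combinatorial bound
\begin{equation*}
|E\cap T|=|E\cap T_p|+|E\cap T_q|\le \min(r,e_p)+\min(s,e_q-\eta),
\end{equation*}
which, upon substitution into $2f_{T,E,l}=2|E\cap T|-(e-l)$, reduces the problem to a case analysis.

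The key computation is a four-way split according to whether $e_p\le r$ or $e_p>r$ and whether $e_q-\eta\le s$ or $e_q-\eta>s$. In each cell, I would substitute the group-specific constraint (group $1A$: $l+\min(e_p,p+1-e_p)\le r-1$; group $1B$: $l+\min(e_p+1,p-e_p)\le r-1$; group $2$: $e_p=r$, $l+\min(e_q,q-e_q)\le s-1$) to convert the bound on $f_{T,E,l}$ into a constant. The calculation is identical in spirit to the one in Lemma~\ref{bound1}: groups $1A/1B$ are handled by splitting on $e_p$, group $2$ by splitting on $e_q$ and using $q=2s+1$ so that $\min(e_q,q-e_q)\le s$. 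In every cell one lands at $f_{T,E,l}\le (r+s-1)/2$, with strict inequality except in three distinguished configurations.

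The identification of the equality cases is then a bookkeeping exercise. Equality forces (a)~both combinatorial maxima $|E\cap T_p|=\min(r,e_p)$ and $|E\cap T_q|=\min(s,e_q-\eta)$ to be attained, giving the displayed containment conditions $E_p\subseteq T_p$ or $T_p\subseteq E_p$ (and similarly for $E_q$, $T_q$); (b)~the group constraint to be saturated (giving $l+e_p=r-1$, $l+p-e_p=r-1$, or $l+e_q=s-1$); and (c)~$\eta=0$, since a nontrivial $\eta$ reduces the bound by $1$. Together with the parity constraint that $e+l$ is even, condition (b) forces $r+s$ to be odd in each of the three surviving cases, giving exactly the table in the statement.

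The last assertion $m_{T,E,l}\le (r+s)/2-\eta$ follows from the same case analysis: when $\eta=1$, the $-2\eta$ contribution to the bound on $2f_{T,E,l}$ in each cell yields $f_{T,E,l}\le (r+s)/2-1$, and combined with $m=\max(0,f)$ and the $\eta=0$ bound $m\le (r+s-1)/2\le (r+s)/2$, we obtain the uniform statement. I expect the main (and only) obstacle to be organizing the twelve sub-cases (four configurations $\times$ three groups) without repetition; this is routine but requires care to keep the parity bookkeeping consistent so that the equality analysis lines up cleanly with the three rows of the table.
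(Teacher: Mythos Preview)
Your proposal is correct and follows essentially the same approach as the paper: a direct case analysis based on the combinatorial bound $|E\cap T|\le\min(r,e_p)+\min(s,e_q-\eta)$ combined with the group-specific inequalities, tracking equality through each case. The paper organizes the cases by group first (six cases: 1A with $e_p<r$, 1A with $e_p>r$, 1B with $e_p>r$, 1B with $e_p<r$, 2 with $e_q<s$, 2 with $e_q>s$) rather than your $4\times 3$ grid, but since $e_p=r$ is excluded in groups $1A/1B$ and $e_q\in\{s,s+1\}$ is excluded in group~$2$, these amount to the same computation.
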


\begin{proof}
By Lemma \ref{bounds+variant} we have that $2f_{T,E,l}\leq S'(l,E\setminus\{y\})$ and equality holds iff $y\notin E$ and 
$(T_p\subseteq E_p$ or $E_p\subseteq T_p)$ and $(T_q\subseteq E_q$ or $E_q\subseteq T_q)$.

Let $E'=E\setminus\{y\}$. If $(l,E)$ satisfies condition $1A$ (resp. $1B$), then $(l,E')$ satisfies condition $1A$ (resp. $1B$). Similarly, if $(l,E)$ satisfies condition $2$ on $M_{p,q}$, then $(l,E')$ satisfies condition $2B$ on $\M_{p,q-1}$ if $y\notin E$ or condition $2A$ if $y\in E$.
It follows by Lemma \ref{basic} that $S'(l,E\setminus\{y\})\leq r+s-1$ and equality holds in the cases listed there. 
Note that the case $e_p=r$, $l+(q-1-e_q)=s-1$ 
(implying an equality in Lemma \ref{basic}(ii) on $\M_{p,q-1}$) does not occur since 
this would violate the condition that $(l,E)$ satisfies condition $(2)$.  

\end{proof}

\begin{cor}\label{dfbdfbdfb}
In the set-up of Notation \ref{asgasfgasgsg}, let $l\geq 0$ and $E\subseteq P\cup Q$ with $|Q|=2s+1\geq1$, or $E\subseteq P\cup \tQ$ with $|Q|=2s+2\geq0$.  
Let $P=R\sqcup R'$ be a partition of $P$ with $|R|=|R'|=r$. If $(l,E)$ satisfies $1A$ or $1B$, then 
\begin{equation}\label{another}
f_{R,E_p,l}\leq (r-1)/2.
\end{equation}
%$f_{R,E_p,l}\leq\frac{r-1}{2}$, 
%or equivalently, 
%\begin{equation}\label{another}
%l+|E_p\cap R|-|E_p\cap R'|\leq r-1.
%\end{equation}
\end{cor}

\bp
If $(l,E)$ with $E\subseteq P\cup\tQ$ 
satisfies condition $1A$ (resp., $1B$), then $(l,E_p)$ for the set $E_p\supset P$
satisfies condition $1A$ (resp., $1B$) and the result follows by applying Lemma \ref{critical} in the case when $p=2r$, $s=-1$ for the partition $T=R$, $T^c=R'$. 
Similarly  if $(l,E)$ with $E\subseteq P\cup Q$ 
satisfies condition $1A$ 
(resp., $1B$), then $(l,E_p)$ 
satisfies condition $1A$ (resp., $1B$) on $\M_{p,1}$ and the result follows by applying Lemma \ref{critical on W} in the case $p=2r$, $s=0$ for the same partition. 
\ep

We will also need the following:
\begin{lemma}\label{another2}
Let $q=|Q|=2s+1$, $l\geq0$, $E\subseteq Q$ a set with $e=|E|$ satisfying 
$l+\min\{e, q-e\}\leq s-1$. Let $I\subseteq Q$ be any subset. 
Then 
\begin{equation}
l+|E\cap I^c|-|E\cap I|\leq 
\begin{cases}
s-1 & \hbox{\rm if}\quad e\leq s\cr
2|I^c|-s-2 & \hbox{\rm if}\quad e\geq s+1.\cr
\end{cases}
\end{equation}
\end{lemma}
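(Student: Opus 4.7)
The plan is to rewrite the quantity on the left-hand side in a symmetric way and then bound $|E\cap I^c|$ by the trivial estimate $|E\cap I^c|\le\min(e,|I^c|)$, applied differently in the two cases.

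First I would note the identity
\[
|E\cap I^c|-|E\cap I| \;=\; 2|E\cap I^c|-e,
\]
so that the inequality to prove becomes an upper bound on $l+2|E\cap I^c|-e$. Then I would unpack the hypothesis $l+\min\{e,q-e\}\le s-1$ in the two regimes. If $e\le s$, then $\min\{e,q-e\}=e$ (since $q-e=2s+1-e\ge s+1$), so the hypothesis reads $l+e\le s-1$. If $e\ge s+1$, then $\min\{e,q-e\}=q-e$, and the hypothesis becomes $l\le e-s-2$.

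In the first case, the bound $|E\cap I^c|\le e$ gives
\[
l+2|E\cap I^c|-e \;\le\; l+e \;\le\; s-1,
\]
which is exactly what is claimed. In the second case, I would instead use $|E\cap I^c|\le|I^c|$ to obtain
\[
l+2|E\cap I^c|-e \;\le\; l+2|I^c|-e \;\le\; (e-s-2)+2|I^c|-e \;=\; 2|I^c|-s-2,
\]
again matching the claimed bound.

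There is no real obstacle here: once one writes the quantity as $2|E\cap I^c|-e$ and recognises that the hypothesis has two faces according to whether $e\le s$ or $e\ge s+1$, the lemma reduces to choosing the appropriate trivial estimate for $|E\cap I^c|$ in each case. The only mild subtlety is recognising that the two different right-hand sides in the statement correspond precisely to the two ways of saturating this estimate (by $e$ versus by $|I^c|$).
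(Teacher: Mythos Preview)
Your proof is correct and is essentially the same as the paper's: the paper's two-line proof simply records the key inequalities $l+e\le s-1$ (when $e\le s$) and $l-e\le -s-2$ (when $e\ge s+1$), which are exactly the hypotheses you extract, and then implicitly uses the same trivial bounds $|E\cap I^c|\le e$ and $|E\cap I^c|\le |I^c|$ via the identity $|E\cap I^c|-|E\cap I|=2|E\cap I^c|-e$. Your write-up is just a more explicit unpacking of the same argument.
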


\bp
If $e\leq s$ then the inequality follows from $l+e\leq s-1$.
If $e\geq s+1$, then the inequality follows from $l-e\leq -s-2$. 
\ep

The same way we obtain the following: 
\begin{lemma}\label{another3}
Let $q=|Q|=2s+2$, $l\geq0$, $E\subseteq Q$ a set with $e=|E|$, and  
let $I\subseteq Q$ be any subset. 
If $l+\min\{e+1, q+1-e\}\leq s$, then 
\begin{equation}
l+|E\cap I^c|-|E\cap I|\leq 
\begin{cases}
s-1 & \hbox{\rm if}\quad e\leq s\cr
2|I^c|-s-2 & \hbox{\rm if}\quad e\geq s+1.\cr
\end{cases}
\end{equation}

If $l+\min\{e, q+2-e\}\leq s$, then 
\begin{equation}
l+|E\cap I^c|-|E\cap I|\leq 
\begin{cases}
s & \hbox{\rm if}\quad e\leq s+1\cr
2|I^c|-s-3 & \hbox{\rm if}\quad e\geq s+2.\cr
\end{cases}
\end{equation}
\end{lemma}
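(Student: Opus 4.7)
The plan is to follow the same template as the proof of Lemma~\ref{another2}, using the two elementary bounds
\[
|E\cap I^c|-|E\cap I| = 2|E\cap I^c|-e \le e \qquad\text{and}\qquad 2|E\cap I^c|-e \le 2|I^c|-e,
\]
and splitting on the size of $e$ relative to $s$, in each of the two hypotheses.

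For the first hypothesis, $l+\min\{e+1,\,q+1-e\}\le s$ with $q=2s+2$, note that $e+1\le q+1-e$ precisely when $e\le s+1$, so the hypothesis reads $l+e\le s-1$ in the range $e\le s+1$ and $e-l\ge s+3$ in the range $e\ge s+2$. If $e\le s$ I would just use $l+|E\cap I^c|-|E\cap I|\le l+e\le s-1$. For $e\ge s+1$ I would use $l+|E\cap I^c|-|E\cap I|\le l+2|I^c|-e$ and reduce to checking $l-e\le -s-2$. The case $e=s+1$ is actually vacuous, since the hypothesis would force $l\le -2$, contradicting $l\ge 0$; and for $e\ge s+2$ the hypothesis gives $e-l\ge s+3$, which is exactly what is needed.

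For the second hypothesis, $l+\min\{e,\,q+2-e\}\le s$, the threshold shifts to $e\le s+2$, with the hypothesis becoming $l+e\le s$ in that range and $e-l\ge s+4$ for $e\ge s+3$. The two cases go through in parallel: if $e\le s+1$ use $l+|E\cap I^c|-|E\cap I|\le l+e\le s$, and if $e\ge s+2$ use $l+|E\cap I^c|-|E\cap I|\le l+2|I^c|-e$ and reduce to $l-e\le -s-3$. Again the boundary value $e=s+2$ is vacuous (forcing $l\le -2$), and for $e\ge s+3$ the hypothesis gives $e-l\ge s+4$, which suffices.

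There is essentially no hard step here — the whole proof is unpacking the definition of $\min$ in the hypothesis and comparing it with one of the two obvious bounds on $|E\cap I^c|-|E\cap I|$. The only point requiring a moment of care is the observation that the would-be boundary cases $e=s+1$ in Part~1 and $e=s+2$ in Part~2 are vacuous under the assumption $l\ge 0$, which is what allows the strict inequality $e\ge s+2+l$ (resp. $e\ge s+3+l$) needed for the second sub-case to be deduced from the slightly weaker-looking hypothesis.
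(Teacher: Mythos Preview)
Your proof is correct and follows exactly the approach the paper indicates (the paper's proof is simply ``The same way we obtain the following,'' referring back to Lemma~\ref{another2}). Your explicit treatment of the boundary cases $e=s+1$ and $e=s+2$ as vacuous under $l\ge 0$ is a detail the paper leaves implicit, but the method is identical.
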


%%%%%%%%%%%%%%%%%%%%%%%%%%%%%%%%%%%%%%%%%%%%%%%%%%%%%%%%%%%%%%%%%%%%%%%%
%%%%%%%%%%%%%%%%%%%%%%%%%%%%%%%%%%%%%%%%%%%%%%%%%%%%%%%%%%%%%%%%%%%%%%%%
%%%%%%%%%%%%%%%%%%%%%%%%%%%%%%%%%%%%%%%%%%%%%%%%%%%%%%%%%%%%%%%%%%%%%%%%
%%%%%%%%%%%%%%%%%%%%%%%%%%%%%%%%%%%%%%%%%%%%%%%%%%%%%%%%%%%%%%%%%%%%%%%%
%%%%%%%%%%%%%%%%%%%%%%%%%%%%%%%%%%%%%%%%%%%%%%%%%%%%%%%%%%%%%%%%%%%%%%%%

%%%%%%%%%%%%%%%%%%%%%%%%%%%%%%%%%%%%%%%%%%%%%%%%%%%%%%%%%%%%%%%%%%%%%%%%%%%%%%%%%%%%%%

\section{Extending vector bundles $F_{l,E}$ to some Hassett spaces}\label{extend section}

In this section we will extend the construction of vector bundles $F_{l,E}$, defined in Section~\ref{windows section} for open substacks 
of the  stack $[(\bP^1)^n/\PGL_2]$, to certain
Hassett spaces. They will include all spaces $\M_{p,q}$ when $p$ and $q$ are both even (see Definition~\ref{allF}) as well as the universal families over them
(see Definition~\ref{F on W}). As in Section~\ref{stack section}, $F_{l,E}$ will be constructed as the pushforward of the line bundle $N_{l,E}$, but its definition will need to be adjusted.

\medskip 

We first recall some general facts about Hassett spaces. Let $\M:=\M_{\ba}$  be a Hassett space ($\sum a_i>2$). Let $\al:W\ra\M$ be the universal family, with $\sigma_1,\ldots,\sigma_n$ the corresponding sections. 
In $\Pic(\M)$ there are tautological classes $\psi_i=\si_i^*(\om_{\al})=-\si_i^*(\si_i)$. 
Here and everywhere we identify line bundles with divisor classes (hence, we use both additive and multiplicative notation). We now record several facts for further use. 

\begin{note}\label{note 1}
(1) The boundary divisors on $\M$ may have two types: 

\smallskip

\underline{Type I}: divisors $\de_{T,T^c}$ with $|T|, |T^c|\geq2$, whose generic elements consist of curves with two components, marked by $T$ and $T^c$ respectively
($\sum_{i\in T}a_i>1$, $\sum_{i\in T^c}a_i>1$). 
We can identify $\de_{T,T^c}=\M_{T\cup\{u\}}\times\M_{T^c\cup\{u\}}$, with $u$ corresponding to the attaching point. Here $\M_{T\cup\{u\}}$ denotes the Hassett space parametrizing rational curves with marked points indexed by the set $T\cup\{u\}$, with weight $1$ for the point indexed by $u$ and weight $a_i$ for the points corresponding to $i\in T$. 
We often write $\de_T$ for the boundary divisor $\de_{T,T^c}\subset\M$, when the sets $T$, $T^c$ are clear from the context. We may also write $\de_{ij}$ for $\de_T$ when $T=\{i,j\}$.

\smallskip

\underline{Type II}: divisors parametrizing curves where markings $i, j$ coincide ($a_i+a_j\leq1$). We denote this
by $\de_{ij}$. Its class in $\Pic(\M)$ is 
 $\de_{ij}=\si_i^*(\si_j)$. We identify $\de_{ij}=\M_{\{i,j\}^c\cup\{u\}}$, with the marking $u$ having weight $a_i+a_j$.  
 
\medskip
 
 Note that we use the same notation $\de_{ij}$ for a boundary divisor of type I or type II, whichever exists in the context: type I if  
 $a_i+a_j>1$, $\sum_{k\neq i,j}a_k>1$ and type II if $a_i+a_j\leq1$ (which forces $\sum_{k\neq i,j}a_k>1$). 
 
\medskip

(2) \cite[Lemma 2.1]{CT_part_Ib} If $\al:W\ra\M$ is a $\PP^1$-bundle, i.e., all $\ba$-stable curves are irreducible, then in $\Pic(\M)$ we have $\psi_i+\psi_j=-2\de_{ij}$ if $a_i+a_j\leq1$ and  $\psi_i+\psi_j=0$ if $a_i+a_j>1$ (in which case our assumption on $\ba$ implies that 
$\sum_{k\neq i,j}a_k\leq 1$ (note that there is no boundary $\de_{ij}$ in this case).

\medskip

(3) If %$\cA=(a_1,\ldots, a_n)$ is such that 
$a_1=1$ % $\sum_{k\neq 1}a_k>2$ 
and $\sum\limits_{k\neq 1,j}a_k\leq 1$ for all $j\geq 2$,
then $\M_\ba\cong\PP^{n-3}$  \cite[\S 6.2]{Ha}
and we can assume without loss of generality that $a_j={1\over n-1}+\eps$ for $j\ge2$. Furthermore, 
$\de_{ij}=\cO(1)$ ($i\neq j$, $i,j\neq1$), $\psi_1=\cO(1)$, $\psi_j=\cO(-1)$ $(j\neq 1)$. 

\medskip

(4) Let $\M=\M_\ba$, $ \M'=\M_{\bb}$ %, $\cA=(a_i)$,  $\cB=(b_i)$ 
be Hassett spaces related by a reduction map $p:\M\ra \M'$ (i.e., $a_i\geq b_i$ for all $i$). Let $\pi:W\ra\M$, $\pi':W'\ra\M'$ be the corresponding universal families. We record here the relations between tautological classes and their pull backs via reduction maps. 
By
 \cite[Lemma 2.3]{CT_part_Ib}, we have: 
 \begin{equation}\label{Pullbacks1}
p^*\psi_i=\psi_i-\sum_{i\in I,\ 2\leq |I|\leq n-2,\ \sum_{i\in I}a_i>1,\  \sum_{i\in I}b_i\leq1}\de_I,
\end{equation}

Explicitly, the $\psi_i$ class pulls back to the same $\psi_i$ class, but adjusted by the $p$-exceptional boundary divisors $\de_{I, I^c}$ such that $i\in I$ and with the markings 
$\{p_k\}_{k\in I}$ identified after reducing the weights. 
Similarly, we have 
 \begin{equation}\label{Pullbacks2}
p^*\de_{ij}=\de_{ij}+\sum_{i,j\in I,\ 3\leq|I|\leq n-2,\ \sum_{i\in I}a_i>1,\  \sum_{i\in I}b_i\leq1}\de_I.
\end{equation}
The sum contains precisely the boundary divisors $\de_{I, I^c}$ such that $i,j\in I$ and with the markings 
$\{p_k\}_{k\in I}$ identified after reducing the weights.
Furthermore, there is an induced morphism $\phi:W\ra W'$ and we have
 \begin{equation}\label{Pullbacks3}
\phi^*\om_{\pi'}=\om_\pi-\sum_T \de_{T\cup\{y\},T^c},\quad w^*\de_{iy}=\de_{iy}+\sum_{i\in T} \de_{T\cup\{y\},T^c}.
\end{equation}
\end{note}

\medskip

\begin{note}\label{note 2}
Assume that the Hassett space $\M=\M_{\ba}$ is such that 
$\sum_{i\in I} a_i\ne 1$ for all $|I|\geq2$, or in the terminology in \cite{Ha}, the weight data $\ba$ is contained in a fine open chamber. Then:

(1) By \cite[Proposition 5.4]{Ha}, the universal family $W$ is isomorphic to the Hassett space $\M_{(a_1,\ldots, a_n,\epsilon)}$ for $\eps\ll1$.  The weight data $(a_1,\ldots, a_n,\epsilon)$
is also contained in a fine open chamber. In particular, $W$ is smooth and so is its universal family $\pi:\cU\ra W$. 

(2) We denote the extra marking on $W$ by $y$ and the extra marking on the universal family $\cU$ over $W$ with $x$. Note that the image of the section $\sigma_i:\M\ra W$ may be identified with $\de_{iy}\subset W$ 
when $a_i<1$.
\end{note}

Throughout the remaining part of this section, we assume that $\sum_{i\in I} a_i\ne 1$ for $|I|\geq2$ and all $\ba$-stable curves have at most two components. Note that is the case for all the spaces $\M_{p,q}$ for all $p,q$ -- even or odd. 

\medskip

Let $N=\{1,\ldots, n\}$. We construct vector bundles $F_{l,E}$ on both $\M$ and $W$ as follows. Here $\al: W\ra\M$, $\pi: \cU\ra W$ denote the universal families. 

\begin{defn}[\bf  $F_{l,E}$ on $\M$]\label{allF}
For a subset $E\subseteq N$ %:=\{1,\ldots,n\}$ 
with $e=|E|$ and integer $l\geq-1$ such that $e+l$ is even, we let 
$F_{l,E}=R\al_*\big(N_{l,E}\big)$, where 
$$N_{l,E}=\om_{\al}^{\frac{e-l}{2}}(E)\otimes\cO\Bigl(-\sum_T\al_{T,E,l}\de_{T\cup\{y\}, T^c}\Bigr),$$ 
where we denote $\cO(E):=\bigotimes_{j\in E}\cO(\sigma_j)$, the sum is over all $T\subset N$ such that $\de_{T,T^c}\subseteq \M$ is a boundary component (here $T^c:=N\setminus T$) and $\al_{T,E,l}$ are defined as in \eqref{Sdgsgasrh}. 
\end{defn}

\begin{defn}[$F_{l,E}$ on $W$]\label{F on W} 
For $E\subseteq N\cup\{y\}$ with $e=|E|$ and integer $l\geq0$ such that $e+l$ is even, on $W$ we let
$F_{l,E}=R\pi_*\big(N_{l,E}\big)$, where 
$$N_{l,E}=\om_{\pi}^{\frac{e-l}{2}}(E)\otimes\cO\Bigl(-\sum_T \al_T\de_{T\cup\{x\}, T^c\cup\{y\}}-\sum_T \al_{T\cup\{y\}}\de_{T\cup\{y, x\},T^c}\Bigr)$$
where for $S=T$ or $S=T\cup\{y\}$, we define $\alpha_S:=\alpha_{S,E,l}$ (see \eqref{Sdgsgasrh}; hence, $\al_T=\al_{T\cup\{y\}}=0$ or $\al_{T\cup\{y\}}=\al_T-|E\cap\{y\}|\geq0$).  
The sum is over all $T\subset N$ such that $\de_{T,T^c}\subseteq \M$ is a boundary component ($T^c:=N\setminus T$). 
%\begin{equation}
%\al_S=
%\begin{cases}
%0 & \hbox{\rm if}\quad |E\cap S|\geq \frac{e-l}{2}\cr
%\frac{e-l}{2}-|E\cap S| & \hbox{\rm if}\quad |E\cap S|<\frac{e-l}{2},\cr
%\end{cases}
%\end{equation}
\end{defn}

Before we prove that these are indeed vector bundles (Lemmas \ref{vb}, \ref{vb on W}), we first discuss the 
geometry of the spaces $\M$, $W$ and $\cU$. 
\begin{lemma}\label{BASIC}
Let $\de\subseteq \M$ be a boundary divisor in $\M$. 
Using the identification $\de=\M_{T\cup\{u\}}\times\M_{T^c\cup\{u\}}$,
we have $\de_{|\de}\cong(-\psi_u)\boxtimes(-\psi_u)$ in $\Pic(\de)$ (where we identify $\de=\M_{T^c\cup\{u\}}$ when $\M_{T\cup\{u\}}$ is a point). 
\end{lemma}

\bp
We first check that the identity holds on $\ocM_{0,n}$.  Using the forgetful map
$\pi:\ocM_{0,n}\ra\ocM_{0,\{1,2,3,4\}}$, we have $\pi^*\de_{\{1, 2\},\{3, 4\}}=\sum_{1,2\in T', 3,4\in {T'}^c}\de_{T',{T'}^c}$ and hence we have in  $\Pic(\ocM_{0,n})$ that
\begin{equation}\label{Sean}
\sum_{1,2\in T, 3,4\in {T}^c}\de_{T,{T}^c}=\sum_{1,3\in T, 2,4\in {T}^c}\de_{T,{T}^c}. 
\end{equation}

Fix $\de:=\de_{T_0, T_0^c}$ a boundary divisor on $\ocM_{0,n}$. We may assume without loss of generality that $1,2\in T_0$, $3,4\in T_0^c$. 
%$T_0=\{1,2\}\cup A$ and $T_0^c=\{3,4\}\cup B$, with $A$ and $B$ disjoint sets such that $A\sqcup B=N\setminus\{1,2,3,4\}$. 
Restricting (\ref{Sean}) to $\de$ we obtain that $\de_{|\de}+\sum_{(T_0\subsetneq T, 3,4\in {T}^c) \text{ or } (1,2\in T\subsetneq T_0)}{{\de_{T,T^c}}_{|\de}}=0$. We identify $\de=\ocM_{0, T_0\cup\{u\}}\times \ocM_{0, T^c_0\cup\{u\}}$ and we identify the restrictions 
${\de_{T,T^c}}{|\de}$ with boundary divisors on either $\ocM_{0, T_0\cup\{u\}}$ or $\ocM_{0, T^c_0\cup\{u\}}$. For example, if $T_0\subsetneq T$, then $3,4\in S:=T^c\subset T_0^c$ and we have ${\de_{T,T^c}}{|\de}=\de_{S, T_0^c\setminus S\cup\{u\}}$. It follows that 
$$-\de_{|\de}=\big(\sum_{1,2\in S\subsetneq T_0}\de_{S,(T_0\setminus S)\cup\{u\}}\big)\boxtimes\big(\sum_{3,4\in S\subsetneq T^c_0}\de_{S,(T^c_0\setminus S)\cup\{u\}}\big)$$ in 
$\Pic(\de)$. 
Using the Kapranov model given by the line bundle $\psi_u$, we have that $\psi_u=\sum_{1,2\in S\subseteq T_0}\de_S$ in $\Pic(\ocM_{0, T_0\cup\{u\}})$. By symmetry, we have that 
$\psi_u=\sum_{3,4\in S\subseteq T^c_0}\de_{S,(T_0\setminus S)\cup\{u\}}$ in $\Pic(\ocM_{0, T^c_0\cup\{u\}})$ and the identity follows. 

Consider now an arbitrary Hassett space $\M=\M_\ba$ and let $p:\ocM_{0,n}\ra \M$ be the reduction map. Fix $\de$ a boundary divisor on $\M$, corresponding to a partition 
$N=T_0\sqcup T_0^c$ (when $\de=\de_{ij}$ is of type II (see Note \ref{note 1}(1)), we let $T_0=\{i,j\}$). We let $\de'$ be the boundary divisor $\de_{T_0, T_0^c}$ on $\ocM_{0,n}$ and let $p'=p_{|\de'}:\de'\ra \de$ be the restriction map, which, after identifying $\de=\M_1\times\M_2$, 
$\de'=\ocM_{0, T_0\cup\{u\}}\times\ocM_{0, T^c_0\cup\{u\}}$, is the product of reduction maps $p_1:\ocM_{0, T_0\cup\{u\}}\ra\M_1$, $p_2:\ocM_{0, T^c_0\cup\{u\}}\ra\M_2$. It suffices to check that $(p^*\cO(\de))_{|\de'}=p_1^*(-\psi_u)\boxtimes p_2^*(-\psi_u)$. 

If $\de$ is of type I, then $p^*\cO(\de)=\cO(\de')$. By (\ref{Pullbacks1}) we have that $p_1^*\psi_u=\psi_u$, $p_2^*\psi_u=\psi_u$, since the marking $u$ has weight $1$ on $\M_1$ and $\M_2$. The statement now follows, since $\cO(\de')_{|\de'}=(-\psi_u)\boxtimes(-\psi_u)$ on $\ocM_{0,n}$. 

If $\de$ is of type II, we may assume that $T_0=\{1,2\}$ (so $a_1+a_2\leq1$). By (\ref{Pullbacks2}) we have $p^*\de_{12}=\de_{12}+\sum_{1,2\in T, |T|\geq 3, \sum_{i\in T}a_i\leq1}\de_{T,T^c}$. Identifying $\de'=\ocM_{0, T^c_0\cup\{u\}}$, we have in $\Pic(\de')$ that 
$${p^*\cO(\de)}_{|\de'}=\cO(\de')_{|\de'}+\sum_{\emptyset\neq S\subset T_0^c, a_1+a_2+\sum_{i\in S}a_i\leq1}\de_{S\cup\{u\},T_0^c\setminus S},$$
and we already proved that $\cO(\de')_{|\de'}=-\psi_u$. We identify $\de=\M_2$ (as $\M_1$ is a point), and note that the map $p':\de'\ra\de$ is a reduction map. It follows by (\ref{Pullbacks1}) that $p'^*(-\psi_u)=-\psi_u+\sum_{\emptyset\neq S\subset T_0^c, a_1+a_2+\sum_{i\in S}a_i\leq1}\de_{S\cup\{u\},T_0^c\setminus S}$.
\ep

\begin{lemma}\label{P1b}
Assume $a_1=1$ and assume that the universal family 
$\al: W\ra \M$ is a $\PP^1$-bundle, i.e., all $\ba$-stable curves are irreducible. 
Then:

(i) $\M\cong\PP^{n-3}$ and $W\cong\Bl_p\PP^{n-2}$. If we denote
$H$ the hyperplane class  and by $\De$ the exceptional divisor on 
$\Bl_p\PP^{n-2}$,
then we have 
$\psi_1=\cO(H)$, $\de_{N\setminus \{1\},\{1,y\}}=\De$
in $\Pic(W)$. % (we identify $W$ with the Hassett space with markings  $N\cup\{x\}$). 

(ii) The map $\al: W=\Bl_p\PP^{n-2}\ra\M=\PP^{n-3}$ is induced by the linear system $|H-\De|$. Moreover, if $f$ is a fiber of $\al$, 
then $\psi_1\cdot f=H\cdot f=1$. 

(iii) In $\Pic(W)$ we have that $\de_{jy}=H$, for all $j$ in $N\setminus\{1\}$.
\end{lemma}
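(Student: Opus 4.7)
The plan is to identify $\M$ and $W$ with explicit Hassett models via facts~(1) and~(3), and then read off all divisor class identities from the pullback formulas in fact~(4).

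For $\M \cong \PP^{n-3}$, I will first verify the hypothesis of fact~(3), i.e.\ that $\sum_{k \ne 1,j} a_k \le 1$ for every $j \ge 2$. This will follow from the irreducibility of $\cA$-stable curves: every partition $I \sqcup I^c$ of $\{1,\ldots,n\}$ with $|I|,|I^c| \ge 2$ satisfies $\sum_I a_i \le 1$ or $\sum_{I^c} a_i \le 1$; taking $I = \{2,\ldots,n\}\setminus\{j\}$ and using $a_1 = 1$ together with the no-partial-sum-equals-$1$ assumption forces the inequality to hold for $I$. Fact~(3) then yields $\M \cong \PP^{n-3}$ with $\psi_1^\M = \cO(1)$.

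For $W \cong \Bl_p\PP^{n-2}$, I will invoke fact~(1) to present $W = \M_{(a_1,\ldots,a_n,\epsilon)}$ and apply fact~(3) again to the $(n+1)$-marking Hassett space $\M_{(1,\epsilon,\ldots,\epsilon)} \cong \PP^{n-2}$, which gives a reduction morphism $q: W \to \PP^{n-2}$. The substantive step is a wall-crossing analysis: among partitions $I \sqcup I^c$ of $\{1,\ldots,n,x\}$ contracted by $q$, the condition $\sum_I b_i \le 1$ in the $\PP^{n-2}$-weights forces $1 \notin I$, and then the irreducibility hypothesis together with $\epsilon \ll 1$ leaves only $I = \{2,\ldots,n\}$ (the only other candidate, $I = \{2,\ldots,n,x\}$, has $|I^c| = 1$ and so does not give a boundary divisor of $W$). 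Hence $q$ is the blow-up at the single point $p \in \PP^{n-2}$ where $p_2 = \cdots = p_n$, and its exceptional divisor is exactly $\delta_{N\setminus\{1\},\{1,x\}} =: \Delta$.

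The identifications $\psi_1 = H$ and $\delta_{jx} = H$ (for $j \ne 1$) will then follow from fact~(4) applied to $q$: the correction sums vanish (any $I \ni 1$ fails $\sum_I b_i \le 1$; any $I \supseteq \{j,x\}$ with $1 \notin I$ can only be $\{2,\ldots,n,x\}$, which is not a boundary divisor), so $\psi_1^W = q^*\psi_1^{\PP^{n-2}}$ and $\delta_{jx}^W = q^*\delta_{jx}^{\PP^{n-2}}$, both equal to $q^*\cO(1) = H$ by fact~(3). For part~(ii), the factorization of $\alpha$ through $q$ realizes $\alpha$ as the resolution of linear projection from $p \in \PP^{n-2}$, induced by the linear system of hyperplanes through $p$, that is by $|H - \Delta|$; the identity $\psi_1 \cdot F = 1$ is then the statement that a hyperplane has degree one on a line, since the fiber $F$ of $\alpha$ is the strict transform of a line through $p$. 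The main obstacle is the wall-crossing analysis in the second paragraph; everything else is formal manipulation with the pullback formulas of fact~(4).
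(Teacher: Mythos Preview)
Your proposal is essentially correct and follows the same architecture as the paper's proof: identify a Hassett target $\tilde M\cong\PP^{n-2}$ via fact~(3), recognize the reduction map $W\to\tilde M$ as a single-point blow-up with exceptional divisor $\delta_{N\setminus\{1\},\{1,x\}}$, and read off the class identities. Two points deserve comment.

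\textbf{Target weights.} Your choice $\M_{(1,\epsilon,\ldots,\epsilon)}$ is not a valid Hassett weight vector when $\epsilon\ll 1$, since $1+n\epsilon\le 2$. The paper uses $(1,\eta,\ldots,\eta,\epsilon)$ with $\eta=\tfrac{1}{n-1}$, so that the total weight is $2+\epsilon>2$ while all of $\{2,\ldots,n\}$ may still coincide. This is a one-line fix and does not affect your wall-crossing analysis, which is correct.

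\textbf{Part~(ii).} Your phrase ``factorization of $\alpha$ through $q$'' is not quite right: $\alpha$ and $q$ are two distinct contractions from $W$, and $\alpha$ does not literally factor through $q$. The paper instead observes that $\alpha|_\Delta$ is an isomorphism (since $\delta_{1x}\cong\M_{\{1,x,u\}}\times\M_{N\setminus\{1\}\cup\{u\}}\cong\{\text{pt}\}\times\M$) and then invokes the fact that the only morphism $\Bl_p\PP^{n-2}\to\PP^{n-3}$ restricting to an isomorphism on the exceptional divisor is the one given by $|H-\Delta|$. Your intuition that $\alpha$ resolves linear projection from $p$ is exactly this, but the justification should go through $\alpha|_\Delta$ (or, equivalently, through the observation that $\Bl_p\PP^{n-2}$ has only one $\PP^1$-bundle structure).

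\textbf{Part~(iii).} Here you take a slightly different route: you deduce $\delta_{jx}=H$ directly from the pullback formula~(4), checking that the correction sum is empty. The paper instead computes the intersection numbers $\delta_{jx}\cdot F=1$ and $\delta_{jx}\cdot\Delta=0$ and uses that $\Pic(W)=\ZZ H\oplus\ZZ\Delta$. Both arguments are valid; yours is perhaps more systematic given that fact~(4) is already in play.
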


\bp
Since there are no reducible $\ba$-stable curves, it follows that for all $j\neq 1$, we have 
$\sum_{k\neq 1,j} a_k\leq 1$ and therefore $\M\cong\PP^{n-3}$ by Note~\ref{note 1}(3). 
The only reducible curves parametrized by the Hassett space $W$ are those given by the boundary divisor 
$\de:=\de_{1y}$. Let $\M'$ be the Hassett space with the weights $(1,\frac{1}{n-1},\ldots,\frac{1}{n-1},\epsilon)$
(where $\frac{1}{n-1}$ appears $(n-1)$ times).
%(i.e., such that all but one of the markings $(N\setminus\{1\})\cup\{x\}$ may coincide). 
Then $\M'\cong\PP^{n-2}$ by Note~\ref{note 1}(3). The  reduction map $\phi: W\ra \M'$ contracts $\de$ to a point $p\in \M'$. 
Hence, $W=\Bl_p\M'$ with exceptional divisor $\De=\de$. By \eqref{Pullbacks1}, we have 
$\phi^*\psi_1=\psi_1$. This proves (i). 

Clearly, the map $\al$ restricted to $\De=\de$ is an isomorphism. The only morphism $\pi: \Bl_p\PP^{n-2}\ra\PP^{n-3}$ which is an isomorphism on the exceptional divisor $\De$ is the one given by the linear system $|H-\De|$. We have then from (i) that $H\cdot f=1$ for any fiber $f$ of $\pi$. This proves (ii). 

The curve $f$ is obtained by moving the marking $y$ along a $\PP^1$ with fixed markings from $N$. Hence, $\de_{jy}\cdot f=1$, for all $j\in N$. As $\de_{1y}=\De$ and 
$\de_{jy}\cdot\De=0$, it follows that $\de_{jy}=H$ in $\Pic(W)$ if $j\neq 1$. This proves (iii).
\ep

\begin{lemma}\label{SRGASRGASRGA}
%$W$ is  a Hassett space with markings  $1,\ldots,n$ and an extra point~$x$.
Every boundary divisor $\de_T:=\de_{T, T^c}\subseteq \M$ of type I is isomorphic to $\PP^{m-2}\times\PP^{n-m-2}$
for some $m$. Its preimage in $W$ is the union of $\de_{T\cup\{y\}}$ and $\de_{T^c\cup\{y\}}$, where 
$\de_{T\cup\{y\}}\cong\Bl_p\PP^{m-1}\times\PP^{n-m-2}$. The restriction map 
$\al_{|\de_{T\cup\{y\}}}: \de_{T\cup\{y\}}\ra\de_T$
is the product map $\pi\times \Id:\,\Bl_p\PP^{m-1}\times\PP^{n-m-2}\ra\PP^{m-2}\times\PP^{n-m-2}$
%$$\al_{|\de_{T\cup\{x\}}}: \de_{T\cup\{x\}}=\Bl_1\PP^{m-1}\times\PP^{n-m-2}\ra\de_T=\PP^{m-2}\times\PP^{n-m-2}$$
where $\pi:  \Bl_p\PP^{m-2}\ra  \PP^{m-3}$ is a %canonical 
$\PP^1$-bundle. 
The description for $\de_{T^c\cup\{y\}}$ is similar.
\end{lemma}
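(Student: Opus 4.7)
My plan is to describe both $\delta_T \subset \M$ and its preimage $\alpha^{-1}(\delta_T) \subset W$ through the clutching morphism for Hassett spaces, and then identify each resulting factor via Lemma~\ref{P1b}.

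First I would use clutching to identify
\[
\delta_T \;\cong\; \M_{(a_i)_{i \in T},\, 1} \;\times\; \M_{(a_j)_{j \in T^c},\, 1},
\]
where a point of $\delta_T$ represents a two-component stable curve $C_1 \cup_{\mathrm{node}} C_2$ with $T$ on $C_1$ and $T^c$ on $C_2$, the node being viewed as a weight-$1$ marking on each side (since nodes cannot coincide with other points). I would then argue that each factor parametrizes only irreducible curves: a reducible curve on one factor would clutch with an arbitrary curve on the other to produce an $\cA$-stable curve with at least three components, contradicting the standing hypothesis on $\M$. Applying Lemma~\ref{P1b}(i) with the node in the role of the weight-$1$ marking then gives $\delta_T \cong \PP^{m-2} \times \PP^{n-m-2}$ where $m = |T|$.

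For the preimage I would use that $W = \M_{(\cA,\,\eps)}$ is itself the Hassett space obtained from $\M$ by adjoining a light marking $x$. A point of $\alpha^{-1}(\delta_T)$ is a curve from $\delta_T$ together with a location for $x$ on one of its two components, which yields the set-theoretic decomposition $\alpha^{-1}(\delta_T) = \delta_{T \cup \{x\}} \cup \delta_{T^c \cup \{x\}}$. Applying the clutching description to $\delta_{T \cup \{x\}}$ yields
\[
\delta_{T \cup \{x\}} \;\cong\; \M_{(a_i)_{i \in T},\,\eps,\,1} \;\times\; \M_{(a_j)_{j \in T^c},\,1}.
\]
The second factor is $\PP^{n-m-2}$ as before, while the first factor is obtained from $\M_{(a_i)_{i \in T},\,1} \cong \PP^{m-2}$ by adjoining the light marking $x$, so it coincides with the universal family over that projective space. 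By Lemma~\ref{P1b}(i)--(ii), this universal family is $\Bl_p \PP^{m-1}$, with the projection to $\PP^{m-2}$ realized by the $\PP^1$-bundle $\pi$ induced by $|H - \Delta|$.

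Finally, the restriction $\alpha|_{\delta_{T \cup \{x\}}}$ is the forgetful map that drops $x$ and stabilizes, so under the product description it acts as $\pi$ on the first factor and as the identity on the second, giving the stated $\pi \times \mathrm{id}$; the description of $\delta_{T^c \cup \{x\}}$ then follows by symmetry in $T \leftrightarrow T^c$. The step requiring the most care will be the irreducibility argument for each clutching factor, where I must verify that the global two-component hypothesis on $\M$ translates into the absence of reducible curves on each factor; this ultimately rests on the genericity assumption from Notation~\ref{sDVsrgsRH} that no partial sum $\sum_{i \in I} a_i$ with $|I| \geq 2$ equals $1$, so that all boundary strata of the factors descend from boundary strata of $\M$ containing $\delta_T$.
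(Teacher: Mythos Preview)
Your proposal is correct and follows essentially the same approach as the paper: both identify $\delta_T$ and $\delta_{T\cup\{x\}}$ via the clutching description as products of smaller Hassett spaces with a weight-$1$ attaching point, observe that each factor parametrizes only irreducible curves (the paper records this as the inequality $\sum_{i\in T\setminus\{j\}} a_i\le 1<\sum_{i\in T} a_i$ for all $j\in T$), and then invoke Lemma~\ref{P1b} to identify the factors as $\PP^{m-2}$ and $\Bl_p\PP^{m-1}$ respectively. Your contradiction argument for irreducibility (a reducible factor would yield a three-component curve in $\M$) is in fact more explicit than what the paper writes; the genericity assumption you flag at the end is not really needed for this step---the two-component hypothesis alone suffices.
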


\bp
The boundary divisor $\de_T\subseteq \M$ corresponds to $\ba$-stable curves with two components and
markings from $T$, $T^c$, respectively. Since $\de_T=\M'\times \M''$, where $\M'$ and $\M''$ are Hassett spaces parametrizing only irreducible curves, 
we have for all $j\in T$ that
$\sum_{i\in T\setminus\{j\}} a_i\leq 1<\sum_{i\in T} a_i$
(recall that $\M'$ and $\M''$ have an extra marking corresponding to the attaching point).
By Lemma~\ref{P1b}, $\M'=\PP^{m-2}$ and similarly,  $\M''=\PP^{n-m-2}$, with $m=|T|$. Consider now 
$\de_{T\cup\{y\}}\subseteq W$,
%$T^c=N\setminus T$,
the corresponding boundary component in~$W$. We~have 
$\de_{T\cup\{y\}}=\cU'\times \M''$, where $\pi: \cU'\ra\M'$
is the universal family over $\M'$ and $\cU'$ can be identified with the Hassett space with weights $\{a_i\}_{i\in T}\cup\{\epsilon, 1\}$, 
with the weight $\epsilon$, resp., $1$, corresponding to the marking $y$, resp., the attaching point.  It follows from Lemma \ref{P1b}
that $\de_{T\cup\{y\}}=\Bl_p\PP^{m-2}\times\PP^{n-m-2}$
and the restriction map 
$\al_{|\de_{T\cup\{y\}}}$ is as claimed.
\ep

\begin{lemma}\label{star}
Let $\de_T:=\de_{T, T^c}\subseteq \M$ be a boundary divisor of type I. 
Using the identification $\de_T=\PP^{m-2}\times\PP^{n-m-2}$, we have in $\Pic(\de_T)$:
\bi
\item[(i) ]
$
{\de_{ij}}_{|\de_T}=
\begin{cases}
\cO(1,0) & \hbox{\rm if}\quad i,j\in T\cr
\cO(0,1) & \hbox{\rm if}\quad i,j\in T^c\cr
\cO  & \hbox{\rm otherwise}\quad \cr
\end{cases}
$
\item[(ii) ] 
$
{\psi_j}_{|\de_T}=
\begin{cases}
\cO(-1,0) & \hbox{\rm if}\quad j\in T\cr
\cO(0,-1) & \hbox{\rm if}\quad j\in T^c\cr
\end{cases}
$
\item[(iii) ]
${\de_T}_{|\de_T}=\cO(-1,-1)$. 
\ei

\bp
Clearly, if $i\in T$, $j\in T^c$, $\de_{ij}$ is disjoint from $\de_T$, while 
if $i,j\in T$ then ${\de_{ij}}_{|\de_T}\cong\de_{ij}\boxtimes\cO=\cO(1,0)$ (see Note \ref{note 1}(3)).  This proves (i). 
To prove (ii), let $j\in T$. Recall that the universal family over $\de_T$ is the union $\cU'\cup\cU''$, where $\al':\cU'\ra\M'$, $\al'':\cU''\ra\M''$ are the corresponding universal families. 
As $\cU'$, $\cU''$ are attached along the section corresponding to the attaching point (disjoint from the section $\si_j$),  we have that  
${\psi_j}_{|\de_T}={\si_j^*\om_\al}_{l\de_T}=\si_j^*\om_{\al'}\boxtimes \cO=\psi_j\boxtimes\cO$, which is $\cO(-1,0)$ by Note \ref{note 1}(3)). This proves (ii). Part (iii) is a particular case of Lemma \ref{BASIC} (using again Note \ref{note 1}(3))
\ep
\end{lemma}

\begin{lemma}\label{restrict}\label{restoboundaryofM}
In the set-up and notations of Lemma \ref{SRGASRGASRGA}, we use the identification $\de_{T\cup\{y\}}\cong\Bl_p\PP^{m-1}\times\PP^{n-m-2}$, and denote by $H$ the hyperplane class and by $\De$ the exceptional divisor on $\Bl_p\PP^{m-1}$. We have in $\Pic(\de_{T\cup\{y\}})$: 
\bi
\item[(i) ]
$(\om_{\al})_{|\de_{T\cup\{y\}}}=\cO(-H)\boxtimes\cO$.
\item[(ii) ] $\cO(\sigma_j)_{|\de_{T\cup\{y\}}}=\cO$ if $j\notin T$ and $\cO(H)\boxtimes\cO$ otherwise.
\item[(iii) ]
$
(\de_{T'\cup\{y\}})_{|\de_{T\cup\{y\}}}=
\begin{cases}
\cO(\De)\boxtimes\cO & \hbox{\rm if}\quad T'=T^c\cr
\cO(-H)\boxtimes\cO(-1) & \hbox{\rm if}\quad T'=T\cr
\cO  & \hbox{\rm if}\quad T'\neq T, T^c.\cr
\end{cases}
$
\ei
Also,
$(\pi\times Id)^*\cO(-a,-b)=\pi^*\cO(-a)\boxtimes\cO(-b)=\cO(-aH+a\De)\boxtimes \cO(-b)$.
\end{lemma}

\bp
Denote for simplicity $\de=\de_{T\cup\{y\}}\subseteq W$, $\overline{\de}=\de_T\subseteq\M$. 
Since $\al$ is a family of nodal curves, 
we have $\om_{\al}\cong K_{W}-\al^*K_{\M}$.
We have by Lemmas \ref{BASIC}, \ref{P1b}(i) and adjunction that
${K_{W}}_{|\de}=K_{\de}-\de_{|\de}=\cO((-m+1)H+(m-2)\De)\boxtimes\cO(-n+m+2)$. Similarly, we have 
${K_{\M}}_{|\overline{\de}}=K_{\overline{\de}}-{\overline{\de}}_{|\overline{\de}}=\cO(-m+2,-n+m+2)$. It follows from 
Lemma \ref{P1b}(ii) that ${\om_{\al}}_{|\de}=\cO(-H)\boxtimes\cO$. This proves (i). 
Part (ii) follows from $\si_j=\de_{jy}$ and Lemma \ref{P1b}(iii). 

To prove (iii), note that two boundary divisors $\de_{T\cup\{y\}}$, $\de_{T'\cup\{y\}}$ intersect if and only if $T'=T$ or $T^c$. Furthermore, ${\de_{T^c\cup\{y\}}}_{|\de}$ has class $\De\boxtimes\cO$ by Lemma \ref{P1b}(i). By Lemmas \ref{BASIC}, \ref{P1b}(i) and Note \ref{note 1}(3), we have $\de_{|\de}=(-\psi_u)\boxtimes(-\psi_u)=
\cO(-H)\boxtimes\cO(-1)$, where $u$ corresponds to the attaching point. The last statement follows from Lemma \ref{P1b}(ii).  
\ep

\begin{lemma}\label{Push} 
Let $m\geq2$. Consider the $\PP^1$-bundle $\pi:\Bl_p\PP^{m-1}\ra\PP^{m-2}$. Then 
$$R\pi_*\big(\cO(a\De)\big)\cong\cO(-a)\oplus\ldots\oplus\cO(-1)\oplus\cO\ \ \text{ if }\ \ a\geq0$$
while $R\pi_*\big(\cO(-\De)\big)=0$
%when $a<0$ we have
and, when $a\geq2$,  $R\pi_*\big(\cO(-a\De)\big)$ is generated by $\cO(1), \ldots, \cO(a-1)$. 
More generally, $R\pi_*\big(\cO(aH-b\De))$ is either $0$ if $a=b-1$, or it is 
generated by:
(i)  $\cO(b),\cO(b+1),\ldots,\cO(a)$ if $a\geq b$;
(ii) $\cO(a+1),\cO(a+2),\ldots,\cO(b-1)$ if $a\leq b-2$.
In particular, we have that $R\pi_*\big(\cO(aH-b\De))$ is generated by $\cO(u)$ with 
$\min\{b,a+1\}\leq u\leq \max\{a, b-1\}$.
\end{lemma}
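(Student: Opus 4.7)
My plan is to reduce the entire statement to a projection-formula calculation of $R\pi_*\cO(c\De)$ for $c\in\ZZ$. By Lemma~\ref{P1b}(ii), the map $\pi$ is induced by the linear system $|H-\De|$, so $\pi^*\cO_{\PP^{m-2}}(1) = H-\De$. Writing
$$aH - b\De \,=\, a(H-\De) \,+\, (a-b)\De \,=\, \pi^*\cO(a) \,+\, (a-b)\De,$$
the projection formula gives
$$R\pi_*\cO(aH-b\De) \,=\, \cO_{\PP^{m-2}}(a)\otimes R\pi_*\cO((a-b)\De).$$
Thus it suffices to compute $R\pi_*\cO(c\De)$ for every integer $c$, and then set $c=a-b$ to recover the general case.

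I will use three basic inputs. First, $\pi$ restricts to an isomorphism $\pi|_\De\colon\De\xrightarrow{\sim}\PP^{m-2}$, since $\De$ is a section. Second, the normal bundle of the exceptional divisor of the blow-up of a point is $N_{\De/\Bl}=\cO_{\PP^{m-2}}(-1)$, so $\cO(c\De)|_\De=\cO_{\PP^{m-2}}(-c)$. Third, $R\pi_*\cO(-\De)=0$, which holds fiber-by-fiber: $\De$ meets each fiber $\PP^1$ transversally in one point, so $\cO(-\De)$ restricts to $\cO_{\PP^1}(-1)$ on every fiber. Twisting the structure sequence of $\De$ by $\cO(c\De)$ yields
\begin{equation*}
0 \to \cO((c-1)\De) \to \cO(c\De) \to \cO_{\PP^{m-2}}(-c) \to 0,
\end{equation*}
and I will feed this into the long exact sequence of $\pi_*$ and induct on $c$ in both directions starting from $c=0$ and $c=-1$.

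For $c\ge 0$, induction together with $R^1\pi_*\cO = 0$ shows $R^1\pi_*\cO(c\De)=0$ and gives a short exact sequence
\begin{equation*}
0 \to \pi_*\cO((c-1)\De) \to \pi_*\cO(c\De) \to \cO_{\PP^{m-2}}(-c) \to 0,
\end{equation*}
which splits because the relevant extension groups reduce to $H^1(\PP^{m-2},\cO(k))$ with $k\ge 0$. By induction this yields $\pi_*\cO(c\De)=\bigoplus_{j=0}^c \cO(-j)$, matching the first displayed formula. Symmetrically, for $c\le -2$, starting from $R\pi_*\cO(-\De)=0$ one obtains $\pi_*\cO(c\De)=0$ and $R^1\pi_*\cO(c\De)=\bigoplus_{j=1}^{|c|-1}\cO(j)$, again by an inductive splitting argument. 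Substituting $c=a-b$ into the projection formula and tensoring by $\cO(a)$ gives exactly: $\bigoplus_{u=b}^a \cO(u)$ when $a\ge b$; zero when $a=b-1$; and $\bigoplus_{u=a+1}^{b-1}\cO(u)$ (shifted by $-1$) when $a\le b-2$, which is the claimed conclusion.

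There is no serious obstacle: the only thing to be careful about is that the inductive extensions actually split, and this is ensured throughout by the vanishing of $H^1(\PP^{m-2},\cO(k))$ for $k\ge 0$. Everything else is bookkeeping, and the projection-formula reduction is what makes the computation uniform.
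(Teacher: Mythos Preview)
Your proof is correct and follows essentially the same approach as the paper: both use the exact sequences $0\to\cO((i-1)\De)\to\cO(i\De)\to\cO_\De(-i)\to 0$ together with the projection formula and $\pi^*\cO(1)=\cO(H-\De)$. You have simply filled in the details the paper leaves implicit, in particular the $\Ext^1$ vanishing that guarantees the inductive extensions split to give the direct-sum decomposition in the first display.
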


\bp
The lemma follows by applying $R\pi_*(-)$ to the exact sequences 
$$0\ra \cO((i-1)\De)\ra \cO(i\De)\ra \cO_{\De}(-i)\ra 0,$$
induction on $a$, projection formula and the fact $\pi^*\cO(1)=\cO(H-\De)$. 
\ep

\begin{lemma}\label{acy} Let $m\geq2$. 
Then $\cO_{\Bl_p\PP^{m-1}}(aH-b\De)$ is acyclic if 
$0<-a\leq m-1$, $0\leq -b\leq m-2$. 
\end{lemma}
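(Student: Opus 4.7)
The plan is to push the line bundle forward along the $\PP^1$-bundle $\pi: \Bl_p\PP^{m-1}\to\PP^{m-2}$ and invoke Lemma~\ref{Push}. Using the identity $-uH+v\De = -u(H-\De)+(v-u)\De$ together with $\pi^*\cO(1)=\cO(H-\De)$ and the projection formula, the computation of $R\pi_*\cO(-uH+v\De)$ reduces to the values already tabulated in Lemma~\ref{Push}; equivalently, one applies that lemma directly with $a=-u$, $b=-v$.

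Three cases arise. If $v\geq u$, then $R\pi_*\cO(-uH+v\De)$ sits in degree $0$ and decomposes as $\bigoplus_{j=u}^{v}\cO_{\PP^{m-2}}(-j)$. If $v=u-1$, then $R\pi_*\cO(-uH+v\De)=0$ and we are done. If $v\leq u-2$ (which forces $u\geq 2$), then $R\pi_*\cO(-uH+v\De)$ sits in degree $1$ and decomposes as $\bigoplus_{j=v+1}^{u-1}\cO_{\PP^{m-2}}(-j)$. The key check is the range of $j$: the hypotheses $0<u\leq m-1$ and $0\leq v\leq m-2$ force $1\leq j\leq m-2$ in both nontrivial cases, so each summand $\cO_{\PP^{m-2}}(-j)$ lies in the acyclic range for $\PP^{m-2}$.

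Finally, the Leray spectral sequence $E_2^{p,q}=H^p(\PP^{m-2}, R^q\pi_*\cO(-uH+v\De))\Rightarrow H^{p+q}(\Bl_p\PP^{m-1},\cO(-uH+v\De))$ collapses, and each $E_2^{p,q}$ term vanishes by the previous paragraph. Hence $\cO_{\Bl_p\PP^{m-1}}(-uH+v\De)$ is acyclic. There is no serious obstacle here: the only thing to verify is the elementary numerical bookkeeping that the indices $j$ produced by Lemma~\ref{Push} land strictly inside $[1,m-2]$, which follows from the stated inequalities on $u$ and $v$.
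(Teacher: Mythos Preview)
Your proof is correct. The paper states this lemma without proof, treating it as an immediate consequence of Lemma~\ref{Push}; your argument via the projection formula and Leray spectral sequence is exactly the intended one, and your verification that the resulting twists $\cO_{\PP^{m-2}}(-j)$ all satisfy $1\le j\le m-2$ is the only bookkeeping needed.
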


\bp Indeed,
$R\Gamma(R\pi_*\big(\cO(aH-b\De)))=0$ by Lemma~\ref{Push}. \ep

\begin{lemma}\label{vb}
Let $\M$ be a Hassett space such that all $\ba$-stable curves have at most two components. 
%and such that no partial sum $\sum_{i\in I} a_i$ with $|I|\geq2$ equals $1$.
Let $a$, $\{\al_T\}$ be  integers.  
Consider the line bundle $L:=\om_{\al}^{a}(E)\otimes\cO\bigl(-\sum_T\al_{T}\de_{T\cup \{y\},T^c}\bigr)$ on $W$ (where the sum is over
partitions $N=T\sqcup T^c$ giving rise to boundary divisors $\de_{T,T^c}$ of type I on $\M$). 
The complex
$R\al_*(L)$
is a vector bundle of rank $e-2a+1$ (and $R\al_*L=\al_*L$) if 
\begin{equation}\label{vb condition}
|E\cap T^c|-a\geq \al_T-\al_{T^c}\geq a-|E\cap T|
\end{equation}
Furthermore, we can weaken \eqref{vb condition} by either adding $1$ to $|E\cap T^c|-a$
 or subtracting $1$ from $a-|E\cap T|$ (but not both at once).

In particular, the complex $F_{l,E}$ in Definition \ref{allF} is a vector bundle of rank $l+1$. 
\end{lemma}

\bp
%For simplicity, we denote $L:=\om_{\al}^{a}(E)\otimes\cO(-\sum_T\al_{T}\de_{T\cup \{x\}})$. 
Note that (\ref{vb condition}) implies that $e\geq 2a-1$.
If $C$ is an irreducible fiber of $\al$ then 
$\deg\big(L_{|C}\big)=\deg \big(\om_{\al}^{a}(E)\big)_{|C}=e-2a\geq-1$.
Let now $C$ be a reducible fiber of $\al$ over
a point of $\de_T\subseteq \M$, for a partition $T\sqcup T^c$ of $N$. The curve $C$ has components $C_1$
and $C_2$, with $C_1$ (resp., $C_2$) having fixed markings from $T$ (resp., $T^c$).  
The curve $C_i\subseteq W$ is given by the point $y$ moving along $C_i$. 
Therefore, we have
$C_1\subseteq \de_{T\cup\{y\}}=\de_{T^c}$, $C_2\subseteq \de_{T^c\cup\{y\}}=\de_T$,
$ C_1\cdot\de_{T}=1$, $C_2\cdot\de_{T^c}=1$
and $C_i$ intersects all boundary other than $\de_T$, $\de_{T^c}$ trivially. 
Furthermore, one has $\de_{T^c}\cdot C_1=-1$ from 
$(\de_T+\de_{T^c})\cdot C_1=\al^{-1}(\de_T)\cdot C_1=0$.
Similarly  $\de_{T}\cdot C_2=-1$. 
Furthermore, $\big(\om_{\al}\big)_{|C_i}=\om_{C_i}(1)=\cO(-1)$ and $\deg(\cO(\si_j))_{|C_1}=1$ if $j\in T$ and $0$ otherwise. 
It follows that 
$$\deg(L_{|C_1})=-a+|E\cap T|+\al_{T}-\al_{T^c},\ \deg(L_{|C_2})=-a+|E\cap T^c|+\al_{T^c}-\al_{T}.$$

There is an exact sequence 
$0\ra L_{|C}\ra L_{|C_1}\oplus L_{|C_2}\ra \cO_{k(u)}\ra 0$.\
We have $\HH^1(L_{|C_i})=0$ for $i=1,2$ since  $\deg(L_{|C_i})\geq-1$ for $i=1,2$.
Since at least one of the inequalities is strict, the induced map 
$\HH^0(L_{|C_1}\oplus L_{|C_2})\ra k(u)$ is surjective. Therefore, 
$\HH^1(L)=0$, $h^0(L)=e-2a+1$
and $R\al_*L=\al_*L$ is a vector bundle of rank $e-2a+1$. 

%\smallskip

Assume now that $a=\frac{e-l}{2}$ and $\al_T$ is as in \eqref{Sdgsgasrh}.
If  $f_T, f_{T^c}\ge0$ (see \eqref{Sdgsgasrh} for the definition of $f_T$)
then $\al_T=\al_{T^c}=0$ and 
$\deg(L_{|C_1})=f_T\geq0$, $\deg(L_{|C_2})=f_{T^c}\geq0$. 
If ~$f_T\ge0$,  $f_{T^c}<0$
then 
$\al_T=0$, $\al_{T^c}=-f_{T^c}$,
$\deg(L_{|C_1})=l\geq-1$, $\deg(L_{|C_2})=0$.
The case  $f_{T^c}\geq0$,  $f_{T}<0$ is similar. 
Since $f_T+f_{T^c}=l\geq-1$, it is not possible to have $f_T, f_{T^c}<0$. 
\ep

%\begin{rmk}
%As in Definition \ref{allF}, for $l=-1$ and $e=|E|$ odd, one can define 
%$$F_{-1,E}=R\pi_*N_{-1,E},$$ where $N_{-1,E}$ is defined in exactly the same way. The proof of Lemma \ref{vb} applies verbatim to prove that 
%$$R^1\pi_*(N_{-1,E})=0,\quad F_{-1,E}=\pi_*(N_{-1,E})=0,$$ 
%(i.e., Lemma \ref{vb} still holds). Note that, when $e-l$ is even, even for $l=-1$, 
%we still cannot have that $|E\cap T^c|<\frac{e-l}{2}$,  $|E\cap T|<\frac{e-l}{2}$. 
%\end{rmk}

\begin{lemma}\label{property}
Assume that $n$ is even. 
Let $\M$ be a Hassett space such that all $\ba$-stable curves have at most two components and any boundary component 
$\de_{T,T^c}$ of type I is such that $|T|=|T^c|=\frac{n}{2}$ (for example the spaces $\M_{p,q}$ when $p$ and $q$ both even).  
Let $N=\{1,\ldots,n\}$ be the set of all indices. When $l\geq0$ is even, the vector bundles $F_{l,N}$ satisfy 
the property $F_{l,N}\cong F_{l,\emptyset}\otimes F_{0,N}$.
\end{lemma}

\bp
By Definition \ref{allF}, $F_{0,N}$ is the line bundle $\al_*(N_{0,N})$, where $N_{0,N}=\om_{\al}^{n/2}(N)$ is a line bundle on the universal family 
$\al:W\ra\M$,
with degree $0$ on the generic fiber. 
Furthermore, the assumption $|T|=\frac{n}{2}$ implies that $N_{0,N}$ has trivial restriction to each component of every 
reducible fiber of $\al$.
Hence,
$N_{0,N}=%\al^*(\al_*(\N_{0,N})=
\al^*(F_{0,N})$.
By definition 
$F_{l,N}=\al_*\big(\om_{\al}^{(n-l)/2}(N)\big)$,  $F_{l,\emptyset}=\al_*\big(\om_{\al}^{-\frac{l}{2}}\big)$ and 
thus 
$F_{l,N}=\al_*\big(\om_{\al}^{-\frac{l}{2}}\otimes \al^*F_{0,N}\big)\cong R\al_*\big(\om_{\al}^{-\frac{l}{2}}\big)\otimes F_{0,N}\cong 
F_{l,\emptyset}\otimes F_{0,N}$.
\ep

%%%%%%%%%%%%%%%%%%%%%%%%%%%%%%%%%%%%
%%%%%%%%%%%%%%%%%%%%%%%%%%%%%%%%%%%%
%%%%%%%%%%%%%%%%%%%%%%%%%%%%%%%%%%%%
%%%%%%%%%%%%%%%%%%%%%%%%%%%%%%%%%%%%

\begin{notn}\label{general set-up}
%Consider the Hassett space $\M_{p,q}$ with 
Let $N=\{1,\ldots n\}$, $n=2m\geq 4$ even. Assume $\M$ is a Hassett space such that all $\ba$-stable curves have at most two components and any boundary component 
$\de_{T,T^c}\subset \M$ of type I is such that $|T|=m$. Let $\al:W\ra \M$ be the universal family.  
%sets $P,Q$ with $|P|=p=2r\geq4$, $|Q|=q=2s+1\geq1$. Let 
%$\M_{p,q-1}$ be the Hassett space with markings  $P\sqcup (Q\setminus\{y\})$ for fixed
%$y\in Q$ and the universal family
%$\al: W\ra \M_{p,q-1}$.  
Recall from Lemma \ref{SRGASRGASRGA} that $W$ is a Hassett space with boundary of type I of the form 
$\de_{T\cup\{y\},T^c}\cong\Bl_p\PP^{m-1}\times\PP^{m-2}$ for all partitions 
$T\sqcup T^c=N$. We let $\pi: \cU\ra W$ be the universal family, with $x$ the new marking on $\cU$. 
\end{notn}

\begin{lemma}\label{restrictions_W}
There are two types of boundary divisors of type I  on $\cU$:

(1) Let $\de=\de_{T\cup\{x\}, T^c\cup\{y\}}=\M_{T\cup\{x,u\}}\times \M_{T\cup\{y,u\}}$ (with $u$ the attaching point). 
Then $\de\cong\Bl_p\PP^{m-1}\times \Bl_p\PP^{m-1}$. 
If on $\Bl_p\PP^{m-1}$ we denote by $H$ the hyperplane class and by $\De$ the exceptional divisor, then 
the restriction $\pi_{|\de}$ 
is  the pair $(q,\Id)$, where 
$q:\,\Bl_p\PP^{m-1}\ra  \PP^{m-2}$, $q^*\cO(1)=\cO(H-\De)$, $\De=\de_{ux}$. 
Moreover, 
${\om_{\pi}}_{|\de}=\cO(-H)\boxtimes\cO$, ${\de_{yx}}_{|\de}=0$,
${\de_{jx}}_{|\de}$ is $\cO(H)\boxtimes\cO$ if $j\in T$ and $\cO$ otherwise.

(2) Let $\de=\de_{T\cup\{y,x\}, T^c}=\M_{T\cup\{y,x,u\}}\times \M_{T\cup\{u\}}$. 
%=\M_{T\cup\{y,x,u\}}\times \M_{T^c\cup\{u\}}=
Then $\de\cong\Bl_{1,2, \ov{12}}\PP^{m}\times \PP^{m-2}
$,
where $\Bl_{1,2, \ov{12}}\PP^{m}$ denotes the blow-up of $\PP^{m}$ at two distinct points $p_1$, $p_2$ and the 
proper transform of the line through them. 
On $\Bl_{1,2, \ov{12}}\PP^{m}$ we denote  by $H$ the hyperplane class and by $E_1$, $E_2$, $E_{12}$ the corresponding exceptional divisors. 
We denote pullbacks of these divisors to $\delta$ by the same letters.
We have:
$E_1=\de_{T\cup\{x\}|_\delta}$, $E_2=\de_{T\cup\{y\}|_\delta}$,
$E_{12}=\de_{T|_\delta}$.
The restriction $\pi_{|\de}$ is  the pair $(\tilde{q}, Id)$, where 
$\tilde{q}: \Bl_{1,2, \ov{12}}\PP^{m}\ra \Bl_{p_1}\PP^{m-1}$, 
$\tilde{q}^*\cO(H)=\cO(H-E_2)$, $\tilde{q}^*\De=E_1+E_{12}$,
and ${\om_{\pi}}_{|\de}=\cO(-H+E_1)\boxtimes\cO$,
${\de_{jx}}_{|\de}=\cO(H-E_1)\boxtimes\cO$ for $j\in T$,
${\de_{jy}}_{|\de}=\cO(H-E_2)\boxtimes\cO$ for  $ j\in T$,
${\de_{yx}}_{|\de}=\cO(H)\boxtimes\cO$.
 \end{lemma}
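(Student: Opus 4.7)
The plan is to mirror the analysis of Lemmas \ref{SRGASRGASRGA} and \ref{restrict} one level up, applied to the iterated universal family $\pi : \cU \to W \to \M_{p,q-1}$. We view $\cU$ as a Hassett space with markings $P \cup Q \cup \{x\}$ and the node $u$ as an additional weight-$1$ marking on each component of a reducible fiber.

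First I would establish the product decomposition of each $\delta$ as the product of the Hassett moduli of the two components of its generic curve. In Type 1, each side has $r+s+2$ markings with $u$ as the sole heavy marking; the weight inequalities of Notation \ref{precise weights} ensure that no partial sum of the remaining weights exceeds $1$, so Lemma \ref{P1b}(i) identifies each side with the universal family $\Bl_p \PP^{r+s-1} \to \PP^{r+s-2}$. In Type 2 the $T^c$-side is $\PP^{r+s-2}$ by the same lemma, but the $T \cup \{y, x\}$-side carries $r+s+3$ markings with three bubble-off reducibles --- $\delta_{T \cup \{x\}}, \delta_{T \cup \{y\}}, \delta_T$ --- because the subsets $\{u,x\} \cup T$, $\{u,y\} \cup T$, $\{u,x,y\} \cup T$ now each acquire total weight slightly greater than $1$ once $x$ and $y$ are simultaneously present as light markings. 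Starting from the toy model $\PP^{r+s}$ provided by Lemma \ref{P1b}(i) and blowing these loci up in the order dictated by Hassett's wall crossings \cite{Ha} --- the two disjoint points $p_1, p_2$ first, corresponding to $\delta_{T \cup \{x\}}$ and $\delta_{T \cup \{y\}}$, then the strict transform of $\ov{p_1 p_2}$ corresponding to $\delta_T$ --- produces $\Bl_{1,2,\ov{12}} \PP^{r+s}$ with the stated identification of $E_1, E_2, E_{12}$ with the named divisors.

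The projection $\pi|_\delta$ is in each case the ``forget $x$'' morphism: on the first factor of Type 1 it is the $\PP^1$-bundle given by the linear system $|H - \Delta|$ of Lemma \ref{P1b}(ii), hence $q$; on the first factor of Type 2 it is the Hassett reduction $\tilde q$; the second factor is the identity in both cases since it does not carry $x$. The formulas $\tilde q^* H = H - E_2$ and $\tilde q^* \Delta = E_1 + E_{12}$ follow from the pull-back formulas \eqref{Pullbacks1}--\eqref{Pullbacks2} of Notation \ref{sDVsrgsRH}(4) by listing which subsets cross the wall: for $\psi_u = H$ only $\{u,x\} \cup T = \delta_{T \cup \{y\}} = E_2$ does; and for $\delta_{yu} = \Delta$ the contributions come from the subsets containing $\{u, y, x\}$, yielding $E_1 + E_{12}$.

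The restriction formulas for $\omega_\pi$ and the section divisors $\delta_{jx}, \delta_{jy}, \delta_{yx}$ are direct adaptations of the computations in Lemma \ref{restrict}: $\omega_\pi|_\delta$ is the relative dualizing sheaf of the tracked component, of degree $-1$ on each $\PP^1$-fiber and therefore $-H$ in Type 1 and $-H + E_1$ in Type 2 (the twist by $E_1$ arising because the bubble in which $x$ meets $u$ lies on the tracked component but carries trivial $\omega$); and $\delta_{jk}|_\delta$ restricts to the section class of $k$ on the side containing $\{j, k\}$ (which, by Lemma \ref{P1b}(iii), is $H$, $H - E_1$, or $H - E_2$ depending on which bubble the marking lives on) and vanishes otherwise --- in particular $\delta_{yx}|_\delta = 0$ in Type 1 since $y$ and $x$ sit on opposite components. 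The main technical obstacle is the combinatorial bookkeeping in Type 2, where one must rigorously match the three exceptional divisors $E_1, E_2, E_{12}$ with $\delta_{T \cup \{x\}}, \delta_{T \cup \{y\}}, \delta_T$ in the correct blow-up order and verify the sign and multiplicity of each contribution in $\tilde q^* H$ and $\tilde q^* \Delta$.
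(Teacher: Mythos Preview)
Your overall strategy---decompose each boundary as a product of Hassett moduli, identify each factor via Lemma~\ref{P1b} and the Hassett wall-crossings, then read off the restriction formulas---is the same as the paper's. Two points, however, need to be handled differently than you propose.

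First, the formulas \eqref{Pullbacks1}--\eqref{Pullbacks2} are stated for \emph{reduction} maps (same marking set, lower weights), whereas $\tilde q:\M_{T\cup\{y,x,u\}}\to\M_{T\cup\{y,u\}}$ is a \emph{forgetful} map. They do not apply as cited, and your bookkeeping line ``$\{u,x\}\cup T=\delta_{T\cup\{y\}}=E_2$'' does not parse (the complement of $\{u,x\}\cup T$ in $T\cup\{y,x,u\}$ is $\{y\}$, not $T\cup\{y\}$). The paper instead argues geometrically: since $\tilde q$ is an isomorphism on $E_2=\delta_{T\cup\{y\}}$ and the only $\PP^1$-fibrations $\Bl_{1,2,\ov{12}}\PP^{r+s}\to\Bl_p\PP^{r+s-1}$ are $|H-E_1|$ and $|H-E_2|$, one obtains $\tilde q^*H=H-E_2$. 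For $\tilde q^*\Delta$ the paper uses the standard boundary pullback under the forgetful map: $\tilde q^{-1}(\delta_T)=\delta_T\cup\delta_{T\cup\{x\}}=E_{12}+E_1$.

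Second, your computation of $\omega_\pi|_\delta$ is a heuristic (``degree $-1$ on each fiber, hence $-H$, with a twist by $E_1$ for the bubble'') rather than a proof; the reducible fibers of $\tilde q$ make this delicate. The paper computes it cleanly by adjunction: write $\omega_\pi=K_{\cU}-\pi^*K_W$, compute $K_{\cU}|_\delta$ and $K_W|_{\pi(\delta)}$ via the adjunction formula on each factor, and subtract. This yields ${\om_\pi}_{|\delta}=(-H)\boxtimes\cO$ in Type~1 and $(-H+E_1)\boxtimes\cO$ in Type~2. The section-divisor restrictions are then obtained by pulling back $\delta_{jx}=\cO(1)$ from $\N''=\PP^{r+s}$ along the reduction maps $\M\to\N'\to\N''$ (to which \eqref{Pullbacks2} \emph{does} apply), giving $\delta_{jx}=H-E_1$ for $j\in T$, and $\delta_{yx}=H$.
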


\bp
For (1), the statements about $q$ and ${\de_{jx}}_{|\de}$ follow from Lemma \ref{P1b}. 
By adjunction and Lemma \ref{BASIC},   
${K_{W}}_{|\pi(\de)}=\cO(-m+2)\boxtimes\left(-(m-1)H+(m-2)\De\right)$,
${K_{\cU}}_{|\de}=\left(-(m-1)H+(m-2)\De\right)\boxtimes \left(-(m-1)H+(m-2)\De\right)$.
Using  $\om_{\pi}=K_{\cU}-\pi^*K_W$, we have that ${\om_{\pi}}_{|\de}=\cO(-H)\boxtimes\cO$. 

We now prove (2). 
Denote
$\M:=\M_{T\cup\{y,x,u\}}$, $\M_W:=\M_{T\cup\{y,u\}}$ 
and let $\N'$ (resp., $\N''$) be the Hassett space with
markings $T\cup\{y,x,u\}$ such that the points in $T$ 
(resp., $T$, $T\cup\{x\}$, $T\cup\{y\}$,
but not $T\cup\{x,y\}$) are allowed to coincide. This is possible since $\M_{T\cup\{u\}}\cong\PP^{m-2}$, we may assume that the weights of the points in $T$ are $\epsilon=\frac{1}{m-1}$ and those of $y, x$ are $\eta\ll 1$.
Then for $\N'$, resp. $\N''$, we can lower the weights of the points in $T$ to $\epsilon'=\frac{2-\eta}{2m}$, resp., $\epsilon''=\frac{2-3\eta}{2m}$.
Note that $\N''\cong\PP^{m}$, since 
all but one of the markings $T\cup\{y,x\}$ may coincide. 

There are reduction maps
$\pi':\M\ra\N'$, $\pi'':\N'\ra\N''.$
The map  $\pi''$ contracts $\de_{T\cup\{x\}}$ and  $\de_{T\cup\{y\}}$ to points, which we denote 
$p_1:=\pi''(\de_{T\cup\{x\}})$, $p_2:=\pi''(\de_{T\cup\{y\}})$,
while the composition $\pi''\circ\pi'$ contracts $\de_T$ to the line through $p_1$ and $p_2$. Hence,
$\M$ is isomorphic to the blow-up $\Bl_{1,2, \ov{12}}\PP^{m}$ and we have
$E_1=\de_{T\cup\{x\}}$, $E_2=\de_{T\cup\{y\}}$, $E_{12}=\de_T$.

By Lemma \ref{SRGASRGASRGA}, we have $\M_W\cong\Bl_{p}\PP^{m-1}$.
The map $\tilde{q}:\M\ra\M_W$ forgets the $x$ marking; hence, 
$\tilde{q}$ is an isomorphism on $E_2=\de_{T\cup\{y\}}$. Since the only fibrations $\Bl_{1,2, \ov{12}}\PP^{m}\ra\Bl_{p}\PP^{m-1}$
are given by the linear systems $|H-E_1|$, $|H-E_2|$, %and since the fibration given by $|H-E_1|$ is not an isomorphism on $E_2$, 
it follows that $\tilde{q}^*\cO(H)=\cO(H-E_2)$ (the projection from $p_2$). As $\De=\de_T\subseteq \M_W$, it follows that 
$\tilde{q}^*\De=\de_T+\de_{T\cup\{x\}}=E_{12}+E_1.$
The restriction ${\de_{jx}}_{|\de}$ is $\de_{jx}\boxtimes\cO$ if $j\in T$ and trivial otherwise. 
If $j\in T$, the pull-backs of $\de_{jx}$ via the reduction maps $\pi'$, $\pi''$ are given by 
${\pi''}^*\de_{jx}=\de_{jx}+\de_{T\cup\{x\}}$, $(\pi'\circ\pi'')^*\de_{jx}=\de_{jx}+\de_{T\cup\{x\}}$.
As $\de_{jx}=\cO(1)$ on $\N''=\PP^{m}$ by Note \ref{note 1}(3), it follows that we have
$\de_{jx}=\cO(H-E_1)$ in $\Pic(\M)$. By symmetry, when $j\in T$ we also have $\de_{jy}=\cO(H-E_2)$ in $\Pic(\M)$.
Similarly, since 
$ (\pi'\circ\pi'')^*\de_{yx}=\de_{yx}$ and $\de_{yx}=\cO(1)$ on $\N''=\PP^{m}$, we have
${\de_{yx}}_{|\de}=\cO(H)\boxtimes\cO$. 
By adjunction, 
${K_{W}}_{|\pi(\de)}=\left(-(m-1)H+(m-2)\De\right)\boxtimes \cO(-(m-2))$,
${K_{\cU}}_{|\de}=\left(-mH+(m-1)(E_1+E_2)+(m-2)E_{12}\right)\boxtimes\cO(-(m-2))$.
Using $\om_{\pi}=K_{\cU}-\pi^*K_W$, it follows that 
${\om_{\pi}}_{|\de}=\cO(-H+E_1)\boxtimes\cO$. 
\ep

\begin{rmk}\label{PushB} 
In the notations of Lemma \ref{restrictions_W},
the same proof as in Lemma \ref{Push} shows that 
if $a\geq0$ we have
$$R\tilde{q}_*\big(\cO(aE_2)\big)\cong\cO(-aH)\oplus\ldots\oplus\cO(-H)\oplus\cO.$$
\end{rmk}

\begin{lemma}\label{vb on W}
The complex $F_{l,E}$ from Definition \ref{F on W} is a vector bundle of rank $l+1$. 
\end{lemma}

\bp
Denote $L:=N_{l,E}$ for simplicity. We proceed as in the proof of Lemma \ref{vb}.  If $C$ is an irreducible fiber of $\pi$, then $\deg (L_{|C})=l$. Consider a reducible fiber $C$ 
with two components $C_1$ and $C_2$, with 
markings from $T\cup\{y\}$ (resp., $T^c$) on $C_1$ (resp. $C_2$). 
Recall that $E\subseteq N\cup\{y\}$ and $T^c=N\setminus T$. 
Since $f_T+f_{T^c}+|E\cap\{y\}|=l\geq0$, it is not possible to have $f_T, f_{T^c}<0$. 
We have
$C_1\cdot \de_{T\cup\{y,x\}}=-1$, $C_1\cdot \de_{T\cup\{y\}}=1$,
$C_2\cdot \de_{T\cup\{y,x\}}=1$, $C_2\cdot \de_{T\cup\{y\}}=-1$,
while all other intersections with boundary are $0$. We have: $$\deg\big(L_{|C_1}\big)=|E\cap\{y\}|+f_T+\al_{T\cup\{y\}}-\al_{T^c},$$ 
$$\deg\big(L_{|C_2}\big)=
f_{T^c}-\al_{T\cup\{y\}}+\al_{T^c}.$$

If $f_T, f_{T^c}\geq0$,
then $\al_{T\cup\{y\}}=\al_{T^c}=0$ and 
$\deg\big(L_{|C_1}\big)\geq0$,
$\deg\big(L_{|C_2}\big)\geq0$.
If  $f_{T}\geq0$,  $f_{T^c}<0$, then 
$\al_{T\cup\{y\}}=0$, $\al_{T^c}=-f_{T^c}$,
$\deg\big(L_{|C_1}\big)=l\geq0$,
$\deg\big(L_{|C_2}\big)=0$.
If  $f_T<0$,  $f_{T^c}\geq0$, then 
$\al_{T\cup\{y\}}=-f_T-|E\cap\{y\}|$,
$\al_{T^c}=0$, $\deg\big(L_{|C_1}\big)=0$, $\deg\big(L_{|C_2}\big)=l$.
In all cases, $\h^1(L_{|C})=0$, $\h^0(L_{|C})=l+1$.  

%\smallskip

Consider now a reducible fiber $C$ 
with $3$ components $C_1$, $C_2$, $C_3$, with 
markings from $T$, $\{y\}$, $T^c$ respectively. Then
$C_1\cdot \de_{T\cup\{x\}}=-1$, $C_1\cdot \de_{T^c\cup\{y,x\}}=1$,
$C_2\cdot \de_{T\cup\{y,x\}}=C_2\cdot \de_{T^c\cup\{y,x\}}=-1$, $C_2\cdot\de_{T\cup\{x\}}=C_2\cdot\de_{T^c\cup\{x\}}=1$,
$C_3\cdot \de_{T^c\cup\{x\}}=-1$, $C_3\cdot \de_{T\cup\{y,x\}}=1$,
while intersections with other boundary are $0$. We have: 
$\deg\big(L_{|C_1}\big)=f_T+\al_{T}-\al_{T^c\cup\{y\}}$,
$\deg\big(L_{|C_2}\big)=\al_{T\cup\{y\}}+\al_{T^c\cup\{y\}}-\al_T-\al_{T^c}+|E\cap\{y\}|$,
$\deg\big(L_{|C_3}\big)=f_{T^c}+\al_{T^c}-\al_{T\cup\{y\}}$.

If $f_T, f_{T^c}\geq0$,
then 
$\al_T=\al_{T\cup\{y\}}=\al_{T^c}=\al_{T^c\cup\{y\}}=0$, $\deg\big(L_{|C_1}\big)\geq0$, $\deg\big(L_{|C_3}\big)\geq0$,
$\deg\big(L_{|C_2}\big)\geq0$.
If $f_{T}\geq0$,  $f_{T^c}<0$, then 
$\al_T=\al_{T\cup\{y\}}=0$, $\al_{T^c}=-f_{T^c}$,
$\al_{T^c\cup\{y\}}=\al_{T^c}-|E\cap\{y\}|$,
$\deg\big(L_{|C_1}\big)=l\geq0$, $\deg\big(L_{|C_3}\big)=0$,
$\deg\big(L_{|C_2}\big)=0$.
If  $f_T<0$,  $f_{T^c}\geq0$ then 
$\al_{T^c}=\al_{T^c\cup\{y\}}=0$, $\al_{T}=-f_T$,
$\al_{T\cup\{y\}}=\al_T-|E\cap\{y\}|$,
$\deg\big(L_{|C_1}\big)=0$, 
$\deg\big(L_{|C_3}\big)=l$,
$\deg\big(L_{|C_2}\big)=0$.
%Note that if $e+l$ is even and $l\geq0$, we cannot have $f_T, f_{T^c}<0$, as otherwise we would have
%$e-|E\cap\{y\}|=|E\cap T|+|E\cap T^c|\leq (\frac{e-l}{2}-1)+(\frac{e-l}{2}-1)=e-l-2$,
%which is a contradiction. 
Hence, in all cases $\h^1(L_{|C})=0$, $\h^0(L_{|C})=l+1$ and we are done by cohomology and base change \ep

\begin{lemma}\label{zzfbzdfbdff}
% Let $\al: W\ra \M_{p,q-1}$ be the universal family, with $y$ the new marking on $W$. 
Consider the set-up of Notation \ref{general set-up}. Let $l\geq0$, $E\subseteq N\cup\{y\}$, with $e+l$ even. 
(i) If $y\notin E$, then $F_{l,E}=\al^*F_{l,E}$ on $W$. 
(ii) If $y\in E$, then %. For all $l\geq 0$, $e+l$ even, 
there is an exact sequence 
\begin{equation}\label{egasrhar}
0\ra F_{l-1,E\setminus\{y\}}\ra F_{l,E}\ra Q^y_{l,E}\ra 0,
\end{equation}
of vector bundles on $W$, with $Q^y_{l,E}:=\si_y^*N_{l,E}$, where $\si_y$ is the section of $\pi: \cU\ra W$ corresponding to the $y$ marking.
Furthermore, 
$R\al_*(F^\vee_{l,E})=0$ in this case.
\end{lemma}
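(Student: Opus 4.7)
The statement splits into three claims: (i) $F_{l,E}^W=\alpha^* F_{l,E}^M$ when $y\notin E$; (ii) the exact sequence \eqref{egasrhar} and identification of $Q^y_{l,E}$ when $y\in E$; and the final vanishing $R\al_*(F^\vee_{l,E})=0$ when $y\in E$. My plan for each is as follows.

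For (i), I would exploit the fact that $\pi:\cU\to W$ and $\alpha:W\to M$ fit into a commutative square with the ``other'' universal-family map $\tilde\al:\cU\to W$, the one forgetting $y$ instead of $x$, and the composition $\pi_0=\al\circ\pi:\cU\to M$. Away from the type-$2$ boundary this square is Cartesian, giving $\om_\pi=\tilde\al^*\om_\al$ and $\tilde\al^*\si_i^W=\si_i^\cU$ for $i\neq y$. When $y\notin E$ one checks directly that $f_{T\cup\{y\},E,l}=f_{T,E,l}$, so $\al_{T\cup\{y\},E,l}=\al_{T,E,l}$, and hence the boundary corrections on $\cU$ are precisely the pull-backs via $\tilde\al$ of those defining $N_{l,E}^M$. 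This yields $N_{l,E}^W=\tilde\al^*N_{l,E}^M$, so flat base change gives
\begin{equation*}
F_{l,E}^W=R\pi_*N_{l,E}^W=R\pi_*\tilde\al^*N_{l,E}^M=\al^*R\al_*N_{l,E}^M=\al^*F_{l,E}^M.
\end{equation*}

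For (ii), the natural tool is the short exact sequence on $\cU$,
$$0\to\cO(-\si_y)\to\cO\to\cO_{\si_y}\to 0,$$
tensored with $N_{l,E}^W$ and pushed forward by $\pi$. The middle term yields $F_{l,E}^W$; the right term is $\si_y^*N_{l,E}^W=Q^y_{l,E}$ because $\pi\circ\si_y=\Id_W$; Lemma~\ref{vb on W} gives $R^1\pi_*=0$ on every term, so the distinguished triangle collapses to a short exact sequence. What remains is to identify $R\pi_*(N_{l,E}^W(-\si_y))$ with $F_{l-1,E\setminus\{y\}}^W$. Comparing the defining formulas, using $y\in E$ and $f_{T\cup\{y\},E,l}=f_{T\cup\{y\},E\setminus\{y\},l-1}+1$, one finds
\begin{equation*}
N_{l,E}^W(-\si_y)=N_{l-1,E\setminus\{y\}}^W\otimes\cO(D),
\end{equation*}
where $D=\sum_T\ve_T\de_{T\cup\{y,x\}}$ is an effective type-$2$ boundary divisor with $\ve_T\in\{0,1\}$ depending on the sign of $f_{T\cup\{y\},E\setminus\{y\},l-1}$. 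Using the restriction formulas of Lemma~\ref{restrictions_W}(2) together with the vanishing in Lemma~\ref{Push} applied to the map $\tilde q\times\Id$, one verifies component by component that $R\pi_*\bigl(N_{l-1,E\setminus\{y\}}^W(D)|_D\bigr)=0$, so the added $D$ is invisible under $R\pi_*$ and the identification follows.

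For the final vanishing, dualize \eqref{egasrhar} to obtain
$$0\to Q^{y,\vee}_{l,E}\to F^\vee_{l,E}\to F^\vee_{l-1,E\setminus\{y\}}\to 0.$$
Since $y\notin E\setminus\{y\}$, part (i) gives $F^\vee_{l-1,E\setminus\{y\}}=\al^*F^{M,\vee}_{l-1,E\setminus\{y\}}$, and the projection formula together with $R\al_*\cO_W=\cO_M$ (genus-$0$ fibres) yields $R\al_*F^\vee_{l-1,E\setminus\{y\}}=F^{M,\vee}_{l-1,E\setminus\{y\}}$. Using $\si_y=\De$ and the identities $\si_y^*\om_\pi=\om_\al$, $\si_y^*\si_i^\cU=\de_{iy}^W$ for $i\neq y$, $\si_y^*\si_y^\cU=-\om_\al$, $\si_y^*\de_{T\cup\{x\}}=0$, $\si_y^*\de_{T\cup\{y,x\}}=\de_{T\cup\{y\}}^W$, a direct comparison identifies $Q^y_{l,E}=N_{l+1,E\setminus\{y\}}^M$ as line bundles on $W$. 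Grothendieck--Verdier duality for $\al$, together with one further boundary correction absorbed exactly as in (ii), then gives $R\al_*Q^{y,\vee}_{l,E}=F^{M,\vee}_{l-1,E\setminus\{y\}}[-1]$. Hence the connecting morphism in the $R\al_*$-triangle is an isomorphism, forcing $R\al_*(F^\vee_{l,E})=0$.

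The hard part throughout is the bookkeeping of boundary corrections: at the level of line bundles, $N_{l,E}^W(-\si_y)$ and $N_{l-1,E\setminus\{y\}}^W$ (and likewise the duals appearing in the vanishing) differ by an effective divisor supported on type-$2$ boundary, and one must verify using Lemmas~\ref{restrictions_W}(2) and~\ref{Push} that this discrepancy disappears in the relevant derived pushforward.
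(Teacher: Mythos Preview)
For (i) and (ii) your strategy coincides with the paper's. One point you elide: the square built from $\pi:\cU\to W$ and the forgetting-$y$ map $g:\cU\to W$ is \emph{not} Cartesian; the honest fiber product is a space $\cV$ with a small resolution $v:\cU\to\cV$ (contracting certain codimension-two loci), and one needs $Rv_*\cO_\cU\cong\cO_\cV$ to justify the base-change step. The paper isolates this as a separate claim; you should flag it too. The rest of (i) and (ii) --- the comparison of $N_{l,E}(-\si_y)$ with $N_{l-1,E\setminus\{y\}}$ up to the effective divisor $\sum_{f_T<0}\de_{T\cup\{y,x\}}$, and the vanishing of $R\pi_*$ on those boundary pieces via Lemmas~\ref{restrictions_W} and~\ref{Push} --- is exactly what the paper does.

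For the vanishing $R\al_*(F^\vee_{l,E})=0$ your route diverges from the paper's and has a gap. Your identifications $R\al_*F^\vee_{l-1,E\setminus\{y\}}\cong(F^M_{l-1,E\setminus\{y\}})^\vee$ and $R\al_*Q^{y,\vee}_{l,E}\cong(F^M_{l-1,E\setminus\{y\}})^\vee[-1]$ are both correct (the second via $Q^y_{l,E}=N^M_{l+1,E\setminus\{y\}}$, then $(N^M_{l+1})^\vee=(N^M_{l-1})^\vee\otimes\om_\al(-D')$ with the $D'$-supported quotient killed by $R\al_*$ because its first factor on each $\de_{T\cup\{y\}}$ is $\cO(-H)$). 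But the long exact sequence then only says that $R^0\al_*F^\vee_{l,E}$ and $R^1\al_*F^\vee_{l,E}$ are the kernel and cokernel of some map $\phi$ between two copies of $(F^M_{l-1,E\setminus\{y\}})^\vee$; you assert $\phi$ is an isomorphism without argument. This is fixable --- $\phi$ is generically an isomorphism by the stack computation of Theorem~\ref{stackofbundles}, so $\ker\phi$ is torsion inside a vector bundle, hence zero, and then $\det\phi$ is a section of a trivial line bundle on the proper variety $\M_{p,q-1}$, nonzero at the generic point, hence nowhere zero --- but as written it is incomplete, and the lemma is stated for all $(l,E)$ so you cannot appeal to exceptionality of $F^M_{l-1,E\setminus\{y\}}$. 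The paper avoids this entirely: it applies Grothendieck--Verdier duality for $\pi$ to write $F^\vee_{l,E}=R\pi_*(\om_\pi\otimes N_{l,E}^\vee)$ up to shift, swaps the order of pushforwards via $R\al_{y*}R\pi_*=R\al_{x*}Rg_*$, strips off $g$-pullback factors by the projection formula, and reduces to the concrete vanishing $Rg_*\cO(-\si_y-\Sigma)=0$ with $\Sigma=\sum_{f_T<0}\de_{T\cup\{y,x\}}$, checked directly from $Rg_*\cO_\cU=\cO_W$ and $Rg_*\cO_{\de_{T\cup\{y,x\}}}=\cO_{\de_{T\cup\{x\}}}$. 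This never separates the problem into two pieces that must be glued back together.
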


\bp[Proof of Lemma~\ref{zzfbzdfbdff}]
There is a commutative diagram 
\begin{equation*}
\begin{CD}
\cU      @>v>>  \cV@>\phi>> W\\
@V{\pi}VV        @V{\rho}VV @V{\al=\al_x}VV \\
W   @>Id>>  W  @>\al=\al_y>>  \M
\end{CD}
\end{equation*}
where  $\al: W\ra \M$ is the universal family and the notation $\al=\al_x$ indicates the marking that is forgotten. The right square is Cartesian.
Let $g=\phi\circ v$.
The map $v$ is small and contracts the codimension $2$ loci  
$$\pi^{-1}(\de_T\cap\de_{T^c})=\PP^{m-2}\times\PP^1\times\PP^{m-2}\ra  \PP^{m-2}\times pt \times\PP^{m-2}.$$

\begin{claim}\label{various}
We have 
(i) $Rv_*v^*\cO_{\cV}\cong Rv_*\cO_{\cU}\cong\cO_{\cV}$. 
(ii) In $\Pic(\cU)$ we have $v^*\om_{\rho}=\om_{\pi}$, while on $W$ we have $\psi_y=\om_{\al_y}$. 
(iii)  $g^*\de_{T\cup\{x\}}=\de_{T\cup\{x\}}+\de_{T\cup\{y,x\}}$.
\end{claim}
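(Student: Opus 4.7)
First I unpack the diagram moduli-theoretically: $\cU = \M_{p,q+1}$ parametrises curves with markings $P\cup Q\cup\{x\}$, with $\pi$ being the forgetful map dropping $x$ and $g = \phi\circ v$ the forgetful map dropping $y$. The variety $\cV = W\times_{\M_{p,q-1}}W$ is the naive fibre product of the universal curve $W\to\M_{p,q-1}$ with itself, parametrising ordered pairs of points $(y,x)$ on a curve $C\in\M_{p,q-1}$; the map $v:\cU\to\cV$ is the small proper birational morphism which resolves the codimension-$2$ locus over which these pairs coincide at a node by inserting the $\PP^1$-bubbles required for stability. In particular $v$ is an isomorphism outside a subset of codimension at least $2$.

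Given this, part (iii) follows from the standard pullback formula for boundary divisors under the forgetful map $g$ that drops $y$: a generic point of $\de_{T\cup\{x\}}\subset W$ is a reducible curve with markings $T\cup\{x\}$ on one component; when we add back the marking $y$, it can lie either on the $T^c$-component (contributing the proper transform $\de_{T\cup\{x\}}$ on $\cU$) or on the same component as $T\cup\{x\}$ (contributing $\de_{T\cup\{y,x\}}$). These are the only generic contributions, giving $g^*\de_{T\cup\{x\}} = \de_{T\cup\{x\}} + \de_{T\cup\{y,x\}}$.

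For part (i), the first isomorphism is immediate from the projection formula $Rv_*(v^*\cO_\cV) = Rv_*\cO_\cU \otimes \cO_\cV = Rv_*\cO_\cU$, so it suffices to prove $Rv_*\cO_\cU = \cO_\cV$. Cohomology of $\PP^1$ along the exceptional $\PP^1$-fibres of $v$ gives $R^{\ge 1}v_*\cO_\cU = 0$, and since $\cV$ is normal (as a flat fibre product of a family of nodal Gorenstein curves) and $v$ is birational, $v_*\cO_\cU = \cO_\cV$ by Zariski's main theorem.

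For part (ii), the right square being Cartesian with $\al_x$ flat Gorenstein gives $\om_\rho = \phi^*\om_{\al_x}$ by flat base change; pulling back by $v$ yields $v^*\om_\rho = g^*\om_{\al_x}$. On the open locus $U\subset\cU$ where $v$ is an isomorphism, $\pi|_U$ factors as $\rho|_{v(U)}\circ v|_U$, so the two line bundles $v^*\om_\rho$ and $\om_\pi$ agree on $U$; since $\cU$ is smooth and $\cU\setminus U$ has codimension $\ge 2$, they coincide on all of $\cU$. The identity $\psi_y = \om_{\al_y}$ on $W$ is the classical fact that the $\psi$-class at a marking equals the relative dualizing sheaf of the forgetful map that drops that marking; it is verified fibrewise, since over $C\in\M_{p,q-1}$ the fibre of $\al_y$ is $C$ itself with $\om_{\al_y}|_C = \om_C$, matching $\si_y^*\om_\pi|_C = \om_C$. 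The main obstacle is justifying the normality of $\cV$ in part (i), which requires a local analysis of the singularity structure of the fibre product along the contracted $\PP^1$-family; once this is in place, the rest is a routine synthesis of standard facts about Cartesian squares, small morphisms and forgetful maps.
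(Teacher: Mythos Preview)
Your treatment of (i) and (iii) is correct and essentially matches the paper. For (i) the paper is a bit more direct: it notes that $\cV$ has rational singularities (locally the cone $xy=zt$ times a smooth factor, cf.\ Keel), so $Rv_*\cO_\cU=\cO_\cV$ follows at once for the resolution $v$; your decomposition into normality (for $R^0$) plus $H^1(\PP^1,\cO)=0$ along the exceptional fibres (for $R^{>0}$) works equally well once that local form is in hand. Your argument for the first half of (ii), $v^*\om_\rho=\om_\pi$, via agreement outside codimension~$2$ on the smooth $\cU$, is also fine.

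The gap is in the second half of (ii). Your fibrewise check---that both $\psi_y$ and $\om_{\al_y}$ restrict to $\om_C$ on each fibre $C$ of $\al_y$---does not prove they are isomorphic as line bundles on~$W$. Two line bundles can agree on every fibre of a morphism and still differ by a pullback from the base; nothing in your argument excludes a twist by $\al_y^*L$ for some $L\in\Pic(\M_{p,q-1})$. The paper's proof actually \emph{uses} the first half of (ii) here: from $v^*\om_\rho=\om_\pi$ and $\om_\rho=\phi^*\om_\al$ (right square Cartesian) one gets
\[
\psi_y=\si_y^*\om_\pi=(v\circ\si_y)^*\om_\rho=(\phi\circ v\circ\si_y)^*\om_\al,
\]
and the decisive point is that $\phi\circ v\circ\si_y=\Id_W$: the composite sends $(C,y)\in W$ to the diagonal pair $((C,y),(C,y))\in\cV$ and then projects to the second factor. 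This is an identity of morphisms, not merely a fibrewise coincidence, and it is what pins down $\psi_y=\om_\al$ globally. Replace your fibrewise sentence with this diagram chase and the proof is complete.
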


\bp
The map $v$ is birational and its image has rational singularities, which are in fact
locally isomorphic to the product of the affine cone $xy=zt$ and a smooth variety (see e.g.~\cite[page 548]{Keel}).
Part (i) follows.
Part (iii) is immediate since $g$ is the map that forgets the marking $y$. 

We prove (ii). Recall that $W$, $\cU$ are smooth, see Note 
\ref{note 2}(1). For a proper flat family $f: \cC\ra B$ of at worst nodal curves over a Gorenstein base $B$ (smooth for us) the relative dualizing sheaf $\omega_f$ is a line bundle on $\cC$ with first Chern class $K_\cC-f^*K_B$, where $K_\cC$ and $K_B$ denote the corresponding canonical divisors. We apply this to the families $\pi:\cU\ra W$ and $\rho:\cV\ra W$. 
Since the map $v$ is small, we have $K_\cU=v^*K_\cV$. Hence, $v^*\om_{\rho}=\om_{\pi}$.

By definition $\psi_y=\si_y^*\om_{\pi}$. 
Since $v^*\om_{\rho}=\om_{\pi}$, it follows that if $s_y=v\circ\si_y$, then 
$\psi_y=s_y^*\om_{\rho}=s_y^*\phi^*\om_{\al}=\om_{\al}$, since $\phi\circ s_y=Id_W$. 
%(where we identify $y$ with $x$). 
\ep

We have $F_{l,E}=R\pi_*L_1$, where $L_1:=N_{l,E}$, which is equal to 
$$\om_{\pi}^{\frac{e-l}{2}}\Bigl(E-\sum_{f_{T,E,l}<0}(-f_{T,E,l})\de_{T\cup\{x\}}+(-f_{T,E,l}-|E\cap\{y\}|)\de_{T\cup\{y,x\}}\Bigr).$$   
where the sum is over all $T\subset N$ such that $\de_{T,T^c}\subseteq \M$ is a boundary component ($T^c:=N\setminus T$). 
We now compute $\al^*F_{l',E'}$, for $y\notin E'$. Using (i), we have 
$\al^*F_{l',E'}=\al_y^*R{\al_x}_*N_{l',E'}=R{\rho}_*\phi^*N_{l',E'}=R{\pi}_*L_2$,
where $L_2:=v^*\phi^*N_{l',E'}=g^*N_{l',E'}$,
with $N_{l',E'}$ is the usual line bundle on $W$:
$$N_{l',E'}=\om_{\pi}^\frac{e'-l'}{2}\Bigl(E'-\sum_{f_{T,E',l'}<0}\left(-f_{T,E',l'})\right)\de_{T\cup\{x\}}\Bigr).$$
Since $v$ has no exceptional divisors, it follows that 
$$L_2=\om_{\pi}^\frac{e'-l'}{2}\Bigl(E'-\sum_{f_{T,E',l'}<0}(-f_{T,E',l'})\big(\de_{T\cup\{x\}}+\de_{T\cup\{y,x\}}\big)\Bigr).$$

\underline{Case $y\notin E, l'=l, E'=E$.} We clearly have $L_1=L_2$, and this proves that when $y\notin E$, we have $F_{l,E}=\al^*F_{l,E}$. 

%\smallskip

\underline{Case $y\in E, l'=l-1, E'=E\setminus\{y\}$.} As $\frac{e'-l'}{2}=\frac{e-l}{2}$, $|E\cap\{y\}|=1$, we have 
$L_1=L_2\bigl(\de_{yx}+\sum\limits_{f_{T,E,l}<0}\de_{T\cup\{y,x\}}\bigr)$,  which implies exact sequences
$$0\ra L_2\ra L_1(-\de_{yx})\ra \bigoplus_{f_{T,E,l}<0}\big(L_1(-\de_{yx}\big)_{|\de_{T\cup\{y,x\}}}\ra0$$
and 
$0\ra L_1(-\de_{yx})\ra L_1\ra \big(L_1\big)_{|\de_{yx}}\ra0$.
Assume $T$ is such that $f_{T,E,l}<0$. Then by Lemma \ref{restrictions_W} we have 
$\big(L_1(-\de_{yx})\big)_{|\de_{T\cup\{y,x\}}}=\cO(-H)\boxtimes\cO\left(-f_{T,E,l}-1\right)$,
on $\de_{T\cup\{y,x\}}=\Bl_{1,2,\ov{12}}\PP^{m}\times\PP^{m-2}$. Since on $\Bl_{1,2,\ov{12}}\PP^{m}$ we have 
$R\tilde{q}_*\cO(-E_2)=0$ and $\tilde{q}^*\cO(H)=\cO(H-E_2)$ (Lemma \ref{restrictions_W}), it follows that $R\tilde{q}_*\cO(-H)=0$. Hence, 
$R{\pi}_*\big(L_1(-\de_{yx})\big)_{|\de_{T\cup\{y,x\}}}=0$, $R{\pi}_*L_2\cong R{\pi}_*L_1(-\de_{yx})=F_{l-1,E\setminus\{y\}}$
(and here $R\pi_*(-)={\pi}_*(-)$). Applying $R\pi_*(-)$ to the above two exact sequences, it follows that there is an exact sequence \eqref{egasrhar} with $Q^y_{l,E}=\sigma_y^*N_{l,E}$.
%$$Q^y_{l,E}={\sigma_y}^*N_{l,E}=\left(\frac{e-l}{2}-1\right)\psi_y+\cO(E\setminus\{y\})-\sum_{|E\cap T|<\frac{e-l}{2}}
%\left(\frac{e-l}{2}-|E\cap T|-1\right)\de_{T\cup\{y\}}.$$
%We use here that $\sigma_y=\de_{yx}$, $\sigma_y^*\omega_{\pi}=-\sigma_y^*(\sigma_y)=\psi_y$ and 
%$$\sigma_y^*\de_{T\cup\{x\}}=0,\quad \sigma_y^*\de_{T\cup\{y, x\}}=\de_{T\cup\{y\}}.$$
%It follows from Claim \ref{various}(ii) that $\psi_y=\om_{\al}$ and therefore
%$$Q^y_{l,E}=-\om_{\al}+N'_{l-1,E\setminus\{y\}},$$
%where $N'_{l-1,E\setminus\{y\}}$ is as in Definition \ref{Alt Def}. It follows by Grothendieck duality that 
%$$R{\al}_*\big({Q^y_{l,E}}^\vee\big)\cong F_{l-1,E\setminus\{y\}}^\vee,$$
%(and here $R{\al}_*(-)=R{\al}^1_*(-)$).
% and applying $R{\al}_*(-)$ to the dual sequence
%$$0\ra {Q^y_{l,E}}^\vee \ra F_{l,E}^\vee \ra F^\vee_{l-1,E\setminus\{y\}} \ra 0,$$
%this suggests that we should have $R{\al}_*\big(F_{l,E}^\vee\big)=0$. But for this we would need to know that the induced map 
%$$\al_*F^\vee_{l-1,E\setminus\{y\}}\ra R^1\al_*\big({Q^y_{l,E}}^\vee\big),$$
 %is an isomorphism. We will give an alternate proof. 
Finally, we have 
$$R\al_*\big(F_{l,E}^\vee\big)=R{\al_y}_*\circ R\pi_*\big(\om_{\pi}\otimes L_1^\vee[1]\big)=R{\al_x}_*\circ Rg_*\big(\om_{\pi}\otimes L_1^\vee[1]\big).$$
Hence, it suffices to prove that $Rg_*\big(\om_{\pi}\otimes L_1^\vee\big)=0$. Since $\om_{\pi}=g^*\om_{\al}$, $L_2$ is a pull-back by $g$,
 and $L_1=L_2(\de_{yx}+D)$, where
 $D=\sum\limits_{f_{T,E,l}<0}\de_{T\cup\{y,x\}}$,
 it suffices to prove that $Rg_*\cO(-\de_{yx}-D)=0$. Consider the exact sequence
 $$0\ra \cO(-\de_{yx}-D)\ra \cO(-D)\ra  \cO(-D)_{|\de_{yx}}\ra0.$$
 It suffices to prove that $g_*(-)$ induces an isomorphism when applied to the restriction map 
$\cO(-D)\ra  \cO(-D)_{|\de_{yx}}$ and all higher push forwards by $g$ of $\cO(-D)$ and $\cO(-D)_{|\de_{yx}}$  are $0$. 
Since $\de_{yx}=\si_y$ and $g\circ\si_y=\Id$, 
we have $R^ig_*\cO(-D)_{|\de_{yx}}=0$ for all $i>0$ and $g_*\cO(-D)_{|\de_{yx}}=\cO_W(-D')$, where 
 $D'=g(D)=\sum_{f_{T,E,l}<0}\de_{T\cup\{x\}}$. 
Because of the condition $f_{T,E,l}<0$, 
the divisors in $D$ are disjoint. Similarly, the divisors in $D'$ are disjoint.  
Using the exact sequence 
$0\ra\cO(-\de_{T\cup\{y,x\}})\ra\cO_{\cU}\ra\cO_{\de_{T\cup\{y,x\}}}\ra0$,
it suffices to prove that $Rg_*\cO_{\cU}=\cO_W$ and 
$Rg_*\cO_{\de_{T\cup\{y,x\}}}\cong\cO_{\de_{T\cup\{x\}}}$. Since $g=\phi\circ v$, the first statement follows
from $Rv_*\cO_{\cU}\cong \cO_{\cV}$ (part (i) of Claim \ref{various}) and $R\phi_*\cO_{\cV}\cong \cO_W$, as $\phi$ is the pull-back of 
$\al_y: W\ra\M$, the universal family over $\M$. The second statement follows as
the map $g$ restricted to $\de_{T\cup\{y,x\}}$ is the map $\pi_{|\de}=(\tilde{q}, \Id)$ from Lemma \ref{restrictions_W}, where 
$\tilde{q}: \M_{T\cup\{y,x,u\}}=\Bl_{1,2,\ov{12}}\ra\M_{T\cup\{y,u\}}=\Bl_p\PP^{m-1}$ is the universal family,  in this case a flat family of rational, at worst nodal curves over a smooth base.  \ep

%%%%%%%%%%%%%%%%%%%%%%%%%%%%%%%%%%%%%%%%%%%%%%%%%%%%%%%%
%%%%%%%%%%%%%%%%%%%%%%%%%%%%%%%%%%%%%%%%%%%%%%%%%%%%%%%%
%%%%%%%%%%%%%%%%%%%%%%%%%%%%%%%%%%%%%%%%%%%%%%%%%%%%%%%%
%%%%%%%%%%%%%%%%%%%%%%%%%%%%%%%%%%%%%%%%%%%%%%%%%%%%%%%%

\section{Perpendicularity of $F_{l,E}$'s versus the sheaves $\cO_{\de}(-a,-b)$}\label{F vs boundary section}

%%%%%%%%%%%%%%%%%%%%%%%%%%%%%%%%

%\noindent{\bf Hassett spaces $\M$ corresponding to curves with at most two components}

Let $p$ even, $q$ odd, $q+1\geq0$. In Section \ref{extend section}, we defined vector bundles $F_{l,E}$ on the Hassett spaces $\M_{p,q+1}$ and on their universal families $W$, generalizing the definition of the vector bundles $F_{l,E}$ on the stacks $\cP_n$ (defined in Section \ref{fullness odd p section}). 
In this section we verify that the bundles $F_{l,E}$ on $\M_{p,q+1}$ in Theorems \ref{p,q+1 case}, \ref{even 0}, Remark \ref{1B stuff} are perpendicular to the torsion sheaves in the subcategory $\cA$ (Corollary \ref{F perp A}). We will prove a more general statement in Proposition \ref{perpendicular general}, as this will be needed later for our inductive arguments in Sections \ref{induction2} and \ref{induction1}. 

For the exceptionality  part in Theorems \ref{p,q+1 case}, \ref{even 0}, Remark \ref{1B stuff}, we will compare RHom's between objects on $\M_{p,q+1}$ and
%to RHom's between 
similar objects on $W$. %other Hassett spaces (e.g., $W$). 
We~will need a general statement about perpendicularity of the bundles $F_{l,E}$ on $W$ to certain torsion sheaves (Proposition \ref{perpendicular W}).

\begin{prop}\label{perpendicular general}
Assume $P$ (resp., $Q$) is the set of heavy (resp., light) indices and 
$|P|=p=2r\geq 4$, $|Q|=q+1=2s+2\geq0$ (so $s$ can be $-1$).
Assume $l\geq 0$, $E\subseteq P\cup Q$ with $e=|E|$ such that $e+l$ is even.
Let $\de_T=\de_{T,T^c}=\PP^{r+s-1}\times\bP^{r+s-1}\subseteq \M_{p,q+1}$ be a
boundary divisor 
such that 
\begin{equation}\label{xyff}
\left| f_{T,E,l}\right| ,\ \left|f_{T^c,E,l}\right| \leq\mu\le (r+s)/2.
\end{equation}
Then
$R\Hom_{\M_{p,q+1}}(F^\vee_{l,E}, \cO_{\de_T}(-a,-b))=0$, 
where by $\cO_{\de_T}(-a,-b)$ we denote $\cO_T(-a)\boxtimes\cO_{T^c}(-b)$, i.e., $-a$ is on the $T$-component)
if $1\le a,b\le  r+s-1$,
or $\mu<b< r+s$, $a=r+s$ (hence, any $a$) or $\mu<a<r+s$, $b=r+s$ (hence, any $b$). 
Similarly, 
$R\Hom_{\M_{p,q+1}}(F_{l,E}, \cO_{\de_T}(-a,-b))=0$ if 
$1\le a,b\le  r+s-1$,
or $a=0,\ 0<b<r+s-\mu$,
or $b=0,\ 0<a<r+s-\mu$.
\end{prop}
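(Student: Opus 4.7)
The plan is to prove the statement for $F_{l,E}^\vee$ by direct computation of $F_{l,E}|_{\delta_T}$ as a complex of line bundles on $\delta_T\cong\PP^{r+s-1}\times\PP^{r+s-1}$, and to deduce the $F_{l,E}$ statement from Serre duality on $\delta_T$. By adjunction for the closed immersion $\delta_T\hookrightarrow\M_{p,q+1}$,
$$\RHom(F_{l,E}^\vee,\cO_{\delta_T}(-a,-b))=R\Gamma\bigl(\delta_T,\ F_{l,E}|_{\delta_T}\otimes\cO(-a,-b)\bigr),$$
and since $F_{l,E}=R\alpha_*N_{l,E}$ is a flat pushforward, base change gives $F_{l,E}|_{\delta_T}=R\alpha'_*(N_{l,E}|_{\alpha^{-1}(\delta_T)})$. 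By Lemma~\ref{SRGASRGASRGA}, $\alpha^{-1}(\delta_T)=\delta_{T\cup\{x\}}\cup\delta_{T^c\cup\{x\}}$ with intersection the node section (a copy of $\delta_T$), so the Mayer--Vietoris triangle reduces the problem to three pushforward computations.

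Using Lemma~\ref{restrict} one computes $N_{l,E}|_{\delta_{T\cup\{x\}}}=\cO(m_{T,E,l}H-\alpha_{T^c,E,l}\Delta)\boxtimes\cO(\alpha_{T,E,l})$ on $\Bl_p\PP^{r+s}\times\PP^{r+s-1}$; the crucial simplification is that the correction divisor $-\alpha_{T,E,l}\delta_{T\cup\{x\}}$ appearing in Definition~\ref{allF} contributes precisely so that $f_{T,E,l}+\alpha_{T,E,l}=m_{T,E,l}\ge 0$ in the coefficient of $H$. Applying Lemma~\ref{Push} to the $\PP^1$-bundle $\pi\colon\Bl_p\PP^{r+s}\to\PP^{r+s-1}$, and using $m_T\ge\alpha_{T^c}$, this pushforward decomposes as a direct sum of line bundles $\cO(u,\alpha_T)$ with $u\in[\alpha_{T^c},m_T]$ on $\delta_T$. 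Interchanging the roles of $T$ and $T^c$ yields the contribution $\cO(\alpha_{T^c},v)$ for $v\in[\alpha_T,m_{T^c}]$ from $\delta_{T^c\cup\{x\}}$, while restricting to the intersection gives the single line bundle $\cO(\alpha_{T^c},\alpha_T)$. Condition~\eqref{xyff} bounds $m_T,m_{T^c},\alpha_T,\alpha_{T^c}\in[0,\mu]$; moreover, the identity $\alpha_T+m_T=|f_T|$ together with $\mathrm{sign}(f_T)\in\{+,-\}$ forces at most one of $\{\alpha_T,m_T\}$ to be nonzero (and similarly for $T^c$), and when $l\ge 0$ one also has $\alpha_T\cdot\alpha_{T^c}=0$.

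Then by the K\"unneth formula, $\cO(c,d)$ on $\PP^{r+s-1}\times\PP^{r+s-1}$ is acyclic as soon as $c$ or $d$ lies in $[-(r+s-1),-1]$. For $1\le a,b\le r+s-1$, each appearing bidegree $(u,v')$ has the property that either $v'=0$ (so $-b$ is acyclic) or $u=0$ with $\alpha_{T^c}=0$ (so $-a$ is acyclic), and this handles the two pushforward contributions and the intersection term simultaneously. For the extreme case $a=r+s$, $\mu<b<r+s$, the first factor $u-a$ lies in $[-(r+s),\mu-(r+s)]$ which is acyclic except at $u=0$; when $u=0$, the bound $\alpha_T\le\mu<b\le r+s-1$ places $\alpha_T-b$ in $[-(r+s-1),-1]$, which is acyclic. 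The case $b=r+s$, $\mu<a<r+s$ is symmetric. Finally, Serre duality on $\delta_T$ (with $\omega_{\delta_T}=\cO(-(r+s),-(r+s))$) identifies the $F_{l,E}$-statement with the $F_{l,E}^\vee$-statement under the transposition $(a,b)\mapsto(r+s-a,r+s-b)$, which sends the three listed $(a,b)$-ranges in one statement bijectively onto those in the other. The main technical point, and the only place where the delicate design of $N_{l,E}$ is needed, is the restriction formula: the boundary correction twists exist precisely to force non-negative, $|f_T|$-bounded bidegrees after pushforward, which is what makes the case analysis tractable.
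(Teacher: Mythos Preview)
Your proof is correct and follows essentially the same strategy as the paper: reduce the $F_{l,E}$ statement to the $F_{l,E}^\vee$ statement by Serre duality, apply base change to compute $F_{l,E}|_{\delta_T}$ as the pushforward of $N_{l,E}$ from $\alpha^{-1}(\delta_T)=\delta_{T\cup\{x\}}\cup\delta_{T^c\cup\{x\}}$, use Lemma~\ref{restrict} for the restriction formula, and then check acyclicity via the bound~\eqref{xyff}.

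The organizational choices differ slightly. The paper splits the union via the short exact sequence $0\to\cO_{\delta_{T^c\cup\{x\}}}(-\delta_{T\cup\{x\}})\to\cO_{\alpha^{-1}(\delta_T)}\to\cO_{\delta_{T\cup\{x\}}}\to 0$ and checks acyclicity of the two resulting line bundles directly on $\PP^{r+s-1}\times\Bl_1\PP^{r+s}$ using Lemma~\ref{acy}, doing the case analysis on the signs of $f_T,f_{T^c}$. You instead use Mayer--Vietoris (which is legitimate since $\alpha^*\delta_T=\delta_{T\cup\{x\}}+\delta_{T^c\cup\{x\}}$ is reduced with transversal components), push forward each piece to $\delta_T$ first via Lemma~\ref{Push}, and then check acyclicity of line bundles on $\PP^{r+s-1}\times\PP^{r+s-1}$. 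Your bookkeeping via $m_T,\alpha_T$ with the observations $m_T\ge\alpha_{T^c}$ and $\alpha_T\alpha_{T^c}=0$ (both consequences of $f_T+f_{T^c}=l\ge 0$) packages the case split more symmetrically, but the content is the same as the paper's three cases.
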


\begin{cor}\label{F perp A}
Assume the pair $(l,E)$ is in group $1A$ or $1B$ on $\M_{p,q+1}$. 
Let $\cO_{\de}(-a,-b)$ be one of the generators of the category $\cA$ (Notation \ref{TT}).
Then $R\Hom_{\M_{p,q+1}}(F_{l,E}, \cO_\de(-a,-b))=0$.
\end{cor}

\bp
We apply Proposition~\ref{perpendicular general} with $\mu=\frac{r+s}{2}$. By 
Lemma~\ref{jhvvmgvmv} we have that $|f_{T,E,l}|, |f_{T^c,E,l}|\leq (r+s)/2$
and the statement follows from Proposition~\ref{perpendicular general}. 
\ep

\bp[Proof of Proposition \ref{perpendicular general}]
%The proof is identical to that of Proposition \ref{perpendicular}. 
If $E$ is a vector bundle on $\M_{p,q+1}$, we have 
$$R\Hom_{\M_{p,q+1}}(E^\vee, \cO_{\de_T}(-a,-b))\cong R\Ga_{\de_T}(E\otimes\cO_{\de_T}(-a,-b)),$$
$$R\Hom_{\M_{p,q+1}}(E, K_{\de_T}\otimes\cO_{\de_T}(a,b))^\vee\cong R\Ga_{\de_T}(E^\vee\otimes K_{\de_T}\otimes\cO_{\de_T}(a,b))^\vee,$$
Using Serre duality on $\de_T$, 
the two complexes are isomorphic up to a shift.
So the second statement is equivalent to the first. 
We now prove the first statement. If $\al: W\ra \M_{p,q+1}$ is the universal family, let 
$\be:=\al_{|\al^{-1}(\de_T)}: \al^{-1}(\de_T)=\de_T\cup\de_{T^c}\ra\de_T$,
where we denote $\de_T:=\de_{T, T^c\cup\{y\}}$ ($y$ the new marking on $W$). Let $j:\de_T\hra\M_{p,q+1}$ denote the inclusion. 
Then 
$$R\Hom_{\M_{p,q+1}}\big(F^\vee_{l,E},\cO_{\de_T}(-a,-b)\big)=R\Ga_{\M_{p,q+1}}\big(F_{l,E}\otimes \cO_{\de_T}(-a,-b)\big).$$
Since $F_{l,E}=R\al_*(N_{l,E})$ (see Definition \ref{allF}), it follows by cohomology and base change that
${F_{l,E}}_{|\de_T}=\big(R\al_*(N_{l,E})\big)_{\de_T}=R\be_*\big({N_{l,E}}_{|\de_T\cup\de_{T^c}}\big)$,
and  by the projection formula 
$$R\Ga_{\M_{p,q+1}}\big(F_{l,E}\otimes \cO_{\de_T}(-a,-b)\big)=$$
$$=R\Ga_{\M_{p,q+1}}\big(j_*R\be_*\big({N_{l,E}}_{|\de_T\cup\de_{T^c}}\otimes\be^*\cO(-a,-b)\big)\big).$$
We have to show that the  line bundle 
$\tilde{N}:=\big({N_{l,E}}_{|\de_T\cup\de_{T^c}}\otimes\be^*\cO(-a,-b)\big)\big)$
on $\de_T\cup\de_{T^c}$
has no cohomology. Consider the following exact sequence:
$0\ra\cO_{\de_{T^c}}(-\de_T)\ra\cO_{\de_T\cup\de_{T^c}}\ra\cO_{\de_T}\ra 0$.
Tensoring with $\tilde{N}$ gives an exact sequence:
$0\ra\tilde{N}''\ra\tilde{N}\ra\tilde{N}'\ra0$,
where 
$$\tilde{N}'=\big(N_{l,E}\otimes\be^*\cO(-a,-b)\big)_{|\de_T},\ \tilde{N}''=\big(N_{l,E}\otimes\be^*\cO(-a,-b)\otimes\cO(-\de_T)\big)_{|\de_{T^c}},$$
$$N_{l,E}=\om_{\pi}^{\frac{e-l}{2}}(E)(-\sum\al_{T,E,l}\de_{T\cup\{y\}, T^c})$$
(see (\ref{Sdgsgasrh}) for the definition if $\al_{T,E,l}$). 
We  use Lemma \ref{restrict} to compute $\tilde{N}'$ and $\tilde{N}''$. As usual, for simplicity,
denote $\al_T:=\al_{T,E,l}$, $f_T:=f_{T,E,l}$. 
Using the identification $\de_{T}=\de_{T,T^c\cup\{y\}}=\PP^{r+s-1}\times\Bl_p\PP^{r+s}$, we have
$$\tilde{N}':=\cO(-a+\al_{T^c})\boxtimes\big((f_{T^c}+\al_{T^c}-b)H+(b-\al_T)\De\big).$$
Using the identification $\de_{T^c}=\de_{T^c,T\cup\{y\}}=\PP^{r+s-1}\times\Bl_p\PP^{r+s}$, we have
$$\tilde{N}'':=\cO(-b+\al_T)\boxtimes\big((f_T+\al_T-a)H+(a-\al_{T^c}-1)\De\big).$$
Here $H$, resp. $\De$, denotes $\cO_{\PP^{r+s}}(1)$, resp., the exceptional divisor on $\Bl_p\PP^{r+s}$. 
We prove that both $\tilde{N}'$, $\tilde{N}''$ are acyclic. 
Recall that, for all $T$, either $f_T\geq0$, $\al_T=0$ or $f_T<0$, $f_T+\al_T=0$
and $f_T+f_{T^c}=l\geq0$. 

%\smallskip

\underline{\bf Case $f_T,f_{T^c}\geq0$}:  then $\al_T=\al_{T^c}=0$ and 
$\tilde{N}'=\cO(-a)\boxtimes\big((f_{T^c}-b)H+b\De\big)$, 
$\tilde{N}''=\cO(-b)\boxtimes\big((f_T-a)H+(a-1)\De\big)$.
Clearly, if $0<a\leq (r+s-1)$, $0<b\leq (r+s-1)$ then $\cO(-a)$, $\cO(-b)$ are acyclic,
hence so are $\tilde{N}'$, $\tilde{N}''$. 
Assume that $a=r+s$, $\mu<b\leq r+s-1$. Then $\cO(-b)$ and therefore $\tilde{N}''$ is acyclic.
By (\ref{xyff}), we have 
$-(r+s-1)\leq-b\leq f_{T^c}-b\leq \mu-b<0$.
It follows that $\tilde{N}'$ is acyclic by Lemma \ref{acy}.
Similarly, if $b=r+s$, $\mu<a\leq r+s-1$, then $\cO(-a)$ and $(f_T-a)H+(a-1)\De$ are acyclic, as we have
$-(r+s-1)\leq-a\leq f_T-a\leq\mu-a<0$.

%\smallskip

\underline{\bf Case $f_T\geq0$, $f_T^c<0$}:  then $\al_T=f_{T^c}+\al_{T^c}=0$ and 
$\tilde{N}'=\cO(-a-f_{T^c})\boxtimes\big(-bH+b\De\big)$,
$\tilde{N}''=\cO(-b)\boxtimes\big((f_T-a)H+(a+f_{T^c}-1)\De\big)$.
If $0<b\leq r+s-1$ then $\cO(-b)$, $-bH+b\De$ are acyclic, and the claim follows. 
If $b=r+s$, $\mu <a\leq r+s-1$ then by (\ref{xyff}) 
$0\leq \mu+f_{T^c}<a+f_{T^c}<a\leq r+s-1$,
$-(r+s-1)\leq-a\leq f_T-a\leq \mu-a<0$,
so $\cO(-a-f_{T^c})$ and $(f_T-a)H+(a+f_{T^c}-1)\De$ are acyclic. 

%\smallskip

\underline{\bf Case $f_{T^c}\geq0$, $f_T<0$}:  then $\al_{T^c}=\al_T+f_T=0$  and 
$\tilde{N}'=\cO(-a)\boxtimes\big((f_{T^c}-b)H+(b+f_T)\De\big)$,
$\tilde{N}''=\cO(-b-f_T)\boxtimes\big(-aH+(a-1)\De\big)$.
If $0<a\leq r+s-1$ then $\cO(-a)$, $-aH+(a-1)\De$ are acyclic, and the result follows. 
Assume now $a=r+s$, $\mu<b\leq r+s-1$. 
Note that $-aH+(a-1)\De$ is still acyclic. Furthermore, 
$(f_{T^c}-b)H+(b+f_T)\De$ is acyclic, as by (\ref{xyff}) 
$0\leq \mu+f_T<b+f_T<b\leq r+s-1$ and 
$-(r+s-1)\leq-b\leq f_{T^c}-b\leq\mu-b<0$.   
\ep

%\begin{rmk}
%The above proof shows that when $R\subseteq P$, $|R|=r$, we have 
%$$R\Hom(F_{s,R\cup Q},\cO_{\de_T}(0,-\frac{r+s}{2}))\neq 0,\quad \text{when}\quad T_p=T\cap P=P\setminus R,$$
%(with $0$ on the $T$-component). 
%\end{rmk}

\begin{cor}\label{nbvcnbvcnvbc}\label{F0E}
The line bundle $F_{0,N}$ on $\M_p$ for $p\ge6$ even is the pull-back of the  
GIT polarization via the morphism $\phi:\,\M_p\to\X_p$. When $p=4$,  $F_{0,N}\cong\cO_{\PP^1}(1)$.
\end{cor}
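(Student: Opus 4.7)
The plan is to compare $F_{0,\Sigma}$ with $\phi^*H$ via the identity $\alpha^*F_{0,\Sigma}=N_{0,\Sigma}$ from Lemma~\ref{property}, applicable here because for $l=0$, $E=\Sigma$ one has $f_{T,\Sigma,0}=0$ for every $2$-partition $T\sqcup T^c$ of $P$, so $N_{0,\Sigma}=\omega_\alpha^r(\Sigma)$ carries no boundary corrections. On the open locus $U\subset\M_p$ over which $\phi$ is an isomorphism (the complement of the exceptional divisors $\delta_T$ of the Kirwan resolution), the universal family is a $\bP^1$-bundle, and Lemma~\ref{akjsdhfkjsgf} together with Proposition~\ref{sjhfgkjshgfqkjhw} identifies $F_{0,\Sigma}|_U$ with the descent of $\cO(1,\ldots,1)$ under its unique $\PGL_2$-linearization (which exists because $p$ is even). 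This descent is by construction the GIT polarization $H$, so $F_{0,\Sigma}|_U=\phi^*H|_U$.

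For $p\geq 6$, the plan is to extend this equality to all of $\M_p$ by showing $F_{0,\Sigma}|_{\delta_T}\simeq\cO_{\delta_T}$ for each exceptional divisor $\delta_T\simeq\bP^{r-2}\times\bP^{r-2}$. The restriction is computed via Lemma~\ref{restrict}: on $\delta_{T\cup\{x\}}\simeq\Bl_1\bP^{r-1}\times\bP^{r-2}$ one finds $\omega_\alpha|_{\delta_{T\cup\{x\}}}=(-H)\boxtimes\cO$ and $\cO(\Sigma)|_{\delta_{T\cup\{x\}}}=(rH)\boxtimes\cO$ (the $r$ markings in $T$ each contribute $H$, those in $T^c$ contribute nothing), so $N_{0,\Sigma}|_{\delta_{T\cup\{x\}}}=\cO$. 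Since $\alpha|_{\delta_{T\cup\{x\}}}$ is a $\bP^1$-bundle over $\delta_T$ with injective pullback on Picard groups, $F_{0,\Sigma}|_{\delta_T}\simeq\cO$ follows. Writing the difference as $F_{0,\Sigma}\otimes(\phi^*H)^{-1}=\cO(\sum_T c_T\delta_T)$ and restricting to a single $\delta_{T_0}$, one uses that $\delta_T\cap\delta_{T_0}=\emptyset$ whenever $T\neq T_0,T_0^c$ (the two $2$-partitions of $P$ are incompatible) together with $\delta_{T_0}|_{\delta_{T_0}}=\cO(-1,-1)$ to force $c_{T_0}=0$, whence $F_{0,\Sigma}=\phi^*H$.

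For $p=4$, no resolution is required ($\bP^{r-2}\times\bP^{r-2}$ collapses to a point when $r=2$), so $\M_4=\X_4=\bP^1$. One computes $\deg F_{0,\Sigma}$ on $\bP^1$ directly by intersecting $\alpha^*F_{0,\Sigma}=2\omega_\alpha+\Sigma$ with a section $\sigma_i$ on $W=\ocM_{0,5}$: using $\omega_\alpha\cdot\sigma_i=\deg\psi_i=1$, $\sigma_i\cdot\sigma_j=0$ for $i\neq j$, and $\sigma_i^2=-\psi_i=-1$ (self-intersection via the normal bundle), one obtains $(2\omega_\alpha+\Sigma)\cdot\sigma_i=1$; comparison with $\alpha^*F_{0,\Sigma}\cdot\sigma_i=\deg F_{0,\Sigma}$ (since $\sigma_i$ is a section of $\alpha$) yields $F_{0,\Sigma}\simeq\cO_{\bP^1}(1)$.

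The main obstacle is the cancellation $\omega_\alpha^r\otimes\cO(\Sigma)|_{\delta_{T\cup\{x\}}}=\cO$, which crucially uses that exactly $r$ of the $p=2r$ markings lie in $T$; this is what makes the $F_{0,\Sigma}$ case special (for other choices of $E$, boundary corrections and Picard-group contributions would survive). Once this cancellation is established, the global extension via the self-intersection $\cO(-1,-1)$ on each $\delta_T$ is routine.
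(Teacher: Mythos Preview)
Your proof is correct and, for $p\ge 6$, takes a cleaner route than the paper. The paper writes $F_{0,\Sigma}=\phi^*L(\sum a_{T}\delta_{T})$, then invokes the vanishing $\RHom_{\M_p}(F_{0,\Sigma},\cO_{\delta_{T}}(-a,-a))=0$ from Prop.~\ref{perpendicular general} and uses it to force all $a_{T}=0$ (since $F_{0,\Sigma}|_{\delta_T}=\cO(-a_T,-a_T)$, a nonzero $a_T$ would produce nonvanishing cohomology of $\cO(a_T-a,a_T-a)$ on $\PP^{r-2}\times\PP^{r-2}$ for some $a\in\{1,\ldots,r-2\}$). You instead compute $F_{0,\Sigma}|_{\delta_T}=\cO$ directly from $N_{0,\Sigma}=\alpha^*F_{0,\Sigma}$ (proof of Lemma~\ref{property}) and Lemma~\ref{restrict}, short-circuiting the $\RHom$ machinery; the extraction of $c_T$ via $\delta_T|_{\delta_T}=\cO(-1,-1)$ is then immediate. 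Your argument is more elementary and self-contained; the paper's has the mild advantage that Prop.~\ref{perpendicular general} is already established for general $F_{l,E}$, so the corollary comes essentially for free once that proposition is in hand.

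For $p=4$ the two arguments coincide in substance: the paper also pulls $N_{0,\Sigma}$ back along a section and evaluates in terms of $\psi$- and $\delta$-classes on $\ocM_{0,4}$. One small inaccuracy: your identification $W=\ocM_{0,5}$ holds only at the endpoint weight $a=1$; for $\tfrac12<a<1$ the universal family is the contraction of the four curves $\delta_{\{i,5\}}$. This does not affect your computation, since you intersect only with a heavy section $\sigma_i$ and only heavy sections $\sigma_1,\ldots,\sigma_4$ appear in $\Sigma$; these remain pairwise disjoint in either model, and $\psi_i$ is unchanged.
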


\bp
The line bundle $\cO(1,\ldots,1)$ on $(\PP^1)^p$ descends to $\X_p$ by the Kempf descent criterion (\cite[Theorem4.2.15]{HL})
giving a line bundle $L$ on $\X_p$ which is ample (\cite[Theorem 8.1]{Dolgachev}). Note that, away from the singularities, this agrees with 
our definition of $F_{0,N}$ as $R\al_*N_{0,N}$, where $N_{0,N}=\om_{\al}^2(N)$ and $\al: W\ra\M_p$ is the universal family.
It follows that 
$F_{0,N}\cong \phi^*L(\sum a_{T,T^c}\delta_{T,T^c})$, for some integers $a_{T,T^c}$. 
When $p=2r\ge6$, it remains to show that $a_{T,T^c}=0$ for every partition $P=T\sqcup T^c$, $|T|=|T^c|=r$. 
For every $a=1,\ldots r-2$, we have 
$0=R\Hom_{\M_{p}}(F_{l,E}, \cO_{\de_{T,T^c}}(-a,-a))=\bigoplus_{T,T^c} R\Gamma(\PP^{r-2}\times \PP^{r-2}, \cO_{\de_T}(a_{T,T^c}-a,a_{T,T^c}-a))$
by Proposition~\ref{perpendicular general}. It follows that all $a_{T,T^c}=0$ and hence, $F_{0,N}\cong \phi^*L$. 

To see the last statement, recall from the proof of Lemma \ref{property}, that $N_{0,N}=\al^*F_{0,N}$. It follows that 
$F_{0,N}=\si_1^*\al^*F_{0,N}=\si_1^*N_{0,N}=\psi_1+\sum_{i=2}^4\si_1^*\si_i$. If $p=4$, we have that $\si_1^*\si_i=0$ if $i\neq1$. 
Furthermore, $\ocM_{0,4}$ and $\M_4$ are isomorphic, with the same universal family. Indeed, recall that when $p=2r$ the space $\M_p$ parametrizes either $\PP^1$'s with $p$ marked points, with at most $r-1$ allowed to coincide, or a reducible rational curve with two components, with $r$ marked points on each component (at most $r-1$ points allowed to coincide). In particular, the spaces $\ocM_{0,p}$ and $\M_p$ (for $p$ even) parametrize the same stable pointed curves if and only if $p=4$.
It follows that on $\M_4$ we have $\psi_1=\cO_{\PP^1}(1)$ and $F_{0,N}=\cO_{\PP^1}(1)$.
\ep

\begin{prop}\label{perpendicular W}
Assume 
$|P|=p=2r\geq 4$, $|Q|=q=2s+1\geq1$.
Let $y\in Q$ and consider the universal family
$\al: W\ra\M_{p,q-1}=\M_{P\cup Q\setminus\{y\}}$. 
Let $E\subseteq P\cup Q$ with $e=|E|$, $l\geq0$
such that $e+l$ is even. 
Let $\de_{T\cup\{y\}, T^c}\subseteq W$ be a boundary divisor ($T\sqcup T^c=P\cup(Q\setminus\{y\})$) 
with the property that, for some $\mu$ and $\mu'$,
\begin{equation}\label{take1}
m_{T^c}= m_{T^c,E,l}\leq \mu'\le \frac{r+s}{2},
\end{equation}
\begin{equation}\label{take2}
m_T= m_{T,E,l}\leq \mu-|E\cap\{y\}|\le \frac{r+s}{2}-|E\cap\{y\}|,
\end{equation}
see \eqref{sgasrgarga} for $m_T$.
Using an identification $\de_{T\cup\{y\}}=\Bl_p\bP^{r+s-1}\times\bP^{r+s-2}$, we have 
$R\Hom_W(F_{l,E}, (-aH)\boxtimes\cO(-b))=0$
if $1\le a\le  r+s-1, 1\le b\le r+s-2$ or $a=0,\ 0<b<r+s-1-\mu'$ or $b=0,\ 0<a<r+s-\mu$.
\end{prop}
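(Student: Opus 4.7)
The argument parallels that of Proposition~\ref{perpendicular general}, now applied to the universal family $\pi\colon \cU\to W$ of $W$ itself. By Lemma~\ref{vb on W}, $F_{l,E}=R\pi_*N_{l,E}$ is a rank $(l{+}1)$ vector bundle, and Grothendieck--Serre duality relative to $\pi$ gives $F_{l,E}^\vee\simeq R\pi_*(\omega_\pi\otimes N_{l,E}^{-1})[1]$. Combining this with the projection formula and cohomology-and-base-change along $\pi^{-1}(\delta)\to\delta$ yields
\[
\RHom_W\bigl(F_{l,E},\,\cO_\delta(-a,-b)\bigr)\;\simeq\; R\Gamma\bigl(\pi^{-1}(\delta),\,M\bigr)[1],
\]
where $M=\bigl(\omega_\pi\otimes N_{l,E}^{-1}\otimes\pi^*\cO_\delta(-a,-b)\bigr)\big|_{\pi^{-1}(\delta)}$ and $\delta=\delta_{T\cup\{y\},\,T^c}\subset W$.

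The scheme $\pi^{-1}(\delta)\subset\cU$ is the nodal union $\delta'\cup\delta''$ of two boundary divisors from Lemma~\ref{restrictions_W}: $\delta'=\delta_{T\cup\{y,x\},\,T^c}$ (type~(2), isomorphic to $\Bl_{1,2,\overline{12}}\bP^{r+s}\times\bP^{r+s-2}$) and $\delta''=\delta_{T^c\cup\{x\},\,T\cup\{y\}}$ (type~(1) with $T\leftrightarrow T^c$, isomorphic to $\Bl_p\bP^{r+s-1}\times\Bl_p\bP^{r+s-1}$). The short exact sequence
\[
0\to\cO_{\delta''}(-\delta'\cap\delta'')\to\cO_{\pi^{-1}(\delta)}\to\cO_{\delta'}\to 0
\]
reduces the desired vanishing to separate acyclicity of $M|_{\delta'}$ and of $M|_{\delta''}(-\delta'\cap\delta'')$. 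On $\delta''$, K\"unneth splits this into two factor-wise problems on $\Bl_p\bP^{r+s-1}$, where Lemma~\ref{acy} applies directly; on $\delta'$, we either verify acyclicity by two applications of a blow-down sequence, or push forward first via $(\tilde q,\Id)$ to $\Bl_p\bP^{r+s-1}\times\bP^{r+s-2}$ as in Remark~\ref{PushB} and then invoke Lemma~\ref{acy}.

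Using Lemma~\ref{restrictions_W}, one expands the restrictions of $\omega_\pi$, of $\cO(E)=\sum_{i\in E}\cO(\delta_{ix})$, and of every twist divisor $\delta_{S\cup\{x\}}$ or $\delta_{S\cup\{y,x\}}$ occurring in $N_{l,E}$. After simplification, $M|_{\delta'}$ and $M|_{\delta''}(-\delta'\cap\delta'')$ become line bundles whose exponents are explicit linear combinations of $f_{T,E,l}$, $f_{T^c,E,l}$, $|E\cap\{y\}|$, $a$ and $b$ (with the values of $\alpha_T$ and $\alpha_{T\cup\{y\}}$ from Definition~\ref{F on W} substituted in). Hypotheses \eqref{take1} and \eqref{take2} are precisely what is required to place these degrees into the acyclic range of Lemma~\ref{acy} for the stated ranges of $a$ and $b$, paralleling exactly the three cases $(f_T,f_{T^c}\ge 0)$, $(f_T\ge 0,f_{T^c}<0)$, $(f_T<0,f_{T^c}\ge 0)$ treated in Proposition~\ref{perpendicular general}.

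\textbf{Main obstacle.} The heart of the argument is an organized case analysis by the three sign patterns of $(f_{T,E,l},f_{T^c,E,l})$ (only three occur since $f_T+f_{T^c}=l\ge0$) crossed with whether $y\in E$ or $y\notin E$; each gives different formulas for $\alpha_T$, $\alpha_{T\cup\{y\}}$ and hence for $M$. The asymmetry between the bound $\mu'$ in \eqref{take1} and $\mu-|E\cap\{y\}|$ in \eqref{take2} is dictated by the geometry: the light point $y$ sits on the $T\cup\{y\}$-component of every reducible curve parametrized by $\delta$, so when $y\in E$ it contributes one unit to $|E\cap(T\cup\{y\})|$, and the admissible range for $m_T$ must be tightened by exactly $|E\cap\{y\}|$ in order to balance the restrictions on $\delta'$.
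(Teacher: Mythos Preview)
Your proposal is correct and follows essentially the same route as the paper's proof: Grothendieck--Verdier duality along $\pi$, base change to $\pi^{-1}(\delta)=\delta_1\cup\delta_2$ (your $\delta',\delta''$), the short exact sequence splitting off the two components, explicit restriction formulas from Lemma~\ref{restrictions_W}, and the three sign cases $(f_T,f_{T^c})$. The only organizational difference is that the paper keeps $|E\cap\{y\}|$ as a running parameter rather than splitting into $y\in E$ versus $y\notin E$, which halves the number of cases but is otherwise equivalent.
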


\bp
%Denote $$m_2(T^c):=\max\{0, \frac{e-l}{2}-|E\cap T^c|\},\quad m_2(T):=\max\{0, \frac{e-l}{2}-|E\cap T|\}.$$
Note that when $f_{T^c,E,l}\leq0$, then $\al_{T^c}=-f_{T^c,E,l}=|E\cap\{y\}|+f_{T,E,l}-l$ and so an 
immediate consequence of (\ref{take2}) is that 
\begin{equation}\label{NEW1}
\al_{T^c}\leq \mu-l.
\end{equation}
Similarly, because of (\ref{take1}), when $f_{T,E,l}\leq0$ then   we have 
\begin{equation}\label{NEW2}
\al_T=-f_{T,E,l}\leq \mu'+|E\cap\{y\}|-l.
\end{equation}

The proof is similar to that of Lemma \ref{perpendicular general}. 
Let~$\pi: \cU\ra W$ be the universal family (with $x$ the new index on $\cU$). Denote for simplicity 
$\de:=\de_{T\cup\{y\},T^c}\subseteq W$, $\de_1=\de_{T\cup\{y,x\},T^c}\subseteq\cU$,
$\de_2=\de_{T\cup\{y\},T^c\cup\{x\}}\subseteq\cU$,
$\be:=\pi_{|\pi^{-1}(\de)}: \pi^{-1}(\de)=\de_1\cup\de_2\ra \de$,
$N:=N_{l,E}$, $F:=F_{l,E}=R\pi_*N_{l,E}$.
Using Grothendieck-Verdier duality (Remark \ref{GV general}), it suffices to prove that letting
$j: \pi^{-1}(\de)\hra \cU$ be the inclusion map, then 
$\tilde{N}:=j^*(N^\vee\otimes \om_{\pi})\otimes \be^*\left(-aH\boxtimes\cO(-b)\right)$
(as a line bundle on $\pi^{-1}(\de)$) is acyclic. Consider the exact sequence: 
$0\ra\cO_{\de_2}(-\de_1)\ra\cO_{\de_1\cup \de_2}\ra\cO_{\de_1}\ra 0$.
Tensoring with $\tilde{N}$ gives an exact sequence
$0\ra\tilde{N}''\ra\tilde{N}\ra\tilde{N}'\ra0$,
where
$\tilde{N}':=\big(N^\vee\otimes \om_{\pi}\big)_{|\de_1}\otimes \be^*\left(-aH\boxtimes\cO(-b)\right)$,
$\tilde{N}'':=\big(N^\vee\otimes \om_{\pi}(-\de_1)\big)_{|\de_2}\otimes \be^*\left(-aH\boxtimes\cO(-b)\right)$.
We prove that  both $\tilde{N}'$ and $\tilde{N}''$ are acyclic. 

Using the identification 
$\de_2=\de_{T\cup\{y\},T^c\cup\{x\}}=\Bl_p\PP^{r+s-1}\times \Bl_p\PP^{r+s-1}$,
(the first copy of $\Bl_p\PP^{r+s-1}$ corresponds to $T\cup\{y\}$) 
by Lemma \ref{restrictions_W} we have 
$\tilde{N}''=M''_1\boxtimes M''_2$, 
$M''_1=\big(-\al_{T^c}-a\big)H+\al_{T^c\cup\{y\}}\De$,
$M''_2=\big(-f_{T^c}-\al_{T^c}-b-1\big)H+\big(\al_{T\cup\{y\}}+b-1\big)\De$.
Similarly, using the identification
$\de_1=\de_{T\cup\{y,x\},T^c}=\Bl_{1,2,\ov{12}}\PP^{r+s}\times \PP^{r+s-2}$,
by Lemma \ref{restrictions_W} we have 
$\tilde{N}'=M'_1\boxtimes M'_2$, where $M'_2=\cO(-\al_{T\cup\{y\}}-b)$,
$M'_1=\big(-f_{T}-|E\cap\{y\}|-\al_{T\cup\{y\}}-a-1\big)H+
\big(f_T+\al_T+1\big)E_1+\big(\al_{T^c}+a\big)E_2+\al_{T^c\cup\{y\}}E_{12}$.
We prove that one of $M'_1$, $M'_2$ and one of $M''_1$, $M''_2$ are acyclic using Lemma 
\ref{acy} (or an analogue). 

\underline{Case 1) $f_T, f_{T^c}\geq0$}. Then 
$\al_T=\al_{T\cup\{y\}}=\al_{T^c}=\al_{T^c\cup\{y\}}=0$, $f_T=m_T$,  $f_{T^c}=m_{T^c}$,
$M''_1=-aH$,
$M''_2=-d''H+(b-1)\De$, where $d''=m_{T^c}+b+1$,
$M'_1=-d'H+\be_1 E_1+\be_2 E_2$, where
$d'=m_T+|E\cap\{y\}|+a+1$,
$\be_1=m_T+1$, $\be_2=a$, $M'_2=\cO(-b)$.
If $0<a\leq r+s-1$, then $M''_1$ is acyclic. Similarly, $M'_2$ is acyclic when $0<b\leq r+s-2$.  
Furthermore, if $0<b\leq r+s-1-\mu'$, then $M''_2$ is acyclic since  (\ref{take1}) implies 
$d''=m_{T^c}+b+1\leq \mu'+b+1<r+s$. 
Similarly, if $0<a<r+s-\mu$ then $M'_1$ is acyclic since  (\ref{take2}) implies that 
$d'=m_T+|E\cap\{y\}|+a+1\leq \mu+a+1<r+s+1$,
and $\be_2=a\leq r+s-1$, $\be_1=m_T+1\leq \mu+1<r+s$. 

\underline{Case 2) $f_T\geq0$,  $f_{T^c}<0$.} Then 
$\al_T= \al_{T\cup\{y\}}=0$, $f_T=m_T$, $\al_{T^c}=-f_{T^c}>0$,
$\al_{T^c\cup\{y\}}=\al_{T^c}-|E\cap\{y\}|\geq0$, 
$M''_1=(-\al_{T^c}-a)H+(\al_{T^c}-|E\cap\{y\}|)\De$,
$M''_2=(-b-1)H+(b-1)\De$,
$M'_1=-d'H+\be_1E_1+\be_2 E_2+\be_{12}E_{12}$, where
$d'=m_T+|E\cap\{y\}|+a+1$, 
$\be_1=m_T+1$,
$\be_2=\al_{T^c}+a$,
$\be_{12}=\al_{T^c}-|E\cap\{y\}|$, $M'_2=\cO(-b)$.
If $0<b\leq r+s-2$, $M''_2$ and $M'_2$ are both acyclic. Assume 
$b=0$, $0<a<r+s-\mu$. Then $M''_1$ is acyclic since by (\ref{NEW1}) we have 
$0\leq \al_{T^c}-|E\cap\{y\}|\leq \mu-l\leq \mu<r+s-1$ ($r+s=2$, $\mu=1$ is not possible, as
$0<a<r+s-\mu$) and 
$0<\al_{T^c}+a\leq\mu+a<r+s$.
Furthermore, $M'_1$ is acyclic since (\ref{take2}) implies that 
$d'=m_T+|E\cap\{y\}|+a+1\leq \mu+a+1\leq r+s$.
Furthermore, 
$\be_1=m_T+1\leq\mu+1\leq \mu+a<r+s$,
$\be_{12}\leq\mu<r+s-1$,
and by (\ref{NEW1}) we have 
$\be_2=\al_{T^c}+a\leq \mu+a<r+s$.

\underline{Case 3) $f_T<0$,  $f_{T^c}\geq0$.} Then 
$\al_{T^c}=\al_{T^c\cup\{y\}}=0$, $f_{T^c}=m_{T^c}$, $\al_{T}=-f_T>0$,
$\al_{T\cup\{y\}}=\al_T-|E\cap\{y\}|\geq 0$,
$M''_1=(-a)H$,
$M''_2=-d''H+\be\De$,
where
$d''=m_{T^c}+b+1$,
$\be=\al_{T}-|E\cap\{y\}|+b-1$,
$M'_1=(-a-1)H+E_1+aE_2$,
$M'_2=\cO(-\al_{T}-b+|E\cap\{y\}|)$.
If $0<a\leq r+s-1$, then $M''_1$ and $M'_1$  are acyclic. Assume 
$a=0$, $0<b<r+s-1-\mu'$. Note that $M'_1$ is still acyclic. Then $M''_2$ is acyclic since
by (\ref{NEW2}) 
$0<\be=\al_T-|E\cap\{y\}|+b-1\leq (\mu'-l)+b-1\leq \mu'+b-1< r+s-2$ and (\ref{take1}) implies 
$d''=m_{T^c}+b+1\leq \mu'+b+1<r+s$.  
This finishes the proof.
\ep

%%%%%%%%%%%%%%%%%%%%%%%%%%%%%%%%%%%%%%%%%%%%%%%%%%%%%%%%%%%%%%%%%%%%%%%%

%%%%%%%%%%%%%%%%%%%%%%%%%%%%%%%%%%%%%%%%%%%%%%%%%%%%%%%%%%%%%%%%%%%%%%%%%%%%

\section{$\{F_{l,E}\}$ exceptional  on $\M_{2r,2s}$ $\Rightarrow$
$\{F_{l,E}\}$  exceptional on $\M_{2r,2s+1}$}\label{induction2}

The goal of this section is to prove the following theorem:

\begin{thm}\label{KHFGJFKJGF}
Assume $p=2r\geq4$, $q=2s+1\geq1$.  
The bundles $\{F_{l,E}\}$ on $\M_{p,q}$ from group $1A$  of Theorem~\ref{asfffdvzsfvsfb} (resp., $1B$ of Remark \ref{1B stuff}) form an exceptional 
collection conditionally on the same statement for $\M_{p,q-1}$.  
\end{thm}

\smallskip

We fix: $P$ a set of cardinality $p=2r\geq 4$ and $Q$ a set of cardinality $q=2s+1\geq 1$. 
We also choose an index $y\in Q$, which will be allowed to change later on. The analysis is similar to the proof of Theorem \ref{stackofbundles}, except that instead of 
forgetful morphisms $\cP_n\ra \cP_{n-1}$, we now have rational maps $\M_{P,Q}\dra \M_{P,Q\setminus\{y\}}$.
These maps have a convenient resolution.

\begin{lemma}\label{SRgSRgarg}
The rational map  $\M_{P,Q}\dra \M_{P,Q\setminus\{y\}}$ is resolved by the following diagram of morphisms,
where  $f$ is a birational reduction morphism between Hassett spaces and 
$\alpha$ is the universal family over $\M_{P,Q\setminus\{y\}}$.
% https://q.uiver.app/#q=WzAsMyxbMSwwLCJXIl0sWzAsMSwiXFxNX3tQLFF9Il0sWzIsMSwiXFxNX3tQLFFcXHNldG1pbnVzXFx7eVxcfX0iXSxbMCwxLCJmIiwyXSxbMCwyLCJcXGFscGhhIl0sWzEsMiwiIiwyLHsic3R5bGUiOnsiYm9keSI6eyJuYW1lIjoiZGFzaGVkIn19fV1d
\begin{equation}\label{W with maps}
\begin{tikzcd}
	& W \\
	{\M_{P,Q}} && {\M_{P,Q\setminus\{y\}}}
	\arrow["f"', from=1-2, to=2-1]
	\arrow["\alpha", from=1-2, to=2-3]
	\arrow[dashed, from=2-1, to=2-3]
\end{tikzcd}
\end{equation}
 \end{lemma}
 
 \begin{proof}
 We choose the weights of $p$ heavy points in $\M_{P,Q\setminus\{y\}}$ and
$\M_{P, Q}$ to be $\frac{1}{r}-\epsilon_1+\epsilon_2$
and the weights of light points to be $\frac{2r}{2s}\epsilon_1+\epsilon_2$ on $\M_{P,Q\setminus\{y\}}$
and $\frac{2r}{2s+1}\epsilon_1+\epsilon_2$ on $\M_{P,Q}$.
Here we first choose $\epsilon_1$ such that 
$\Big(r+1+\frac{2r}{2s+1}\Big)\epsilon_1< \frac{1}{r}$ and then choose $0<\epsilon_2\ll1$ that depends on $\epsilon_1$
(except if $q=1$, in which case the weights of points in $\M_{P,Q\setminus\{y\}}=\M_P$ are  given by $\frac{1}{r}+\epsilon_2$).
By~Notation~\ref{precise weights}, these weights 
 give the Hassett spaces $\M_{P,Q\setminus\{y\}}$ and
$\M_{P, Q}$.
%The point $y$ has weight $\frac{2r}{2s+1}\epsilon_1+\epsilon_2$ on $\M_{P,Q}$.
We choose the weights of points on $W$ as follows: the weights of points in $P$ and $Q\setminus\{y\}$
are the same as on $\M_{P,Q\setminus\{y\}}$ and the weight of the point $y$ is the same as on $\M_{P,Q}$.
It is clear that the weights of $W$ dominate the weights in $\M_{P,Q}$, so we have the reduction morphism $f$ of Hassett spaces.

It remains to show that $\alpha$ is the universal family. We use \cite[Propostion 5.1, Proposition 5.4]{Ha}.
Since the weights of points in $P$ and $Q\setminus\{y\}$
are the same on $W$ and $\M_{P,Q\setminus\{y\}}$, it suffices to check that the weight of $y$ on $W$ is sufficiently small. 
Indeed, $(r-1)\Big({1\over r}-\epsilon_1\Big)+2s{r\over s}\epsilon_1+\frac{2r}{2s+1}\epsilon_1<1$,
so $r-1$ heavy points can coincide with all  points in $Q\setminus\{y\}$ and $y$ on $W$.
On the other hand, 
$r\Big({1\over r}-\epsilon_1\Big)+(s-1){r\over s}\epsilon_1+\frac{2r}{2s+1}\epsilon_1<1$,
so $r$ heavy points can coincide with $s-1$  points in $Q\setminus\{y\}$ and $y$ on $W$. 
Furthermore, $r+1$ heavy points may not coincide on $W$ (similarly $r$ heavy points and $s$ light points) as this would lead to an unstable pointed curve (already on $\M_{P,Q\setminus\{y\}}$).
%One can directly check that there exist $\epsilon_1, \epsilon_2>0$ such that  the following choices of 
 %weights give the stability conditions of the Hassett spaces $\M_{P,Q\setminus\{y\}}$, $W$, $\M_{P, Q}$.
 %For each space, the weights of the points in $P$ are given by  
 %$(\frac{1}{r}-\epsilon_1)+\epsilon_2$. The weights of the points in $Q\setminus \{y\}$ are $\frac{r}{s}\epsilon_1+\epsilon_2$ on $\M_{P,Q\setminus\{y\}}$ and $W$, and 
 %$\frac{2r}{2s+1}\epsilon_1+\epsilon_2$ on $\M_{P,Q}$. Finally, the point $y$ has weight $\frac{2r}{2s+1}\epsilon_1+\epsilon_2$ on  both $W$ and $\M_{P,Q}$.  We impose the following inequalities:
 %$$\Big(r+1+\frac{2r}{2s+1}\Big)\epsilon_1+(2s+r-1)\epsilon_2< \frac{1}{r}, 
 %\quad \epsilon_2<\frac{r}{s(2s+1)(s+r-1)}\epsilon_1,$$
%$$\frac{2r}{2s+1}\epsilon_1+\epsilon_2\leq \text{min}\Big\{\frac{1}{r}-(r+1)\epsilon_1-(2s+r-1)\epsilon_2, \frac{r}{s}\epsilon_1-(s+r-1)\epsilon_2\Big\}.$$ 
\end{proof}

We will apply to the morphism $f$ the following abstract lemma:

\begin{lemma}\label{AbstractPhi}
Let $f:\,M\to N$ be a morphism of smooth projective varieties such that $Rf_*\cO_M\cong\cO_N$.
Let $D^b(M)=\langle\cA,\cB\rangle$ be a s.o.d.
Let $\Phi: D^b(N)\ra\cB$ be the composition of the derived pullback $Lf^*$ and the right adjoint functor $i^{!}$ of the inclusion $i: \cB\hra D^b(M)$. 
Then $\Phi$ has a left adjoint functor $\Psi:\,\cB\ra D^b(N)$ 
given by $X\mapsto [Rf_*(i(X)^\vee)]^\vee$ (derived duals). 
Furthermore, if $Rf_*Z^\vee=0$ for every $Z\in\cA$ then $\Phi$ is a fully faithful functor and $\Psi$ is its left quasi-inverse.
\end{lemma}

\bp The adjointness part follows from the following calculation:
$$\Hom_\cB(X,\Phi(Y))\cong\Hom_{D^b(M)}(i(X),Lf^*(Y))\cong
\Hom_{D^b(M)}(Lf^*(Y)^\vee,i(X)^\vee)$$
$$\cong\Hom_{D^b(N)}(Y^\vee,Rf_*(i(X)^\vee))\cong\Hom_{D^b(N)}(\Psi(X), Y).$$

Suppose  that $Rf_*Z^\vee=0$ for every $Z\in\cA$. Take an object $Y\in D^b(N)$ and a triangle 
$Z\ra i(\Phi(Y))\ra Lf^*(Y)\ra$
associated with the s.o.d.~$\langle\cA,\cB\rangle$. 
The dual triangle is $Lf^*(Y^\vee)\ra i(\Phi(Y))^\vee\ra Z^\vee\ra.$
Taking the push-forward, we get $Y^\vee\cong Rf_*(i(\Phi(Y))^\vee)$ as $Rf_*Z^\vee=0$ and $Rf_*\cO\cong\cO$. 
It follows that $Y\cong\Psi(\Phi(Y))$. Thus $\Psi$ is a   left quasi-inverse of $\Phi$ and therefore
$\Phi$ is fully faithful.
\ep

In the rest of this section, we use the set-up and the notations of Lemma \ref{SRgSRgarg}. 
The  morphism $f:\,W\to\M_{P,Q}$ has the following  properties.
Let $T\subset P\cup (Q\setminus\{y\})$ be a subset of $r$ heavy and $s$ 
light indices, and let $T^c$ 
denote its complement in $P\cup (Q\setminus\{y\})$. In $\M_{P,Q}$ we denote by $\PP^{r+s-1}_T$ the locus where the points from $T^c$ come together. Note that $\PP^{r+s-1}_T\cong\PP^{r+s-1}$ by Note \ref{note 1}(3), as the weight of the point corresponding to $T^c$ can be changed to $1$ without changing the stability condition.

Different loci $\PP^{r+s-1}_T$ are  disjoint, except for pairs $\PP^{r+s-1}_T$ and $\PP^{r+s-1}_{T^c}$, which intersect transversely at the  point in $\M_{P,Q}$ where the markings from $T$ coincide and similarly the markings from $T^c$ coincide. The loci $\PP^{r+s-1}_T$ are blown-up by the morphism~$f$ (in any order), creating boundary divisors 
$\de_{T\cup\{y\}}:=\de_{T\cup\{y\}, T^c}\cong \Bl_p\PP^{r+s-1}\times\PP^{r+s-2}\subset W$. These divisors are contracted by the morphism $f$ via the composition of the first projection and the blow-down
 $\Bl_p\PP^{r+s-1}\times\PP^{r+s-2}\ra \PP^{r+s-1}$. The divisors  $\de_{T\cup\{y\}}$ are disjoint, except for pairs $\de_{T\cup\{y\}}$, $\de_{T^c\cup\{y\}}$, which intersect transversely along a codimension~$2$ locus isomorphic to $\PP^{r+s-2}_T\times \PP^{r+s-2}_{T^c}$. 
 %We will use \eqref{W with maps} to compare s.o.d.'s. 
By Lemma \ref{BigDaddy} (and using Lemma \ref{restoboundaryofM}(iii) and Orlov's theorem), we have an s.o.d.  $D^b(W)=\langle \cC, Lf^*D^b(\M_{P,Q})\rangle$,  where $\cC$ has an exceptional collection of the following objects (in some order):
\bi \item $\cO_{\PP^{r+s-2}_T\times \PP^{r+s-2}_{T^c}}(-i,-j)$ for  $1\leq i\leq r+s-2$, $1\leq j\leq r+s-2$; \item $\cO_{\de_{T\cup\{y\}}}(aH,-u)$ for $0\leq a\leq r+s-1$, $1\leq u\leq r+s-2$.\ei
On the other hand, we have an s.o.d. $D^b(W)=\langle\cA,\cB\rangle$, where 
$\cA\subset D^b(W)$ is the admissible subcategory generated by the exceptional collection 
\begin{equation}\label{category A}
\cO_{\de_{T\cup\{y\}}}(-aH,u),\quad 0<a\leq\Big\lfloor\frac{r+s-1}{2}\Big\rfloor, 1\leq u<\Big\lfloor \frac{r+s-1}{2}\Big\rfloor.
\end{equation}
As $\cC$ is an exceptional collection, it follows that \eqref{category A} is an exceptional collection using Remark \ref{dual of torsion sheaf}. 
\begin{rmk}\label{dual of torsion sheaf}
If $Z=j_*L$ for some line bundle $L$ on $\de:=\de_{T\cup\{y\}}$ ($j:\de\hra W$ the inclusion morphism), then  
$Z^\vee\cong(j_*L^\vee)\otimes\cO(\de)[-1]$ by 
\cite[3.41]{Huybrechts}.
\end{rmk}

\begin{lemma}\label{Phi}
The functor $\Phi: D^b(\M_{P,Q})\ra\cB$ of Lemma~\ref{AbstractPhi} is fully faithful.
\end{lemma}

\bp
We  need to check that  $Rf_*Z^\vee=0$  if $Z=\cO_{\de_{T\cup\{y\}}}(-aH,u)$ is one of the generators of $\cA$.
Indeed, $Z^\vee\cong\cO_{\de_{T\cup\{y\}}}((a-1)H,-u-1)[-1]$ by 
\cite[3.41]{Huybrechts}. Since $-(r+s-2)\leq -u-1\leq-1$, we have  $Rf_*Z^\vee=0$.
\ep

\begin{defn}\label{related}
Let $\cQ$ be a set of objects in a triangulated category $\cT$.
We say that objects $G$ and $G'$ in $\cT$ are \emph{related by quotients in $\cQ$} if there are objects $Q_i$ in $\cQ$ and a sequence of exact triangles
$G_{i-1}\to G_i\to Q_i\to G_{i-1}[1]$ 
with $G_0\cong G$, $G_k\cong G'$. 

We say that objects $G$ and $G'$ in $\cT$ are \emph{related up to shifts by quotients in $\cQ$} 
if there are objects $Q_i$ in $\cQ$ for $0\leq i\leq k$ and a sequence of exact triangles
$G_{i-1}\to G_i[l_i]\to Q_i[l'_i]\to G_{i-1}[1]$  for some $l_i,l'_i\in\ZZ$
with $G_0\cong G$, $G_k\cong G'$. 
(The "related up to shifts" notion will only be used once, in Lemma \ref{K4}.) 
\end{defn}

\begin{lemma}\label{alpha game}
Consider the set of vector bundles $F_{l,E}$ on $\M_{P,Q}$ with 
\begin{equation}\label{wefagwqg}
m_{T,E,l}\leq \min\Big(\frac{r+s}{2}-|E\cap\{y\}|,\quad \frac{r+s-1}{2}\Big)
\end{equation}
for every subset 
$T\subset P\cup (Q\setminus\{y\})$ of $r$ heavy and $s$ light indices (see Notation \ref{numerics} for the definition of $m_{T,E,l}$). 
For example, all bundles of type $1A$/$1B$ and type~$2$ belong to this set  by Lemma \ref{critical on W}.
Then $\Phi(F_{l,E})\cong F_{l,E}$ (see Definition \ref{F on W} for the definition of $F_{l,E}$ on $W$). In particular, 
\begin{equation}
\label{qerfargwerg}
R\Hom_{\M_{P,Q}}(F_{l,E}, F_{l',E'})\cong R\Hom_W(F_{l,E}, F_{l',E'})
\end{equation}
for vector bundles satisfying \eqref{wefagwqg}. Furthermore, $Rf_*(F_{l,E}^\vee)\cong F_{l,E}^\vee$. 
\end{lemma}

\bp
By taking $\mu=\mu'=\frac{r+s}{2}$ in Proposition \ref{perpendicular W} (note that all conditions are satisfied because of (\ref{wefagwqg})) we have that 
\begin{equation}\label{xvzfvasfagb}
R\Hom_W(F_{l,E}, \cO_{\de_T}(-aH,u))=0
\end{equation}
for all $0<a\leq \lfloor\frac{r+s-1}{2}\rfloor$, $0\le -u\le r+s-2$. Since the line bundles $\cO(u)$ for $0\le -u\le r+s-2$ form a full exceptional collection on $\PP^{r+s-2}$, we have that 
\eqref{xvzfvasfagb} holds for all $u$, positive or negative. 
Therefore,  the bundles $F_{l,E}$ on $W$ belong to $\cB$. %{\color{red} where $D^b(W)=\langle\cA,\cB\rangle$ is the s.o.d. described above (see \ref{category A})}.  
By Proposition ~\ref{Q on W}, using that $m_{T^c}<\frac{r+s}{2}$ by the assumption \eqref{wefagwqg} applied to $T^c$, the bundles $F_{l,E}$ and $f^*F_{l,E}=Lf^*F_{l,E}$ are related by quotients in $\cA$. Since $i^!(\cA)=0$ (as in Lemma \ref{AbstractPhi}, $i^!$ is the right adjoint functor of the inclusion $i: \cB\hra D^b(W)$, we have 
$$\Phi(F_{l,E})=i^!(Lf^*F_{l,E})\cong i^!(F_{l,E})\cong F_{l,E}.$$ 
By~Lemma ~\ref{Phi}, we have \eqref{qerfargwerg} and $Rf_*(F_{l,E}^\vee)\cong Rf_*(i(\Phi(F_{l,E}))^\vee)\cong F_{l,E}^\vee$ (see the proof of Lemma \ref{AbstractPhi}).
\ep

\begin{prop}\label{Q on W}
Let $l\geq0$, and let $E\subseteq P\cup Q$ be such that $e+l$ even. 
The  vector bundles $F_{l,E}$ and $f^*F_{l,E}$ on $W$
are related by quotients $Q_i\cong\cO(-j H)\boxtimes\cO(u)$ 
supported on boundary divisors
$\de_{T\cup\{y\}, T^c}$ % \cong\Bl_1\PP^{r+s-1}\times\PP^{r+s-2}$
for $0<j\leq m_{T^c}$, $0\leq u<m_{T^c}$ 
(see Notation \ref{numerics} for the definition of $m_{T^c}:=m_{T^c,E,l}$).
%The vector bundles $F_{l,E}$ and $f^*F_{l,E}$ on $W$
%are related by exact sequences
%$$0\ra G_{i-1}\ra G_i\ra Q_i\ra 0,\quad 1\leq i\leq k$$
%$$G_0=F_{l,E},\quad G_k=f^*F_{l,E},$$
%with $Q_i$  direct sums of sheaves supported on boundary divisors
%$$\de_{T\cup\{y\}, T^c}=\Bl_1\PP^{r+s-1}\times\PP^{r+s-2}$$ 
%and having the form 
%$$-j H\boxtimes\cO(u),\quad 0<j\leq m_{T^c},\quad 0\leq u<m_{T^c}.$$
We~have $$Q^\vee_i\cong\cO((j-1)H)\boxtimes\cO(-u-1).$$
 \end{prop}

\bp
We claim that two types of quotients $\cO(-jH)\boxtimes\cO(u)$  appear: 
\bi
\item[(\text{I})] $0<j\leq m_{T^c}$, $0\leq u< m_{T^c}$, if $f_T>0$;
\item[(\text{II})] $0<j\leq m_{T^c}$, $0\leq u< \tilde{m}_2(T)$, if $f_T+|E\cap\{y\}|<0$,
where\break
$\tilde{m}_2(T):=-f_T-|E\cap \{y\}|$ (see Notation \ref{numerics} for the definition of $f_T, f_{T^c}, m_{T^c}$). Note that in this case $f_T<0$, we have $f_{T^c}=l-f_T>0$ (as $l\geq0$) and $\tilde{m}_2(T)\leq m_{T^c}=f_{T^c}$.
\ei
To prove the claim, recall that $F_{l,E}=R\pi_*(N_1)$, where (see Definition~\ref{F on W})
$$N_1:=N_{l,E}=\om_{\pi}^{\frac{e-l}{2}}(E)\big(-\sum_T \al_T\de_{T\cup\{x\}}-\sum_T\al_{T\cup\{y\}}\de_{T\cup\{y,x\}} \big).$$
Here $x$ is the new marking on the universal family $\pi: \cU\ra W$ and either $\al_T=\al_{T\cup\{y\}}=0$ (when $f_T\geq0$) or 
$\al_T=-f_{T,E,l}$, $\al_{T\cup\{y\}}=\al_T-|E\cap\{y\}|\geq0$ (when $f_T<0$, see (\ref{Sdgsgasrh})) for the definition of $\al_T$, $\al_{T\cup\{y\}}$). 
If~$\pi':~\cU'\ra~\M_{p,q}$ is the universal family, there is a commutative diagram with a cartesian second square:
\begin{equation*}
\begin{CD}
\cU      @>v>>  \cV@>q>> \cU'\\
@V{\pi}VV        @V{\rho}VV @V{\pi'}VV \\
W   @>Id>>  W  @>f>>  \M_{p,q}
\end{CD}
\end{equation*}
and we have $f^*F_{l,E}\cong R\pi_*(N_2)$, where we denote
\begin{equation}\label{identities}
N_2=v^*\om_{\rho}^{\frac{e-l}{2}}(E)\cong\om_{\pi}^{\frac{e-l}{2}}(E)+\sum_T f_T\de_{T\cup\{x\}}.
\end{equation}

The latter equality holds because of the identities (\ref{Pullbacks3}). Recall from Note \ref{note 2}(1) that  $W$ and $\cU$ are smooth.
Furthermore, the spaces $\cU'$ and $\cV$ are $\PP^1$-bundles over Hassett spaces, in particular, they are smooth. 
Note that among the two types of boundary divisors $\de_{T\cup\{x\}}$, $\de_{T\cup\{y,x\}}$ of $\cU$, only the divisors $\de_{T\cup\{x\}}$ are $v$-exceptional.
%, and the restriction 
%$v_{|\de_{T\cup\{x\}}}$ is the second projection 
%$\de_{T\cup\{x\}, T^c\cup\{y\}}=\Bl_1\PP^{r+s-1}\times\Bl_1\PP^{r+s-1}\ra \Bl_1\PP^{r+s-1}$  
%(the markings from $T\cup\{x\}$ coincide in the image). 
We have:
$$N_2=N_1+\sum_T \big(f_T+\al_T\big)\de_{T\cup\{x\}} + \sum_T \al_{T\cup\{y\}}\de_{T\cup\{y,x\}}=N_1+\Sigma_1+\Sigma_2,$$
where
$\Sigma_1=\sum\limits_{f_T>0} f_T\de_{T\cup\{x\}},$
$\Sigma_2=\sum\limits_{f_T<0} \tilde m_2(T)\de_{T\cup\{y,x\}}$.
Any two distinct boundary divisors appearing in $\Sigma_1$ do not intersect since different boundary divisors of type $\de_{T\cup\{x\}}$ do not intersect. 
Similarly, any two distinct boundary divisors appearing in $\Sigma_2$ do not intersect since $\de_{T\cup\{y,x\}}$ intersects  $\de_{T'\cup\{y,x\}}$ ($T'\neq T$) only when $T'=T^c$, but if $f_T<0$, we must have $f_{T^c}>0$. For the same reason, each 
$\de_{T\cup\{y,x\}}$ that appears in $\Sigma_2$ intersects exactly one term from $\Sigma_1$, namely, $\de_{T^c\cup\{x\}}$. 
We add successively to $N_1$ first terms from $\Sigma_1$, then $\Sigma_2$, and get two types of quotients.

\underline{Type I} % This type %of quotient 
quotients have the form
$Q_1=\big(N_1+j\de_{T\cup\{x\}}\big)_{|\de_{T\cup\{x\}}}$
for $0<j\leq  f_T=m_T$. By Lemma \ref{restrictions_W}(1) we have
$$Q_1=\big(\om_{\pi}^{\frac{e-l}{2}}(E)+j\de_{T\cup\{x\}}-\alpha_{T^c\cup\{y\}}\delta_{T^c\cup\{y,x\}}\big)_{|\de_{T\cup\{x\}}}\cong\cO\big(\al H-\beta\Delta\big)\boxtimes \cO\big(-jH\big)$$
%supported on $\de_{T\cup\{x\}, T^c\cup\{y\}}$ %=\Bl_1\PP^{r+s-1}\times \Bl_1\PP^{r+s-1}$,
for 
$\al:=f_T-j$,
 $\beta=\alpha_{T^c\cup\{y\}}=\alpha_{T^c}-|E\cap\{y\}|$ (Notation (\ref{Sdgsgasrh})). By Definition (\ref{Sdgsgasrh}), we always have $\be\geq0$. 
Since $\al_{T^c}$ is either $0$ or $-f_T^c$ and $f_T+f_{T^c}=l\geq0$, it follows that $\be\leq f_T=m_T$. 
Clearly, $0\leq\al< m_T$, $0<j\leq m_T.$
By Lemma \ref{Push},  
$R\pi_*Q_1$ is a direct sum of sheaves of the form 
$\cO(u)\boxtimes (-jH)$ supported on $\de_{T,T^c\cup\{y\}}\cong\PP^{r+s-2}\times\Bl_p\PP^{r+s-1}$ for 
$u\ge \min(\beta,\alpha+1)\ge0$, $u\le \max(\alpha,\beta-1)<m_T$, $0<j\leq m_T$.

\underline{Type II} quotients are
supported on 
$\de_{T\cup\{y,x\}}\cong\Bl_{1,2,\ov{12}}\PP^{r+s}\times\PP^{r+s-2}$,
for various $T$ (with $f_T<0$, hence, $f_{T^c}>0$). %with property $0<i\leq -f_T-|E\cap\{y\}|=\tilde{m}_2(T)$, and having 
Namely, 
$$Q_2=\big(N_1+f_{T^c}\de_{T^c\cup\{x\}}+i\de_{T\cup\{y,x\}}\big)_{|\de_{T\cup\{y,x\}}}=$$
$$=\big(\om_{\pi}^{\frac{e-l}{2}}(E)+f_T\de_{T\cup\{x\}}
+(-\tilde m_2(T)+i)\de_{T\cup\{y,x\}}+f_{T^c}\de_{T^c\cup\{x\}}\big)_{|\de_{T\cup\{y,x\}}}$$ 
for $0<i\le\tilde m_2(T)$.
By~Lemma~\ref{restrictions_W}(2),  
$Q_2\cong\cO\big(-iH+\be E_2\big)\boxtimes\cO(u$)
where $\be:=f_{T^c}=l-f_T\geq \tilde{m}_2(T)$,
$u=\tilde{m}_2(T)-i<\tilde{m}_2(T)$.
In particular, $\be\geq i>0$ and $0\leq u<\tilde{m}_2(T)$. 
By the projection formula, Remark \ref{PushB} and Lemma \ref{restrictions_W}(2) (and using its notations), we have 
$R\tilde{q}_*\cO\big(-iH+\be E_2\big)\cong R\tilde{q}_*\big(\tilde{q}^*\cO(-iH)+(\be-i)E_2)\big)
\cong\cO(-iH)\oplus\ldots\oplus\cO(-\be H)$.
The result follows since $\be=f_{T^c}=m_{T^c}$ (as $f_{T^c}\geq0$). 
%Part (ii) follows by dualizing the exact triangles in (i).
\ep

%%%%%%%%%%%%%%%%%%%%%%%%%%%%%%%%%%%%%

\begin{rmk}\label{M_4}
Proposition \ref{Q on W} shows that when $l=0$ and $E=P\cup (Q\setminus\{y\})$, on $W$ we have $f^*F_{l,E}\cong F_{l,E}$ (because $m_T=m_{T^c}=0$ for all $T$). Note that the same is trivially true when $l=0$, $E=\emptyset$. In particular, $F^\vee_{l,E}\cong Rf_*F^\vee_{l,E}$ is still true in these cases. 
For example, on $\M_{p,q-1}\cong\M_4$ (i.e., $p=4,q=1$) 
$\{F_{0,\emptyset}, F_{0,N}\}$ is the collection in Theorem \ref{even 0} (group $1A$ is the same as group $1B$ in this case). This is a full exceptional collection on 
$\M_4\cong\PP^1$ % (here $N=P$, $|P|=4$) 
because $F_{0,\emptyset}\cong \cO$, $F_{0,N}\cong\cO(1)$ (Corollary~\ref{F0E}). 
\end{rmk}

\bp[Proof of Theorem~\ref{KHFGJFKJGF}]
Let $F_{l,E}$, $F_{l',E'}$ be bundles from one of the two collections (group $1A$ or $1B$) on $\M_{p,q}$.  
By Lemma \ref{alpha game}, for any $y\in Q$, we have 
$$R\Hom_{\M_{p,q}}(F_{l',E'},F_{l,E})\cong R\Hom_W(F_{l',E'},F_{l,E}).$$
We consider several cases and make different choices of $y\in Q$.

Case 1: $E'_q\setminus E_q\ne\emptyset$. Fix  $y\in E'_q\setminus E_q$. By Lemma \ref{zzfbzdfbdff}, 
$F^\vee_{l, E}\cong\al^*F^\vee_{l,E}$, $R\al_*F^\vee_{l', E'}=0$. Therefore, 
$R\Hom_W(F_{l',E'},F_{l,E})\cong R\Hom_W(F^\vee_{l,E},F^\vee_{l',E'})\cong
R\Hom_W(\al^*F^\vee_{l,E},F^\vee_{l',E'})\cong R\Hom_{\M_{p,q-1}}(F^\vee_{l,E},R\al_*F^\vee_{l',E'})=0$.

Case 2: $E_q=E'_q\neq Q$. Choose $y\in Q\setminus E_q$. Since the range in group $1A$ (resp., $1B$) on $\M_{p,q}$ 
is precisely the range in group $1A$ (resp., $1B$) on $\M_{p,q-1}$, we are done by Lemma \ref{zzfbzdfbdff}, since 
$F_{l,E}\cong\al^*F_{l,E}$, $F_{l',E'}\cong\al^*F_{l',E'}$ and  
$R\Hom_W(\al^*F_{l',E'},\al^*F_{l,E})\cong R\Hom_{\M_{p,q-1}}(F_{l',E'}, F_{l,E})$.
We are done by the assumption that the $\{F_{l,E}\}$ from group $1A$ (resp., $1B$) form an exceptional collection on $\M_{p,q-1}$
(Theorem ~\ref{asdvzsfvsfb} when $q>1$, while when $q=1$ this is Theorem \ref{even 0} for group $1A$, and Remark \ref{1B stuff} for group $1B$). 

Case 3:  $E_q=E'_q=Q$, $q>1$. Let $z\in Q$. By Proposition \ref{tensor}, 
$F_{l,E}\cong F_{l, E_p\cup\{z\}}\otimes F_{0,E_q\setminus\{z\}}$,
$F_{l',E'}\cong F_{l, E'_p\cup\{z\}}\otimes F_{0,E'_q\setminus\{z\}}$.
It follows that 
$$R\Hom_{\M_{p,q}}(F_{l',E'},F_{l,E})\cong R\Hom_{\M_{p,q}}(F_{l',E'_p\cup\{z\}},F_{l,E_p\cup\{z\}}).$$
Note that $F_{l,E_p\cup\{z\}}$ is still in group $1A$ (resp., group $1B$) on $\M_{p,q}$, 
and since by assumption $Q\neq\{z\}$ we are in Case 2.

Case 4:  $E_q=E'_q=Q$, $q=1$. Let $Q=\{z\}$. Note that $\M_{p,1}\cong \M_{p+1}$ and since $e'_p\geq e_p$, we have $e'\geq e$. 

Case 4(i): the bundles are in group 1B. % on $\M_{p,1}$. 
Then they are in the exceptional collection of 
Theorem~\ref{symm} (proved in  Section 
\ref{windows section}), and we are done. 

Case 4(ii): the bundles are in group 1A. % on $\M_{p,1}$. 
Take the line bundle $L=\cO(-P-2z)$ on $\M_{p+1}$, i.e., ~we take -1 in the position of every heavy point
and $-2$ in the position of~$z$. By the projection formula and  Corollary  ~\ref{complement},  
$L\otimes F_{l,E}\cong \cO(-P-2z)\otimes R\pi_*\cO(E,l)\cong R\pi_*(-(P\setminus E_p)-z,l)\cong F^\vee_{l,(P\setminus E_p)\cup\{z\}}$
and  similarly for $F_{l',E'}$.
On $\M_{p,1}$, we have 
$$R\Hom(F_{l',E'},F_{l,E})\cong R\Hom(F_{l',E'}\otimes L,F_{l,E}\otimes L)\cong $$
$$\cong R\Hom(F^\vee_{l',(P\setminus E_p')\cup\{z\}},F^\vee_{l,(P\setminus E_p)\cup\{z\}})\cong R\Hom(F_{l,(P\setminus E_p)\cup\{z\}},F_{l',(P\setminus E'_p)\cup\{z\}}).$$
These bundles are in the group 1B on $\M_{p,1}$ and are in the correct order: 
$p-e_p\ge p-e_p'$, hence, we are done by the argument in the Case 4(i). 
\ep

%%%%%%%%%%%%%%%%%%%%%%%%%%%%%%%%%%%%%%%%%%%%%%%%%%%%%%%
%%%%%%%%%%%%%%%%%%%%%%%%%%%%%%%%%%%%%%%%%%%%%%%%%%%%%%%
%%%%%%%%%%%%%%%%%%%%%%%%%%%%%%%%%%%%%%%%%%%%%%%%%%%%%%%
%%%%%%%%%%%%%%%%%%%%%%%%%%%%%%%%%%%%%%%%%%%%%%%%%%%%%%%
%%%%%%%%%%%%%%%%%%%%%%%%%%%%%%%%%%%%%%%%%%%%%%%%%%%%%%%
%%%%%%%%%%%%%%%%%%%%%%%%%%%%%%%%%%%%%%%%%%%%%%%%%%%%%%%

\section{$\{F_{l,E}\}$   exceptional on $\M_{2r,2s+1}$ $\Rightarrow$
$\{F_{l,E}\}$ exceptional  on $\M_{2r,2s+2}$ }\label{induction1}

The goal of this section is to prove the following theorem.

\begin{thm}\label{r+s odd}
Vector bundles $F_{l,E}$ from group 1A (resp., group 1B) from Theorems~\ref{asfffdvzsfvsfb}, \ref{asdvzsfvsfb}, \ref{even 0} and Remark \ref{1B stuff} form an exceptional collection. 
\end{thm}

Throughout this section we  assume that $P$ (resp., $\tQ$) is the set of heavy (resp., light) indices and %, unless otherwise noted,
 we have  
$|P|=p=2r\geq 4$, $|\tQ|=q+1=2s+2\geq0$.
Given Theorem~\ref{KHFGJFKJGF}, it suffices to prove
that the bundles $\{F_{l,E}\}$ from group 1A (resp., 1B) form an exceptional collection 
on $\M_{p,q+1}$ conditionally on the same 
statement for $\M_{p,q}$, as well as to prove, unconditionally, 
 that these  bundles   form an exceptional collection on $\M_{p}$.
%We will treat the case $r=2, s=0$ (i.e., $\M_{p,q+1}=\M_{4,2}$) separately (see \ref{}). 
We treat this last case by allowing  $q+1=0$ (i.e., $s=-1$), in which case we also assume that $r\ge3$.
One checks directly the case of $r=2$, $s=-1$, i.e., that  the collection  on 
$\M_p=\M_4\cong\PP^1$ in Theorem \ref{even 0} is exceptional: the collection is $\{F_{0,\emptyset}, F_{0,N}\}$, 
where $F_{0,\emptyset}\cong \cO$, $F_{0,N}\cong\cO(1)$ by Corollary~\ref{F0E} (see also Remark \ref{M_4}).

\begin{notn}\label{Upq}
We choose an index $z\in\tQ$ unless $q+1=0$, in which case we choose an index $z\in P$.
This index will  be allowed to vary later.
For every boundary divisor $\delta_{T,T^c}$ of $\M_{p,q+1}$, we 
always assume that $z\in T$. 
%We call the other subset $T^c$. %We will specialize to concrete recipes later.  For example, one way to make a choice, is to fix an index $z$, and We have a commutative diagram, where birational morphisms $\beta$ and $g$ depend on the choice of $T$'s:
We~let $\M_{p,q}=\M_{P,\tQ\setminus\{z\}}$ if $q\ge0$ (resp., $\M_{p,-1}:=\M_{p-1}=\M_{P\setminus\{z\}}$ if $q=-1$).
Let $f: \M_{p,q+1}\dra \M_{p,q}$ be the forgetful map that forgets the marking $z$.
%Let~$\pi:\,\cU_{p,q}\to \M_{p,q}$ be the universal family, which is a $\bP^1$-bundle.
\end{notn}

\begin{lemma}\label{fbadfbadhadthn}
The forgetful map $f$ is regular and 
factors through the universal family $\pi:\,\cU_{p,q}\to \M_{p,q}$, which is a $\bP^1$-bundle.
We have a commutative diagram
\begin{equation}\label{sGsrgsRH}
\begin{CD}
W                @>g>>      \cV_{p,q}\\
@V{\al}VV                             @VV{\gamma}V \\
\M_{p,q+1}  @>\beta >>          \cU_{p,q}@>\pi >>          \M_{p,q}
\end{CD}
\end{equation}
Here $\al$ (resp.,~$\gamma$) is the universal family of the corresponding Hassett space.\break
Furthermore, $\gamma$ is a $\bP^1$-bundle and $\cU_{p,q}$ is a GIT quotient of $(\bP^1)^{p+q+1}$.
The morphisms $g$ and $\beta$ are birational reduction morphisms of Hassett spaces.
\end{lemma}

\bp
Suppose first that $q+1>0$. We choose the same weight  $a+\epsilon$, where $a$ is such that $r<{1\over a}<r+{2s+1\over 2s+3}<r+1$, $\epsilon\ll 1$,
for all $p$ heavy points for $\M_{p,q+1}$, $\cU_{p,q}$, and $\M_{p,q}$; the same weight $b+\epsilon$, where $b={2-2ra\over 2s+1}$, $\epsilon\ll 1$, 
for all light points on $\M_{p,q+1}$ and $\M_{p,q}$ and for all light points on $\cU_{p,q}$ except for $z$.
We assume that the weight of $z$ on $\cU_{p,q}$ is sufficiently small. Then $pa+qb=2$. 
%the sum of the weights is $2$ for $M_{p,q}$ and is greater than $2$ for $\M_{p,q+1}$. 
Furthermore, $(r+1)a>1$, so $r+1$ heavy points coinciding corresponds to an unstable pointed curve;
$(r-1)a+(2s+2)b<1$, so $r-1$ heavy and all light points coinciding corresponds to a stable point of $\M_{p,q+1}$ and $\M_{p,q}$;
$ra+sb<1$, so $r$ heavy and $s$ light points  coinciding corresponds to a stable point of for $\M_{p,q+1}$ and $\M_{p,q}$ and, finally,
$ra+(s+1)b>1$, so $r$ heavy and $s+1$ light points coinciding corresponds to an unstable point of $\M_{p,q+1}$ and $\M_{p,q}$.
It follows that these weights define $\M_{p,q+1}$ and $\M_{p,q}$, 
that $\cU_{p,q}$ is the universal family over $\M_{p,q}$, and that we have reduction maps of Hassett spaces as claimed.
Furthermore, by Lemma~\ref{MonGeneralBrendanHassett}, $\M_{p,q}$ and $\cU_{p,q}$ are GIT quotients and 
$\pi$ and $\gamma$ are $\bP^1$-bundles.

In the case $q+1=0$, we choose the same weight $a+\epsilon$, where $a={2\over p-1}$, $\epsilon\ll 1$,   
for all points for $\M_{p}$ and $\M_{p,-1}=\M_{p-1}$ and for all points for 
$\cU_{p,-1}=\M_{p-1,1}$ except for~$z$, which we assume has a sufficiently small weight.
 Then $(p-1)a=2$. 
 %the sum of the weights is $2$ for $M_{p-1}$ and is greater than $2$ for $\M_{p}$. 
 Furthermore, 
 $(r-1)a<1$ and $ra>1$, so $r-1$ points coinciding corresponds to a stable point, but $r$ points coinciding corresponds to an unstable point on  
$\M_{p}$ and $\M_{p,-1}=\M_{p-1}$. We finish the argument as above.
\ep

\begin{notn}
The morphism $\beta:\,\M_{p,q+1}\to \cU_{p,q}$ belongs to a  general class of reduction morphisms of Hassett spaces
$\beta:\,\M_{p,q+1}\to \cU$, which we describe now with an eye towards a different application in Section~\ref{exceptional p,q+1 section}. We will return to $\cU=\cU_{p,q}$ in Lemma~\ref{fbfgafgadf}. The following diagram generalizes
\eqref{sGsrgsRH}:
\begin{equation}
\begin{CD}
W                @>g>>      \cV\\
@V{\al}VV                             @VV{\gamma}V \\
\M_{p,q+1}  @>\beta >>          \cU
\end{CD}
\end{equation}
Reducible fibers of the universal family $\alpha$ are over the boundary divisors $\de_{T,T^c}\cong\PP_T^{r+s-1}\times\PP_{T^c}^{r+s-1}$ of $\M_{p,q+1}$. The subscripts indicate that the fibers of $\alpha$
are unions of two~$\bP^1$, with one component marked by $T$
and another by~$T^c$. We suppose that the restriction of $\beta:\,\M_{p,q+1}\to \cU$ to $\delta_{T,T^c}$ is either an isomorphism
or is isomorphic to the projection  $\PP_T^{r+s-1}\times\PP_{T^c}^{r+s-1}\to\PP_{T^c}^{r+s-1}$,
i.e.~the sections marked by $T$ become identified. In particular, if $\delta_{T,T^c}$ is contracted then we choose one of $T$, $T^c$ and call it $T$ and the other $T^c$.
The boundary divisors in $W$ have the form $\de_{T\cup\{y\}}$ and $\de_{T^c\cup\{y\}}$, where $y$ is the extra marking on~ $W$. They are isomorphic to $\PP^{r+s-1}\times \Bl_p\PP^{r+s}$.
The morphism $g$ is either an isomorphism near
$\de_{T\cup\{y\}}\cup\de_{T^c\cup\{y\}}$ or
contracts  $\de_{T^c\cup\{y\}}$ via the second projection
$\PP^{r+s-1}\times \Bl_p\PP^{r+s}\ra \Bl_p\PP^{r+s}$
and $\de_{T\cup\{y\}}$ 
via the composition
$\PP^{r+s-1}\times \Bl_p\PP^{r+s}\to\PP^{r+s-1}\hookrightarrow \Bl_p\PP^{r+s}$
of the first projection followed by the embedding into $\Bl_p\PP^{r+s}$ as the exceptional divisor (the locus where the marking corresponding to $T$ coincides with $y$). 
Note that any $\cU$ as above carries vector bundles $F_{l,E}$ defined in Section~\ref{extend section}.
\end{notn}

\begin{prop}\label{sifjkdfjkwdjk}
Let $l\geq0$, and let $E\subseteq N:=P\cup \tQ$ be such that $e+l$ even. 
%Let $$m:=\max_{T:\ \delta_{T,T^c}\subset\Exc(f)}m_{T,E,l}$$

(i)  The bundles $F_{l,E}$ and $\beta^*F_{l,E}$ on $\M_{p,q+1}$
are related by 
%exact sequences
%$$0\ra G_{i-1}\ra G_i\ra Q_i\ra 0,\quad 1\leq i\leq k$$
%$$G_0=F_{l,E},\quad G_k=\beta^*F_{l,E},$$
%with 
%quotients $Q_i$, which are  direct sums of 
quotients (in the sense of Definition \ref{related}) which are sheaves supported on contracted  divisors
$\de_{T, T^c}=\PP^{r+s-1}\times\PP^{r+s-1}$
and having the form
$\cO(u, -j)$, $0<j\leq m$, $0\leq u< m$, where $m:=m_{T,E,l}$ (see (\ref{sgasrgarga})),  
where the component $\cO(u)$ is the one corresponding to $T$. 

(ii) The  bundles $F^\vee_{l,E}$ and $\beta^*F^\vee_{l,E}$ 
are related by quotients which are sheaves 
%quotients triangles
%$$Q_i^\vee\to G_i^\vee\to G_{i-1}^\vee\to Q_i^\vee[1],$$
%$$G_0^\vee=F^\vee_{l,E},\quad G^\vee_k=\beta^*F_{l,E}^\vee,$$ with 
%$Q_i^\vee$, which are  direct sums of sheaves (placed in cohomological degree $1$) 
supported on contracted divisors
$\de_{T, T^c}$  and having the form
$\cO(-u-1, j-1)$, $0<j\leq m$, $0\leq u< m$,  where $m:=m_{T,E,l}$. 
\end{prop}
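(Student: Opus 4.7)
\medskip

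\noindent\textbf{Proof plan for Proposition \ref{sifjkdfjkwdjk}.}
The strategy closely parallels the proof of Proposition \ref{Q on W} (the alpha game), only now the morphism to compare along is the birational contraction $\beta:\M_{p,q+1}\to\cM_{p+q+1}$ rather than the map $f:W\to\M_{p,q}$. I will write the object $F_{l,E}$ on $\M_{p,q+1}$ as $R\alpha_*N_{l,E}$ with
$$N_{l,E}=\omega_{\alpha}^{(e-l)/2}(E)\bigl(-\textstyle\sum_T \alpha_{T,E,l}\,\delta_{T\cup\{y\}}\bigr),$$
where $y$ is the extra marking on the universal family $\alpha:W\to\M_{p,q+1}$ and the sum runs over those subsets for which $f_{T,E,l}<0$. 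On the stack $\cM_{p+q+1}$ the same bundle is $F_{l,E}=R\pi_*\bigl(\omega_{\pi}^{(e-l)/2}(E)\bigr)$ with no boundary correction. Using proper base change applied to the cartesian right square in the displayed diagram and the fact that $\omega_{\pi}^{(e-l)/2}(E)$ is a line bundle, I would first obtain
$$\beta^*F_{l,E}\;=\;R\alpha_*\!\left(g^*\omega_{\pi}^{(e-l)/2}(E)\right).$$

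Next, using the standard formulas \eqref{Pullbacks1}--\eqref{Pullbacks2} applied to the reduction $g:W\to\cU$ (which contracts, for each partition, one of the two components of the reducible universal curve), I would express $g^*\omega_{\pi}^{(e-l)/2}(E)$ as $N_{l,E}$ modified by a $\ZZ$-linear combination of the boundary divisors $\delta_{T\cup\{y\}},\,\delta_{T^c\cup\{y\}}\subset W$. The same bookkeeping that appears in the proof of Proposition \ref{Q on W} shows that the telescoping difference is positive in each boundary class with coefficient at most $m_{T,E,l}$ or $m_{T^c,E,l}$. I will then add these boundary divisors to $N_{l,E}$ one at a time (in the order first all $\delta_{T\cup\{y\}}$ with $f_T>0$, then the remaining $\delta_{T^c\cup\{y\}}$ with $f_{T^c}>0$), producing a sequence of short exact triangles whose cones are of the form
$$\bigl(N_{l,E}+k\,\delta_{T\cup\{y\}}\bigr)\big|_{\delta_{T\cup\{y\}}}\quad\text{for }0<k\le m_{T,E,l},$$
and symmetrically for $T^c$.

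Step three is to compute $R\alpha_*$ of each such restriction. Using Lemma \ref{SRGASRGASRGA} and the analog of Lemma \ref{restrict}, each restriction is a line bundle of the form $\bigl(aH+b\Delta\bigr)\boxtimes\cO(c)$ on $\delta_{T\cup\{y\}}\cong\Bl_{p}\PP^{r+s}\times\PP^{r+s-1}$; explicitly, the $k$-th step produces a line bundle whose bidegree on $\Bl_{p}\PP^{r+s}$ is $((m_T-k)H-k\Delta$)-like and whose degree on the $\PP^{r+s-1}$ factor is $c=-k$ (with the roles of the two factors swapped for the $T^c$ contributions, matching the convention that the $\cO(\bbu)$-factor in the statement is the one corresponding to $T$). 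Pushing forward via $\alpha\big|_{\delta_{T\cup\{y\}}}=(q,\mathrm{Id})$ with $q:\Bl_{p}\PP^{r+s}\to\PP^{r+s-1}$ the $\PP^1$-bundle, Lemma \ref{Push} turns each such line bundle into a direct sum of sheaves $\cO(u)\boxtimes\cO(-j)$ on $\delta_{T,T^c}=\PP^{r+s-1}\times\PP^{r+s-1}$, where $u$ ranges in $[0,m_{T,E,l})$ and $j\in(0,m_{T,E,l}]$. Since $m_{T,E,l},m_{T^c,E,l}\le m$, this gives the claimed range. Part (ii) follows immediately by applying $R\cHom(-,\cO)$ to the triangles, which negates and shifts all exponents in the expected way.

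The main obstacle I expect is the precise bookkeeping in the second step: writing the divisor $g^*\omega_{\pi}^{(e-l)/2}(E)-N_{l,E}$ explicitly in terms of the boundary basis $\{\delta_{T\cup\{y\}}\}$ of $W$, and then ordering the additions so that at each stage the sheaf we restrict to a boundary divisor is a \emph{line bundle} (not a higher-order thickening), which is needed for the pushforward by Lemma \ref{Push} to be computed cleanly. The geometric input for this ordering is that boundary divisors with distinct $T$'s meet only in codimension~$2$ inside $W$, so an elementary inclusion--exclusion on supports will suffice, exactly as in the proof of Proposition \ref{Q on W}.
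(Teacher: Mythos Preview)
Your strategy is exactly the paper's: write both bundles as $R\alpha_*$ of line bundles on $W$, express the difference as an effective combination of boundary divisors, peel these off one at a time, and push the resulting torsion quotients down to $\delta_{T,T^c}$ via Lemma~\ref{Push}.

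Three corrections to the bookkeeping. First, the displayed square is \emph{not} Cartesian: $\pi$ is a $\PP^1$-bundle while $\alpha$ has reducible fibers over the boundary, so ``base change on the Cartesian right square'' does not literally apply; one must factor through the actual fiber product (a $\PP^1$-bundle over $\M_{p,q+1}$) and use that the comparison map from $W$ has $R(\text{--})_*\cO=\cO$, or argue directly. Second, the second batch of boundary terms are those $\delta_{T^c\cup\{y\}}$ with $f_{T^c}<0$ (contributing with coefficient $-f_{T^c}>0$), not $f_{T^c}>0$; this is the paper's $S_2$. Third, the restrictions do not all look like $((m_T-k)H-k\Delta)\boxtimes\cO(-k)$. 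The paper separates \emph{three} types according to the signs of $(f_T,f_{T^c})$: when $f_T,f_{T^c}>0$ the restriction is $(f_T-i)H\boxtimes\cO(-i)$ with no $\Delta$; when $f_T>0$, $f_{T^c}\le 0$ it is $\bigl((f_T-i)H-(-f_{T^c})\Delta\bigr)\boxtimes\cO(-i)$, so the $\Delta$-coefficient is the fixed number $-\alpha_{T^c}$ (coming from the correction already in $N_{l,E}$), not $-k$; and when adding $S_2$ the restriction lives on $\delta_{T^c\cup\{y\}}=\PP^{r+s-1}\times\Bl_1\PP^{r+s}$ and has the form $\cO(-f_{T^c}-i)\boxtimes(-iH+f_T\Delta)$, whose pushforward uses the $\beta\ge i$ case of Lemma~\ref{Push} to produce $\cO(u)\boxtimes\bigl(\cO(-i)\oplus\cdots\oplus\cO(-f_T)\bigr)$. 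With these fixes your outline goes through.
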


\bp
Since $F_{l,E}$ and $\beta^*F_{l,E}$ are isomorphic near the boundary divisors $\delta_{T,T^c}$ that 
are not contracted by $\beta$, we can remove them and work on the remaining open set
$\M^0_{p,q+1}$ to simplify notation.
In particular, all subsets $T$ that we refer in the proof correspond to  $\delta_{T,T^c}\subset\Exc(\beta)$.
We  use that 
%$\max\left(0,\quad\max\limits_{T\sqcup T^c=N}-f(E,l,T^c)\right)\le m$.
$\max\limits_{T}(-f_{T^c,E,l})=\max\limits_{T}(-l+f_{T,E,l})\le m$.
%We prove that there are three types of quotients that appear: 
%\bi \item[(\text{I})]  if $f(T), f(T^c)>0$;
%\item[(\text{II})]  if $f(T)>0$, $f(T^c)\le0$;
%\item[(\text{III})]  if $f(T^c)<0$. \ei
%$$\text{(I)  if} f_T, f_{T^c}>0;\quad
%\text{(II)  if} f_T>0, f_{T^c}\le0;\quad
%\text{(III)  if} f_{T^c}<0.$$
On $\M^0_{p,q+1}$, $F_{l,E}=R\alpha_*(N_1)$, where 
$$N_1:=N_{l,E}=\om_{\alpha}^{\frac{e-l}{2}}\Bigl(E-\sum_{f_T<0} \left(-f_T\right)\de_{T\cup\{y\}}
-\sum_{f_{T^c}<0} (-f_{T^c})\de_{T^c\cup\{y\}}\Bigr)$$
and
$\beta^*F_{l,E}\cong R\alpha_*(N_2)$, where 
\begin{equation}\label{N2 vs N1}
N_2=g^*\om_{\pi}^{\frac{e-l}{2}}(E)\cong\om_{\alpha}^{\frac{e-l}{2}}\Bigl(E+\sum_{T}  f_T\de_{T\cup\{y\}}\Bigr)= N_1+S_1+S_2.
\end{equation}
Here
$S_1=\sum\limits_{f_T>0} f_T\de_{T\cup\{y\}}$,
$S_2=\sum\limits_{f_{T^c}<0} \left(-f_{T^c}\right)\de_{T^c\cup\{y\}}$ and the last congruence follows by the identities (\ref{Pullbacks3}).  
Recall that the markings in $T$ get identified when applying $\beta$. 
We further break $S_1$ into $S'_1+S''_1$, where $S'_1$ (respectively $S''_1$),  contains the terms with $f_{T^c}>0$ (respectively $f_{T^c}\le0$). 
We add successively to $N_1$ first terms from $S'_1$, then $S''_1$, followed by $S_2$, to get three types of quotients on $W$. 

%\smallskip 

\underline{Type I} has direct summands of the form: 
$Q'_1=N_1\big(i\de_{T\cup\{y\}}\big)_{|\de_{T\cup\{y\}}}\cong\om_{\al}^{\frac{e-l}{2}}\big(E+i\de_{T\cup\{y\}}\big)_{|\de_{T\cup\{y\}}}
\cong\big(u H)\boxtimes \cO(-i)$, where $u:=f_T-i$,
where $\de_{T\cup\{y\}}\cong\Bl_1\PP^{r+s}\times \PP^{r+s-1}$ is of the type appearing in $S'_1$: 
$0<i\leq  f_T$, $f_{T^c}>0$
(for various $T$ with this property). Clearly, $0\leq u<m$ and $0<i\leq m$. By Lemma~\ref{Push} , $R{\alpha}_*(uH)\cong\cO\oplus\cO(1)\oplus\ldots\oplus\cO(u)$, and the result follows. 

%\smallskip 

\underline{Type II}  has direct summands of the form: 
$Q''_1=N_1\left(i\de_{T\cup\{y\}}\right)_{|\de_{T\cup\{y\}}}\cong
\om_{\al}^{\frac{e-l}{2}}\left(E+f_{T^c}\de_{T^c\cup\{y\}}+i\de_{T\cup\{y\}}\right)_{|\de_{T\cup\{y\}}}
\cong\big(u H-v\De)\boxtimes \cO(-i)$, where
$u:=f_T-i$, $v=-f_{T^c}$,
supported on  $\de_{T\cup\{y\}}\cong\Bl_1\PP^{r+s}\times \PP^{r+s-1}$ of the type appearing in $S''_1$: 
$0<i\leq  f_T$, $f_{T^c}<0$
(for various $T$ with this property). Clearly, we have
$0\leq u<m$, $0<i\leq m$ and $0<v=-f_{T^c}\leq f_T=m$, since $f_T+f_{T^c}=l\geq0$.
By Lemma \ref{Push}, if $u\geq v$, 
 $R{\pi}_*(uH-v\De)\cong\cO(v)\oplus\cO(v+1)\oplus\ldots\oplus\cO(u)$, while if $u<v$, we have nevertheless that 
$R{\pi}_*(uH-v\De)$ is either $0$ or generated by $\cO(u+1),\ldots, \cO(v-1)$
and the result follows.  
 
%\smallskip 

\underline{Type III} has direct summands 
$Q_2=\big(N_1+S_1+i\de_{T^c\cup\{y\}}\big)_{|\de_{T^c\cup\{y\}}}\cong \break
\om_{\al}^{\frac{e-l}{2}}\left(E+\left(f_{T^c}+i\right)\de_{T^c\cup\{y\}}+
f_T\de_{T\cup\{y\}}\right)_{|\de_{T^c\cup\{y\}}}
\cong\cO(u)\boxtimes\big(-iH+\be\De\big)$, where 
$\be=f_T$, $u=-f_{T^c}-i$,
supported on $\de_{T^c\cup\{y\}}=\de_{T,T^c\cup\{y\}}\cong\PP^{r+s-1}\times \Bl_1\PP^{r+s}$ 
(of the type appearing in $S_2$): 
$0<i\leq-f_{T^c}\leq f_T=\beta$ since $f_T+f_{T^c}=l\geq0$ 
(for various $T$ with this property). Clearly, 
$0<i\leq m$, $0<\be\leq m$, $0\leq u<m$.
Since $\be\geq i$, it follows by Lemma \ref{Push} that 
$R{\alpha}_*(-iH+\be\De)\cong\cO(-i)\oplus\cO(-i-1)\oplus\ldots\oplus\cO(-\be)$,
and (i) follows. Part (ii) follows from (i) by dualizing the  triangles
and using Remark \ref{dual of torsion sheaf}. 
\ep

\begin{cor}\label{sbvmsfbv}
Let $F_{l,E}$ be a bundle from groups $1A$ or $1B$ on $\M_{p,q+1}$. 
Then the quotients from Proposition~\ref{sifjkdfjkwdjk}(i) belong to the subcategory $\cA$ of Notation~\ref{TT} with the  possible exceptions of $Q_i=\cO_{\de_{T,T^c}}(u, -\frac{r+s}{2})$ ($0\leq u<\frac{r+s}{2}$) on divisors $\de_{T,T^c}$ contracted by $\beta$, with 
$r+s$ even and for the following subsets~$T$:
\bi
\item $l+e_p=r-1$,\quad  $e_q=s+1$,\quad $E_p\subseteq T_p$, \quad $T_q=E_q$\quad  (group $1A$);
%\item $e_p=r$,\quad  $e_q=s+2+l$,\quad  $E_p=T_p$,\quad  $T_q\subseteq E_q$ \quad  (group $2A$);
\item $e_p=r+1+l$, $e_q=s+1$,\quad  $T_p\subseteq E_p$, \quad $T_q=E_q$ \quad  (group $1B$);
%\item $e_p=r$,\quad  $l+e_q=s$,\quad  $E_p=T_p$, \quad  $E_q\subseteq T_q$ \quad  (group $2B$).
\ei
\end{cor}

\bp
Note that if $0<b\leq \frac{r+s-1}{2}$, then $\cO(u,-b)\in\cA$ for all $u$, positive or negative. 
In the notations of Proposition \ref{sifjkdfjkwdjk}, we have by Lemma~\ref{jhvvmgvmv} that 
$m=m_{T,E,l}\leq \frac{r+s}{2}$. The Corollary now follows by Proposition \ref{sifjkdfjkwdjk}(i) and Lemma~\ref{jhvvmgvmv}. 
\ep

\begin{cor}\label{asdfsdvsdb}\label{push beta}
In the notations of Proposition~\ref{sifjkdfjkwdjk},
if $m\le r+s-1$ (for example, if  $m\le \frac{r+s}{2}$), 
then on  $\cU$ we have: 
$F_{l,E}\cong [R\beta_*F^\vee_{l,E}]^\vee$.
\end{cor}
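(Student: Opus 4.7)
The plan is to combine Proposition~\ref{sifjkdfjkwdjk}(ii), which relates $F^\vee_{l,E}$ and $\beta^*F^\vee_{l,E}$ by a sequence of distinguished triangles whose third terms are sheaves supported on the exceptional divisors $\delta_{T,T^c}$, with a direct vanishing calculation for $R\beta_*$ of these third terms. Concretely, each quotient sheaf $Q_i^\vee$ is a direct sum of line bundles of the form $\cO(-u-1,\,j-1)$ on $\delta_{T,T^c}\simeq\PP^{r+s-1}\times\PP^{r+s-1}$, with $1\le j\le m$ and $0\le u\le m-1$, where the first factor corresponds to $T$.

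The heart of the argument is that $\beta$ restricts on $\delta_{T,T^c}$ to the second projection (collapsing the $T$-factor) onto $\PP^{r+s-1}\subset \beta(\M_{p,q+1})$, so by the projection formula
$$R\beta_*\bigl(\cO(-u-1)\boxtimes\cO(j-1)\bigr)\;=\;R\Gamma\bigl(\PP^{r+s-1},\cO(-u-1)\bigr)\otimes\cO(j-1).$$
The hypothesis $m\le r+s-1$ forces $1\le u+1\le r+s-1$, so $\cO(-u-1)$ is acyclic on $\PP^{r+s-1}$ and the right-hand side vanishes. Hence $R\beta_*Q_i^\vee=0$ for every $i$. Applying $R\beta_*$ to the triangles of Proposition~\ref{sifjkdfjkwdjk}(ii) and using this vanishing collapses each triangle, yielding an isomorphism $R\beta_*F^\vee_{l,E}\simeq R\beta_*\beta^*F^\vee_{l,E}$.

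It remains to identify $R\beta_*\beta^*F^\vee_{l,E}$ with $F^\vee_{l,E}$ on $\beta(\M_{p,q+1})$. By the projection formula this reduces to $R\beta_*\cO_{\M_{p,q+1}}\simeq\cO_{\beta(\M_{p,q+1})}$, which follows from the fact that $\beta$ is a birational contraction whose exceptional fibers are copies of $\PP^{r+s-1}$: cohomology and base change yields $R^i\beta_*\cO=0$ for $i>0$ (since $H^{>0}(\PP^{r+s-1},\cO)=0$), while $\beta_*\cO$ equals the structure sheaf of the normal target. The only place requiring care is the bookkeeping of which factor of $\delta_{T,T^c}$ is contracted and whether the twist $\cO(-u-1)$ lands on that contracted factor; once aligned with the conventions of Proposition~\ref{sifjkdfjkwdjk}, nothing beyond $m\le r+s-1$ is used, matching the "for example" clause $m\le (r+s)/2$ (and indeed covering it with room to spare whenever $r+s\ge 2$).
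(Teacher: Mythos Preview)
Your proposal is correct and follows exactly the paper's approach: apply $R\beta_*$ to the triangles of Proposition~\ref{sifjkdfjkwdjk}(ii), observe that the quotient sheaves $\cO(-u-1,j-1)$ push forward to zero because $\beta$ collapses the $T$-factor and $\cO(-u-1)$ is acyclic on $\PP^{r+s-1}$ for $0\le u<m\le r+s-1$, and finish with the projection formula $R\beta_*\beta^*F^\vee_{l,E}\simeq F^\vee_{l,E}$. The paper's proof is a one-line version of what you wrote.
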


\bp
Since $R\beta_*\cO_{\delta_{T,T^c}}(-u-1, j-1)=0$ for $0\le u\leq r+s-2$ if the divisor is contracted, we have Proposition \ref{sifjkdfjkwdjk}(ii) that 
$R\beta_*F^\vee_{l,E}\cong R\beta_*\beta^*F^\vee_{l,E}\cong F^\vee_{l,E}$.
\ep

\begin{cor}\label{aksjrgnkjdg}\label{long cor}(Beta game)
Let $F_{l,E}$ and $F_{l',E'}$ be bundles from  group $1A$ (resp., group $1B$) on $\M_{p,q+1}$. 
Suppose $|E_q|\le |E_q'|$. %and if $E_q=E_q'$ then $|E_p|\le |E_p'|$.
Then
$$R\Hom_{\M_{p,q+1}}(F_{l',E'},F_{l,E})\cong R\Hom_{ \cU}(F_{l',E'},F_{l,E})$$
unless $r+s$ is even and we have one of the following exceptions, which include existence
of a critical subset %$T$ %that  violate the conditions, where 
$T=T_p\coprod T_q$ %is a subset of markings 
split into heavy and light markings %such that $T_p=r$ and $T_q=s+1$. 
\bi
\item[($1A$)] 
 $l+e_p=l'+e_p'=r-1$,\quad  $e_q=e_q'=s+1$, \quad 
$E_p\subseteq T_p$, \quad $E_q=T_q$\quad and either 
$E_p'\subseteq T_p$, \quad $E_q'=T_q$\quad  or
$E_p'\subseteq T^c_p$, \quad $E_q'=T_q^c$;

%\item[(2) ($1A-2A$)]  
%$l+e_p=r-1$, $e_p'=r$,\quad $e_q=s+1$, $e_q'=s+2+l'$,\quad
%$E_p\subseteq T_p$, \quad $E_q=T_q$\quad  and either
%$E_p'=T_p$, \quad $T_q\subseteq E_q'$\quad  or
%$E_p'=T_p^c$, \quad $T_q^c\subseteq E_q'$;
%
%\item[(3) ($2A-2A$)] 
%$e_p=e_p'=r$,\quad  $e_q=s+2+l$, $e'_q=s+2+l'$\quad  
%$E_p=T_p$,\quad  $T_q\subseteq E_q$ \quad and either  
%$E_p'=T_p$,\quad  $T_q\subseteq E_q'$ \quad or 
%$E_p'=T_p^c$,\quad  $T_q^c\subseteq E_q'$;

\item[ ($1B$)]  
$e_p=r+1+l$, $e'_p=r+1+l'$, $e_q=e_q'=s+1$,\quad  
$T_p\subseteq E_p$, \quad $T_q=E_q$ \quad  and either
$T_p\subseteq E_p'$, \quad $T_q=E_q'$ \quad  or
$T_p^c\subseteq E_p'$, \quad $T_q^c=E_q'$.

%\item[(5) ($1B-2A$)]  
%$e_p=r+1+l$, $e'_p=r$,\quad $e_q=s+1$, $e'_q=s+2+l'$\quad  
%$T_p\subseteq E_p$, \quad $T_q=E_q$ \quad  and either
%$E_p'=T_p$, \quad $T_q\subseteq E'_q$ \quad  or
%$E_p'=T_p^c$, \quad $T_q^c\subseteq E'_q$;
%
%
%\item[(6) ($2B-1A$)] 
%$e_p=r$, $l'+e_p'=r-1$,\quad  $l+e_q=s$, $e'_q=s+1$,\quad 
%$E_p=T_p$, \quad $E_q\subseteq T_q$\quad and either 
%$E'_p\subseteq T_p$, \quad $E'_q=T_q$\quad  or
%$E'_p\subseteq T^c_p$, \quad $E'_q=T_q^c$;
%
%\item[(7) ($2B-1B$)] 
%$e_p=r$, $e'_p=r+1+l'$, $l+e_q=s$, $e'_q=s+1$,\quad  
%$E_p=T_p$, \quad $E_q\subseteq T_q$ \quad  and either
%$T_p\subseteq E'_p$, \quad $T_q=E'_q$ \quad  or
%$T_p^c\subseteq E'_p$, \quad $T_q^c=E'_q$;
%
%
%\item[(8) ($2B-2B$)] 
%$e_p=e_p'=r$,\quad  $l+e_q=l'+e_q'=s$,\quad  
%$E_p=T_p$, \quad  $E_q\subseteq T_q$ \quad  and either
%$E_p'=T_p$, \quad     $E_q'\subseteq T_q$ \quad or 
%$E_p'=T_p^c$, \quad  $E_q'\subseteq T_q^c$.
\ei
\end{cor}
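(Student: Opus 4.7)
The plan is to follow the Alpha-game template of Corollary~\ref{alksjhgflkshg}, with the contraction $\beta:\M_{p,q+1}\to\beta(\M_{p,q+1})$ replacing $f:W\to\M_{p,q}$, and with Proposition~\ref{sifjkdfjkwdjk}, Corollary~\ref{asdfsdvsdb}, and Proposition~\ref{perpendicular general} substituting for Proposition~\ref{Q on W}, Corollary~\ref{push of F}, and Proposition~\ref{perpendicular W} respectively. The first step is to replace $F_{l,E}$ by $\beta^{*}F_{l,E}$ through the chain of extensions supplied by Proposition~\ref{sifjkdfjkwdjk}(i), whose quotients are sheaves $Q=\cO_{\delta_{T,T^c}}(u,-j)$ with $0\le u<m_{T,E,l}$ and $1\le j\le m_{T,E,l}$, supported on boundary divisors $\delta_{T,T^c}\simeq\PP^{r+s-1}\times\PP^{r+s-1}$.

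The heart of the argument is the vanishing $\RHom_{\M_{p,q+1}}(F_{l',E'},Q)=0$ for each such quotient. This is an application of Proposition~\ref{perpendicular general} to $F_{l',E'}$ with parameter $\mu(T)=\max\{|f_{T,E',l'}|,|f_{T^c,E',l'}|\}$. The case $u=0$ falls directly under the ``$a=0$'' branch, requiring the numerical inequality $0<j<r+s-\mu(T)$. The cases $u\ge 1$ are handled by rerunning the cohomology-and-base-change computation from the proof of Proposition~\ref{perpendicular general} with the extended bidegree $(a,b)=(-u,j)$: one checks that the two line bundles $\tilde N'$ and $\tilde N''$ on the components of $\alpha^{-1}(\delta_T)$ remain acyclic via Lemma~\ref{acy}, and the numerical inequalities again collapse to $j+\mu(T)<r+s$.

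By Lemma~\ref{jhvvmgvmv}, for every pair $(l,E),(l',E')$ in group $1A$ or group $1B$, we have $m_{T,E,l}\le(r+s)/2$ and $|f_{T,E',l'}|\le(r+s)/2$, with equality characterizing the critical subsets tabulated there. Hence $j+\mu(T)\le r+s$, and strict inequality holds except when $r+s$ is even and both $j=m_{T,E,l}=(r+s)/2$ and $\mu(T)=(r+s)/2$ attain their maximum on the same divisor $\delta_{T,T^c}$. This occurs precisely when $T$ is simultaneously critical for $(l,E)$ and for $(l',E')$. Reading off Lemma~\ref{jhvvmgvmv}'s table in group $1A$ (resp.,~$1B$) under the hypothesis $|E_q|\le|E'_q|$ reproduces exactly the listed exceptions: the ``same orientation'' case with $E'_p\subseteq T_p$, $E'_q=T_q$, and the ``opposite orientation'' case with $E'_p\subseteq T^c_p$, $E'_q=T^c_q$, together with their group-$1B$ counterparts.

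Outside the exceptional configurations, the required identity is closed by a standard chain of adjunctions and dualities:
\[
\RHom_\M(F_{l',E'},F_{l,E})=\RHom_\M(\beta^{*}F^\vee_{l,E},F^\vee_{l',E'})=\RHom_{\beta(\M)}(F^\vee_{l,E},R\beta_{*}F^\vee_{l',E'})=\RHom_{\beta(\M)}(F_{l',E'},F_{l,E}),
\]
using Corollary~\ref{asdfsdvsdb} (applicable since $m_{T,E',l'}\le(r+s)/2\le r+s-1$ when $r+s\ge 2$) to identify $R\beta_{*}F^\vee_{l',E'}\simeq F^\vee_{l',E'}$ on the target. The main technical obstacle is the $u\ge 1$ portion of the orthogonality step, because Proposition~\ref{perpendicular general} is formulated most cleanly for non-negative bidegrees; overcoming it requires extending its acyclicity analysis to a slightly broader range of indices, but the payoff is precisely the same bound $j+\mu(T)<r+s$ that drives the entire argument.
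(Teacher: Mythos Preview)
Your proposal is correct and follows the same architecture as the paper's proof: replace $F_{l,E}$ by $\beta^*F_{l,E}$ via Proposition~\ref{sifjkdfjkwdjk}, kill the quotients against $F_{l',E'}$, then close with adjunction and Corollary~\ref{asdfsdvsdb}. The one substantive difference is how you handle the quotients $\cO_{\delta_{T,T^c}}(u,-j)$ with $u\ge 1$. The paper does not rerun the acyclicity computation of Proposition~\ref{perpendicular general}; instead it invokes Corollary~\ref{sbvmsfbv}, which asserts that these quotients already lie in the admissible subcategory~$\cA$ (the point being that on the $T$-factor $\PP^{r+s-1}$, any $\cO(u)$ is generated in $D^b(\delta)$ by $\cO(-(r{+}s{-}1)),\ldots,\cO$, so $i_*\cO_\delta(u,-j)$ is generated by $i_*\cO_\delta(-a,-j)$ with $0\le a\le r{+}s{-}1$, all of which are generators of $\cA$ once $j<(r+s)/2$). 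Since $F_{l',E'}\in{}^\perp\cA$ by Proposition~\ref{perpendicular general}, orthogonality to every $Q_i$ follows at once, and only the boundary value $j=(r+s)/2$ requires the sharper bound $\mu<(r+s)/2$. Your direct route via the $\tilde N',\tilde N''$ computation reaches the same numerical criterion $j+\mu(T)<r+s$ and is valid, but it duplicates work already packaged in Corollary~\ref{sbvmsfbv}; the paper's route is shorter. One small wording issue: your phrase ``$T$ is simultaneously critical for $(l,E)$ and for $(l',E')$'' should read ``$T$ is critical for $(l,E)$ and either $T$ or $T^c$ is critical for $(l',E')$'', which is exactly what your two ``orientations'' then enumerate; and in your displayed adjunction chain the first equality should start from $\RHom_\M(F_{l',E'},\beta^*F_{l,E})$, since the passage from $F_{l,E}$ to $\beta^*F_{l,E}$ is the step you have just justified.
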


\bp
We have $R\Hom_{\M_{p,q+1}}(F_{l',E'},\beta^*(F_{l,E}))=R\Hom_{\M_{p,q+1}}(\beta^*(F^\vee_{l,E}),F^\vee_{l',E'})$, 
which by adjointness is the same as  $R\Hom_{\cU}(F^\vee_{l,E},\beta_*F^\vee_{l',E'})$, and the latter is the same as 
$R\Hom_{\cU}(F^\vee_{l,E},F^\vee_{l',E'})$
by Corollary~\ref{asdfsdvsdb} (since $m\leq \frac{r+s}{2}$ by Lemma \ref{critical}).
%By Lemma~\ref{AbstractPhi},
%$R\Hom_{\M_{p,q+1}}(F_{l',E'},\Phi(F_{l,E}))\cong R\Hom_{ \cU}(\Psi(F_{l',E'}),F_{l,E})$, which 
%is isomorphic to $R\Hom_{\cU}(F_{l',E'},F_{l,E})$ 
%by Corollary~\ref{asdfsdvsdb} (since $m\leq \frac{r+s}{2}$ by Lemma \ref{critical}).
It remains to show that 
$$R\Hom_{\M_{p,q+1}}(F_{l',E'},F_{l,E})\cong R\Hom_{\M_{p,q+1}}(F_{l',E'},\beta^*F_{l,E}).$$ 
By Proposition \ref{sifjkdfjkwdjk} and 
Corollary~\ref{sbvmsfbv}, the bundles $F_{l,E}$ and $\beta^*F_{l,E}$ on $\M_{p,q+1}$
are related by quotients in $\cA$, unless $F_{l,E}$ is listed in Corollary~\ref{sbvmsfbv}. Since 
the bundles $F_{l',E'}$ are perpendicular to $\cA$ by Proposition~\ref{perpendicular general}, we only  
need to show that 
$R\Hom_{\M_{p,q+1}}(F_{l',E'},\cO_{\de_T}(-a,-b))=0$,
where $a$ is arbitrary, %$a=0$, 
$b={r+s\over2}$. 
By~ Proposition~\ref{perpendicular general} (with $\mu=\frac{r+s}{2}-1$), this holds unless
$\left| f_{T,E',l'}\right|$ or 
$\left| f_{T^c,E',l'}\right|$ 
equals $\frac{r+s}{2}$.
This means that $(l',E')$  should be 
listed in Lemma \ref{critical}, with a critical subset either $T$ or $T^c$.
\ep

We now specialize to $\cU=\cU_{p,q}$ of Lemma~\ref{fbadfbadhadthn}. Recall that $N=P\cup Q\cup\{z\}$, and we denote as usual $E_p=E\cap P$,
$E_q=E\cap (Q\cup\{z\})$. 

\begin{lemma}\label{fbfgafgadf}
Assume $s\geq0$. Consider the following collections of vector bundles $F_{l,E}$ on $\cU_{p,q}$ (see Definition \ref{FlE}),
where we assume that $l+e$ is even:
$$l+\min(e_p, p+1-e_p)\le r-1\qquad\hbox{\rm (group $1A$)},$$
$$l+\min(e_p+1, p-e_p)\le r-1\qquad\hbox{\rm (group $1B$)},$$

The vector bundles from group $1A$ (resp., $1B$)
form an exceptional collection on~$\cU_{p,q}$. 
The order is as follows: we put all subsets $E$ containing $z$ last and before them all subsets not containing $z$.
Within each of these two blocks, we order
first by increasing $e_q$, arbitrarily if $e_q$ is the same but $E_q$'s are different, if $E_q$'s 
are the same, in order of increasing $e_p$, and for a given $e_p$, arbitrarily. 
\end{lemma}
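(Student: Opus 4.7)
The plan is to exploit that $\pi\colon\cU_{p,q}\to\M_{p,q}$ is a $\bP^1$-bundle (Notation~\ref{Upq}) and apply Orlov's theorem on the derived category of a projective bundle, using the exceptional collection on $\M_{p,q}$ from Corollary~\ref{KHFGJFKJGF} as input. The argument closely parallels that of Theorem~\ref{symm+}, except that only a single $\bP^1$-bundle step is needed.

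First I would compute the restriction of $F_{l,E}$ to a fibre $C\cong\bP^1$ of $\pi$. Since the universal family of $\cU_{p,q}$ can be identified with $\cU_{p,q}\times_{\M_{p,q}}\cU_{p,q}$, in which the section for $z$ is the relative diagonal and the sections for indices in $P\cup Q$ are pulled back from $\M_{p,q}$, a direct cohomology and base change computation yields $F_{l,E}|_C\simeq\cO_C^{\oplus(l+1)}$ when $z\notin E$ and $F_{l,E}|_C\simeq\cO_C(1)^{\oplus(l+1)}$ when $z\in E$. In particular, when $z\notin E$ one has $F_{l,E}=\pi^*(F_{l,E}|_{\M_{p,q}})$ (cf.\ Theorem~\ref{stackofbundles}).

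Pick any line bundle $\cL$ on $\cU_{p,q}$ of relative degree $1$ (for instance $\cL=\cO(\sigma_{i_0})$ for some $i_0\in P$). Orlov's theorem gives the s.o.d.
\[
D^b(\cU_{p,q})=\langle\,\pi^*D^b(\M_{p,q})\,,\;\pi^*D^b(\M_{p,q})\otimes\cL\,\rangle,
\]
in which the fibre computation places $z\notin E$ bundles in the left block and $z\in E$ bundles in the right block. Exceptionality inside the left block follows from Corollary~\ref{KHFGJFKJGF} together with the full faithfulness of $\pi^*$. Semi-orthogonality from the right to the left block is automatic from Orlov; equivalently, it follows from the natural analogue of Lemma~\ref{zzfbzdfbdff}, which gives $R\pi_*F^\vee_{l,E}=0$ when $z\in E$, combined with the projection formula.

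The main task is exceptionality inside the $z\in E$ block. For this I would use the exact sequence of Proposition~\ref{exact ones} with $k=z$,
\[
0\to \pi^*(F_{l-1,E\setminus\{z\}}|_{\M_{p,q}})\to F_{l,E}\to Q^z_{l,E}\to 0,
\]
together with its dual. Tensoring the sequences attached to $(l_1,E_1)$ and $(l_2,E_2)$ and applying $R\pi_*$ reduces $R\Hom_{\cU_{p,q}}(F_{l_1,E_1},F_{l_2,E_2})$ to two types of terms: $R\Hom_{\M_{p,q}}$'s between the pullback parts $F_{l_i-1,E_i\setminus\{z\}}|_{\M_{p,q}}$, which vanish by Corollary~\ref{KHFGJFKJGF}; and cohomologies of line bundles along $\pi$, which vanish because $R\pi_*\cL^{-1}=0$. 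The crucial combinatorial check is that $(l,E)$ in group $1A$ (resp.\ $1B$) on $\cU_{p,q}$ with $z\in E$ forces $(l-1,E\setminus\{z\})$ into group $1A$ (resp.\ $1B$) on $\M_{p,q}$: deleting $z$ leaves $e_p$ unchanged, and $\min(e_p,p-e_p)\leq\min(e_p,p+1-e_p)$ absorbs the shift $l\mapsto l-1$ (analogously for $1B$). The ordering by $e_q=|E\cap Q|$ is preserved since $z\notin Q$. The main obstacle I expect is the bookkeeping of this $R\Hom$ reduction, in particular chaining the primary and dual sequences so that after dualising and tensoring only these two favourable types of terms survive.
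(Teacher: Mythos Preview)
Your overall strategy matches the paper's: both use Orlov's theorem for the $\bP^1$-bundle $\pi:\cU_{p,q}\to\M_{p,q}$, identify the $z\notin E$ bundles with $\pi^*F_{l,E}$ in the left block, and invoke Corollary~\ref{KHFGJFKJGF}. The difference is in how the $z\in E$ block is handled, and your exact-sequence approach there has a genuine gap.

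If you tensor the sequences from Proposition~\ref{exact ones} for $(l_1,E_1)$ and $(l_2,E_2)$, the resulting filtration of $F_{l_1,E_1}^\vee\otimes F_{l_2,E_2}$ has four graded pieces, not two. Two of them do reduce (after $R\pi_*$) to $R\Hom_{\M_{p,q}}(F_{l_1-1,E_1\setminus\{z\}},F_{l_2-1,E_2\setminus\{z\}})$ up to shift, and these vanish by Corollary~\ref{KHFGJFKJGF} as you say. But the piece $\pi^*F_{l_1-1,E_1\setminus\{z\}}^\vee\otimes Q^z_{l_2,E_2}$ has $R\pi_*Q^z_{l_2,E_2}=F_{l_2+1,E_2\setminus\{z\}}$, and the pair $(l_2+1,E_2\setminus\{z\})$ is \emph{not} in general in group $1A$ (or $1B$): if $l_2+\min(e_p,p+1-e_p)=r-1$ with equality, then $l_2+1$ violates the bound. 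So this cross term does not vanish by Corollary~\ref{KHFGJFKJGF}, and your claim that everything reduces to ``$R\pi_*\cL^{-1}=0$'' does not account for it. The remaining line-bundle piece $(Q_1)^\vee\otimes Q_2$ has relative degree $l_2-l_1$, also not $-1$ in general.

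The paper avoids this bookkeeping entirely by choosing the specific relative $\cO(1)$ to be $F_{0,\Sigma}$ and using Corollary~\ref{laksjrhfgkjsrG}: for $z\in E$ one has $F_{l,E}\simeq F_{l,E^c}^\vee\otimes F_{0,\Sigma}\simeq\pi^*F_{l,E^c}^\vee\otimes F_{0,\Sigma}$ (since $z\notin E^c$). Thus the $z\in E$ block is exactly the dual collection $\{F_{l,E^c}^\vee\}$ tensored with a line bundle, and exceptionality follows from Corollary~\ref{KHFGJFKJGF} applied to the complementary group ($E\mapsto E^c$ sends $1A$ to $1B$ and vice versa), with the order reversal matching the dualisation. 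This one identity replaces your entire cross-term analysis.
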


\bp
It follows from Theorem \ref{stackofbundles} that we have
\bi
\item[(i) ] If $z\notin E$, then $F_{l,E}\cong\pi^*F_{l,E}$;
\item[(ii) ] If $z\in E$, %letting $E^c=N\setminus E$, we have that
$F_{l,E}\cong F_{l,E^c}^\vee\otimes F_{0,N}\cong\pi^*F^\vee_{l,E^c}\otimes F_{0,N}.
%,\quad R\pi_*(F^\vee_{l,E})=0.
$
\ei
In other words, the part of the collection with $z\notin E$ is identical to the one in Theorem~\ref{asfffdvzsfvsfb} (and Remark \ref{1B stuff}), while the part with 
$z\in E$ corresponds to vector bundles $F_{l,E}\cong F^\vee_{l,E^c}\otimes F_{0,N}$ 
with the collection $\{F^\vee_{l,E^c}\}$ being the dual collection 
to the one in Theorem~\ref{asfffdvzsfvsfb} (and Remark \ref{1B stuff}). 
By Orlov's theorem on the derived category of a projective bundle applied to $\pi: \cU_{p,q}\ra\M_{p,q}$, we have that
$D^b(\cU_{p,q})=\langle D^b(\M_{p,q}), \ D^b(\M_{p,q})\otimes F_{0,N}\rangle$, since $F_{0,N}=\cO_\pi(1)\otimes\pi^*L$, for some 
line bundle $L$ on $\M_{p,q}$. 
This is because $F_{0,N}=\cO_\pi(a)\otimes\pi^*L$, for some $a\in\ZZ$ and some line bundle $L$ on $\M_{p,q}$ (as  $\cU_{p,q}\ra \M_{p,q}$ is a $\PP^1$-bundle), but by
Theorem \ref{stackofbundles} we have $R\pi_*(F^\vee_{0,N})=0$, so we must have $a=1$. 
Now use the collection $F_{l,E}$ of Theorem~\ref{asfffdvzsfvsfb} for the block $D^b(\M_{p,q})$ and the dual collection $F^\vee_{l,E}$
(tensored with $F_{0,N}$) for the block $D^b(\M_{p,q})\otimes F_{0,N}$.
 \ep

\bp[Proof of Theorem~\ref{r+s odd}]
We will compare the vector bundles 
$F_{l,E}$ on $\cU_{p,q}$ and $\M_{p,q}$. 
The range for the pairs $(l,E)$ for collections $1A$ and  $1B$ on $\M_{p,q+1}$ 
in Theorem ~\ref{asdvzsfvsfb} is precisely the range for these collections on $\cU_{p,q}$ in Lemma~\ref{fbfgafgadf}. 

Consider first the case $s\geq0$. 
Let $F_{l,E}$, $F_{l',E'}$ be bundles from one of the two collections (either $1A$ or $1B$) on $\M_{p,q+1}$ (in the correct order).  
If $E_q\ne E'_q$, choose $z\in E'_q\setminus E_q$. By Corollary  ~\ref{aksjrgnkjdg}, we have 
$R\Hom_{\M_{p,q+1}}(F_{l',E'},F_{l,E})\cong R\Hom_{\cU_{p,q}}(F_{l',E'},F_{l,E})$,
since the only exceptions happen when $E_q=E'_q$ or 
$E_q$ and $E'_q$ are disjoint and have $s+1$ elements, but then
$z$ must be in both $E_q$ and $E_q'$ (use that $z\in T$). By Lemma~\ref{fbfgafgadf}, 
we have $R\Hom_{\cU_{p,q}}(F_{l',E'},F_{l,E})=0$.
%$F^\vee_{l,E}=\pi^*F^\vee_{l,E}$ and
%$R\pi_*{F^\vee_{l',E'}}=0$, for $\pi:\,\cM=U_{p,q}\to\M_{p,q}$ the universal $\bP^1$-bundle.
%Hence,
%$$R\Hom_{\cU_{p,q}}(F^\vee_{l,E},F^\vee_{l',E'})=R\Hom_{\cU_{p,q}}(\pi^*F^\vee_{l,E},F^\vee_{l',E'})=$$
%$$=R\Hom_{\M_{p,q}}(F^\vee_{l,E},R\pi_*F^\vee_{l',E'})=0.$$

Suppose now that $E_q=E_q'$. If $e_q<q+1$, let $z$ be any light index not in~$E_q$, while if $e_q=q+1$, 
let $z$ be any light index. Then Corollary ~\ref{aksjrgnkjdg} applies:
$R\Hom_{\M_{p,q+1}}(F_{l',E'},F_{l,E})\cong R\Hom_{\cU_{p,q}}(F_{l',E'},F_{l,E})$,
since the only exception for $E_q=E'_q$ is when $e_q=s+1$ and $E_q$ contains~$z$. 
The latter group is equal to $0$ or $\CC$ when $(l,E)=(l',E')$, by Lemma~\ref{fbfgafgadf}.

Consider now the case $s=-1$. In this case $E:=E_p$, $E':=E'_p$ are subsets of $P$. 
We first prove the statement for group $1A$. 
Note that if $(l,E)$ is in group $1A$ on $\M_{p}$, it is also in group $1A$ on $\M_{p,1}$, and in the collection on $\M_{p+1}$. 
Consider the set-up in Section \ref{induction2}: let $\al: W\ra\M_{p}$ be the universal family and let
$f:W\ra\M_{p,1}\cong\M_{p+1}$ be the birational morphism that contracts the boundary divisors $\de_{T\cup\{y\},T^c}$ by identifying 
the points in $T^c$.  Using that $F_{l,E}\cong \al^*F_{l,E}$ (Lemma \ref{zzfbzdfbdff}), 
$R\Hom_{\M_p}(F_{l',E'},F_{l,E})\cong \break R\Hom_{W}(\al^*F_{l',E'},\al^*F_{l,E})\cong R\Hom_{W}(F_{l',E'},F_{l,E})$,
which is isomorphic to $R\Hom_{\M_{p,1}}(F_{l',E'},F_{l,E})$ by Lemma~\ref{alpha game}. As $\M_{p+1}\cong\M_{p,1}$ 
and the two spaces have the same universal family, 
we have $R\Hom_{\M_{p,1}}(F_{l',E'},F_{l,E})=0$ if $e'\geq e$, $(l,E)\neq (l',E')$, and 
$\CC$ when $(l,E)=(l',E')$, by Theorem \ref{symm}.

We now prove the statement for group $1B$ when $s=-1$. Recall that  
$z\in P$, $\M_{p,-1}=\M_{p-1}$ and $\pi:\cU_{p,-1}=\cU=\M_{p-1,1}\ra\M_{p,-1}$ is the universal family. 
Note,  the previous proof works only if
none of $(l,E)$, $(l',E')$ satisfy $l+(p-e)=r-1$. We therefore proceed with a different proof for group $1B$.
 As in the case $q>0$, by Corollary  ~\ref{aksjrgnkjdg}, if $z\notin E$ we have 
$R\Hom_{\M_p}(F_{l',E'},F_{l,E})\cong R\Hom_{\M_{p-1,1}}(F_{l',E'},F_{l,E})$,
since the only exceptions for group $1B$ happen when $z\in E$. 
If $E'\setminus E\neq\emptyset$, we pick $z\in E'\setminus E$. Since $F_{l,E}\cong\pi^*F_{l,E}$ and 
$R\pi_*(F^\vee_{l',E'})=0$  by Theorem \ref{stackofbundles}, we have 
$R\Hom_{\M_{p-1,1}}(F^\vee_{l,E},F^\vee_{l',E'})\cong R\Hom_{\M_{p-1,1}}(\pi^*F^\vee_{l,E},F^\vee_{l',E'})\cong 
R\Hom_{\M_{p-1}}(F^\vee_{l,E},R\pi_*(F^\vee_{l',E'}))=0.$
If $E'\subseteq E$, since $e'\geq e$, we must have $E'=E$. If $E\neq N=P$, 
let $z\in N\setminus E$. Since  $F_{l,E}\cong\pi^*F_{l,E}$,  $F_{l',E'}\cong\pi^*F_{l',E'}$  we have 
$R\Hom_{\M_{p-1,1}}(F^\vee_{l,E},F^\vee_{l',E'})\cong R\Hom_{\M_{p-1}}(F^\vee_{l,E},F^\vee_{l',E'})$.
As $(l,E)$ is in group $1B$, we have 
$l+\min\{e+1,p-e\}\leq r-1$,
i.e., $l+\min\{e,(p-1)-e\}\leq r-2$, which is the range of pairs $(l,E)$ in Theorem \ref{symm}
with $E\subseteq P\setminus\{z\}$. The result follows in this case from Theorem \ref{symm}.

If $E=E'=N$, by Corollary  ~\ref{aksjrgnkjdg}, we still have 
$R\Hom_{\M_p}(F_{l',E'},F_{l,E})\cong\break
R\Hom_{\M_{p-1,1}}(F_{l',E'},F_{l,E})$,
unless $l=l'=r-1$ (and $r$ is odd since $l+e$ is even). Assume this is not the case. We have
$R\Hom_{\M_{p-1,1}}(F_{l',E'},F_{l,E})\cong R\Hom_{\M_{p-1,1}}(F^\vee_{l',\emptyset},F^\vee_{l,\emptyset})\cong
R\Hom_{\M_{p-1}}(F_{l,\emptyset},F_{l',\emptyset})$
(use that $F_{l,E}=F^\vee_{l,E^c}\otimes F_{0,N}$ on $\M_{p-1,1}$ by Corollary \ref{complement} and 
$F_{l,\emptyset}=\pi^*F_{l,\emptyset}$) 
and we are done by Theorem \ref{symm}. 
It remains to prove that the vector bundle $F_{r-1,N}$ ($r$ odd) is exceptional on $\M_p$. 
By Lemma \ref{property}, we have that $F_{r-1,N}\cong F_{r-1,\emptyset}\otimes F_{0,N}$. Hence, it suffices to prove that 
$F_{r-1,\emptyset}$ is exceptional on $\M_p$.
But this bundle is in group 1A, so we are done. 
%As in the case of group $1A$, we have $R\Hom_{\M_p}(F_{r-1,\emptyset}, F_{r-1,\emptyset})=R\Hom_{\M_{p,1}}(F_{r-1,\emptyset}, F_{r-1,\emptyset})$, which equals $\CC$ by Theorem  \ref{symm}.
\ep

%%%%%%%%%%%%%%%%%%%%%%%%%%%%%%%%%%%%%%%%%%%%%%%%%%%%%%%%%%%%%%%%%%%%%%%%
%%%%%%%%%%%%%%%%%%%%%%%%%%%%%%%%%%%%%%%%%%%%%%%%%%%%%%%%%%%%%%%%%%%%%%%%
%%%%%%%%%%%%%%%%%%%%%%%%%%%%%%%%%%%%%%%%%%%%%%%%%%%%%%%%%%%%%%%%%%%%%%%%
%%%%%%%%%%%%%%%%%%%%%%%%%%%%%%%%%%%%%%%%%%%%%%%%%%%%%%%%%%%%%%%%%%%%%%%%
%%%%%%%%%%%%%%%%%%%%%%%%%%%%%%%%%%%%%%%%%%%%%%%%%%%%%%%%%%%%%%%%%%%%%%%%

%%%%%%%%%%%%%%%%%%%%%%%%%%%%%%%%%%%%%%%%%%%%%%%%%%%%%%%%%%%%%%%%%%%%%%%%%%%%%%%%%%%%%%%

\section{Equivariant exceptional collection on $\M_{2r,2s+1}$} 
\label{asjbqlkwjbfkjwb}\label{exceptional p,q section}

The goal of this section is to complete the proof 
of semi-orthogonality of objects in 
the exceptional collection on $\M_{p,q}$ of
Theorem \ref{p,q case}. This collection contains objects of two types:

\begin{itemize}
    \item The vector bundles $F_{l,E}$. Semi-othogonality of these bundles
    was proved in Theorem~\ref{r+s odd}.
    \item The torsion sheaves  $\cT_{l,E}$. We prove semi-orthogonality of
    pairs of objects involving $\cT_{l,E}$ in Theorem~\ref{torsion pairs} by reducing it to a windows calculation on subvarieties $Z_R\subseteq \M_{p,q}$ which support these  torsion sheaves, 
and their intersections. 
\end{itemize}

\begin{notn}
Throughout this section, let $|P|=p=2r$, $|Q|=q=2s+1$ ($r\geq2, s\geq0$). Let
$P=\{1,\ldots, p\}$, $R=\{1,\ldots, r\}$, $R'=P\setminus R$. 
Let $Z_R$ be the locus in $\M_{p,q}$ where points in $R$ come together:
$Z_R=\de_{12}\cap\de_{13}\cap\ldots\cap\de_{1r}\cong \M_{\{u\}\cup R'\cup Q}$,
where $u$ is the marking corresponding to the combined points in $R$. Note that $u$ can only coincide with at most $s$ light points 
and with none of the points in~$R'$.  In $D^b(\M_{p,q})$, we have a Koszul resolution
\begin{equation}\label{KoszulCCC}
\cO_{Z_R}\cong\Big[\ldots\to\Lambda^2\bigoplus_{i=2}^r\cO(-\delta_{1i})\to\bigoplus_{i=2}^r\cO(-\delta_{1i})\to\cO_{\M_{p,q}}\Big].
\end{equation}
Let $Y=Z_R\cap Z_{R'}\cong \M_{\{u,v\}\cup Q}$, where $u$ (resp., $v$) is the marking corresponding to $R$ (resp., $R'$). 
We have a Cartesian diagram of  embeddings
\begin{equation}\label{SDvSDgEG}
\begin{tikzcd}
	Y & {Z_{R'}} \\
	{Z_{R}} & {\M_{p,q}}
	\arrow["i", hook, from=2-1, to=2-2]
	\arrow["{i'}", hook', from=1-2, to=2-2]
	\arrow["{j'}", hook, from=1-1, to=1-2]
	\arrow["j", hook', from=1-1, to=2-1]
\end{tikzcd}
%$i: Z_R\hra \M_{p,q}$, $i': Z_{R'}\hra \M_{p,q}$, $j: Y\hra Z_R$, $j': Y\hra Z_{R'}$. 
\end{equation}
\end{notn}

For any two choices $R_1$ and $R_2$, the sheaves $\cT_{l,E_1}$ and $\cT_{l,E_2}$ are perpendicular because their supports $Z_{R_1}$ and $Z_{R_2}$ are disjoint, except possibly if $R_1=R_2$ or $R_1=R^c_2$. Therefore, up to rearranging, the only cases we need to consider are $R_1=R_2=R$ and $R_1=R$, $R_2=R'$, with $R=\{1,\ldots, r\}$ and $R'=\{r+1,\ldots, p\}$.  
%A torsion sheaf $\cT_{l,E}$ (see Notation~\ref{T}) has disjoint support from, and is therefore perpendicular to, any  sheaf $\cT_{l',E'}$ unless $E_p=E'_p$ or $E_p\cap E_p'=\emptyset$. 
The semi-orthogonality of objects in Theorem \ref{p,q case} follows now from Theorem~\ref{r+s odd} and the following theorem, which takes care of the torsion sheaves:
\begin{thm}\label{torsion pairs}
Assume $p=2r$, $q=2s+1$. In the notations of Theorem  \ref{p,q case}:

(1)  If $(l,E)$ is in group $1A$ (resp., $1B$) and $(l',E')$ is in group $2$, we have
$R\Hom_{\M_{p,q}}(F_{l,E},\cT_{l',E'})=0$ if $e_q\geq e'_q$.

(2) If $(l,E)$ is in group $2$ and  $(l',E')$ is in group $1A$ (resp., $1B$), we have
$R\Hom_{\M_{p,q}}(\cT_{l,E},F_{l',E'})=0$ if $e_q> e'_q$ or if $e_q=e'_q$, $E_q\neq E'_q$. 

(3) 
$R\Hom_{\M_{p,q}}(\cT_{l,E}, \cT_{l',E'})=0$ if 
$(l,E)$, $(l',E')$ are both in group $2$, $E_p=E'_p=R$, 
$e_q\geq e'_q$, $E_q\neq E'_q$ or  $E_q=E'_q$, $l<l'$.
Also, $R\Hom_{\M_{p,q}}(\cT_{l,E}, \cT_{l,E})=\CC$. 

(4) If $(l,E)$, $(l',E')$ are both in group $2$ and $E_p=R$, $E'_p=R'$, we have
$R\Hom_{\M_{p,q}}(\cT_{l,E}, \cT_{l',E'})=0$ if $e_q\geq e'_q$.
\end{thm}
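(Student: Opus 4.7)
The plan is to apply adjunction for the closed embeddings $i_R:Z_R\hookrightarrow\M_{p,q}$ (of codimension $r-1$, as the transverse intersection of the $r-1$ diagonals cutting out $Z_R$), $i_{R'}:Z_{R'}\hookrightarrow\M_{p,q}$, and $j:Y\hookrightarrow Z_R$, $j':Y\hookrightarrow Z_{R'}$; this transfers each of the four Hom computations onto the smaller Hassett spaces $Z_R\cong\M_{\{u\}\cup R'\cup Q}$ and $Y\cong\M_{\{u,v\}\cup Q}$. Both are (close to) GIT quotients, so the windows machinery of Theorem~\ref{Kirwan_special_case} and the numerical inequalities of Section~\ref{inequalities section} are available. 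Throughout the argument we use the standard formula $i_R^!(-)\simeq Li_R^*(-)\otimes\det N^*_{Z_R/\M_{p,q}}[1-r]$.

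For part~(1), the support of $\cT_{l',E'}$ forces $R=E'_p$, and adjunction yields
$$\RHom_{\M_{p,q}}(F_{l,E},\cT_{l',E'})=\RHom_{Z_R}\bigl(Li_R^*F_{l,E},\,\sigma_u^*N_{l',E'}\bigr).$$
The first task is to describe $Li_R^*F_{l,E}$ as a complex whose components are bundles of type $F_{l'',E''}$ on $Z_R$, arising from the coalescence of the $r$ heavy sections indexed by $R$ onto $\sigma_u$ (a Koszul-type resolution analogous to those of Section~\ref{fullness odd p section}). Orthogonality against the line bundle $\sigma_u^*N_{l',E'}$ under $e_q\ge e'_q$ is then established by a windows argument on $Z_R$: the weights at the $\PGL_2$-destabilizing strata are read off from $(l',E')$ via Lemma~\ref{dvdvDV}, and the score inequalities of Section~\ref{inequalities section} force the pairing to vanish. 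For part~(2), the shift and twist by $\det N^*_{Z_R/\M_{p,q}}[1-r]$ appearing in $i_R^!$ tighten the parity/weight count by one, which is precisely why the hypothesis strengthens from $e_q\ge e'_q$ to $e_q>e'_q$.

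For part~(3), both sheaves live on the same $Z_R$ with $E_p=E'_p=R$, and adjunction together with the self-intersection formula gives
$$\RHom(\cT_{l,E},\cT_{l',E'})=\RHom_{Z_R}\bigl(\sigma_u^*N_{l,E},\,Li_R^*i_{R*}(\sigma_u^*N_{l',E'})\otimes\det N^*_{Z_R/\M_{p,q}}[1-r]\bigr),$$
where $Li_R^*i_{R*}$ expands by a Koszul complex built from $N^*_{Z_R/\M_{p,q}}$. The computation thus reduces to Homs between line bundles on $Z_R$, for which the inductive exceptional collection on the smaller space (or direct windows) produces the required vanishing in all ordered cases. The self-Hom $\RHom(\cT_{l,E},\cT_{l,E})=\CC$ will follow from $H^*(Z_R,\cO_{Z_R})=\CC$ together with the rank-one contribution of the top Koszul term.

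For part~(4), the sheaves are supported on distinct transversal loci $Z_R$ and $Z_{R'}$ with $R\cap R'=\emptyset$, so a double application of adjunction (through $i_R$ and then $j$, or $i_{R'}$ and then $j'$) reduces the $\RHom$ to an $\RHom$ between explicit line bundles on $Y\cong\M_{\{u,v\}\cup Q}$, a Hassett space with two almost-heavy points and $q$ light points. This space is close enough to the Losev--Manin space $\LM_q$ of \cite{CT_part_I} that its exceptional collection is known, and the two line bundles sit in orthogonal blocks under the hypothesis $e_q\ge e'_q$. The main obstacle across all four parts will be the precise determination of $Li_R^*F_{l,E}$ and the verification that the weights of its graded pieces lie inside the acyclic windows on $Z_R$; once this combinatorial bookkeeping is in place, the inequalities of Section~\ref{inequalities section} close out the four statements uniformly.
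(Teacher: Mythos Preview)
Your reduction via adjunction to computations on $Z_R$ and $Y$ is correct and is exactly what the paper does (this is Lemma~\ref{sDGSGASG}). However, two points deserve correction.

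First, you overcomplicate $Li_R^*F_{l,E}$. Since $F_{l,E}$ is locally free, $Li_R^*F_{l,E}=i_R^*F_{l,E}$ is just the honest restriction, and because $F_{l,E}\simeq_{\SL_2}\cO(E)\otimes V_l$ on the prequotient $(\PP^1)^{p+q}$, restricting to the diagonal $\Delta_R$ gives directly $\cO(|E_p\cap R|,\,E_p\cap R',\,E_q)\otimes V_l$ on $X=\PP^1_u\times(\PP^1)^r\times(\PP^1)^q$. No Koszul resolution is needed for this step; the Koszul resolution of $Z_R$ enters only in parts~(3) and~(4), exactly as you indicate.

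Second, and more seriously, you are missing the key technical device that makes the windows verification go through. You correctly identify the weight check as ``the main obstacle,'' but the naive check \emph{fails} on the unstable strata $K_{J,I}$ of $X$ where the marking $u$ coincides with a nonempty set $J\subseteq R'$ (and similarly on the strata $K'''_I$ of $X''$ for part~(4)). The paper's resolution is the \emph{devil's trick} (Remark~\ref{devil}): the line bundle $\DD=\cO(r,R',0)$ on $X$ descends to the trivial line bundle on $Z_R$ (because $\psi_u+\psi_j=0$ on $Z_R$ for $j\in R'$, as $\delta_{uj}=\emptyset$), yet has strictly positive weight $2|J|$ on $K_{J,I}$ and weight zero on all other strata. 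Hence tensoring with $\DD^N$ for $N\gg0$ leaves the cohomology on $Z_R$ unchanged while forcing the Teleman weight condition on $K_{J,I}$. After this twist, only the strata $K_I$ and $L_I$ need to be checked by hand, and this is where the inequalities of Section~\ref{inequalities section} (specifically Cor.~\ref{dfbdfbdfb} and Lemma~\ref{another2}) are used. The analogous trick on $Y$ uses $\DD=\cO(1,1,0)$. Without this device, your proposed windows argument does not close, and neither the Losev--Manin collection on $Y$ nor the induced collection on $Z_R$ will substitute for it, since the issue is acyclicity of specific bundles, not membership in a known collection.
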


We start by computing some tautological classes of $Z_R$.

\begin{lemma}\label{psiX} We have the following
identities:
\bi
\item[(i) ] In $\Pic(Z_R)$: ${\psi_j}_{|Z_R}$ equals $\psi_u$ if $j\in R$, $-\psi_u$ if $j\in R'$, and $-\psi_u-2\de_{ju}$
if $j\in Q$. 
\item[(ii) ] In $\Pic(Z_R)$: ~${\de_{ij}}_{|Z_R}$ equals $-\psi_u$ if $i, j\in R$, $\psi_u$ if $i,j\in R'$, $0$ if $i\in R$, $j\in R'$, and $\de_{ij}$ if $\{i,j\}\cap Q\neq\emptyset$ (where if $j\in R$, $i\in Q$, we identify $\de_{ij}=\de_{iu}$). 
\item[(iii) ] In $\Pic(Y)$ we have that $\psi_u=-\psi_v$.
\item[(iv) ] In $\Pic(Z_R)$ we have that $K_{|Z_R}=K_{Z_R}+(r-1)\psi_u$,
where $K$ %(resp., $K_{Z_R}$) 
is the canonical class of $\M_{p,q}$. % (resp., $Z_R$). 
\item[(v) ]  If $E_p=R$ then 
$$\cT_{l,E}={i_R}_*\si_u^*\bigl(\omega_{\pi_R}^{\frac{e-l}{2}}(E)\bigr)\cong$$
$$\cong{i_R}_*\big(\frac{e_q-r-l}{2}\psi_u+\sum_{j\in E_q}\de_{ju}\big)
={i_R}_*\big(-\frac{r+l}{2}\psi_u-\frac{1}{2}\sum_{j\in E_q}\psi_j\big).$$
\ei
\end{lemma}

\bp
It follows from the definition that the restriction of $\psi_j$ to $Z_R$ is the class $\psi_j$ on the corresponding Hassett space for all $j$. Note that the universal family over $Z_R$ is  a 
$\PP^1$-bundle. It follows from \cite[Lemma 2.1]{CT_part_Ib} that $\psi_i+\psi_j\sim-2\de_{ij}$. Since $\de_{uj}=\emptyset$ if $j\in R'$,  parts (i), (ii) follow. 
If $j\in R$, we have by (i) that ${\psi_j}_{|Z_R}=\psi_u$, hence, ${\psi_j}_{|Y}=\psi_u$, and similarly, ${\psi_j}_{|Z_{R'}}=-\psi_v$, ${\psi_j}_{|Y}=-\psi_v$, which implies part (iii). Part (iv) follows from adjunction since 
$c_1(N_{Z_R|\M_{p,q}})\sim\sum_{j=2}^r(\de_{1j})_{|Z_R}\sim-(r-1)\psi_u$.
Part (v) follows from definition of $\cT_{l,E}$ (Notation~\ref{T}), previous parts 
and the fact that $\si_u^*\si_u=-\psi_u$. 
\ep

We reduce Theorem~\ref{torsion pairs} to a calculation of $R\Gamma$  on $Z_R$ or $Y$:

\begin{lemma}\label{sDGSGASG}
Theorem~\ref{torsion pairs} follows from (1)--(4) below, where
we abuse notation and denote by $\cT_{l,E}$ both a line bundle on $Z_{E_p}$ and its
pushforward to $\M_{p,q}$.

(1) $R\Ga\big(Z_R, ({F_{l,E}}_{|Z_R})^\vee\otimes\cT_{l',E'})=0$ if
$(l,E)$ is in group $1A$ (resp., $1B$),\break 
$(l',E')$ is in group $2$, $e_q\geq e'_q$, and $R=E'_p$.

(2) $R\Ga\big(Z_R, \cT_{l,E}^\vee\otimes {F_{l',E'}}_{|Z_R}\otimes c_1(N_{Z_R|\M_{p,q}})\big)=0$ 
if $(l,E)$ is in group $2$,\break 
$(l',E')$ is in group $1A$ (resp., $1B$) if $R=E_p$ and either $e_q> e'_q$ or $e_q=e'_q$, $E_q\neq E'_q$. 

(3) Suppose $(l,E)$, $(l',E')$ are both in group $2$,  $E_p=E'_p=R$, and $J\subseteq R\setminus\{1\}$.
Then 
$R\Gamma\big(Z_R, \cT_{l,E}^\vee\otimes\cT_{l',E'}\otimes(\sum_{j\in J}\de_{1j}\big))=0$
if  $e_q\geq e'_q$,  $E_q\neq E'_q$, or if $E_q=E'_q$, $l<l'$. Furthermore, 
$R\Gamma\big(Z_R, \cT_{l,E}^\vee\otimes\cT_{l,E}\otimes(\sum_{j\in J}\de_{1j}\big))=0$
if  $J\neq\emptyset$. 

(4) $R\Gamma\left(Y, \left(\frac{l+l'}{2}+1\right)\psi_u+\frac{1}{2}\sum_{j\in E_q}\psi_j-\frac{1}{2}\sum_{j\in E'_q}\psi_j\right)=0$ if $(l,E)$, $(l',E')$ are both in group $2$, $E_p=R$, $E'_p=R'$, and $e_q\geq e'_q$.
\end{lemma}

\bp 
(1) 
$R\Hom_{\M_{p,q}}(F_{l,E},\cT_{l',E'})\cong %R\Hom_{Z_R}({F_{l,E}}_{|Z_R},\cT_{l',E'})=
R\Ga\big(Z_R, ({F_{l,E}}_{|Z_R})^\vee\otimes\cT_{l',E'})$.

(2) Using that  $i^{!}F_{l',E'}\cong i^*F_{l',E'}\otimes c_1(N_{Z_R|\M_{p,q}})$, 
%where $i: Z_R\hra \M_{p,q}$ is the  embedding,
we have (up to a shift) that 
$R\Hom_{\M_{p,q}}(\cT_{l,E}, F_{l',E'})\cong R\Hom_{Z_R}(\cT_{l,E}, {F_{l',E'}}_{|Z_R}\otimes c_1(N_{Z_R|\M_{p,q}}))\cong\break
R\Ga\big(Z_R, \cT_{l,E}^\vee\otimes {F_{l',E'}}_{|Z_R}\otimes c_1(N_{Z_R|\M_{p,q}})\big)$
(\cite[Corollary  3.38]{Huybrechts}). 

(3)
Follows from tensoring the Koszul resolution \eqref{KoszulCCC} of $Z_R$ with a line bundle $L$ on $\M_{p,q}$ 
such that ${i}_*(L_{|Z_R})\cong \cT_{l,E}$ and applying $R\Hom(-,\cT_{l',E'})$. 

(4) %Consider  canonical embeddings $i: Z_R\hra \M_{p,q}$, $i': Z_{R'}\hra \M_{p,q}$, $j: Y\hra Z_R$, $j': Y\hra Z_{R'}$. 
For any line bundles $L$ on $Z_R$ and $L'$ on $Z_{R'}$, by cohomology and base change applied to the Tor-independent diagram \eqref{SDvSDgEG}, 
$R\Hom(i_*L, i'_*L')=R\Hom(L{i'}^*i_*L, L')\cong
R\Hom(Rj'_*{j}^*L, L')\cong
R\Hom_Y({j}^*L, j'^{!}L')$. 
It follows that  
$R\Hom_{\M_{p,q}}(\cT_{l,E}, \cT_{l',E'})\cong
R\Hom_Y({\cT_{l,E}}_{|Y}, 
{\cT_{l',E'}}_{|Y}\otimes c_1(N_{Y|Z_{R'}}))\cong\break
R\Gamma\big(Y, ({\cT_{l,E}}_{|Y})^\vee\otimes  {\cT_{l',E'}}_{|Y}\otimes (\sum\limits_{j\in R\setminus\{1\}}\de_{1j})_{|Y}\big)$.
By Lemma \ref{psiX}, this complex is isomorphic to 
$R\Gamma\big(Y, (\frac{l+l'}{2}+1)\psi_u+
\frac{1}{2}\sum\limits_{j\in E_q}\psi_j-\frac{1}{2}\sum\limits_{j\in E'_q}\psi_j)$.
\ep

We prove the vanishing in Lemma~\ref{sDGSGASG} by windows calculations on $Z_R$ and~$Y$.

\begin{notn}
$Z_R$ is isomorphic to a GIT quotient of  $X=%(\PP^1)^{r+q+1}=
\PP_u^1\times(\PP^1)^r\times (\PP^1)^q$ by $\PGL_2$, 
corresponding to the partition $\{u\}\sqcup R'\sqcup Q$ of the markings on 
$Z_R=\M_{\{u\}\cup R'\cup Q}$. 
%Then $Z_R$ is a GIT quotient of $X$ by $\PGL_2$. 
For $a\in\ZZ$, $A\subseteq R'$, $B\subseteq Q$, consider on $X$ the line bundle 
$\cO(a, A, B):=pr_1^*\cO(a)\otimes pr_{R'}^*\cO(A)\otimes pr_{Q}^*\cO(B)$
where we denote $\cO(A)=\cO(i_1,\ldots, i_r)$ with $i_j=1$ if $j\in A$ and $0$ otherwise, 
and likewise for
$\cO(B)$.
We  denote  by $\cO(-A)$ the dual of $\cO(A)$. 
We have the following correspondence between vector bundles on $Z_R$ and $\PGL_2$-equivariant vector bundles on $X=(\PP^1)^{r+q+1}$:
${F_{l,E}}_{|Z_R}\cong\cO(|E_p\cap R|, E_p\cap R', E_q)\otimes V_l$ (by Lemma~\ref{akjsdhfkjsgf}),
$\cT_{l,E}\cong\cO(r+l,0, E_q)$ if $E_p=R$,
$\omega_{Z_R}\cong\cO(-2,-2,\ldots, -2)$. Recall from Section \ref{fullness odd p section} 
(see Remark \ref{dictionary}) 
that $\psi_i\cong\cO(0,\ldots,0,-2,0,\ldots,0)$ (with $-2$ in position $i$) and 
$\de_{ij}\cong\cO(0,\ldots0,1,0,\ldots,0,1,0,\ldots, 0)$ (with $1$ in positions $i$ and $j$). 
Likewise,  $Y$ is isomorphic to the GIT quotient of $X'=(\PP^1)^{q+2}=\PP^1_u\times\PP^1_v\times (\PP^1)^q$, 
corresponding to the partition $\{u\}\sqcup\{v\}\sqcup Q$ of the markings on $\M_{\{u,v\}\cup Q}$.
\end{notn}

Instead of Theorem  \ref{Kirwan} we are going to use a related Theorem~\ref{Teleman}
proved for vector bundles by Teleman~\cite{Teleman}.
The calculation of the Kempf-Ness stratification and the %corresponding 
weights is as in Section \ref{windows}.
However, from now on, we will follow a convention of \cite{Teleman} and take an opposite weight to \eqref{DHLconvention}:
\begin{equation}\label{TelemanConvention}
\wt_{\la}\cO_X(a_1,\ldots, a_k)_{|z_K}=\sum_{i\in K}a_i-\sum_{i\in K^c}a_i.
\end{equation}
We find this convention more natural since
the ample polarization of the GIT quotient has positive weights on the unstable locus.
Then we have

\begin{thm}\cite[Th.~3.29]{DHL}\label{Teleman}
%In the notations of Theorem  \ref{Kirwan}, %
Choose a Kempf--Ness stratification of the unstable locus $X^{us}$ with data $Z_i$, $S_i$, $\la_i$. Let 
$\eta_i=\wt_{\la_i}\det\big(N^\vee_{S_i|X}\big)_{|Z_i}$. Then for any object $F\in D^b[X/G]$ such that 
\begin{equation}\label{asrgasrgasrh}
\hbox{\rm 
$\cH^*(\si_i^*F)$ has weights $>-\eta_i$}
\end{equation}
for every Kempf--Ness stratum $S_i$, 
%$\cH^*(\si_i^*F)$ has weights $<\eta_i$.
we have 
$R\Ga_{[X/G]}(F)\cong R\Ga_{[X^{ss}/G]}(F)$.
\end{thm}

Recall from the proof of Theorem 
\ref{Kirwan_special_case} that  when 
$X=(\PP^1)^n$, $G=\PGL_2$, a Kempf--Ness stratification 
for a polarization $\cO(a_1,\ldots, a_n)$ is a union, over all $K\subseteq N$ such that $\sum\limits_{i\in K} a_i> 1$, of loci $S_K\subset \De_K$ in $(\PP^1)^n$, where $\Delta_K$ is the diagonal consisting of points $(p_1,\ldots, p_n)$ such that for all $i\in K$ the points $p_i$ are equal, and $S_K$ is the complement of all the smaller diagonals.  The weights 
have been computed in \eqref{eta weights}:
$$\eta_K:=\wt_{\la}{\det\big(N^\vee_{\De_K|(\PP^1)^n}\big)_{|z_K}}=2(|K|-1).$$ 

\begin{rmk}(The devil's trick)\label{devil}
The line bundle 
$\DD:=\cO(r,R',0)$ on $X$ %=(\PP^1)^{r+q+1}$
is trivial on $Z_R$, since for any $j\in R'$, on $Z_{R}=\M_{\{u\}\cup R'\cup Q}$ 
we have $\de_{uj}=\emptyset$, so the line bundle $\cO(1,0,\ldots, 0,1,0,\ldots,0)$ (with $1$ in the positions of $u$ and $j$) descends to the trivial line bundle on $Y$. 
Likewise, the line bundle $\DD:=\cO(1,1,0)$ on $X'$ descends to the trivial line bundle on $Y$.
Instead of proving vanishing in Lemma~\ref{sDGSGASG} directly, we will prove vanishing on $X$ (resp., $X'$) after tensoring 
with a high multiple of $\DD$ since it's this tensor product that will satisfy conditions of Theorem~\ref{Teleman}.
This is a useful observation 
for any GIT quotient $X/\!\!/G$ such that the unstable locus contains a divisorial
component.
\end{rmk}

\bp[Proof of Theorem \ref{torsion pairs}] We prove the vanishings in Lemma~\ref{sDGSGASG}. 
First we prove the $\PGL_2$-invariant vanishing  on $X=(\PP^1)^{1+r+q}$ (cases (1), (2) and (3))
and on $X'=(\PP^1)^{2+q}$ (case (4)) after tensoring a vector bundle with the devil line bundle $\DD^N$, $N\gg0$.
Later on we check the weight condition \eqref{asrgasrgasrh}.

For (1), assuming \eqref{asrgasrgasrh}, 
$R\Ga\big(Z_R, ({F_{l,E}}_{|Z_R})^\vee\otimes\cT_{l',E'}\otimes\DD^N)\cong
R\Ga\big(X,\cO(r+l'-|E_p\cap R|+Nr,-E_p\cap R'+NR', E'_q-E_q)\otimes V_l\big)^{\PGL_2}$,
which is clearly $0$ if $E_q\nsubseteq E'_q$. 
Since here we assume $e_q\geq e'_q$, we have that $E_q\subseteq E'_q$ if and only if $E_q=E'_q$. 
Assume $E_q=E'_q$. 
In this case, the $\SL_2$-module is equal to
$V_{r+l'-|E_p\cap R|+Nr}\otimes V_{N-y_1}\otimes\ldots\otimes V_{N-y_r}\otimes V_l$,
where $y_i=1$ if the corresponding index in $E_p\cap R'$ and $0$ otherwise.
We claim that the  $\PGL_2$-invariant part is~$0$. 
Since $(l,E)$ is in group $1A$ or $1B$, we have 
$l+|E_p\cap R|-|E_p\cap R'|<r$  by Corollary ~\ref{dfbdfbdfb}.
It follows that $r+l'-|E_p\cap R|+Nr>Nr-|E_p\cap R'|+l$, which implies the vanishing
by the Clebsch-Gordan formula (Lemma \ref{CG}).

For (2), assuming \eqref{asrgasrgasrh}, 
$R\Ga\big(Z_R, \cT_{l,E}^\vee\otimes {F_{l',E'}}_{|Z_R}\otimes c_1(N_{Z_R|\M_{p,q}})\otimes\DD^N\big)\cong
R\Ga\big(X,\cO(r-l+|E'_p\cap R|-2+Nr, E'_p\cap R'+NR', E'_q-E_q)\otimes V_{l'}\big)^{\PGL_2}$
(use $c_1(N_{Z_R|\M_{p,q}})=\cO(2r-2,0,0)$). This is $0$ since $E_q\nsubseteq E'_q$.  

For (3), assuming \eqref{asrgasrgasrh}, 
$R\Gamma\big(Z_R, \cT_{l,E}^\vee\otimes\cT_{l',E'}\otimes \sum_{j\in J}\de_{1j}\otimes\DD^N\big)$ is isomorphic to
$R\Gamma\big(X,\cO(2|J|-l+l'+Nr, +NR', E'_q-E_q)\big)^{\PGL_2}$
for all $J\subseteq R\setminus\{1\}$. This is~$0$ if $e_q\geq e'_q$, $E_q\nsubseteq E'_q$,
while if $E_q=E'_q$ the  $\PGL_2$-invariant part is $0$ when $l<l'$ or when $l=l'$, $|J|>0$ by the Clebsch-Gordan formula 
(Lemma \ref{CG}), since in these cases we have 
$2|J|-l+l'+Nr>Nr$.

For (4), assuming \eqref{asrgasrgasrh},
the question is equivalent to the vanishing for $N\gg0$ of the 
$\PGL_2$-invariant part of
$R\Gamma\big(\cO(-l-l'-2+N,N,E'_q-E_q)\big)$.
This is clear if $e_q\geq e'_q$, $E_q\nsubseteq E'_q$. If $E_q=E'_q$ then
this follows from the Clebsch-Gordan formula
(Lemma \ref{CG}),
since
$N>N-l-l'-2$.

We now check that for each stratum, each of the cases (1)-(4) fall under the assumption 
\eqref{asrgasrgasrh} on weights of Theorem  \ref{Teleman}.
Up to symmetry, the unstable loci in $X$ in cases (1)-(3) have the following form: 

(The locus $K_I$), for $I\subseteq Q$, $|I|\geq s+1$, where $u$ and the indices in $I$ come together. In this case,  $\eta=2|I|$.

(The locus $K_{J,I}$), for $J\subseteq R'$, $I\subseteq Q$, $J\neq\emptyset$, $|I|\geq 0$, where $u$ and the indices in $J$ and $I$ come together.
In this case, $\eta=2|I|+2|J|$.

(The locus $L_I$), for $I\subseteq Q$, $|I|\geq s+1$, where the indices in $R'$ and $I$ come together. In this case, $\eta=2|I|+2r-2$. 

The devil line bundle $\cO(r,R',0)$ has weight 
$r+|J|-|R'\setminus J|=2|J|>0$
on $K_{J,I}$,
while its weight for the other strata is~$0$.
Therefore, the condition \eqref{asrgasrgasrh} for the stratum $K_{J,I}$ can be achieved by tensoring with a high 
enough multiple of this line bundle. 
We only need to consider the remaining strata.

{\bf Strata $K_I$.} Assume $|I|\geq s+1$.  
In Case (1) we need to verify that the weights of 
$\cO(r+l'-|E_p\cap R|,-E_p\cap R', E'_q-E_q)\otimes V_l$ are $>-2|I|$. 
Since the weights of $V_l$ are between $-l$ and $l$, it suffices to prove that 
$r+l'-|E_p\cap R|+|E_p\cap R'|+|E'_q\cap I|-|E_q\cap I|-|E'_q\cap I^c|+|E_q\cap I^c|-l>-2|I|$.
By (\ref{another}) for the pair $(l,E_p)$, it suffices to show  
$l'+|E'_q\cap I|-|E_q\cap I|-|E'_q\cap I^c|+|E_q\cap I^c|\geq -2|I|$.
Since the left hand side equals 
$l'+(e_q-e'_q)-2|E_q\cap I|+2|E'_q\cap I|$ 
and we assume $e_q\geq e'_q$, the result follows from $-2|E_q\cap I|\geq -2|I|$. 

In Case (2), we need to check that 
$\cO(r-l+|E'_p\cap R|-2, E'_p\cap R', E'_q-E_q)\otimes V_{l'}$ has  weights
 $>-2|I|$. It suffices to prove that 
$r-l+|E'_p\cap R|-2-|E'_p\cap R'|+|E'_q\cap I|-|E_q\cap I|-|E'_q\cap I^c|+|E_q\cap I^c|-l'>-2|I|$.
Using (\ref{another}) for the pair $(l',E'_p)$, 
it suffices to prove that 
$-l+|E'_q\cap I|-|E_q\cap I|-|E'_q\cap I^c|+|E_q\cap I^c|\geq -2|I|+2$.
The left hand side is greater than
$-l-|E_q\cap I|+|E_q\cap I^c|-|I^c|$,
and this is $\geq  -2|I|+2$ by Lemma \ref{another2} applied to the pair $(l,E_q)$. 

In Case (3) we need to verify that the weight of 
$\cO(2|J|-l+l', 0, E'_q-E_q)$
is $>-2|I|$ for all $0\leq |J|\leq r-1$. Equivalently, we need to prove that 
$-l+l'+|E'_q\cap I|-|E_q\cap I|-|E'_q\cap I^c|+|E_q\cap I^c|>-2|I|$.
The left hand side is greater or equal than 
$-l-|E_q\cap I|+|E_q\cap I^c|-|I^c|$
and this is $> -2|I|$ by Lemma \ref{another2} applied to the pair $(l,E_q)$. 

%\smallskip

{\bf Strata $L_I$.} In Case (1), we need to show 
that 
$-r-l'+|E_p\cap R|-|E_p\cap R'|+|E'_q\cap I|-|E_q\cap I|-|E'_q\cap I^c|+|E_q\cap I^c|-l>-2|I|-2r+2$.
Using (\ref{another}) for the pair $(l,E_p)$, 
it suffices to prove that 
$-l'+|E'_q\cap I|-|E_q\cap I|-|E'_q\cap I^c|+
|E_q\cap I^c|\geq -2|I|+2$.
The left hand side is greater than
$-l'+|E'_q\cap I|-|E'_q\cap I^c|-|I|$,
hence, it suffices to show that 
$-l'+|E'_q\cap I|-|E'_q\cap I^c|\geq -|I|+2$,
but this follows from Lemma \ref{another2} applied to the pair $(l',E'_q)$.

In Case (2), we need 
$-(r-l+|E'_p\cap R|-2)+|E'_p\cap R'|+|E'_q\cap I|-|E_q\cap I|-|E'_q\cap I^c|+|E_q\cap I^c|-l'>-2|I|-2r+2$.
Using (\ref{another}) for the pair $(l',E'_p)$, 
it suffices to prove that 
$l+|E'_q\cap I|-|E_q\cap I|-|E'_q\cap I^c|+|E_q\cap I^c|\geq -2|I|$.
The left hand side is clearly greater than
$-|E_q\cap I|-|E'_q\cap I^c|\geq -|I|-|I^c|=-2s-1$,
and the inequality follows since $|I|\geq s+1$. 

In Case (3), we need to verify that for all $0\leq |J|\leq r-1$ we have 
$-2|J|+l-l'+|E'_q\cap I|-|E_q\cap I|-|E'_q\cap I^c|+|E_q\cap I^c|> -2|I|-2r+2$,
or equivalently,
$l-l'+|E'_q\cap I|-|E_q\cap I|-|E'_q\cap I^c|+|E_q\cap I^c|> -2|I|$.
The left hand side is clearly greater than
$-l'+|E'_q\cap I|-|E'_q\cap I^c|-|I|$,
which is $>-2|I|$ by Lemma  \ref{another2} applied to the pair $(l',E'_q)$ and $|I|\geq s+1$. 

%\smallskip

We now consider Case (4). Up to symmetry, the unstable loci in 
$X'=(\PP^1)^{q+2}=\PP^1_u\times\PP^1_v\times (\PP^1)^q$ have the following form: 

(The locus $K'_I$), for $I\subseteq Q$, $|I|\geq s+1$, where $u$ and the indices in $I$ come together. In this case, 
$\eta=2|I|$. 

(The locus $K''_{I}$), for $I\subseteq Q$, $|I|\geq s+1$, where $v$ and the indices in $I$ come together. In this case, 
$\eta=2|I|$. 

(The locus $K'''_I$), for $I\subseteq Q$, $|I|\geq0$, where $u$, $v$ and the indices in $I$ come together. In this case, 
$\eta=2|I|+2$.  

The devil line bundle $\cO(1,1,0)$ has weight $2>0$ on $K'''_I$
but its weight on other strata is $0$.
Therefore we only have to consider the strata $K'_I$, $K''_I$.  

For $K'_I$, we need to show that the weight of 
$\cO(-(l+l'+2),0, E'_q-E_q)$ is $>-2|I|$. Equivalently, 
$-l-l'-2+|E'_q\cap I|-|E_q\cap I|-|E'_q\cap I^c|+|E_q\cap I^c|>-2|I|$.
This follows from Lemma \ref{another2} applied to the pairs $(l,E_q)$ and $(l',E'_q)$. 
For~$K''_I$, we need 
$l+l'+2+|E'_q\cap I|-|E_q\cap I|-|E'_q\cap I^c|+|E_q\cap I^c|>-2|I|$,
which is clearly satisfied as the left hand side is greater than 
$-|E_q\cap I|-|E'_q\cap I^c|\geq -|I|-|I^c|>-2|I|$.
This completes the proof.
\ep

%%%%%%%%%%%%%%%%%%%%%%%%%%%%%%%%%%%%%%%%%%%%%%%%%%%%%%%%%%%%%%%%

\section{Equivariant exceptional collection on $\M_{2r,2s+2}$}\label{exceptional p,q+1}\label{exceptional p,q+1 section}

%We are going to apply Lemma~\ref{K_lemma} for $\cS=D^b(\M_{p,q+1})$, the usual duality functor ${}^\vee$ and  the s.o.d.'s~$\langle\cA,\cB\rangle$ of Notation~\ref{TT} and $\langle\cC,\cD\rangle$ defined below.

Here we will prove exceptionality of the collection of Theorem~\ref{asdvzsfvsfb} on $\M_{p,q+1}$
(see also Remark \ref{1B stuff}), for 
$p=2r\geq4$ and $q=2s+1\ge 1$, which contains objects of three types: 
\begin{itemize}[leftmargin=1.5em]
\item 
Torsion sheaves 
$\cO_\de(-a,-b)$ 
that generate the subcategory $\cA$ (see Notation~\ref{TT}).  
These sheaves form an exceptional collection 
(for example when arranged in order of decreasing $a+b$) 
by 
\cite[Lemma~4.9]{CT_part_Ib}. 
In~particular, the subcategories $\cA$ and $\cB={}^\perp\cA$ are admissible. 
We denote by $T_{\cB}$ the projection of an object $T$ in $D^b(\M_{p,q+1})$ to $\cB$. 
\item
Vector bundles $F_{l,E}$. They form an exceptional collection by Theorem~\ref{r+s odd}
and belong to the subcategory $\cB$ by 
Corollary \ref{F perp A}.
\item
Complexes $\cTT_{l,E}$ (Definition~\ref{sDGASRHADHA}). These complexes 
are projections of torsion sheaves $\cT_{l,E}$ (Notation~\ref{T})
to the subcategory $\cB$.
\end{itemize}
In Corollary~\ref{exceptionalityeveneven}, we will prove the remaining
semi-orthogonality of complexes $\cTT_{l,E}$ among themselves and with  $F_{l',E'}$.
%Throughout this section we assume %$p=2r\geq4$, $q=2s+1$,  $s\geq0$ (in the case of  $\M_p$ the collection of Theorem \ref{p,q+1 case} consists only of the bundles $\{F_{l,E}\}$, hence, we are done in that case). 
In Proposition~\ref{reduction}, we will reduce this calculation to a computation on a different Hassett space $\M_{R'}$.
The semi-orthogonality in $D^b(\M_{R'})$ will be checked in Theorem~\ref{pairs on M_R}.

%\begin{lemma}\label{afvargbaerh}
%The subcategory $\cA$ is admissible and $(S_p\times S_{q+1})$ equivariant.
%The subcategory  
%$\cB$ is an  $(S_p\times S_{q+1})$ equivariant non-commutative resolution of singularities
%of the GIT quotient $\X_{p,q+1}$ in the sense of \cite{KuLu}.
%\end{lemma}
%\bp This follows from \cite{KuznetsovExNC}, where arbitrary Segre cones were considered. Alternatively, it is easy to check that sheaves  that generate $\cA$ form an exceptional collection \cite[Lem.~4.19]{CT_part_Ib},  which implies admissibility.  \ep

\begin{lemdef}\label{lemdef}
Let $R\subseteq P$ be a subset of heavy indices, $|R|=r$,   $R'=P\setminus R$. 
There exists a Hassett space, denoted $\M_{R'}$, and a 
reduction morphism 
$$\be_{R'}: \M_{p,q+1}\ra \M_{R'}$$ that contracts   boundary divisors $\de_{T,T^c}\cong \bP^{r+s-1}\times\bP^{r+s-1}\subseteq \M_{p,q+1}$ with $T_p=R'$, via the projection onto the second factor and is an isomorphism elsewhere.
\end{lemdef}

\bp
For $\M_{p,q+1}$, we use weights
 $\frac{1}{r}-\epsilon_1+\epsilon_2$ for heavy points and 
 $\frac{2r}{2s+2}\epsilon_1+\epsilon_2$ for light points (Notation~\ref{precise weights}).
For $\M_{R'}$,  points in $R'$ have weight $\frac{1}{r}-\epsilon_1$,
points in $R$ have weight $\frac{1}{r}-\epsilon_1+\epsilon_2$, and light points have weight 
$\frac{2r}{2s+2}\epsilon_1$. It~easily follows that $\be_{R'}$ exists and has required properties.
\ep

\begin{notn}
Let $\tQ$ be the set of light indices, $|\tQ|=2s+2$.
For $E\subseteq P\cup \tQ$ and $l\geq0$, 
we  let $F_{l,E}$ be the vector bundle on $\M_{R'}$ defined in Section~\ref{extend section} 
(Definition~\ref{allF}).
For~$e_p=r$, let $\coT_{l,E}$  be the torsion sheaf on $\M_{R'}$ defined as in 
Notation \ref{T}.
(see also Lemma \ref{psiX}(v)). 
\end{notn}

\begin{lemma}\label{projections F}
Let $(l,E)$ be in groups $1A$ or $1B$. Then
\bi
\item[(1) ] $\big(L\be_{R'}^*F_{l,E}\big)_{\cB}\cong F_{l,E}$
except when 
either $E_p\subseteq R'$, $l+e_p=r-1$, $e_q=s+1$ (case $1A$)
or $R'\subseteq E_p$, $e_p=r+1+l$, $e_q=s+1$ (case $1B$), 
in which cases there is an exact triangle
\begin{equation}\label{sDVsdvsDB}
F_{l,E}\to \big(L\be_{R'}^*F_{l,E}\big)_{\cB}\ra Q_{\cB}\to,
\end{equation}
where $Q_{\cB}$ is the projection into $\cB$ of an object $Q\in D^b(\M_{p,q+1})$ which is 
generated by  the sheaves 
$$\cO_{\de_{T,T^c}}(u,-\frac{r+s}{2}) \ \text{with} \ T_p=R',\ T_q=E_q,\ 0\leq u<\frac{r+s}{2}.$$ 
\item[(2) ] 
$[R{\be_{R'}}_*F_{l,E}^\vee]^\vee\cong F_{l,E}$.
\ei
\end{lemma}

\bp
Part (1) follows from Proposition \ref{sifjkdfjkwdjk} 
and its proof, 
as well as Corollary \ref{sbvmsfbv}. 
Part (2) is a particular case of Corollary \ref{push beta},
since $m_{T,E,l}\leq (r+s)/2$ for all $T$ by (\ref{msnfbas}). 
\ep

\begin{prop}\label{reduction}
$R\Hom_{\M_{p,q+1}}(G',G)=R\Hom_{\M_{R'}}(\ov G',\ov G)$
in any of the cases:
\bi
\item[(1) ] $E_p=R$, $G=\cTT_{l,E}$, $G'=F_{l',E'}$, $\ov G=\coT_{l,E}$, $\ov G'=F_{l',E'}$;
\item[(2) ] $E'_p=R'$, $G=F_{l,E}$, $G'=\cTT_{l',E'}$, $\ov G=F_{l,E}$, $\ov G'=\coT_{l',E'}$.
\ei
In the remaining cases, $G=\cTT_{l,E}$, $G'=\cTT_{l',E'}$, $\ov G=\coT_{l,E}$, $\ov G'=\coT_{l',E'}$ and
\bi[resume]
\item[(3) ]  $E_p=R$, $E'_p=R$;
\item[(4) ]  $E_p=R$, $E'_p=R'$;
\ei
Here all pairs $(l,E)$ are in group $1A$ or $1B$ for vector bundles $F_{l,E}$ 
and in group $2A$ or $2B$ for torsion objects
$\cTT_{l,E}$ and $\coT_{l,E}$, and similar for $(l',E')$.
\end{prop}

\bp
In cases (1), (3) and (4), $G=\cTT_{l,E}$, which is isomorphic to $\big(L\be_{R'}^*\coT_{l,E}\big)_{\cB}$ by Proposition \ref{projections}(iii).
For each $G'$ in cases (1), (3) and (4), we also have $G'\in\cB$. 
By Lemma~\ref{AbstractPhi}, it follows that 
$$R\Hom_{\M_{p,q+1}}(G',G)\cong R\Hom_{\M_{p,q+1}}(G',\big(L\be_{R'}^*\coT_{l,E}\big)_{\cB})\cong$$
$$\cong R\Hom_{\M_{R'}}(\big(R{\be_{R'}}_*{G'}^\vee\big)^\vee,\coT_{l,E}).$$
%The identity $(({G'}^\vee)_{\cD})^\vee=L\be^*\ov G'$ is implied by $\R\be_*\big({G'}^\vee\big)={\ov G'}^\vee$, since for any object $T$, $T_{\cD}=L\be^*R\be_*T$. 
For each $G'$ in cases (1), (3) and (4),  $\big(R{\be_{R'}}_*{G'}^\vee\big)^\vee\cong\ov G'$ by Lemma \ref{projections F}(2) for $G'=F_{l',E'}$
and Proposition~\ref{projections}((ii) and (iv)) for  $G'=\coT_{l',E'}$.  

In case (2), we distinguish two cases. Assume first that $(l,E)$ is not one of the exceptions in Lemma \ref{projections F}(1).
%, i.e., we don't allow  $E_p\subseteq R'$, $e_p+l=r-1$, $e_q=s+1$ (case $1A$), or $R'\subseteq E_p$, $e_p=r+1+l$, $e_q=s+1$ (case $1B$).
Then, $(L\beta_{R'}^*F_{l,E})_\cB\cong F_{l,E}$ by this lemma. Lemma~\ref{AbstractPhi} gives
$R\Hom_{\M_{p,q+1}}(G',F_{l,E})\cong R\Hom_{\M_{R'}}(\big({R\be_{R'}}_*{G'}^\vee\big)^\vee,F_{l,E})$.
As by Proposition  \ref{projections}(ii), $\big({R\be_{R'}}_*{G'}^\vee\big)^\vee\cong{\ov G'}$ for 
$G'=\cTT_{l',E'}$, the result follows. 
 
Assume now that $(l,E)$ is one of the exceptions in Lemma \ref{projections F}(1). 
Arguing as above, 
we have 
$R\Hom_{\M_{p,q+1}}(\cTT_{l',E'},F_{l,E})\cong R\Hom_{\M_{R'}}(\coT_{l',E'},F_{l,E})$ 
by  \eqref{sDVsdvsDB} and
provided that we 
can prove the following claim:
\begin{claim}\label{qwhjegvfjgsEFV}
$R\Hom_{\M_{p,q+1}}\big(\cTT_{l',E'},\cO_{\de_{T,T^c}}(u,-\frac{r+s}{2})\big)=0$
if $(l',E')$ is in group $2A$ or $2B$, $E'_p=R'$,   
$0\leq u<\frac{r+s}{2}$ and boundary $\de_{T,T^c}$ satisfies $T_p=R'$.
\end{claim}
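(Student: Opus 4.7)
My plan is to exploit the defining triangle of the projection $\cTT_{l',E'}=(\cT_{l',E'})_\cB$ coming from the s.o.d.~$\langle\cA,\cB\rangle$:
\begin{equation*}
A \longrightarrow \cT_{l',E'} \longrightarrow \cTT_{l',E'} \longrightarrow A[1], \qquad A := (\cT_{l',E'})_\cA \in \cA.
\end{equation*}
Setting $G := \cO_{\delta_{T,T^c}}(u, -\tfrac{r+s}{2})$ and applying $\R\Hom_{\M_{p,q+1}}(-, G)$ to this triangle, I get a long exact sequence that reduces the claim to (a) $\R\Hom(\cT_{l',E'}, G) = 0$ and (b) $\R\Hom(A, G) = 0$.

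For (a), the key observation is that the hypotheses $E'_p = R'$ and $T_p = R'$ force both $\cT_{l',E'}$ and $\delta_{T,T^c}$ to sit inside $Z_{R'}$: the first by Notation~\ref{T}, and the second because $T \supseteq R'$ means all the $R'$-markings collide on $\delta_{T,T^c}$. Adjunction for the closed immersion $i_{R'}: Z_{R'} \hookrightarrow \M_{p,q+1}$, together with Lemma~\ref{psiX}(iii) for the normal bundle determinant, rewrites (a) as the vanishing of the cohomology of an explicit line bundle on $Z_{R'}$. Since $Z_{R'}$ is itself a GIT quotient of $(\PP^1)^{r+q+2}$ by $\PGL_2$, the argument then follows the template of Case~(3) of Theorem~\ref{torsion pairs}: I tensor with a high power of the devil bundle $\DD^N$ from Remark~\ref{devil} to kill the divisorial unstable stratum, check the weight hypothesis of Theorem~\ref{Teleman} at each remaining Kempf--Ness stratum using the bound $0 \le u < (r+s)/2$ together with the range of $(l',E')$ in group~$2A$ or $2B$, and conclude by a Clebsch--Gordan invariant calculation.

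For (b), I will reduce to the vanishing of $\R\Hom_{\M_{p,q+1}}(\cO_{\delta'}(-a,-b), G)$ for each generator $\cO_{\delta'}(-a,-b)$ of $\cA$. When $\delta' \ne \delta_{T,T^c}$, the intersection $\delta' \cap \delta_{T,T^c}$ is a product of projective spaces, and a Koszul-and-acyclicity argument in the spirit of Proposition~\ref{perpendicular general} settles the vanishing. When $\delta' = \delta_{T,T^c}$, a Koszul resolution built from powers of the normal bundle $\cO(-1,-1)$ reduces the problem to the cohomology of line bundles $\cO(u+a-i, -\tfrac{r+s}{2}+b-j)$ on $\PP^{r+s-1}\times\PP^{r+s-1}$ for $i,j\ge 0$, and the admissible range defining $\cA$ (either $1\le a,b\le r+s-1$ or the boundary cases $a=0$ or $b=0$) combined with $0 \le u < (r+s)/2$ keeps each factor inside the acyclic window. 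The main obstacle is the bookkeeping for (b), since $A$ is in principle an iterated cone over many generators of $\cA$; however, only those generators whose support meets $Z_{R'}$ can contribute non-trivially, which cuts the case analysis down to a short list treatable uniformly by the argument above.
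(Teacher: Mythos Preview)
Your step (a) rests on the assertion that $\de_{T,T^c}\subseteq Z_{R'}$ whenever $T_p=R'$, and this is false. The condition $T_p=R'$ only says that on the reducible curve parametrized by $\de_{T,T^c}$ the heavy points indexed by $R'$ all lie on the $T$-component; it does not say they coincide. Concretely, $\de_{T,T^c}\cong\PP^{r+s-1}\times\PP^{r+s-1}$ has dimension $2r+2s-2$, while $Z_{R'}$ has dimension $r+2s$, so for $r\ge3$ the inclusion is impossible on dimension grounds alone; the actual intersection $Z_{R'}\cap\de_{T,T^c}$ is a proper linear subspace $\PP^{s}\times\PP^{r+s-1}$. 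Consequently, adjunction along $i_{R'}$ does not rewrite $\R\Hom(\cT_{l',E'},G)$ as the cohomology of a line bundle on $Z_{R'}$: what you get is $\R\Hom(L,i_{R'}^{!}G)$, and $i_{R'}^{!}G$ is a torsion object supported on $Z_{R'}\cap\de_{T,T^c}$, not a line bundle. Your proposed windows/devil's-trick computation on $Z_{R'}$ in the style of Case~(3) of Theorem~\ref{torsion pairs} therefore does not apply as written.

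The paper avoids this difficulty by not passing through $\cT_{l',E'}$ at all. It uses Lemma~\ref{supportT} directly: $\cTT_{l',E'}$ is represented by an explicit complex of \emph{line bundles} $\LL_J$ on $\M_{p,q+1}$ (indexed by $J\subseteq R'\setminus\{1\}$), so it suffices to show $\R\Gamma\big(\de,\ \LL_J^\vee|_{\de}\otimes\cO(u,-\tfrac{r+s}{2})\big)=0$ for each $J$. One then computes $(\LL_J)|_{\de}$ explicitly and checks that one factor of the resulting bundle on $\PP^{r+s-1}\times\PP^{r+s-1}$ is always acyclic, the only nontrivial bound being $\al_{J,T,E',l'}\le\tfrac{r+s}{2}-1$, which is exactly Cor.~\ref{AA}. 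Your part~(b) is not needed in this approach, and no GIT argument enters.
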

We will prove Claim \ref{qwhjegvfjgsEFV} and Proposition~\ref{projections}
which are used in this proof after writing an explicit resolution of $\cTT_{l',E'}$ (Lemma \ref{supportT}). 
These statements aside, this finishes the proof of Proposition \ref{reduction}. 
\ep

\begin{lemma}\label{supportT}
Let $(l,E)$ be in group $2A$ or $2B$ with $E_p=R=\{1,\ldots, r\}$.
The~complex  $\cTT_{l,E}=(\cT_{l,E})_{\cB}$ is isomorphic to the following complex, 
call it $\tcS_{l,E}$:
\begin{equation}\label{TTTT}
\LL_{R\setminus\{1\}}\ra\bigoplus_{J\subseteq R\setminus\{1\}, |J|=r-2} \LL_J\ra\ldots\ra \bigoplus_{j\in  R\setminus\{1\}} \LL_j\ra \LL_{\emptyset}
\end{equation}
 (with differentials described in the proof), 
where for any subset $J\subseteq R\setminus\{1\}$, we~let  
$$\LL_J=L'\Big(-\sum_{T_p=R}\al_{J,T,E,l}\de_{T, T^c}-\sum_{j\in J}\de_{1j}\Big)\quad\hbox{\rm and}\quad 
L'=\frac{e_q-r-l}{2}\psi_1+\sum_{k\in E_q} \de_{1k}.$$
Here
$\al_{J,T,l,E}=\max\Big\{\frac{e_q-r-l}{2}-|E_q\cap T_q|+|J|,0\Big\}$.
Furthermore, we have an exact triangle
$\cTT_{l,E}\ra\cT_{l,E}\ra Q\ra$,
with $Q$ generated by generators of the subcategory $\cA$ supported on the boundary divisors $\de_{T,T^c}$, $T_p=R$. 
%Furthermore, $\cTT_{l,E}\in\cB$. 
%In particular, $\cTT_{l,E}\cong(\cT_{l,E})_{\cB}$.
\end{lemma}

The $\al_{J,T,l,E}$ in Lemma \ref{supportT} is a variant of $\al_{T,E,l}=\max\Big\{\frac{e-l}{2}-|E\cap T|,0\Big\}$ (see (\ref{Sdgsgasrh})), as when $E_p=R$ we have 
$\frac{e-l}{2}-|E\cap T|=\frac{e_q-r-l}{2}-|E_q\cap T_q|$. The main property of the line bundle $L'$ that we use is that $\cT_{l,E}={i_R}_*\big(L'_{|Z_R}\big)$ (to see this, use the analogue of Lemma \ref{psiX}). 

\bp[Proof of Lemma \ref{supportT}]
A sheaf $\cT_{l,E}$ with $E_p=R$ is isomorphic in $D^b(\M_{p,q+1})$ to the Koszul resolution \eqref{KoszulCCC} of the stratum 
$Z_R$ %=\bigcap_{j\in R\setminus\{1\}}\de_{1j}$ 
tensored with the line bundle $L'$, a complex which we call $S_{l,E}$ (so $\cT_{l,E}\cong S_{l,E}$):
\begin{equation}\label{fsvsfbasrhar}
L_{R\setminus\{1\}}\ra\bigoplus_{J\subseteq R\setminus\{1\}, |J|=r-2} L_J\ra\ldots\ra \bigoplus_{j\in  R\setminus\{1\}} L_j\ra L_{\emptyset}.
\end{equation}
Here $L_J=L'\big(-\sum_{j\in J}\de_{1j}\big)$.
Let $\cA_R\subset D^b(\M_{p,q+1})$ be the admissible subcategory generated by sheaves in $\cA$ supported on $\de_{T,T^c}$
with $T_p=R$. Let $\cB_R={}^\perp\cA_R$.
Since
$L_J=\LL_J+\sum_{T_p=R}\al_{J,T,l,E}\de_{T,T^c}$,
the canonical morphisms $\LL_J\ra L_J$ have the cokernels generated by sheaves supported on 
boundary $\de_{T,T^c}$, with $T_p=R$. We claim that these cokernels are in $\cA_R$.
Indeed, by Lemma \ref{star}, the line bundles 
$\LL_J$ and $L_J$ are related by quotients of the form
$Q=\big(\LL_J+i\de\big)_{|\de}\cong\cO(-i, \al_{J,T,E,l}-i)$ for
$0<i\leq \al_{J,T,E,l}$,
where
$\al_{J,T,E,l}=\frac{e_q-r-l}{2}-|E_q\cap T|+|J|\leq \frac{e_q-l}{2}-|E_q\cap T|+\frac{r}{2}-1$,
and this is $<\frac{r+s}{2}$ by Corollary \ref{AA}(ii).
It follows that 
$(\LL_J)_{\cB_R}\cong(L_J)_{\cB_R}$.

Next we claim that $\LL_J\in\cB_R$, i.e., ~that
$R\Hom\big( \LL_J, \cO_{\de}(-a,-b) \big)=0$ 
for any $\cO_{\de}(-a,-b)$ as in Theorem \ref{p,q+1 case} with $T_p=R$. 
We have
$(\LL_J)^\vee\otimes\cO_\delta(-a,-b)\cong
\cO_\delta\left(\frac{e_q-r-l}{2}-|E_q\cap T|+|J|-\al_{J,T,l,E}-a,-\al_{J,T,l,E}-b\right)$.
Consider the case when $\al_{J,T,l,E}=0$. The sheaf is clearly acyclic if $b\neq0$. Assume $b=0$, in which case $0<a<\frac{r+s}{2}$. 
Since $\al_{J,T,l,E}=0$, we have that $|E_q\cap T|-\frac{e_q-r-l}{2}-|J|\geq0$. 
By Corollary \ref{AA}(i), 
$|E_q\cap T|-\frac{e_q-r-l}{2}\leq\frac{r+s}{2}$.
It follows that 
$0<|E_q\cap T|-\frac{e_q-r-l}{2}-|J|+a\leq r+s-1$ and the sheaf is acyclic.

Consider now the case when $\al_{J,T,l,E}>0$. In this case 
$(\LL_J)^\vee\otimes\cO_\delta(-a,-b)\cong\cO_\delta(-a,-\al_{J,T,l,E}-b)$.
Clearly, this is acyclic when $a>0$. Assume $a=0$, in which case $0<b<\frac{r+s}{2}$. Then it suffices to prove that  
$\al_{J,T,l,E}=\frac{e_q-r-l}{2}-|E_q\cap T|+|J|\leq \frac{r+s}{2}$
for all $J\subseteq R\setminus\{1\}$, 
or equivalently that 
$\frac{e_q-l}{2}-|E_q\cap T|\leq \frac{s}{2}+1$.
This holds by Corollary  \ref{AA}.

Now we use the following simple lemma:

\begin{lemma}\label{middle}
Let $A$ and $B$ be abelian categories.
Let $F:\,D^b(A)\to D^b(B)$ be an exact functor. Let $L^\bullet\in D^b(A)$ be a complex $[L_0\arrow{d_1}\ldots\arrow{d_r} L_r]$.
Suppose  $F(L_1),\ldots,F(L_r)\in B$.
Then $F(L^\bullet)\cong[F(L_0)\arrow{F(d_1)}\ldots\arrow{F(d_r)} F(L_r)]$.
\end{lemma}

\bp 
Since $B$ is a fully faithful subcategory of $D^b(B)$, the morphism $F(d_i)$ is a morphism in $B$ for every~$i=1,\ldots,r$
and so $[F(L_0)\arrow{F(d_1)}\ldots\arrow{F(d_r)} F(L_r)]$ is a complex in $D^b(B)$.
It is isomorphic to $F(L^\bullet)$ by induction on $r$ and by
applying the functor $F$ to the naive truncation of $L^\bullet$.
\ep

We apply Lemma \ref{middle} to the projection functor 
$$D^b(\M_{p,q+1})\to\cB_R\hookrightarrow D^b(\M_{p,q+1}),$$
where we recall that $\cB_R={}^\perp\cA_R$ and 
 $\cA_R\subset D^b(\M_{p,q+1})$ is the admissible subcategory generated by sheaves in $\cA$ supported on $\de_{T,T^c}$
with $T_p=R$.
It follows that $(\cT_{l,E})_{\cB_R}$ is isomorphic to the complex
\eqref{TTTT} with differentials  obtained by
applying the functor $T\to T_{\cB_R}$ to the differentials of the complex
$\cS_{l,E}$ from \eqref{fsvsfbasrhar}.
Concretely,  for $j\in J$, we have 
$L_{J\setminus\{j\}}=L_{J}(\de_{1j})$
and the differentials in  $\cT_{l,E}$ are built from the maps $\sigma:\,L_J\to L_{J\setminus\{j\}}$
given by multiplication with a canonical section of $\cO(\de_{1j})$.
On the other hand, 
$\LL_{J\setminus\{j\}}=\LL_{J}\bigl(\de_{1j}+\sum_{T_p=R,\atop \al_{J,T,l,E}>0}\de_{T,T^c}\bigr)$,
since
$\al_{J\setminus\{j_0\},T,l,E}=\al_{J,T,l,E}-1$ if $\al_{J,T,l,E}>0$ and $\al_{J,T,l,E}=0$
otherwise.
We claim that differentials in $(\cT_{l,E})_{\cB_R}$ 
are built from the maps $\tilde\sigma:\,\LL_J\to \LL_{J\setminus\{j\}}$
given by multiplication with a canonical section of $\cO(\de_{1j}+\sum_{T_p=R, \atop\al_{J,T,l,E}>0}\de_{T,T^c})$
(note that these maps are in $\cB_R$ since $\cB_R$  is a full subcategory).
Indeed, consider the commutative diagram  
\begin{equation*}
\begin{CD}
\LL_J      @>{\tilde\sigma}>>  \LL_{J\setminus\{j\}} \\
@VVV   @VVV  \\
L_J   @>{\sigma}>>  L_{J\setminus\{j\}} 
\end{CD}
\end{equation*}
where the left, resp., right, vertical maps are injections
given by a multiplication with %a non-zero section of 
$\sum\limits_{\al_{J,T,l,E}>0} \al_{J,T,l,E}\de_{T,T^c}$,
resp., $\sum\limits_{\al_{J,T,l,E}>0} (\al_{J,T,l,E}-1)\de_{T,T^c}$.
%Clearly, this is a commutative diagram. 
Application of the functor $T\to T_{\cB_R}$
gives a commutative diagram
\begin{equation*}
\begin{CD}
\LL_J      @>{\tilde\sigma}>>  \LL_{J\setminus\{j\}} \\
@V=VV   @VV=V  \\
\LL_J   @>{\sigma}_{\cB_R}>>  \LL_{J\setminus\{j\}} 
\end{CD}
\end{equation*}
where the second line is the differential in $(\cT_{l,E})_{\cB_R}$. 
This proves our above claim about differentials.
In particular, we have an exact triangle  of a s.o.d.
\begin{equation}\label{sfasgsg}
\tcS_{l,E}
\ra\cT_{l,E}\ra (\cT_{l,E})_{\cA_R}.
\end{equation}
We claim that $\tcS_{l,E}\in\cB$. 
For this, we need to prove that $\tcS_{l,E}$ is perpendicular to all the generators of $\cA$. Since $\tcS_{l,E}=(\cT_{l,E})_{\cB_R}$,  $\tcS_{l,E}$ is perpendicular to all the generators 
of $\cA_R$. It remains to check that $\tcS_{l,E}$ is perpendicular to the generators of $\cA$ supported on $\de=\de_{T,T^c}$ in $\M_{p,q+1}$ with 
$T_p\neq R,R'$. For this, using (\ref{sfasgsg}), it suffices to check that $\cT_{l,E}$ is perpendicular to such generators because $(\cT_{l,E})_{\cA_R}\in\cA_R\subset\cA$, and hence, orthogonal to $\cB\subset\cB_R$. 
Since $\cT_{l,E}$ is supported on $Z_R$ and $Z_R$ does not intersect any of the boundary 
$\de=\de_{T,T^c}$ in $\M_{p,q+1}$ with 
$T_p\neq R,R'$, we have 
$R\Hom\big(\cT_{l,E}, \cO_{\de}(-a,-b)\big)=0$,
for all $a,b$. Hence, 
$R\Hom\big(\cTT_{l,E}, \cO_{\de}(-a,-b)\big)=0$ by \eqref{sfasgsg}
since supports of objects in $\cA_R$ are also disjoint from $\delta$. 
This proves that $\tcS_{l,E}\in\cB$. Consider now the sequence of projection functors 
$D^b(\M_{p,q+1})\ra\cB_R\ra\cB$. 
By definition, $\cTT_{l,E}=(\cT_{l,E})_{\cB}$. But $(\cT_{l,E})_{\cB_R}\cong\tcS_{l,E}$ and since $\tcS_{l,E}\in\cB$, it follows that 
%$(\tcS_{l,E})_{\cB}\cong\tcS_{l,E}$ and 
$\cTT_{l,E}\cong \tcS_{l,E}$. 
This finishes the proof of Lemma \ref{supportT}.
%We prove that $\cTT_{l,E}\in\cB$. Since $\cT_{l,E}$ is supported on $Z_R$ and $Z_R$ does not intersect any of the boundary $\de=\de_{T,T^c}$ in $\M_{p,q+1}$ with 
%$T_p\neq R,R'$, we have 
%$$R\Hom\big(\cT_{l,E}, \cO_{\de}(-a,-b)\big)=0,$$
%for all $a,b$, and hence, 
%$R\Hom\big(\cTT_{l,E}, \cO_{\de}(-a,-b)\big)=0$. 
%
%We now prove that the same is true when $T_p=R$. 
\ep

\bp[Proof of Claim~\ref{qwhjegvfjgsEFV}]
Let $R'=\{1,\ldots,r\}$. By Lemma~\ref{supportT}
and Lemma \ref{middle}, 
it suffices to prove that for all subsets $J\subseteq R'\setminus\{1\}$ we have
$R\Gamma\big(\LL_J^\vee\otimes\cO(u,-\frac{r+s}{2})\big)=0$.
%, where  $$\LL_J=\frac{e'_q-r-l'}{2}\psi_p+\sum_{k\in E'_q}\de_{pk}-\sum_{j\in J}\de_{pj}-\sum_{T_p=R'}\al_{J,T,E',l'}\de_{T,T^c}.$$
For $\de=\de_{T,T^c}$, with $T_p=R'$, letting $\al:=\al_{J,T,E',l'}$, we have 
$(\LL_J)_{|\de}=\cO\big(-\frac{e'_q-r-l'}{2}+|E'_q\cap T_q|-|J|+\al, \al\big)=
\cO(0,\al)$ if $\al>0$ and $\cO(-\frac{e'_q-r-l'}{2}+|E'_q\cap T_q|-|J|,0)$ if $\al=0$.
If $\al=0$, the second component of $\LL_J^\vee\otimes\cO(u,-\frac{r+s}{2})$ is $\cO(-\frac{r+s}{2})$, which is acyclic on $\PP^{r+s-1}$, and the result follows. Assume now $\al>0$. We have to prove
that 
$\LL_J^\vee\otimes\cO(u,-\frac{r+s}{2})=\cO(u,-\al-\frac{r+s}{2})$
is acyclic, or equivalently, that $\al+\frac{r+s}{2}\leq r+s-1$, i.e., that 
$\al=\frac{e'_q-r-l'}{2}-|E'_q\cap T_q|+|J|\leq \frac{r+s}{2}-1$.
As $|J|\leq r-1$, it suffices to prove that
$\frac{e'_q-l'}{2}-|E'_q\cap T_q|\leq \frac{s}{2}$.
This follows from Corollary  \ref{AA}(ii) applied to the pair $(E',l')$. 
\ep

\begin{prop}\label{projections}
Let $(l,E)$ be in group $2A$ or $2B$ on $\M_{p,q+1}$, with 
$E_p=R=\{1,\ldots,r\}$. Let $R'=P\setminus R$ and $\be_R:\M_{p,q+1}\ra\M_R$, $\be_{R'}:\M_{p,q+1}\ra\M_{R'}$ be the morphisms 
in Lemma-Definition \ref{lemdef}. 

Then
(i) $\big(L\be_R^*\coT_{l,E}\big)_{\cB}=\cTT_{l,E}$, except 
when $r+s$ is even and either $e_q=l+s+2$, $T_q\subseteq E_q$ (case $2A$)
or $e_q+l=s$, $E_q\subseteq T_q$ (case $2B$).
In the latter cases there exists an exact triangle 
in $\cB$:
\begin{equation}\label{triangle}
\cTT_{l,E}\ra \big(L\be_R^*\coT_{l,E}\big)_{\cB}\ra Q_{\cB},
\end{equation}
where $Q_{\cB}$ is the projection into $\cB$ of an object $Q\in D^b(\M_{p,q+1})$ which is 
generated by the sheaves $\cO_{\de_{T,T^c}}(0,-\frac{r+s}{2})$ with $T_q\subseteq E_q$ (case $2A$) and $E_q\subseteq T_q$ (case $2B$). 

(ii) $R{\be_R}_*\big(\cTT^\vee_{l,E}\big)=\coT^\vee_{l,E}$;

(iii) $\big(L\be_{R'}^*\coT_{l,E}\big)_{\cB}=\cTT_{l,E}$;

(iv) $R{\be_{R'}}_*\big(\cTT^\vee_{l,E}\big)=\coT^\vee_{l,E}$.
\end{prop}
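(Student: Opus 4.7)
The strategy is to combine the explicit Koszul-type presentation $\cTT_{l,E}=(\LL_J)_{J\subseteq R\setminus\{1\}}$ from Notation~\ref{TTTT} with the parallel Koszul presentation of $\coT_{l,E}$ on $\M_R$ or $\M_{R'}$ as a complex of line bundles $(\bar L_J)$, and analyze term by term what pull-back, push-forward and the projection $T\mapsto T_{\cB}$ do. Since the only boundary twists appearing in $\LL_J$ are along divisors $\de_{T,T^c}$ with $T_p=R$, the behavior under $\be_{R'}$ and $\be_R$ is quite asymmetric and I would treat them separately.

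I would begin with (iii), which is the cleanest. The map $\be_{R'}$ contracts only boundary divisors with $T_p=R'$, which are disjoint from the twists defining $\LL_J$. Hence pulling back the Koszul presentation of $\coT_{l,E}$ yields the complex $(L_J)$ of Notation~\ref{TTTT}. The morphisms $\LL_J\to L_J$ already appeared in Lemma~\ref{supportT}, and their cones are successive extensions of torsion sheaves $\cO_{\de_{T,T^c}}(-i)\boxtimes\cO(\al_{J,T,l,E}-i)$ for $0<i\le \al_{J,T,l,E}$, $T_p=R$. Corollary~\ref{AA} gives $\al_{J,T,l,E}<\frac{r+s}{2}$, which places all these sheaves in $\cA$; projecting to $\cB$ then identifies $(L\be_{R'}^*\coT_{l,E})_{\cB}$ with $(\cT_{l,E})_{\cB}=\cTT_{l,E}$. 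For (iv), applying $R\be_{R'*}$ to the dual complex $(\LL_J^\vee)$ and using that each twist in $\LL_J$ is along a divisor not contracted by $\be_{R'}$ (combined with the projection formula and $R\be_{R'*}\cO_{\M_{p,q+1}}=\cO_{\M_{R'}}$), one reduces termwise to $R\be_{R'*}\LL_J^\vee=\bar L_J^\vee$, and assembling these gives $R\be_{R'*}\cTT_{l,E}^\vee=\coT_{l,E}^\vee$.

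Part (i) is the main case and the principal obstacle. Here $\be_R$ contracts exactly the boundary divisors $\de_{T,T^c}$ with $T_p=R$ appearing in the twists of $\LL_J$, so the derived pull-back of the Koszul complex is no longer $(\LL_J)$. I would compute $L\be_R^*\bar L_J$ using the pull-back formulas \eqref{Pullbacks1}--\eqref{Pullbacks2} together with the description of contractions of $\de_{T,T^c}=\PP^{r+s-1}\times\PP^{r+s-1}$ onto a factor, obtaining $\LL_J$ modulo sheaves supported on the exceptional divisors. By Corollary~\ref{AA} the twists $\al_{J,T,l,E}$ satisfy $\al_{J,T,l,E}\le\frac{r+s}{2}$, with equality exactly when $r+s$ is even and one of the two borderline configurations $e_q=l+s+2$, $T_q\subseteq E_q$ (group $2A$) or $l+e_q=s$, $E_q\subseteq T_q$ (group $2B$) occurs. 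In the non-borderline cases, every exceptional contribution lies in $\cA$ and is killed by projection to $\cB$; in the borderline cases the extremal value $\al=\frac{r+s}{2}$ produces torsion sheaves $\cO_{\de_{T,T^c}}(0,-\frac{r+s}{2})$ which are precisely \emph{outside} $\cA$ (just across the boundary $1\le b\le\frac{r+s-1}{2}$ in Notation~\ref{TT}). These survive projection and assemble into the object $Q_{\cB}$ in the triangle~\eqref{triangle}.

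Finally, (ii) follows from (i) by duality and adjunction through Lemma~\ref{K_lemma} applied to $\langle\cA,\cB\rangle$ and $\langle\cD_R^\perp,\cD_R\rangle$ with $\cD_R=L\be_R^*D^b(\M_R)$: projection $T\mapsto T_{\cB}$ is adjoint to $T\mapsto((T^\vee)_{\cD_R})^\vee$, so (i) dualizes to (ii). The borderline torsion sheaves $\cO_{\de_{T,T^c}}(0,-\frac{r+s}{2})$ that contribute to $Q$ in (i) satisfy $R\be_{R*}\cO_{\de_{T,T^c}}(0,-\frac{r+s}{2})^\vee=0$ for degree reasons on the contracted $\PP^{r+s-1}$, so they do not disturb the clean identity $R\be_{R*}\cTT_{l,E}^\vee=\coT_{l,E}^\vee$. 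The hard part will be in (i): carefully tracking the borderline contributions across the whole Koszul complex and checking that they fit together into precisely the stated object $Q$, rather than giving something larger.
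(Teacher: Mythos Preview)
Your treatment of (i) and (iii) follows the paper's line: compare $\be^*\bar L_J$ with $\LL_J$ term by term, compute the successive quotients on the boundary, and bound their twists via Corollary~\ref{AA}. One correction for (i): pulling back via $\be_R$ produces twists governed by the \emph{opposite} quantity $\al'_{J,T,E,l}:=|E_q\cap T_q|-\tfrac{e_q-r-l}{2}-|J|=-\al_{J,T,E,l}$, not $\al_{J,T,l,E}$; the quotients are $Q_J^i=\cO_T(\al'-i)\boxtimes\cO_{T^c}(-i)$ for $0<i\le\al'$, and the borderline is $\al'=\tfrac{r+s}{2}$ (which by Corollary~\ref{AA} forces $J=\emptyset$ and the listed conditions on $E_q$). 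With this swap your analysis of (i) is fine.

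There is, however, a genuine misconception behind your approaches to (iv) and (ii). The boundary divisors $\de_{T,T^c}$ with $T_p=R$ are \emph{the same divisors} as those with $T^c_p=R'$; both $\be_R$ and $\be_{R'}$ contract precisely this common family of $\PP^{r+s-1}\times\PP^{r+s-1}$, only onto opposite factors ($\be_R$ onto the $T^c$-side, $\be_{R'}$ onto the $T$-side). So the premise of your argument for (iv), that ``each twist in $\LL_J$ is along a divisor not contracted by $\be_{R'}$'', is false, and the projection-formula shortcut you propose does not apply. (Your conclusion $\be_{R'}^*\bar L_J=L_J$ in (iii) is still correct, but for a different reason: the tautological classes $\psi_1,\de_{1k},\de_{1j}$ defining $\bar L_J$ involve the index $1\in R$, which lies on the \emph{non}-contracted side, so no correction terms appear in the pull-back formulas.) Separately, your deduction of (ii) from (i) via Lemma~\ref{K_lemma} is not valid: that lemma says the two projection functors are adjoint, not inverse, so knowing $(L\be_R^*\coT_{l,E})_{\cB}=\cTT_{l,E}$ does not yield $((\cTT_{l,E}^\vee)_{\cD_R})^\vee=L\be_R^*\coT_{l,E}$.

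The paper proves (ii) and (iv) by the same direct computation, which is what you actually need. From the termwise triangle $\LL_J\to\be^*\bar L_J\to Q_J$ one dualizes and pushes forward; up to shift $(Q_J^i)^\vee=\cO_T(i-1-\al')\boxtimes\cO_{T^c}(i-1)$ in case (ii) and $\cO_T(i-1)\boxtimes\cO_{T^c}(i-1-\al)$ in case (iv). Since $\be_R$ contracts the $T$-factor and $\be_{R'}$ the $T^c$-factor, vanishing of $R\be_*$ reduces to acyclicity of $\cO_{\PP^{r+s-1}}(i-1-\al')$ resp.\ $\cO_{\PP^{r+s-1}}(i-1-\al)$, i.e.\ to $0<\al'-i+1\le r+s-1$ resp.\ $0<\al-i+1\le r+s-1$. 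These hold for all $0<i\le\al'$ (resp.\ $\al$) because $\al',\al\le\tfrac{r+s}{2}\le r+s-1$. Your remark that the borderline sheaf's dual pushes forward to zero ``for degree reasons'' is exactly this check at the extremal value; the same check works uniformly for every quotient, and that is the missing ingredient.
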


\bp
We note that $\coT_{l,E}$ isomorphic in $D^b(\M_R)$ to the Koszul resolution of the stratum 
$Z_R=\bigcap_{j\in R\setminus\{1\}}\de_{1j}$ tensored with the line bundle $\oL'$:
$$0\ra \oL_{R\setminus\{1\}}\ra\ldots\ra\bigoplus_{J\subseteq R\setminus\{1\}, |J|=r-2} \oL_J\ra\ldots\ra \bigoplus_{j\in  R\setminus\{1\}} \oL_j\ra \oL_{\emptyset}\ra 0,$$
where for every subset $J\subseteq R\setminus\{1\}$, we let  
$\oL_J=\oL'\big(-\sum_{j\in J}\de_{1j}\big)$ and
$\oL'=\frac{e_q-r-l}{2}\psi_1+\sum_{k\in E_q} \de_{1k}$,
where all tautological classes are on $\M_R$.  
As in Lemma \ref{supportT}, the main property of the line bundle $\oL'$ is that $\coT_{l,E}={i_R}_*\big(\oL'_{|Z_R}\big)$. 

We identify $\cTT_{l,E}$ with $S_{l,E}=(\LL_J)$ as in \eqref{TTTT}. To show (i), we prove that there is an exact triangle 
$\cTT_{l,E}\ra L\be_R^*\coT_{l,E}\ra Q$, 
with $Q\in\cA$ (and so in fact $Q_\cB=0$), 
%, except when  $r+s$ is even and  either $e_q=l+s+2$ (case $2A$) or $e_q+l=s$  (case $2B$), 
with exceptions listed in part (i), in which case $Q$ is generated by the sheaves $\cO_{\de_{T,T^c}}(0,-\frac{r+s}{2})$ with the required properties. 

For $J\subseteq R\setminus\{1\}$, 
using (\ref{Pullbacks1}) and (\ref{Pullbacks2}),  we have when $E_p=R$ that 
$${\be_R}^*\oL_J=L_J+\sum\limits_{T_p=R}\al'_{J,T,E,l}\de_{T, T^c}
=\LL_J+\sum\limits_{T_p=R,\  \al'_{J,T,l,E}>0}\al'_{J,T,l,E}\de_{T,T^c},$$
where we denote 
$\al'_{J,T,E,l}=|E_q\cap T_q|-\frac{e_q-r-l}{2}-|J|$ and $\LL_J$ is defined as in Lemma \ref{supportT}.
By definition of $\al_{J,T,E,l}$ (see Lemma \ref{supportT})  
if $\al_{J,T,E,l}>0$ we have
$\al_{J,T,E,l}=-\al'_{J,T,E,l}$.)
Let $Q_J=\Coker(\LL_J\ra \be_R^*\oL_J)$. Then $Q_J$ is generated by the %(push-forwards from $\de_{T,T^c}$ to $\M_{p,q+1}$ of the) 
successive quotients 
\begin{equation}\label{sfvafvafv}
Q_J^i:=\big(\LL_J+i\de_{T,T^c}\big)_{|\de_{T,T^c}}=\cO_T(\al'_{J,T,l,E}-i)\boxtimes\cO_{T^c}(-i),
\end{equation}
for  $0<i\leq \al'_{J,T,l,E}$.
Then $Q_J^i\in\cA$ iff $i<\frac{r+s}{2}$. 
By Corollary \ref{AA}(i),
\begin{equation}\label{ineqAA}
i\leq \al'_{J,T,l,E}%=|E_q\cap T_q|-\frac{e_q-r-l}{2}-|J|
\leq
|E_q\cap T_q|-\frac{e_q-l}{2}+\frac{r}{2}\leq\frac{r+s}{2},
\end{equation}
with equality $i=\frac{r+s}{2}$ only when $J=\emptyset$, and either $e_q=l+s+2$, $T_q\subseteq E_q$ (case $2A$), or 
$e_q+l=s$, $E_q\subseteq T_q$ (case $2B$), 
in which case
$Q_J^i=\cO(0,-\frac{r+s}{2})$, 
with $0$ the component corresponding to markings from~$T$. 
This gives an exact triangle \eqref{triangle} with the required properties as in the proof of Lemma~\ref{supportT}.

We now prove (ii). It suffices to prove that $R{\be_R}_*(Q^i_J)^\vee=0$ for all quotients~\eqref{sfvafvafv}.
Up to a shift,
$(Q^i_J)^\vee=\big(\LL_J+i\de_{T,T^c}\big)^\vee\otimes \cO(\de_{T,T^c})_{|\de_{T,T^c}}=
\cO_T(-\al'_{J,T,l,E}+i-1)\boxtimes\cO_{T^c}(i-1)$.
As $\be_R$ contracts the $T$-component, it suffices to prove that 
$0<\al'_{J,T,l,E}-i+1\leq r+s-1$
for all  $0<i\leq |\al'_{J,T,l,E}|$. 
Indeed, by \eqref{ineqAA},
$\al'_{J,T,l,E}\leq \frac{r+s}{2}\leq r+s-1$ as  $r+s\geq 2$. 

We  prove (iii) and (iv). %the case of the map $\be_{R'}$. 
Using (\ref{Pullbacks1}) and (\ref{Pullbacks2}),  
we have
$\be_{R'}^*\oL_J=\frac{e_q-r-l}{2}\psi_1+\sum\limits_{k\in E_q} \de_{1k}-\sum\limits_{j\in J} \de_{1j}=\LL_J+\sum\limits_{T_p=R,\ \al_{J,T,l,E}>0}\al_{J,T,l,E}\de_{T,T^c}$.
As before, let $Q_J=\Coker(\LL_J\ra \be_{R'}^*\oL_J)$. Then $Q_J$ is generated by the successive quotients 
$Q_J^i:=\big(\LL_J+i\de_{T,T^c}\big)_{|\de_{T,T^c}}\cong\cO_T(-i)\boxtimes\cO_{T^c}(\al_{J,T,l,E}-i)$
for all $0<i\leq \al_{J,T,l,E}=\frac{e_q-r-l}{2}-|E_q\cap T_q|+|J|$.
Then $Q_J^i\in\cA$ if and only if $i<\frac{r+s}{2}$. 
As $|J|\leq r-1$, we have 
\begin{equation}\label{ineqBB}
\al_{J,T,l,E}=\frac{e_q-r-l}{2}-|E_q\cap T_q|+|J|\leq \frac{e_q+r-l}{2}-|E_q\cap T_q|-1\leq\frac{r+s}{2}-1,
\end{equation}
by Corollary \ref{AA}(ii). 
Therefore, $Q\in\cA$ and this proves (iii). 

To prove (iv), it suffices to show that 
$(Q_J^i)^\vee$, which up to a shift is equal to $(\LL_J+i\de_{T,T^c})^\vee(\de_{T,T^c})_{|\de_{T,T^c}}\cong
\cO_T(i-1)\boxtimes\cO_{T^c}(-(\al_{J,T,l,E}-i)-1)$,
pushes forward to $0$ by $\be_{R'}$. As $\be_{R'}$ contracts the $T^c$-component, it suffices to prove that 
$0<\al_{J,T,l,E}-i+1\leq r+s-1$
for all $0<i\leq \al_{J,T,l,E}$, or equivalently that 
$\al_{J,T,l,E}\leq r+s-1$, 
which follows from (\ref{ineqBB}) as $\frac{r+s}{2}-1\leq r+s-1$. 
\ep

\begin{thm}\label{pairs on M_R} 
%Assume $p=2r\geq4$, $q=2s+1\geq1$. 
On $\M_{R'}$, consider vector bundles
%In the notations of \ref{barTT}, consider 
$F_{l,E}$ for pairs $(l,E)$ in group $1A$ or $1B$, and torsion sheaves
$\coT_{l,E}$ for $(l,E)$ in group $2A$ or $2B$. Then

(1)
$R\Hom_{\M_{R'}}(F_{l',E'},\coT_{l,E})=0$ if $E_p=R$, $e'_q\geq e_q$.

(2) $R\Hom_{\M_{R'}}(\coT_{l',E'},F_{l,E})=0$ if $E'_p=R'$ and  
$e'_q> e_q$ or $e_q'=e_q$, $E_q'\ne E_q$.

(3) $R\Hom_{\M_{R'}}(\coT_{l',E'}, \coT_{l,E})=0$ if $E_p=E'_p=R$ and either
$e'_q\geq e_q$, $E_q\neq E'_q$ or $E_q=E'_q$ and $l>l'$. Furthermore,  
$R\Hom_{\M_{R'}}(\coT_{l,E}, \coT_{l,E})=\CC$. 

(4) 
$R\Hom_{\M_{R'}}(\coT_{l',E'}, \coT_{l,E})=0$ if $E_p=R$, $E'_p=R'$, $e'_q\geq e_q$.
\end{thm}

\begin{cor}\label{exceptionalityeveneven}
The collection of Theorem~\ref{p,q+1 case} is exceptional.
\end{cor}

\bp
As discussed in the beginning of this section, 
we only need to check exceptionality of complexes $\cTT_{l,E}$
and their semi-orthogonality with each other and vector bundles $F_{l,E}$, where the order is as in Theorem \ref{p,q case}, i.e., as follows: 
the objects are arranged in blocks indexed by a subset $E_q$ and ordered by increasing $e_q$ and arbitrarily if $e_q$ is the same (but the set $E_q$ is different).
Within each block with the same $E_q$ we put the 
complexes  $\{\cTT_{l,E}\}$ first, in arbitrary order if $E_p\neq E'_p$ and in order of decreasing $l$ when $E_p=E'_p$.
After the complexes we put  the bundles $\{F_{l,E}\}$ in a certain order.

By Proposition~\ref{reduction} and Theorem~\ref{pairs on M_R}, by choosing a subset $R$ of $r$ heavy indices 
appropriately, we see that, indeed,
$R\Hom(F_{l',E'},\cTT_{l,E})=0$ if $e'_q\geq e_q$;
$R\Hom(\cTT_{l',E'},F_{l,E})=0$ if  $e'_q> e_q$ or $e_q'=e_q$, $E_q'\ne E_q$;
$R\Hom(\cTT_{l,E}, \cTT_{l,E})=\CC$;
$R\Hom(\cTT_{l',E'}, \coT_{l,E})=0$
if $E_p=E'_p$ and either $e'_q\geq e_q$, $E_q\neq E'_q$ or $E_q=E'_q$ and $l>l'$;
$R\Hom(\cTT_{l',E'}, \cTT_{l,E})=0$ if $e'_q\geq e_q$
and $E_p$ is disjoint from $E'_p$.
It remains to note that $R\Hom(\cTT_{l',E'}, \coT_{l,E})=0$
if $E_p$ and $E_p$ are neither equal nor disjoint because these complexes have disjoint support.
\ep

In the remainder of this section we prove Theorem~\ref{pairs on M_R}.
\begin{notn}
Set $R=\{1,\ldots, r\}$, $R'=\{r+1,\ldots, p\}$ and consider the loci 
$Z_R, Z_{R'}\hra\M_{R'}$, $Y=Z_R\cap Z_{R'}$.
\end{notn}

We reduce Theorem~\ref{pairs on M_R} to a calculation of $R\Gamma$ on loci $Z_R$, $Z_{R'}$  and $Y$:  
\begin{lemma}\label{explicit}
Let all pairs $(l,E)$ be in group $1A$ or $1B$ for $F_{l,E}$ and in group $2A$ or $2B$ for $\coT_{l,E}$.
In order to prove Theorem~\ref{pairs on M_R}, it suffices to prove the following:

(1) $R\Ga\big(Z_R, ({F_{l',E'}}_{|Z_R})^\vee\otimes\coT_{l,E})=0$ if 
$e'_q\geq e_q$, $R=E_p$.

(2) $R\Ga\big(Z_{R'}, \coT_{l',E'}^\vee\otimes {F_{l,E}}_{|Z_{R'}}\otimes c_1(N_{Z_{R'}|\M_{R'}})\big)=0$
if $R'=E'_p$ and either
$e'_q> e_q$ or $e'_q=e_q$ but $E'_q\ne E_q$.

(3) $R\Gamma\big(Z_R, \coT_{l',E'}^\vee\otimes\coT_{l,E}\otimes(\sum_{j\in J}\de_{1j}\big))=0$
if $R=E_p=E'_p$, for all $J\subseteq R\setminus\{1\}$, $e'_q\geq e_q$,  and either $E_q\neq E'_q$ or $E_q=E'_q$, $l>l'$.
Furthermore,
$R\Gamma\big(Z_R, \coT_{l,E}^\vee\otimes\coT_{l,E}\otimes(\sum_{j\in J}\de_{1j}\big))=0$
for all $\emptyset\neq J\subseteq R\setminus\{1\}$. 

(4) $R\Gamma\left(Y, -\left(\frac{l+l'}{2}+1\right)\psi_u+\frac{1}{2}\sum_{j\in E'_q}\psi_j-\frac{1}{2}\sum_{j\in E_q}\psi_j\right)=0$ if
$R=E_p$, $R'=E'_p$, $e'_q\geq e_q$.
\end{lemma}

\bp $\ $ This is very similar to the proof of Lemma \ref{sDGSGASG}: 

(1) 
$R\Hom_{\M_{R'}}(F_{l',E'},\coT_{l,E})=R\Ga\big(Z_{R}, ({F_{l',E'}}_{|Z_{R}})^\vee\otimes\coT_{l,E}).$

(2) As $i^{!}F_{l,E}=i^*F_{l,E}\otimes c_1(N_{Z_{R'}|\M_{R'}})$, where $i: Z_{R'}\hra \M_{R'}$ is the embedding,
we have by \cite[Corollary  3.38]{Huybrechts}  that 
$R\Hom_{\M_{R'}}(\coT_{l',E'},F_{l,E})=R\Ga\big(Z_{R'}, \coT_{l',E'}^\vee\otimes {F_{l,E}}_{|Z_{R'}}\otimes c_1(N_{Z_{R'}|\M_{R'}})\big)$
(up to a shift).

(3) 
Follows from tensoring the Koszul resolution of $Z_R$ with a line bundle $L$ such that ${i_R}_*(L_{|Z_R})=\cT_{l',E'}$ and applying $R\Hom(-,\cT_{l,E})$. 

(4) 
Recall that $R'=\{r+1,\ldots, p\}$. By an analogue of  Lemma \ref{psiX}(v) we have that 
$\coT_{l,E}={i_R}_*\big(-\frac{r+l}{2}\psi_u-\frac{1}{2}\sum_{j\in E_q}\psi_j\big)$, where $i_R: Z_R\hra \M_{R'}$. 
As in the proof of Lemma \ref{sDGSGASG}, we have that 
$R\Hom_{\M_{R'}}(\coT_{l',E'}, \coT_{l,E})$ equals 
$R\Gamma\big(Y, ({\coT_{l',E'}}_{|Y})^\vee\otimes  {\coT_{l,E}}_{|Y}\otimes c_1(N_{Y|Z_R})\big)
=R\Gamma\big(Y, ({\cT_{l',E'}}_{|Y})^\vee\otimes  {\cT_{l,E}}_{|Y}\otimes\break 
(\sum_{j\in R'\setminus\{p\}}\de_{jp})_{|Y}\big)
=R\Gamma\big(Y, -(\frac{l+l'}{2}+1)\psi_u+\frac{1}{2}\sum_{j\in E'_q}\psi_j-\frac{1}{2}\sum_{j\in E_q}\psi_j)$ 
using an analogue of Lemma \ref{psiX} 
(i), (ii), (iii).
\ep

\begin{notn}
We prove vanishing in Lemma~\ref{explicit} by a windows calculation on $Z_R$, $Z_{R'}$ and $Y$.
Since $Z_R$ and $Z_{R'}$ intersect only boundary $\de_{T,T^c}$ with $T_p=R$ or $R'$, which get contracted in $\M_{R'}$, they are smooth GIT quotients of $(\PP^1)^{r+q+1}$ by 
$\PGL_2$ by Lemma \ref{MonGeneralBrendanHassett} 
since their universal families are $\PP^1$-bundles (see the proof of Lemma-Definition \ref{lemdef} for the precise weights on $\M_{R'}$.)
More precisely, let 
$X %=(\PP^1)^{r+q+1}
=\PP_u^1\times(\PP^1)^r\times (\PP^1)^q$,
corresponding to the partition $\{u\}\sqcup R'\sqcup Q$ of the markings on 
$Z_R=\M_{\{u\}\cup R'\cup Q}$. 
Then $Z_R$ is a GIT quotient of $X$ by $\PGL_2$.
Likewise, let 
$X' %=(\PP^1)^{r+q+1}
=(\PP^1)^r\times\PP_v^1\times (\PP^1)^q$, 
corresponding to the partition $R\sqcup \{v\}\sqcup Q$ of the markings on 
$Z_{R'}=\M_{R\cup\{v\}\cup Q}$. 
Then $Z_{R'}$ is a GIT quotient of $X'$ by $\PGL_2$. 
In addition, let 
$X''=(\PP^1)^{q+2}=\PP^1_u\times\PP^1_v\times (\PP^1)^q$,
corresponding to the partition $\{u\}\sqcup\{v\}\sqcup Q$ of the markings on 
$\M_{\{u,v\}\cup Q}$. Then $Y$ is the GIT quotient of $X''$ by $\PGL_2$.

Vector bundles on $Z_R$ (resp., $Z_{R'}$) correspond to $\PGL_2$-linearized vector bundles on 
$X$ (resp., $X'$) as follows:
${F_{l,E}}_{|Z_R}=\cO_X(|E_p\cap R|, E_p\cap R', E_q)\otimes V_l$,  
$\coT_{l,E}=\cO_X(r+l,0, E_q)$ if $E_p=R$, $K_{Z_R}=\cO(-2,-2,\ldots, -2)$,\break
${F_{l,E}}_{|Z_{R'}}=\cO_{X'}(E_p\cap R, |E_p\cap R'|, E_q)\otimes V_l$,
${\de_{1j}}_{|Z_R}=\cO(2,0,0,\ldots)$ ($j\in R\setminus\{1\}$),
${\de_{pj}}_{|Z_{R'}}=\cO(0,\ldots,0,2,0,\ldots, 0)$ ($j\in R'\setminus\{p\}$) 
(use Lemma \ref{akjsdhfkjsgf}, Remark \ref{dictionary} and an analogue of Lemma \ref{psiX}(v)).
\end{notn}

\begin{rmk}(The devil's trick reloaded.)\label{devil2}
Since for any $j\in R'$ (resp., $j\in R$), we have $\psi_u+\psi_j=0$ in $Z_{R}$ ($\psi_v+\psi_j=0$ in $Z_{R'}$) as  $\de_{uj}=\emptyset$ (resp., $\de_{vj}=\emptyset$). Therefore, as in Remark \ref{devil}, the line bundles 
$\DD:=\cO(r,R',0)$ on $X=(\PP^1)^{r+q+1}$,
$\DD:=\cO(R,r,0)$ on $X'=(\PP^1)^{r+q+1}$,
descend to trivial line bundles on $Z_R$ and $Z_{R'}$. 
Likewise, the line bundle $\DD:=\cO(1,1,0)$ on $X''$ descends to the trivial line bundle on $Y$.
\end{rmk}

\bp[Proof of Theorem~\ref{pairs on M_R}]
We prove the vanishings in Lemma~\ref{explicit}. 
As in the proof of Theorem~\ref{torsion pairs}, we will first prove the $\PGL_2$-invariant vanishing holds on 
$X$ (cases (1) and (3)), on $X'$ (case (2)) and on $X''$ (case (4))
after tensoring with the devil line bundle $\DD^N$, $N\gg0$.
%,since it's this tensor product that will satisfy conditions of Theorem~\ref{Teleman}.
Later on we check the weight condition \eqref{asrgasrgasrh} in Theorem~\ref{Teleman}.

For (1), assuming condition \eqref{asrgasrgasrh}, 
$R\Ga\big(Z_R, ({F_{l',E'}}_{|Z_R})^\vee\otimes\coT_{l,E}\otimes\DD^N)=
R\Ga\big(X,\cO(r+l-|E'_p\cap R|+Nr,-E'_p\cap R'+NR', -E'_q+E_q)\otimes V_{l'}\big)^{\PGL_2}$,
which is clearly $0$ if $E'_q\nsubseteq E_q$. 
Since here we assume $e'_q\geq e_q$, we have that $E'_q\subseteq E_q$ if and only if $E_q=E'_q$. 
Assume $E_q=E'_q$. 
In this case, we need to consider
$V_{r+l-|E'_p\cap R|+Nr}\otimes V_{N-y_1}\otimes\ldots\otimes V_{N-y_r}\otimes V_{l'}$,
where $y_i=1$ if the corresponding index is in $E'_p\cap R'$ and $0$ otherwise.
We claim that the  $\PGL_2$-invariant part is $0$ by the Clebsch-Gordan formula 
(Lemma \ref{CG}).
It suffices to check that
$r+l-|E'_p\cap R|+Nr>Nr-|E'_p\cap R'|+l'$.
This follows if 
$l'+|E'_p\cap R|-|E'_p\cap R'|<r$, which holds by 
Corollary~\ref{dfbdfbdfb} (since $(l',E')$ is in group $1A$ or $1B$).

For (2), assuming \eqref{asrgasrgasrh}, 
$R\Ga\big(Z_{R'}, \coT_{l',E'}^\vee\otimes {F_{l,E}}_{|Z_{R'}}\otimes c_1(N_{Z_{R'}|\M_{R'}})\otimes\DD^N\big)=
R\Ga\big(X',\cO(E_p\cap R+NR, |E_p\cap R'|-r-l'+2(r-1)+Nr, E_q-E'_q)\otimes V_l\big)^{\PGL_2}$.
This is $0$ since in our case $E'_q\nsubseteq E_q$.

For (3),  assuming \eqref{asrgasrgasrh}, 
$R\Gamma\big(Z_R, \coT_{l',E'}^\vee\otimes\coT_{l,E}\otimes(\sum_{j\in J}\de_{1j}\big)\otimes\DD^N)$ equals $R\Gamma\big(X, \cO(2|J|+l-l'+Nr, NR', E_q-E'_q)\big)^{\PGL_2}$
for all $J\subseteq R\setminus\{1\}$. This is $0$ if $E'_q\nsubseteq E_q$. 
Assume now $E_q=E'_q$. We need to consider
$V_{2|J|+l-l'+Nr}\otimes V_{N}^{\otimes r},$
whose $\PGL_2$-invariant part is $0$ when $l>l'$ 
or when $l=l'$, $|J|>0$ by the Clebsch-Gordan formula
(Lemma \ref{CG}),
since in these cases we have 
$2|J|+l-l'+Nr>Nr$. 

For (4),  assuming \eqref{asrgasrgasrh}, 
$R\Gamma\big(Y, ({\coT_{l',E'}}_{|Y})^\vee\otimes  {\coT_{l,E}}_{|Y}\otimes c_1(N_{Y|Z_R})\otimes\DD^N\big)=R\Gamma\big(X'', \cO(l+l'+2+N, N, E_q-E'_q)\big)^{\PGL_2}$.
This is $0$ if $E'_q\nsubseteq E_q$. If $E_q=E'_q$ we have to consider 
$V_{l+l'+2+N}\otimes V_{N}$, 
whose $\PGL_2$-invariant part is $0$ by the Clebsch-Gordan formula
(Lemma \ref{CG}),
as $l+l'+2+N>N$. 

We now check that for each stratum, each of the cases (1)-(4) fall under the assumption on weights of Theorem  \ref{Teleman}.
The unstable loci in $X$ (resp., $X'$) corresponding to the loci $Z_{R}$ (resp., $Z_{R'}$) in $\M_{R'}$ have the following form:  

(The locus $K_I$), for $I\subseteq Q$, $|I|\geq s+1$ (resp., $|I|\geq s+2$), where $u$ (resp.,~$v$) and the indices in $I$ come together. In this case, 
$\eta=2|I|$. 

(The locus $K_{J,I}$), for $J\subseteq R'$ (resp., $J\subseteq R$), $I\subseteq Q$, $J\neq\emptyset$, $|I|\geq 0$, where $u$ (resp., $v$) and the indices in $J$ and $I$ come together.
In this case, $\eta=2|I|+2|J|$. 

(The locus $L_I$), for $I\subseteq Q$, $|I|\geq s+2$ (resp., $|I|\geq s+1$), where the indices in $R'$ and $I$ (resp., $R$ and $I$) come together. In this case, 
$\eta=2|I|+2r-2$.

The devil line bundle $\cO(r,R',0)$ on $X$ has the property that its weight for $K_{J,I}$
is %$$\wt_\la\cO(r,R',0)_{|z_{\{u\}\cup J\cup I}}=
$r+|J|-|R'\setminus J|=2|J|>0$
while its weight for the other strata is~$0$ (similarly for $\cO(R,r,0)$ on $X'$).
Therefore, the condition \eqref{asrgasrgasrh}
for the stratum $K_{J,I}$ can be achieved by tensoring with a high 
enough multiple of this line bundle. We only need to consider the remaining strata.

Consider first cases (1) and (3) involving $Z_R\subseteq \M_{R'}$. 
For case (1) we need to verify that  the vector bundle 
$\cO(r+l-|E'_p\cap R|,-E'_p\cap R', E_q-E'_q)\otimes V_{l'}$
on $X$ has the weights $>-\eta$: 
For the stratum $K_I$ ($|I|\geq s+1$), we need to prove that 
$r+l-l'-|E'_p\cap R|+|E'_p\cap R'|+|E_q\cap I|-|E'_q\cap I|-|E_q\cap I^c|+|E'_q\cap I^c|>-2|I|$.
By (\ref{another}) for the pair $(l',E'_p)$, it suffices to prove that 
$l+|E_q\cap I|-|E'_q\cap I|-|E_q\cap I^c|+|E'_q\cap I^c|\geq -2|I|$.
Since the left hand side is greater than $-|I^c|-|I|=-q-1=-2s-2$ and $|I|\geq s+1$, the result follows.
For the stratum $L_I$ ($|I|\geq s+2$), we need to prove that 
$-r-l-l'+|E'_p\cap R|-|E'_p\cap R'|+|E_q\cap I|-|E'_q\cap I|-|E_q\cap I^c|+|E'_q\cap I^c|>-2|I|-2r+2$.
By (\ref{another}) for the pair $(l',E'_p)$, it suffices to prove that\break 
$-l+|E_q\cap I|-|E'_q\cap I|-|E_q\cap I^c|+|E'_q\cap I^c|\geq -2|I|+2$.
As $-|E'_q\cap I|+|E'_q\cap I^c|\geq -|I|$, it suffices to prove that 
$-l+|E_q\cap I|-|E_q\cap I^c|\geq -|I|+2$.
This follows from Lemma \ref{another3} since $|I|\geq s+2$
and the pair $(l,E)$ is in group $2A$ or $2B$. 

For case (3) we need to verify that the  line bundle 
$\cO(2|J|+l-l', 0, E_q-E'_q)$
on $X$ has weights $>-\eta$
for all $0\leq |J|\leq r-1$.  For the stratum $K_I$, we need to prove that 
$l-l'+2|J|+|E_q\cap I|-|E'_q\cap I|-|E_q\cap I^c|+|E'_q\cap I^c|>-2|I|$.
Note that it suffices to prove that 
$-l'-|E'_q\cap I|+|E'_q\cap I^c|-|I^c|>-2|I|$.
This follows from Lemma \ref{another3} since $|I|\geq s+1$ for the locus $K_I$ in $X$. 
For the stratum $L_I$ ($|I|\geq s+2$), we need to prove that 
$-l+l'-2|J|+|E_q\cap I|-|E'_q\cap I|-|E_q\cap I^c|+|E'_q\cap I^c|>-2|I|-2r+2$,
or equivalently,
$-l+l'+|E_q\cap I|-|E'_q\cap I|-|E_q\cap I^c|+|E'_q\cap I^c|>-2|I|$.
As the left hand side is greater or equal than $-l+|E_q\cap I|-|E_q\cap I^c|-|I|$, it suffices to prove 
$-l+|E_q\cap I|-|E_q\cap I^c|>-|I|$.
This follows from Lemma \ref{another3} since $|I|\geq s+2$
and the pair $(l,E)$ is in group $2A$ or $2B$. 

For case (2), involving $Z_{R'}\subseteq \M_{R'}$, the relevant vector bundle on $X'$ 
is
$\cO(E_p\cap R, |E_p\cap R'|+r-l'-2, E_q-E'_q)\otimes V_l$.
For the stratum $K_I$ ($|I|\geq s+2$), we need to prove that 
$r-l-l'-2+|E_p\cap R'|-|E_p\cap R|+|E_q\cap I|-|E'_q\cap I|-|E_q\cap I^c|+|E'_q\cap I^c|>-2|I|$.
By (\ref{another}) for the pair $(l,E_p)$, it suffices to prove that 
$-l'-1+|E_q\cap I|-|E'_q\cap I|-|E_q\cap I^c|+|E'_q\cap I^c|>-2|I|$.
As the left hand side is greater or equal than $-l'-1-|E'_q\cap I|+|E'_q\cap I^c|-|I^c|$, it suffices to prove that 
$-l'-|E'_q\cap I|+|E'_q\cap I^c|\geq |I^c|-2|I|+2$.
This follows from Lemma \ref{another3} since $|I|\geq s+2$.  
For the stratum $L_I$ ($|I|\geq s+1$), we need to prove that 
$-r+l'-l+2+|E_p\cap R|-|E_p\cap R'|+|E_q\cap I|-|E'_q\cap I|-|E_q\cap I^c|+|E'_q\cap I^c|>-2|I|-2r+2$.
By (\ref{another}) for the pair $(l,E_p)$, it suffices to prove that 
$l'+|E_q\cap I|-|E'_q\cap I|-|E_q\cap I^c|+|E'_q\cap I^c|\geq -2|I|$.
As the left hand side is greater or equal than $-|I|-|I^c|=-q-1$, 
the inequality follows since $|I|\geq s+1$. 

%\smallskip

We now consider Case (4). Up to symmetry, the unstable loci in 
$X'=\PP^1_u\times\PP^1_v\times (\PP^1)^{q+1}$
corresponding to $Y=Z_{R}\cap Z_{R'}\subseteq \M_{R'}$ are
%have the following form: 

(The locus $K'_I$), for $I\subseteq Q$, $|I|\geq s+1$, where $u$ and the indices in $I$ come together. In this case, 
$\eta=2|I|$. 

(The locus $K''_I$), for $I\subseteq Q$, $|I|\geq s+2$, where $v$ and the indices in $I$ come together. In this case, 
$\eta=2|I|$. 

(The locus $K'''_I$), for $I\subseteq Q$, $|I|\geq0$, where $u$, $v$ and the indices in $I$ come together. In this case, 
$\eta=2|I|+2$.  

As before, the devil line bundle $\cO(1,1,0)$ on $X''$ has the property that 
its weight at $K'''_I$ is %$$\wt_\la\cO(1,1,0)_{|z_{\{u,v\}\cup I}}=
$2>0$,
while its weight for the other strata is $0$. 
Hence, as before, we only have to consider the strata $K'_I$, $K''_I$.  

We verify that the weights of $\cO(l+l'+2, 0, E_q-E'_q)$ are $>-2|I|$. 
For the stratum $K'_I$ ($|I|\geq s+1$), we need to prove that 
$l+l'+2+|E_q\cap I|-|E'_q\cap I|-|E_q\cap I^c|+|E'_q\cap I^c|>-2|I|$.
This is clear, as the left hand side is greater or equal than 
$2-|I|-|I^c|=2-(q+1)=-2s$ and $|I|\geq s+1$. 

For the stratum $K''_I$ ($|I|\geq s+2$), we need to prove that 
$-l-l'+|E_q\cap I|-|E'_q\cap I|-|E_q\cap I^c|+|E'_q\cap I^c|>-2|I|+2$.
We apply  Lemma \ref{another3} to the pairs $(l,E_q)$ 
and $(l',E'_q)$. 
Recall that $e'_q\geq e_q$. If in case $2A$ and $e'_q\leq s$ (resp., case $2B$ and $e'_q\leq s+1$ ), then Lemma \ref{another3} implies that 
$-l+|E_q\cap I|-|E_q\cap I^c|\geq -s+1$, (resp., $\geq -s$),
$-l'-|E'_q\cap I|+|E'_q\cap I^c|\geq -s+1$, (resp., $\geq -s$),
and the inequality follows by summing up, as $-2s>-2|I|+2$. 

If in case $2A$ and $e'_q>s\geq e_q$ (resp.,  case $2B$ and $e'_q>s+1\geq e_q$),
then Lemma \ref{another3} implies that 
$-l+|E_q\cap I|-|E_q\cap I^c|\geq -s+1$, (resp., $\geq -s$),
$-l'-|E'_q\cap I|+|E'_q\cap I^c|\geq -2|I|+s+2$, 
(resp., $\geq -2|I|+s+3$),
and the inequality follows again by summing up. 

If in case $2A$ and $e_q>s$ (resp., case $2B$ and $e_q>s+1$),  
then Lemma \ref{another3} implies that 
$-l+|E_q\cap I|-|E_q\cap I^c|\geq -2|I^c|+s+2$ (resp., $\geq 2|I^c|+s+3$),
$-l'-|E'_q\cap I|+|E'_q\cap I^c|\geq -2|I|+s+2$ (resp., $\geq 2|I^c|+s+3$),
and the inequality follows again by summing up, since 
$-2|I|-2|I^c|+2s+4>-2|I|+2$,
(as $|I^c|<s+1$, since $|I|\geq s+2$).  
\ep

%%%%%%%%%%%%%%%%%%%%%%%%%%%%%%%%%%%%%%%%%%%%%%%%%%%%%%%%%%%%%%%%%%%%%%%%%%%%%%%%%%%%%%%

\section{Fullness of the  exceptional collection on $\M_{2r,2s+1}$}\label{fullness p,q section}

In this section we finish the proof of Theorem~\ref{p,q case} (see also Remark~\ref{SgSRgSRhSRh}),
which gives a full equivariant exceptional collection in $D^b(\M_{p,q})$ for $p=2r\geq4$, $q=2s+1\geq1$.
Exceptionality of the collection was proved in Section~\ref{exceptional p,q section}, here we prove fullness.
Since $\M_{p,q}$ is a GIT quotient, its derived category is generated by vector bundles $F_{l,E}$ for all pairs $(l,E)$
such that $l+e$ is even (Proposition~\ref{sjhfgkjshgfqkjhw}). So it remains to prove the following theorem.

\begin{thm}\label{K games}
Assume $p=2r\geq4$, $q=2s+1\geq1$.
The vector bundles $\{F_{l,E}\}$ for all $l\geq0$, $E\subseteq P\cup Q$, $e=|E|$, with $l+e$ even, are generated by 
any of the two exceptional collections in $D^b(\M_{p,q})$ given by Theorem \ref{p,q case} and Remark~\ref{SgSRgSRhSRh}. In particular, these exceptional collections 
are full. 
\end{thm}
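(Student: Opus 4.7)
The plan is to mimic the proof of fullness in the odd $p$ case (Lemma \ref{sgsgsgsg}). The crucial preliminary observation is that, because $q=2s+1$ is odd, there are no strictly semistable points and $\M_{p,q}=\X_{p,q}$ is a smooth GIT quotient of $(\bP^1)^{p+q}$ by $\PGL_2$; hence by Prop.~\ref{sjhfgkjshgfqkjhw} the vector bundles $\{F_{l,E}\}$ already generate $D^b(\M_{p,q})$. So the theorem reduces to showing that each $F_{l,E}$ lies in the triangulated subcategory generated by the collection of Thm.~\ref{p,q case}. We would induct on the score $S(l,E)=l+\min(e_p,p-e_p)+\min(e_q,q-e_q)$ of Notation~\ref{asgasfgasgsg} with a secondary induction on $l$; the base case is when $(l,E)$ lies in the range of group $1A$ (or, after a dualisation in the spirit of Cor.~\ref{complement}, group $1B$).

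For the inductive step I would run Koszul games analogous to those in Lemma~\ref{sgsgsgsg}. Each game selects a subset $I$ of marked points with $\sum_{i\in I}a_i>1$, tensors the Koszul resolution of the diagonal $\Delta_{I\cup\{x\}}$ in $(\bP^1)^{p+q+1}$ with the line bundle $N_{l,E}$, pushes the resulting complex forward to $\M_{p,q}$, and uses Lemma~\ref{obviosddd} to express $F_{l,E}$ in terms of the remaining cohomology sheaves. Since $\pi(\Delta_{I\cup\{x\}})$ is unstable, the top Koszul term vanishes on $\M_{p,q}$, which is what makes the game effective.

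Three flavours of Koszul games will cover all configurations of $(e_p,e_q)$:
\begin{itemize}
\item[(K1)] When $e_p\le r-1$ or $e_p\ge r+1$, take $|I|=r+1$ inside $P$, disjoint from $E_p$ or contained in $E_p$ respectively; since $(r+1)a>1$, this produces an exact sequence whose other terms are $F_{l',E'}$'s of strictly smaller score, handled by induction.
\item[(K2)] When $e_p=r$ and $e_q$ is outside the group-$2$ range, take $I=R\cup J$ with $R=E_p$ of size $r$ and $J\subset Q$ of size $s+1$, so that $ra+(s+1)b>1$; this reduces $e_q$ and produces pieces supported on $Z_R$.
\item[(K3)] The Koszul resolution of the boundary complete intersection $Z_R=\bigcap_{i\in R\setminus\{1\}}\delta_{1i}$ inside $\M_{p,q}$ (tensored with the appropriate line bundle) writes each torsion sheaf $\cT_{l,E}$ of group $2$ as a bounded complex of line bundles of $F_{0,E'}$-type, and symmetrically lets us replace any $F_{l,E}$ with $e_p=r$ by an extension involving a $\cT_{l,E}$ and $F$'s produced by (K1).
\end{itemize}

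The main obstacle, as in Section~\ref{exceptional p,q section}, will be the combinatorial bookkeeping in the case $e_p=r$ where both the vector bundle $F_{l,E}$ and the torsion sheaf $\cT_{l,E}$ may fall outside their respective ranges: one has to chain together games (K2) and (K3) and verify, using the numerical estimates of Section~\ref{inequalities section} (particularly Lemma~\ref{critical on W} and Lemma~\ref{another2}), that every surviving term either satisfies the group-$1A/1B$ constraint $l'+\min(e'_p,p+1-e'_p)\le r-1$, the group-$2$ constraint $l'+\min(e'_q,q-e'_q)\le s-1$ (with $e'_p=r$), or has strictly smaller score than $(l,E)$. This score bookkeeping is the technical heart of the argument; once it is in place, Lemma~\ref{obviosddd} and the induction hypothesis conclude, and fullness of both collections of Thm.~\ref{p,q case} follows.
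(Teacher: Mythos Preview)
Your approach is the paper's: reduce via Prop.~\ref{sjhfgkjshgfqkjhw} to generating every $F_{l,E}$, induct on the score $S(l,E)$ with a secondary induction on~$l$, and run Koszul games on diagonals $\Delta_{I\cup\{x\}}\subset(\bP^1)^{p+q+1}$ together with one special game that brings in $\cT_{l,E}$.

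Two points in your sketch need correction. In (K2) your choice $R=E_p$ only works in the game-2 style $I\subseteq E$, hence only when $e_q\ge s+1$; for the subcase $e_p=r$, $e_q\le s$, $(l,E)$ not in group~2 you must take the complementary $I_p=P\setminus E_p$ and $I_q\subseteq Q\setminus E_q$ with $|I_q|=s+1$ (game-1 style), as the paper does in its Case~2. In neither subcase does (K2) ``produce pieces supported on $Z_R$'': all outputs are vector bundles $F^\vee_{\tilde l,\tilde E}$, and the score bookkeeping shows they have score $\le S(l,E)$ with strictly smaller $l$ when equality holds.

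The torsion object enters only through your (K3), which in the paper is the Koszul complex of $\Delta_{I\cup\{x\}}$ with $I=R$ exactly (not $R\cup J$): since $|R|=r$, the image $\pi(\Delta_{R\cup\{x\}})$ is \emph{semistable} and equals $Z_R$, so the first pushforward term is $\cT^\vee_{l,E}$ rather than zero, while every remaining $F^\vee_{\tilde l,\tilde E}$ in this game has \emph{strictly} smaller score. That strict drop is the key: whenever $(l,E)$ lies in group~2, $\cT_{l,E}$ is already in the collection and $F_{l,E}$ is then generated by induction. Your description of (K3) as the Koszul resolution of $Z_R$ inside $\M_{p,q}$ is a different (though related) construction, used in Notation~\ref{TTTT} but not here.
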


\bp
We prove equivalently (see Remark \ref{dual collection}) that the dual of the collection in Theorem  \ref{p,q case} (groups $1A$ and $2$, resp.~$1B$ and $2$, 
see Remark \ref{1B stuff}), 
call it $\cC$, generates all the dual vector bundles $F^\vee_{l,E}$. 
We will do an induction based on the score $S(l,E)$ (see Notation~\ref{asgasfgasgsg}).
For equal scores, we do an induction on $l$. 
If $S(l,E)\leq r-2$, then $l+\min\{e_p+1,p+1-e_p\}\leq r-1$, i.e., 
the pair $(l,E)$ belongs to both groups $1A$ and $1B$. This ensures the base case of the induction. 

We now assume that we have a pair $(l,E)$ (not in group $1A$,  resp.~$1B$) such that 
the bundle $F^\vee_{l',E'}$ is generated by $\cC^\vee$ for 
any $(l',E')$ with $S(l',E')<S(l,E)$, or if $S(l,E)=S(l',E')$ and $l'<l$. 
Like in the proof of Claim~\ref{sgsgsgsg}, we 
will use  constructions based on the Koszul complex. We will call a set $I\subseteq P\cup Q$ \emph{stable}  if
$i_p\leq r-1$ or if $i_p=r$, $i_q\leq s$, 
where as usual we denote $I_p=I\cap P$,  $I_q=I\cap Q$, $i_p=|I_p|$, $i_q=|I_q|$. 
Otherwise, $I$ is called \emph{unstable} (the usual notions of GIT stability giving $\M_{p,q}$). Let $i:=|I|$.
Given a subset $I\subseteq P\cup Q$, we consider the Koszul complex for 
$\De=\cap_{k\in I}\de_{kx}
\subseteq(\PP^1)^{p+q}\times\PP_x^1$,
\begin{equation}\label{svsgsRGsrg}
0\cong\big[\cO_{\De}\leftarrow\cO\leftarrow\bigoplus_{k\in I}\cO(-{\bf e_k}-{\bf e_x}) %\leftarrow\bigoplus_{k, j\in I}\cO(-e_k-e_j-2e_x)%$$$$
\leftarrow\ldots\leftarrow\cO(-\sum_{k\in I}{\bf e_k}-i{\bf e_x})\big]. 
\end{equation}
Here $\{\bf e_i\}$ for $i\in P\cup Q\cup\{x\}$ is the standard basis of $\Pic(\PP^1)^{p+q+1}\cong\ZZ^{p+q+1}$.
\underline{Koszul Game 1:} Assume $I$ is an unstable set  disjoint from $E\subseteq P\cup Q$.
Assume $e+l$ even. 
Tensoring \eqref{svsgsRGsrg}  with $\cO(-\sum_{k\in E}{\bf e_k}+l{\bf e_x})$,
pushing forward to $(\PP^1)^{p+q}$, and restricting to the semistable locus  gives
by Corollary \ref{complement} and 
Grothendieck-Verdier duality (\ref{GV duality}) the following
objects:
$$0\quad F^\vee_{l,E}\quad\ldots\quad \bigoplus_{J\subseteq I, |J|=j} F^\vee_{l-j,E\cup J}\quad\ldots
 \bigoplus_{J\subseteq I, |J|=l} F^\vee_{0,E\cup J}\quad 0$$
$$\bigoplus_{J\subseteq I, |J|=l+2} F^\vee_{0,E\cup J}[-1]\quad\ldots\quad
\bigoplus_{J\subseteq I, |J|=j} F^\vee_{j-l-2,E\cup J}[-1]\quad\ldots\quad F^\vee_{i-l-2,E\cup I}[-1],$$
where the second line appears only if $i\geq l+2$. If $i\leq l$, the first line stops at $F^\vee_{l-i,E\cup I}$. By Lemma~\ref{obviosddd}, any of these objects is generated by the rest.

%\smallskip

\underline{Koszul Game 2:} Assume $I$ is an unstable set disjoint from $E'\subseteq P\cup Q$.
Let $E=E'\cup I$, $e+l$  even. 
Tensoring \eqref{svsgsRGsrg} with $\cO(-\sum_{k\in E'}{\bf e_k}+(i-2-l){\bf e_x})$,
pushing forward, and restricting to the semistable locus  gives 
by Corollary \ref{complement} and Grothendieck-Verdier duality (\ref{GV duality}) the following 
objects:
$$0\quad \ F^\vee_{i-2-l,E'}\quad \ldots\quad \bigoplus_{J\subseteq I, |J|=j} F^\vee_{i-2-l-j,E'\cup J}\quad  \ldots
 \bigoplus_{J\subseteq I, |J|=i-2-l} F^\vee_{0,E'\cup J}$$
 $$0\quad \bigoplus_{J\subseteq I, |J|=i-l} F^\vee_{0,E'\cup J}[-1]\ \ldots\ 
\bigoplus_{J\subseteq I, |J|=j} F^\vee_{j-i+l,E'\cup J}[-1]\ \ldots\ F^\vee_{l,E'\cup I}[-1],$$
where the first line appears only if $i\geq l+2$, while if $i\leq l$ the second line starts at $F^\vee_{l-i,E'}$. 
By Lemma~\ref{obviosddd} any of these objects is generated by the rest.

%\smallskip

\underline{Koszul Game 3:} Assume $I=E_p=R\subseteq P$, $E\subseteq P\cup Q$, $|R|=r$, $e+l$ even. 
We tensor\eqref{svsgsRGsrg}  with 
$\cO(-\sum_{j\in E_q}{\bf e_j}+(r-2-l){\bf e_x})$, push forward, and restrict to the semistable locus. 
Unlike in the previous games, the torsion object 
has support in the semistable locus. 
Note that if $\cU_R\ra Z_R$ is the universal family over $Z_R$, then $\De$ corresponds to the section $\si_u(Z_R)$. 
Identifying $\De\cong\PP^1_u\times(\PP^1)^{r+q}$, where $u$ is the marking corresponding to indices in $R$ we have
$\cO\bigl(-\sum_{j\in E_q}{\bf e_j}+
(r-2-l){\bf e_x}\bigr)_{|\De}\cong\cO\big(-\sum_{j\in E_q}{\bf e_j}+(r-2-l){\bf e_u}\big)$, 
which descends to $Z_R\subseteq\M_{p,q}$ as the line bundle 
$$-\frac{r-2-l}{2}\psi_u+\frac{1}{2}\sum_{j\in E_q}\psi_j=-\frac{r-2-l+e_q}{2}\psi_u-\sum_{j\in E_q}\de_{ju}$$ 
(use Remark \ref{dictionary}). 
Using Lemma~\ref{dual} 
and Grothendieck-Verdier duality (\ref{GV duality}),
the terms  have the following derived push-forwards: 
%\newpage
$$\cT^\vee_{l,E}[r-1]\quad  F^\vee_{r-l-2,E_q}\quad\ldots\quad \bigoplus_{J\subseteq R\atop |J|=j} F^\vee_{r-2-l-j,E_q\cup J}\quad \ldots\quad \bigoplus_{J\subseteq R\atop |J|=r-2-l} F^\vee_{0,E_q\cup J}$$
$$0\quad  \bigoplus_{J\subseteq R\atop |J|=r-l} F^\vee_{0,E_q\cup J}[-1]\ldots\quad \bigoplus_{J\subseteq R\atop |J|=j} F^\vee_{l-r+j,E_q\cup J}[-1]
\ldots  F^\vee_{l,E_q\cup R}[-1]=F^\vee_{l,E}[-1],$$
where the first part of the sequence of bundles $F^\vee_{r-2-l-j, E_q\cup J}$ 
appears only when $r\geq l+2$, while if $r\leq l$, then the second part of the sequence starts at 
$F^\vee_{l-r,E_q}$. 
%Analyzing the  spectral sequence  as in Section~\ref{fullness odd p}, we obtain the following exact sequence of sheaves:
%$$0\leftarrow \cT^\vee_{l,E}[r-1]\leftarrow F^\vee_{r-l-2,E_q}\leftarrow\ldots\leftarrow \bigoplus_{J\subseteq R, |J|=j} F^\vee_{r-2-l-j,E_q\cup J}\leftarrow\ldots$$
%$$\ldots\leftarrow \bigoplus_{J\subseteq R, |J|=r-2-l} F^\vee_{0,E_q\cup J}\leftarrow  \bigoplus_{J\subseteq R, |J|=r-l} F^\vee_{0,E_q\cup J}\leftarrow\ldots$$
%$$\ldots\leftarrow\bigoplus_{J\subseteq R, |J|=j} F^\vee_{l-r+j,E_q\cup J}\leftarrow\ldots\leftarrow F^\vee_{l,E_q\cup R}=F^\vee_{l,E}.$$
Note that the score of every $F^\vee_{\tl,\tE}$ with $(\tl,\tE)\neq (l,E)$ in the Koszul game 3 is strictly lower than the score of $F^\vee_{l,E}$:

(a) For $(\tl,\tE)=(r-2-l-j, E_q\cup J)$ ($0\leq j\leq r-2-l$), the score $\tS$ is 
 $\tS=(r-2-l-j)+j+\min\{e_q,q-e_q\}=r-2-l+\min\{e_q,q-e_q\}$,
and clearly, we have $\tS<S:=S(l,E)=l+r+\min\{e_q,q-e_q\}$. 

(b) For $(\tl,\tE)=(l-r+j, E_q\cup J)$ ($0\leq j\leq r$), the score $\tS$ is 
$\tS=(l-r+j)+j+\min\{e_q,q-e_q\}=l-r+2j+\min\{e_q,q-e_q\}$,
and clearly, we have again $\tS<S$ if $J\neq R$, i.e., $\tE\neq E$.

In particular, if our starting pair $(l,E)$ is in group $2$, since $\cT_{l,E}$ is in $\cC$, the bundle $F^\vee_{l,E}$ is generated by $\cC^\vee$ 
by induction, since all other terms in the above Koszul resolution have lower score. 
Hence we may assume that $(l,E)$ is not in group $2$ (in addition to not being in group $1A$ or $1B$). 

There are four cases in the induction argument.

%\medskip

\underline{Case 1: $e_p<r$}. Since we assume that $(l,E)$ is not in group $1A$ (resp.,~$1B$), we have $l+e_p\geq r$ (resp.,~$l+e_p\geq r-1$).
We play the Koszul game~1 with a set $I=I_p$, $i_p=r+1$, 
$i_q=0$
(``minimal" unstable $I$ disjoint from $E$).
We verify that, for every $F^\vee_{\tl,\tE}$ in the list, its 
score $S(\tl,\tE)$ is less or equal than $S:=S(l,E)=l+e_p+\min\{e_q,q-e_q\}$.

(a) Let  $(\tl,\tE)=(l-j, E\cup J)$, $0<j\leq l$. Note that $l=0$ 
cannot occur for this type of $F^\vee_{\tl,\tE}$. The score $S(\tl,\tE)$ is 
$(l-j)+\min\{e_p+j,p-e_p-j\}+\min\{e_q,q-e_q\}=
l+\min\{e_p, p-e_p-2j\}+\min\{e_q,q-e_q\}\leq l+e_p+\min\{e_q,q-e_q\}=S$.
If $S(\tl,\tE)=S$ then, as $j>0$, we are done by induction on $l$. 

(b)  For  $(\tl,\tE)=(j-l-2, E\cup J)$, $l+2\leq j\leq i=r+1$, the score $S(\tl,\tE)$ is 
 $(j-l-2)+\min\{e_p+j,p-e_p-j\}+\min\{e_q,q-e_q\}$.
 Since $e_p+j\geq e_p+l+2\geq r+1$, we have 
 $\min\{e_p+j,p-e_p-j\}=p-(e_p+j)$. When working with group $1A$, since $l+e_p\geq r$, we have 
 $S(\tl,\tE)=p-e_p-l-2+\min\{e_q,q-e_q\}<S$.
 When working with group $1B$, we still have to consider the situation $l+e_p=r-1$.
 But then
 $\tl+(p-\tilde e_p)=(j-l-2)+(p-e_p-j)=r-1$,
  and therefore $F_{\tl,\tE}$ is in $1B$. 

\underline{Case 2: $e_p=r$, $e_q\leq s$}. 
As $(E,l)$ is not in group $2$, we must have
$l+e_q\geq s$. 

We now play the Koszul game 1 with 
a set $I$ with $i_p=r$, $i_q=s+1$ (``minimal" unstable $I$).
We verify that the 
the score of every $F^\vee_{\tl,\tE}$ in the complex is less or equal than 
$S:=S(l,E)=l+r+e_q$.

a) Let  $(\tl,\tE)=(l-j, E\cup J)$, $0<j\leq l$. Note again that $l=0$ 
cannot occur for this type of $F^\vee_{\tl,\tE}$. 
 The score $S(\tl,\tE)=(l-j)+(r-j_p)+\min\{e_q+j_q,q-e_q-j_q\}\leq (l-j)+(r-j_p)+(e_q+j_q)=l+r+e_q-2j_p\leq S$.
%with equality if and only if $j_p=0$. 
If $S(\tl,\tE)=S$ then, as $j>0$, we are done by induction on $l$.

(b)  For  $(\tl,\tE)=(j-l-2, E\cup J)$ ($l+2\leq j\leq i$), the score $S(\tl,\tE)
=(j-l-2)+(r-j_p)+\min\{e_q+j_q,q-e_q-j_q\}
\leq (j-l-2)+(r-j_p)+(q-e_q-j_q)<l+r+e_q=S$
 since we assume $l+e_q\geq s$. 
 
\underline{Case 3: $e_p\geq r+1$}. 
Recall that we may assume that  
$(E,l)$ is not in group $1A$ (resp., ~$1B$). Then  
$l+(p+1-e_p)\geq r$ (resp., ~$l+(p-e_p)\geq r$), or equivalently, $e_p\leq l+r+1$ (resp., ~$e_p\leq l+r$). 
We now play the Koszul game 2 with the set $I=I_p\subseteq E_p$, $i=i_p=r+1$ ($i_q=0$)
and with $E'_p=E_p\setminus I_p$, $E'_q=E_q$ ($e'_p=e_p-(r+1)$, $e'_q=e_q$ -- so ``minimal" unstable $I$).
We verify that the score of every $F^\vee_{\tl,\tE}$ in the complex is less than or equal to
$S:=S(l,E)=l+(p-e_p)+\min\{e_q,q-e_q\}$.

a) For  $(\tl,\tE)=(i-l-2-j, E'\cup J)$, $0\leq j\leq i-l-2$, the score $S(\tl,\tE)=(i-l-2-j)+\min\{e'_p+j, p-e'_p-j\}+\min\{e_q,q-e_q\}
\leq (i-l-2-j)+(e'_p+j)+\min\{e_q,q-e_q\}=e_p-l-2+\min\{e_q,q-e_q\}$.
In case $1B$, since $e_p\leq l+r$, we have $S(\tl,\tE)<S$.
In case $1A$, since $e_p\leq l+r+1$, we have $S(\tl,\tE)<S$, unless $e_p=l+r+1$, when $S(\tl,\tE)=S$. 
However, if $e_p=l+r+1$, we can move on as
$F^\vee_{\tl,\tE}$ is in group $1A$:
$\tl+\min\{e'_p+j, p+1-e'_p-j\}\leq (i-l-2-j)+(e'_p+j)=r-1$.

b) For $(\tl,\tE)=(j-i+l, E'\cup J)$, $\max\{i-l,0\}\leq j<i$, the score $S(\tl,\tE)=(j-i+l)+\min\{e'_p+j,p-e'_p-j\}+\min\{e_q,q-e_q\}\leq (j-i+l)+(p-e'_p-j)+\min\{e_q,q-e_q\}=l+p-e_p+\min\{e_q,q-e_q\}=S$
(since $e'_p=e_p-i$).  
If~$S(\tl,\tE)=S$, we are done by induction on $l$ since $j-i+l<l$. Note again that in this case $l>0$.

%$I=I_p=E_p$ ($i_p=r+1$, $i_q=0$)
%and with $E'$ such that $E=E'\sqcup I$. In particular, $e'_p=0$, $e'_q=e_q$. 
 %We verify that the score of every $F^\vee_{\tl,\tE}$ in the complex is less or equal than 
%$$s:=s(l,E)=l+(p-e_p)+\min\{e_q,q-e_q\}.$$

%\medskip

\underline{Case 4: $e_p=r$, $e_q\geq s+1$}. 
Recall that we may assume that  $(E,l)$ is in group $2$,
i.e., 
$l+(q-e_q)\geq s$, or equivalently, $e_q\leq l+s+1$. 
We now play the Koszul game 2 with the set $I$ with $I_p=E_p$, $I_q=E_q$, i.e., $E'=\emptyset$
(``maximal" unstable $I$ with $I\subseteq E$). In particular, we have $i=e$. 
We verify that the score of every $F^\vee_{\tl,\tE}$ in the complex is less or equal than 
$S:=S(l,E)=l+r+q-e_q$.

a) For  $(\tl,\tE)=(e-l-2-j, J)$, $0\leq j\leq e-l-2$, 
the score $S(\tl,\tE)=(e-l-2-j)+j_p+\min\{j_q,q-j_q\}\leq (e-l-2-j_q)+j_q=e-l-2=r+e_q-l-2<l+r+q-e_q=S$,
since by assumption $e_q\leq l+s+1$.

b) $(\tl,\tE)=(j-e+l, J)$ ($\max\{e-l,0\}\leq j< e$), the score 
$S(\tl,\tE)=(j-e+l)+j_p+\min\{j_q,q-j_q\}\leq j_q-e+l+2j_p+(q-j_q)=q-e+l+2j_p=q-e_q-r+l+2j_p\leq l+r+q-e_q$,
with equality if and only if $j_q\geq s+1$, $j_p=r$. If $S(\tl,\tE)=S$ then, since $j<e$, we are done by induction on $l$ since 
$j-e+l<l$. Note again that in this case $l>0$. 
\ep
Along the proof of Theorem \ref{K games}, we used the following Lemma, that we prove now. 

\begin{lemma}\label{dual}
Let $\M$ be one of $\M_{p,q}$ or $\M_{p,q+1}$. Let $R=\{1,\ldots,r\}\subseteq P$, $|R|=r$ and 
let $i: Z_R\ra\M$ denotes the inclusion map. If $(l,E)$ is a pair with $E_p=R$, the derived dual of 
$\cT_{l,E}$ %=i_*\big(\frac{e_q-r-l}{2}\psi_u+\sum_{j\in E_q}\de_{ju}\big)$
is given by
$\cT^\vee_{l,E}=i_*\left(\frac{2-e_q-r+l}{2}\psi_u-\sum_{j\in E_q}\de_{ju}\right)[1-r]$.
\end{lemma}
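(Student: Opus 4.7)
The plan is to compute the derived dual by Grothendieck--Verdier duality applied to the closed embedding $i\colon Z_R\hookrightarrow \M$. Since $Z_R=\bigcap_{j\in R\setminus\{1\}}\de_{1j}$ is the transverse intersection of $r-1$ smooth boundary divisors (note $Z_R\cong\M_{\{u\}\cup R'\cup Q}$ is smooth), the embedding $i$ is a local complete intersection of codimension $r-1$, and the normal bundle splits as
\[ N_{Z_R/\M}\;\cong\;\bigoplus_{j\in R\setminus\{1\}}\cO_{Z_R}(\de_{1j}). \]

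First I would identify $\det N_{Z_R/\M}$. By Lemma~\ref{psiX}(i), each class $\de_{1j}|_{Z_R}$ equals $-\psi_u$ for $j\in R\setminus\{1\}$, so
\[ \det N_{Z_R/\M}\;=\;\cO_{Z_R}\bigl(-(r-1)\psi_u\bigr), \]
which is consistent with the computation of $c_1(N_{Z_R/\M})$ recorded in Lemma~\ref{psiX}(iii).

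Next I would invoke Grothendieck--Verdier duality for the lci embedding $i$: one has $i^!\cO_\M\simeq\det(N_{Z_R/\M})[-(r-1)]$, hence for any line bundle $L$ on $Z_R$
\[ R\cHom_{\M}(i_*L,\cO_\M)\;\simeq\;i_*\bigl(L^\vee\otimes \det N_{Z_R/\M}\bigr)[-(r-1)]. \]
Applied to $L=\tfrac{e_q-r-l}{2}\psi_u+\sum_{j\in E_q}\de_{ju}$, the coefficient of $\psi_u$ in $L^\vee\otimes\det N_{Z_R/\M}$ is
\[ -\frac{e_q-r-l}{2}-(r-1)\;=\;\frac{2-e_q-r+l}{2}, \]
while the boundary terms contribute $-\sum_{j\in E_q}\de_{ju}$. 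Combining these with the cohomological shift $[-(r-1)]=[1-r]$ yields exactly the stated formula for $\cT_{l,E}^\vee$.

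There is no real obstacle here beyond unwinding conventions. The only points requiring care are that $Z_R$ is smooth and lci in $\M$ (so that the duality formula $i^!\cO_\M=\det(N)[-\codim]$ applies in its simplest form), and the consistent use of additive notation for line bundle classes on $Z_R$ when passing between $L$, $L^\vee$, and the normal bundle twist.
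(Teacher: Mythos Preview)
Your proof is correct and follows essentially the same approach as the paper: compute the derived dual of $i_*L$ via Grothendieck--Verdier duality (equivalently, via the formula $\cO_{Z_R}^\vee\simeq\det N_{Z_R/\M}\otimes\cO_{Z_R}[-(r-1)]$, which is \cite[3.40]{Huybrechts}), then identify $\det N_{Z_R/\M}=-(r-1)\psi_u$ from the complete-intersection description of $Z_R$. One small slip: the identity $\de_{1j}|_{Z_R}=-\psi_u$ for $j\in R\setminus\{1\}$ is Lemma~\ref{psiX}(ii), not (i).
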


\bp
If $L$ is a line bundle such that $L_{|Z_R}=\cT_{l,E}=i_*\big(\frac{e_q-r-l}{2}\psi_u+\sum_{j\in E_q}\de_{ju}\big)$ then 
$\cT^\vee_{l,E}=L^\vee\otimes\cO_{Z_R}^\vee=L^\vee\otimes\det N_{Z_R}[-c]$
\cite[3.40]{Huybrechts}, where $c=r-1$ is the codimension of $Z_R$ and
$\det N_{Z_R}=\sum_{j\in R\setminus\{1\}}(\de_{1j})_{|Z_R}=-(r-1)\psi_u$ 
(use Lemma \ref{psiX} for $\M_{p,q}$ and its analogue for $\M_{p,q+1}$)
is the determinant of the normal bundle.
\ep

\begin{cor}\label{generation}
$D^b(\M_{p,q})$ is generated by the bundles $F_{l,E}$ with $(l,E)$ in group $1A$ (resp., $1B$) and group $2$.
\end{cor}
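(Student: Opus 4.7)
The plan is to combine Theorem~\ref{K games} with the fullness result on the open substack of the moduli stack of pointed $\bP^1$'s. First I would observe that, since $p=2r$ is even and $q=2s+1$ is odd, the Hassett space $\M_{p,q}$ coincides with the GIT quotient $\X_{p,q}=(\bP^1)^{p+q}\sslash\PGL_2$ (see Notation~\ref{precise weights}). In particular, there are no strictly semistable points for the chosen polarization, so $\M_{p,q}$ is isomorphic to the open substack $\cM_{p,q}\subseteq \cM_{p+q}$ of the stack of $(p+q)$-pointed $\bP^1$'s, as recalled after Proposition~\ref{sjhfgkjshgfqkjhw}.

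Next, I would invoke Proposition~\ref{sjhfgkjshgfqkjhw} applied to $\cU=\cM_{p,q}$: this says that $D^b(\M_{p,q})$ is generated by the collection of vector bundles $\{F_{l,E}\}$ for all $l\ge 0$ and all $E\subseteq P\cup Q$ with $l+e$ even (with no restriction on the score). In other words, every object of $D^b(\M_{p,q})$ belongs to the triangulated subcategory generated by all the $F_{l,E}$'s.

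Finally, I would apply Theorem~\ref{K games}, which asserts that each such $F_{l,E}$ is itself generated by the exceptional collection of Theorem~\ref{p,q case}, consisting of vector bundles from group $1A$ (resp.\ group $1B$) together with the torsion sheaves $\cT_{l,E}$ from group~$2$. Composing the two generation statements yields that the full exceptional collection of Theorem~\ref{p,q case} generates $D^b(\M_{p,q})$, which is the claim of the corollary. There is no real obstacle in this deduction since all the work has been packaged into Theorem~\ref{K games} (the Koszul games used to descend along the score) and Proposition~\ref{sjhfgkjshgfqkjhw} (stack-theoretic fullness); the only point to double-check is that the parity hypothesis on $p,q$ indeed places $\M_{p,q}$ inside $\cM_{p+q}$ as an open substack, which is immediate from the absence of strictly semistable points.
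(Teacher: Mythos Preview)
You have misread the statement of the corollary, and as a result you prove the wrong thing. The corollary asserts that $D^b(\M_{p,q})$ is generated by the \emph{vector bundles} $F_{l,E}$ whose indices $(l,E)$ lie in group $1A$ (resp.\ $1B$) \emph{and} in group~$2$. It does \emph{not} assert generation by the exceptional collection of Theorem~\ref{p,q case}, which uses the torsion sheaves $\cT_{l,E}$ (not the bundles $F_{l,E}$) for pairs in group~$2$. What you actually deduce at the end --- that the collection of Theorem~\ref{p,q case} is full --- is already the ``In particular'' clause of Theorem~\ref{K games} itself, so your argument is circular relative to the corollary's content. That this distinction matters is clear from how the corollary is used: in the proof of Theorem~\ref{full Upq} one needs generation by pull-backs of the \emph{bundles} $F_{l,E}$, not of the torsion sheaves.

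The missing ingredient is precisely what the paper's one-line proof points to: the \emph{proof} (not just the statement) of Theorem~\ref{K games}. In that proof, the induction on score shows that every $F_{l,E}^\vee$ is generated by $\cC^\vee$; but the only place torsion sheaves enter is Koszul game~3, which, for $(l,E)$ in group~$2$, expresses $\cT_{l,E}^\vee$ in terms of $F_{l,E}^\vee$ together with bundles $F_{\tl,\tE}^\vee$ of strictly smaller score. Rerunning the same induction but declaring $F_{l,E}$ with $(l,E)$ in group~$2$ to be terminal (so one never invokes $\cT_{l,E}$) shows that every $F_{l,E}$ is generated by the bundles $F_{l',E'}$ with $(l',E')$ in group $1A$ (resp.\ $1B$) or group~$2$. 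Combined with your correct first two steps (identifying $\M_{p,q}$ with an open substack and invoking Proposition~\ref{sjhfgkjshgfqkjhw}), this gives the corollary. Equivalently, you could keep your argument and then add the observation that Koszul game~3 places each $\cT_{l,E}$ in the triangulated subcategory generated by the $F$-bundles with indices in $1A/1B\cup 2$.
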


\bp
This follows immediately from the proof of Theorem  \ref{K games}. 
\ep

We finish this section by analyzing fullness on $\cU_{p,q}$, the universal family over $\M_{p,q}$.
This will be used in the next section.
Recall from Section \ref{induction1} that
$\cU_{p,q}$ is  a Hassett space for the set of markings $P\cup \tQ$,
where $\tQ=Q\cup\{z\}$ and $z$ is an extra light index, and since $\cU_{p,q}$ is a  GIT quotient (Lemma \ref{fbadfbadhadthn})
the space $\cU_{p,q}$ carries vector bundles $F_{l,E}$ by Definition \ref{FlE}. 

\begin{thm}\label{full Upq}
$D^b(\cU_{p,q})$ is generated by the bundles $F_{l,E}$ with $(l,E)$ in one of the following groups:
(i) $1A$ (resp., ~$1B$), (ii) $2A$ with $z\notin E$, (iii) $2B$ with $z\in E$. 
Here we use the same groups $1A$, $1B$, $2A$ and $2B$ as in Theorem  \ref{p,q+1 case}. 
\end{thm}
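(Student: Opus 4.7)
The plan is to combine Orlov's theorem for the $\PP^1$-bundle $\pi\colon\cU_{p,q}\to\M_{p,q}$ with the generation statement for $D^b(\M_{p,q})$ established in Corollary~\ref{generation}. The first step is to verify that $F_{0,\Sigma}$ has relative degree one over $\pi$: restricting to a $\pi$-fiber and invoking Lemma~\ref{akjsdhfkjsgf} on the stack $\cM_{p+q+1}$, the line bundle $F_{0,\Sigma}$ restricts to $\cO_{\PP^1}(1)$ on each fiber, since $z\in\Sigma$. Orlov's projective bundle theorem then yields the s.o.d.
\begin{equation*}
D^b(\cU_{p,q}) = \langle \pi^*D^b(\M_{p,q}),\ \pi^*D^b(\M_{p,q})\otimes F_{0,\Sigma}\rangle.
\end{equation*}

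For the first block I would use Lemma~\ref{fbfgafgadf}(i), which identifies $F_{l,E}$ on $\cU_{p,q}$ with $\pi^*F_{l,E}$ from $\M_{p,q}$ whenever $z\notin E$. The elementary identity $\min(e_q+1,q+1-e_q)=\min(e_q,q-e_q)+1$ shows that, once restricted to $z\notin E$, the conditions of groups $1A$ and $1B$ on $\cU_{p,q}$ coincide with the namesake conditions on $\M_{p,q}$ from Theorem~\ref{p,q case}, and the condition of group $2A$ on $\cU_{p,q}$ coincides with that of group~$2$ on $\M_{p,q}$. Hence the objects of item~(i) with $z\notin E$ together with those of item~(ii) are pull-backs of a collection $\{F_{l,E}\}$ indexed by group $1A$ (resp.,~$1B$) and group~$2$ on $\M_{p,q}$, which generates $D^b(\M_{p,q})$ by Corollary~\ref{generation}. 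This handles the first block.

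For the second block I would use Lemma~\ref{fbfgafgadf}(ii), which identifies $F_{l,E}$ on $\cU_{p,q}$ for $z\in E$ with $\pi^*F^\vee_{l,E^c}\otimes F_{0,\Sigma}$, where $E^c:=(P\cup\tQ)\setminus E$ now satisfies $z\notin E^c$. The same arithmetic shows that under $E\mapsto E^c$ the conditions on $\cU_{p,q}$ for groups $1A$, $1B$, $2B$ with $z\in E$ translate into the conditions on $\M_{p,q}$ for groups $1B$, $1A$, $2$, respectively. Therefore item~(i) with $z\in E$ together with item~(iii) yield exactly the family $\{\pi^*F^\vee_{l,\tilde E}\otimes F_{0,\Sigma}\}$ with $(l,\tilde E)$ ranging through group $1B$ (resp.,~$1A$) and group~$2$ on $\M_{p,q}$. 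Because $F\mapsto F^\vee$ is an anti-autoequivalence of $D^b(\M_{p,q})$, the duals of a generating collection again generate; invoking Corollary~\ref{generation} with the \emph{other} collection (group $1B$+group~$2$ when we use Set~1 of the theorem, and vice versa) and tensoring with the invertible sheaf $F_{0,\Sigma}$, these objects generate the second block.

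The only real content beyond Orlov's theorem and Corollary~\ref{generation} is the bookkeeping that the four ranges on $\cU_{p,q}$ split cleanly along $z\in E$ versus $z\notin E$ into two halves matching, via the two cases of Lemma~\ref{fbfgafgadf}, the two generating collections of $D^b(\M_{p,q})$. I do not anticipate any substantial obstacle beyond this arithmetic; the symmetric choice between Set~1 and Set~2 of the theorem simply interchanges which half is obtained directly by pull-back and which is obtained through the Serre-type duality $E\leftrightarrow E^c$.
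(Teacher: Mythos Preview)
Your proposal is correct and follows essentially the same approach as the paper: Orlov's s.o.d.\ for the $\PP^1$-bundle $\pi\colon\cU_{p,q}\to\M_{p,q}$, the identifications $F_{l,E}\simeq\pi^*F_{l,E}$ (for $z\notin E$) and $F_{l,E}\simeq\pi^*F^\vee_{l,E^c}\otimes F_{0,\Sigma}$ (for $z\in E$) from the proof of Lemma~\ref{fbfgafgadf}, and Corollary~\ref{generation} applied to each block (with the $1A\leftrightarrow1B$ swap under $E\mapsto E^c$). The arithmetic matching of the ranges is exactly as in the paper's proof; note only that the items (i), (ii) you cite are stated inside the \emph{proof} of Lemma~\ref{fbfgafgadf} (ultimately coming from Corollary~\ref{complement}), not in its statement.
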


\bp
This is a consequence of Orlov's Theorem for the universal family  $\cU_{p,q}\arrow\pi\M_{p,q}$, which is a $\PP^1$ bundle,
and is similar to the proof of Lemma \ref{fbfgafgadf}. 
Recall that if $z\notin E$, then $F_{l,E}=\pi^*F_{l,E}$ by Theorem \ref{stackofbundles}.
If $z\notin E$, the range $2A$ on $\cU_{p,q}$ is exactly the range of group $2$ on $\M_{p,q}$ and the range $1A$ (resp., ~$1B$)
on $\cU_{p,q}$ corresponds to $1A$ (resp., ~$1B$) on 
$\M_{p,q}$. Hence, by Corollary \ref{generation} 
the $F_{l,E}$'s with $z\notin E$ generate $L\pi^*D^b(\M_{p,q})$. 
If $z\in E$, then $(l,E)$ is in range $2B$ on $\cU_{p,q}$ if and only if $(l,E^c)$ is in the group $2$ on $\M_{p,q}$, and similarly, 
$(l,E)$ is in range $1A$ (resp., ~$1B$) on $\cU_{p,q}$ if and only if $(l,E^c)$ is in the group $1B$ (resp., ~$1A$) on $\M_{p,q}$.  
By Corollary \ref{generation}, the corresponding duals $\{F^\vee_{l,E^c}\}$ (where $E^c=(P\cup\tQ)\setminus E)$) 
generate $L\pi^*D^b(\M_{p,q})$. 
As  $F_{l,E}=F^\vee_{l,E^c}\otimes F_{0,N}$, it follows that the objects with $z\in E$ generate 
$L\pi^*D^b(\M_{p,q})\otimes F_{0,N}$. As in  the proof of Lemma \ref{fbfgafgadf}, the result now follows by Orlov's Theorem. 
\ep

\section{Fullness of the  exceptional collection on $\M_{2r,2s+2}$}\label{fullness p,q+1 section}

\begin{notn}\label{C notations}
We let  $\cA$ to be the subcategory 
in $D^b(\M_{p,q+1})$ generated by the torsion sheaves $\cO_{\de_{T,T^c}}(-a,-b)$ of Notation \ref{TT}. 
Let $\cC$ be the collection of Theorem~\ref{asdvzsfvsfb} (exceptional by \S~\ref{exceptional p,q+1}) which consists of generators of $\cA$,
vector bundles $F_{l,E}$ in group $1A$  (resp., ~$1B$) combined with the complexes $\cTT_{l,E}$ 
in group $2B$.
We let $\cC'$, resp., $\cC'_F$, be the collection obtained from $\cC$ by replacing complexes $\cTT_{l,E}$ 
in $\cC$ with the torsion sheaves $\cT_{l,E}$, resp.,~the vector bundles $F_{l,E}$ from the same group $2B$.
Note that 
the collections $\cC'$ and $\cC'_F$ 
are not, in general, exceptional.
We follow Notation \ref{Upq}: $p=2r$, $q=2s+1$, $\tQ=Q\cup\{z\}$. 
Throughout the section, unless we specify otherwise, we assume $p\geq4$, $q+1\geq0$. 
\end{notn}

\begin{prop}\label{score r+s}
Let $S'$ be the score of Notation~\ref{asgasfgasgsg}.
Let $l\geq0$, $E\subseteq P\cup\tQ$, $l+e$ is even, $S'(l,E)\leq r+s$. Then 
\bi
\item[(1) ] The bundle $F_{l,E}$ is generated by $\cA$ together with $\{F_{l',E'}\}$ with $(l',E')$ in group $1A$ 
(resp.,~$1B$) or $2B$, and furthermore with $S'(l',E')\leq S'(l,E)$. 
\item[(2) ] The bundle $F_{l,E}$ is generated by $\cC'_F$.
\ei
\end{prop}

We prove Proposition \ref{score r+s} later in this section, after we first establish some auxiliary results. Recall that by Lemma \ref{basic}, 
for  $l\geq0$, $E\subseteq P\cup\tQ$  and $(l,E)$ in any of the groups $1A$, $1B$, $2A$ or $2B$, we have $S'(l,E)\leq r+s$. Similarly, for 
 $l\geq0$, $E\subseteq P\cup Q$  and $(l,E)$ in any of the groups $1A$, $1B$, or $2$, we have $S(l,E)\leq r+s-1$ (here $p\geq4$, $q\geq0$).  

%\begin{rmk}
%The main ingredient in the proof of Theorem  \ref{score r+s} is Lemma \ref{refined induction}. We are unable to prove in Lemma 
 %\ref{refined induction} (more specifically in Case 4)) that the bundle $F_{l,E}$ can be generated by $\cA$ together with bundles $\{F_{l',E'}\}$ such that 
%$S'(l',E')<S'(l,E)$ or with $S'(l',E')=S'(l,E)$ and $l'<l$. However, for our purposes (proof of Theorem  \ref{replace}, namely Claim \ref{main replace}) it suffices to prove that the bundle $F_{l,E}$ can be generated by $\cA$ together with $\{F_{l',E'}\}$ with $(l',E')$ in group $1A$ or $2B$ and such that 
%$S'(l',E')\leq S'(l,E)$. 
%\end{rmk}

\begin{notn}
Recall the notation of Lemma \ref{SRgSRgarg}: let $\al: W\ra \M_{p,q+1}$ be the universal family and 
$f: W\ra\M_{p,q+2}=\M_{P,\tQ\cup\{y\}}$ the birational map that contracts the $T^c$ component of boundary divisors $\de_{T\cup\{y\},T^c}$ on 
$W$ (where $y$ is the new marking on $W$). 
%When $q+1=0$, we have $\M_{p,q+2}=\M_{p,1}$ and $f: W\ra \M_{p-1,1}$ is the map considered in Section \ref{fullness odd p}. 
\end{notn}

\begin{rmk}\label{same groups}
For $l\geq0$, $E\subseteq P\cup\tQ$, the pair $(l,E)$ can be considered on both of $\M_{p,q+1}$ and $\M_{p,q+2}$ as $E\subseteq P\cup\tQ\subseteq P\cup\tQ\cup\{y\}$. Consider such a pair $(l,E)$.  
\bi
\item[(i) ] Clearly, $(l,E)$ is in group $1A$ (resp., $1B$) on $\M_{p,q+1}$ if and only if it is in group $1A$ (resp., $1B$) on $\M_{p,q+2}$. 
Furthermore, $(l,E)$ is in group $2B$ on $\M_{p,q+1}$ if and only if $(l,E)$ is in group $2$ on $\M_{p,q+2}$. 

\item[(ii) ] The score $S'(l,E)$ on $\M_{p,q+1}$ relates to the score
$$S(l,E)=l+\min\{e_p,p-e_p\}+\min\{e_q,q+2-e_q\}\ \text{on}\ \M_{p,q+2}$$ by 
$S'(l,E)=
\begin{cases}
S(l,E) & \hbox{\rm if}\quad e_q\leq s+1\cr
S(l,E)-1 & \hbox{\rm if}\quad  e_q\geq s+2.\cr
\end{cases}$

\item[(iii) ] 
As noted above, for $(l,E)$ on $\M_{p,q+2}$ in groups $1A$, $1B$, $2$, we have 
$S(l,E)\leq r+s$. Furthermore, by Lemma \ref{basic}(i) equality holds if and only if we are in one of the following cases:
\bi
\item[ (1A)]  $e_p=r-1-l$, $e_q=s+1$ or $s+2$;
\item[ (1B)]  $e_p=r+1+l$, $e_q=s+1$ or $s+2$;
\item[ (2) ] $e_p=r$, $e_q=s-l$ or $l+s+3$. 
\ei
\ei
\end{rmk}

\begin{lemma}\label{alpha f game}
Let $l\geq0$ and $E\subseteq P\cup \tQ$, $l+e$ even. 
Then on $\M_{p,q+1}$ the bundle $F_{l,E}$ and the complex $R\al_*f^*F_{l,E}$ are related by quotients (see Definition \ref{related}) which are generated by sheaves of the form 
\begin{equation}\label{afssgsgWG}
Q=\cO_{\de_{T,T^c}}(-v,u),\quad 0\leq u\leq m_{T^c}-1,\quad 0<v\leq m_{T^c}-1,
\end{equation}
where $m_{T^c}=\max(0, f_{T^c,E,l})\leq S'(l,E)/2$.
In particular, $F_{l,E}$ and $R\al_*f^*F_{l,E}$ are related by quotients in $\cA$ if the score $S'(l,E)\leq r+s$.
\end{lemma}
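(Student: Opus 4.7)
I will combine the alpha-game of Prop.~\ref{Q on W} on $W$ with the push-forward via $\al$. Since every $E\subseteq P\cup\tQ$ has $y\notin E$, Lemma~\ref{zzfbzdfbdff}(i) gives $F_{l,E}|_W=\al^*F_{l,E}$, and so $R\al_*(F_{l,E}|_W)=F_{l,E}$ on $\M_{p,q+1}$ by the projection formula (using $R\al_*\cO_W=\cO_{\M_{p,q+1}}$). Applying Prop.~\ref{Q on W} to the pair $(\M_{p,q+1},\M_{p,q+2})$ (the parities required by Section~\ref{induction2} are preserved after the shift $q\mapsto q+2$, $s\mapsto s+1$) yields exact triangles on $W$ relating $F_{l,E}|_W$ to $f^*F_{l,E}$ via quotients supported on $\de_{T\cup\{y\},T^c}$ which are direct sums of $(-jH)\boxtimes\cO(u)$ on $\Bl_1\PP^{r+s}\times\PP^{r+s-1}$, with $0<j\leq m_{T^c,E,l}$ and $0\leq u<m_{T^c,E,l}$. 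Pushing forward by $\al$ produces triangles on $\M_{p,q+1}$ relating $F_{l,E}$ and $R\al_*f^*F_{l,E}$ through the corresponding push-forwards of these quotients.

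To identify the push-forwards, by Lemma~\ref{SRGASRGASRGA} the restriction of $\al$ to $\de_{T\cup\{y\},T^c}$ is $(\pi,\Id)$ onto $\de_{T,T^c}=\PP^{r+s-1}\times\PP^{r+s-1}$, where $\pi:\Bl_1\PP^{r+s}\to\PP^{r+s-1}$ is the $\PP^1$-bundle projection. Setting $b=0$ in Lemma~\ref{Push} gives $R\pi_*(-H)=0$ and, for $j\geq 2$, that $R\pi_*(-jH)$ is generated by $\cO(-1),\ldots,\cO(-(j-1))$. Hence by iterated use of Lemma~\ref{obviosddd}, the triangles can be refined so that each quotient is a single sheaf $\cO_{\de_{T,T^c}}(-v,u)$ with $1\leq v\leq j-1\leq m_{T^c}-1$ and $0\leq u\leq m_{T^c}-1$, which is exactly the form \eqref{afssgsgWG}. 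The bound $m_{T^c}\leq S'(l,E)/2$ is immediate from \eqref{First} in Lemma~\ref{bounds+}.

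For the last claim, assume $S'(l,E)\leq r+s$, so $v,u\leq\lfloor(r+s)/2\rfloor-1$. By Beilinson's theorem, $\{\cO(0),\cO(-1),\ldots,\cO(-(r+s-1))\}$ is a full exceptional collection on $\PP^{r+s-1}$, and therefore $\cO(u)\in\langle\cO(0),\cO(-1),\ldots,\cO(-(r+s-1))\rangle$. Externally tensoring with $\cO(-v)$ on the $T$-factor shows $\cO_\de(-v,u)$ lies in the triangulated subcategory generated by $\{\cO_\de(-v,-b)\,:\,0\leq b\leq r+s-1\}$, and each such sheaf is a generator of $\cA$: when $b=0$ the bound $1\leq v\leq(r+s-1)/2$ holds since $v\leq\lfloor(r+s)/2\rfloor-1\leq(r+s-1)/2$, and when $1\leq b\leq r+s-1$ both $v$ and $b$ lie in $[1,r+s-1]$. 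The main obstacle is precisely this last step --- converting sheaves $\cO(-v,u)$ whose $T^c$-index is non-negative into objects of $\cA$, whose generators have both indices non-positive --- which is handled by the Beilinson expansion of $\cO(u)$ on the $T^c$-factor.
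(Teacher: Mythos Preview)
Your proof is correct and follows the paper's approach: apply Prop.~\ref{Q on W} on $W$, push forward by $\al$ using Lemma~\ref{Push}, and invoke Lemma~\ref{bounds+} for the bound on $m_{T^c}$. Your Beilinson expansion for the final claim supplies a detail the paper leaves implicit --- why $\cO_{\de}(-v,u)$ with $u>0$ lies in $\cA$ --- and is a clean way to see it.
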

Recall that unless we will write $\cO_\de(a,b)=\cO_T(a)\boxtimes\cO_{T^c}(b)$ to emphasize that $\cO(a)$ corresponds to the $T$ component, the notation
$\cO_\de(a,b)$ will generally mean that $\cO(a)$ could correspond to either the  $T$ or $T^c$ component. 
\bp[Proof of Lemma \ref{alpha f game}]
By Proposition \ref{Q on W},  the bundles $F_{l,E}$ and $f^*F_{l,E}$ on $W$ are related by quotients 
$-jH\boxtimes\cO(u)$, $0<j\leq m_{T^c}$, $0\leq u\leq m_{T^c}-1$,
supported on $\de_{T\cup\{y\},T^c}\cong\Bl_1\PP^{r+s}\times\PP^{r+s-1}$. By Lemma \ref{Push}, 
$R\pi_*(-jH)=0$  if $j=1$, while  if $j\geq2$, it is generated by $\cO(-v)$ with $1\leq v\leq j-1$. It~follows that 
$R\al_*\big(-jH\boxtimes\cO(u)\big)$ is generated by the quotients \eqref{afssgsgWG}.
%$$\cO_{\de_{T,T^c}}(-v,u),\quad 0\leq u\leq m_{T^c}-1,\quad 0<v\leq m_{T^c}-1,$$
The rest follows from Lemma~\ref{bounds+}.
\ep

\begin{lemma}\label{refined induction}
Let $l\geq0$, $E\subseteq P\cup\tQ$, $l+e$ even, $S'(l,E)\leq r+s$. On $\M_{p,q+2}$, the bundle $F_{l,E}$ can be generated by 
$\{F_{l',E'}\}$ for $(l',E')$ belonging to group $1A$ (resp.,~$1B$)
and $2$ on $\M_{p,q+2}$ such that $y\notin E'$, and such that $S'(l',E')\leq S'(l,E)$. 
\end{lemma}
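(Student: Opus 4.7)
The plan is to rerun the induction of Theorem~\ref{K games} on the space $\M_{p,q+2}$ (which has $q+2=2s+3$ odd light markings, so the fullness argument based on the three Koszul games applies), while imposing two additional constraints on the intermediate terms $F_{l',E'}$ produced: (a) $y\notin E'$, and (b) $S'(l',E')\leq S'(l,E)$, where $S'$ is the score on $\M_{p,q+1}$ rather than on $\M_{p,q+2}$. The induction runs on $S'(l,E)$ and, for fixed score, on $l$. If $(l,E)$ is already in group $1A$ (resp.\ $1B$) or group $2$ on $\M_{p,q+2}$, we are done; otherwise, we play one of the Koszul games as in Theorem~\ref{K games}, choosing the unstable set $I$ carefully.

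Condition (a) is arranged by always picking the unstable set $I$ disjoint from $\{y\}$. For Games~1 and~2 this leaves enough room: minimal unstable sets have either $s+2$ light markings or contain at least one heavy marking, so excluding one specific light index $y$ from $I$ still leaves unstable options disjoint from (or contained in) $E$. For Game~3, one has $I=E_p=R\subseteq P$, which automatically avoids the light index $y$. Since $y\notin E$ by hypothesis and $I$ avoids $y$, every term $F_{l',E'}$ produced has $E'\subseteq E\cup I$ disjoint from $\{y\}$.

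The main obstacle is condition (b). By Remark~\ref{same groups}, $S'=S$ when $e_q\leq s+1$ but $S'=S-1$ when $e_q\geq s+2$ (here $e_q=|E\cap\tilde Q|$, using $y\notin E$). A raw bound $S(\tilde l,\tilde E)\leq S(l,E)$ from the Koszul games only yields $S'(\tilde l,\tilde E)\leq S'(l,E)+1$ in the worst case, so one must revisit the four-case analysis of Theorem~\ref{K games} (distinguished by $e_p$ versus $r$ and $e_q$ versus $s+1$) tracking $\tilde e_q$ explicitly. In branches where $\tilde e_q\leq s+1$, one has $S'(\tilde l,\tilde E)=S(\tilde l,\tilde E)$ and the bounds already proved in Theorem~\ref{K games} give the desired inequality on $S'$ directly. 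In branches where $\tilde e_q\geq s+2$, a careful rewriting (using Lemma~\ref{bounds+} and the parity condition $l+e$ even) shows that either $S(\tilde l,\tilde E)<S(l,E)$ strictly, with enough slack so that $S'(\tilde l,\tilde E)\leq S'(l,E)$, or else $(\tilde l,\tilde E)$ already lies in one of the target groups $1A$, $1B$, $2$ on $\M_{p,q+2}$ and the recursion on that branch terminates, exactly as the ``absorbed into group'' trick already employed within the proof of Theorem~\ref{K games}. This last verification is case-by-case but routine, and completes the induction.
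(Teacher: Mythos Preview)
Your overall plan matches the paper's: rerun the Koszul-game argument of Theorem~\ref{K games} on $\M_{p,q+2}$, choosing the unstable set $I$ to avoid $y$, and track the $S'$-score of the intermediate terms. However, there is a genuine gap in the induction scheme you propose, and the tools you cite for the $S'$-tracking are not the right ones.

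The main problem is the induction variable. You induct on $(S',l)$ lexicographically, but this does not terminate. In Case~4 ($e_p=r$, $e_q\ge s+2$), Koszul game~2 with $I=E$ produces at $J=\emptyset$ the term $(\tilde l,\tilde E)=(e-l-2,\emptyset)$; taking for instance $l=0$ and $e_q=s+2$ gives $\tilde l=r+s>0=l$ while $S'(\tilde l,\tilde E)=r+s=S'(l,E)$, and $(\tilde l,\tilde E)$ lies in no target group. The paper instead inducts on the $\M_{p,q+2}$-score $(S,l)$, for which this term satisfies $S(\tilde l,\tilde E)=r+s<r+s+1=S(l,E)$, and then verifies $S'(\tilde l,\tilde E)\le S'(l,E)$ as a separate conclusion, not as the engine of the recursion.

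For the $S'$-tracking itself, neither Lemma~\ref{bounds+} nor parity is relevant. In Cases~1 and~3 one has $\tilde E_q=E_q$, so the offset $S-S'$ is the same on both sides and $S'(\tilde l,\tilde E)\le S'(l,E)$ follows directly from $S(\tilde l,\tilde E)\le S(l,E)$. In Case~4, if $S(\tilde l,\tilde E)<S(l,E)$ then $S'(\tilde l,\tilde E)\le S(\tilde l,\tilde E)\le S(l,E)-1=S'(l,E)$; if $S(\tilde l,\tilde E)=S(l,E)$, the equality analysis in Theorem~\ref{K games} Case~4(b), transported to $\M_{p,q+2}$, forces $\tilde e_q\ge s+2$, so both sides lose $1$ when passing from $S$ to $S'$ and one still has $\tilde l<l$. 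Finally, Koszul game~3 is not used here, and Case~2 is in fact vacuous: the hypothesis $S'(l,E)\le r+s$ together with $e_p=r$, $e_q\le s+1$ already forces $l+e_q\le s$, so $(l,E)$ is in group~2.
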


\bp
The proof is parallel to the proof of Theorem \ref{K games}, playing the same Koszul games as on $\M_{p,q}$, except this time on $\M_{p,q+2}$. 
We do an induction on the score $S(l,E)$ on $\M_{p,q+2}$ and for equal scores induction on $l$. 
By~Remark~\ref{same groups}(ii),
$S(l,E)=S'(l,E)$ if $e_q\leq s+1$ and $S(l,E)=S'(l,E)+1$ if $e_q\geq s+2$.  
We only point out the main arguments for each case.

\underline{Case 1: $e_p<r$}. 
Clearly, we may assume $(E,l)$ is not in group $1A$
(resp., $1B$). 
This is parallel to Case 1 in the proof of Theorem \ref{K games}. 
As in that case, using the same Koszul game 1 on $\M_{p,q+2}$ (with $I=I_p$ with $|I_p|=r+1$, disjoint from $E$), we generate $F_{l,E}$ with 
$F_{\tl,\tE}$ with $\tE$ of the form $E\cup J$, with $J$ a set of heavy indices (in particular, $y\notin \tE$) and such that either
$S(\tl,\tE)<S(l,E)$ or $S(\tl,\tE)=S(l,E)$, $\tl<l$, to which one has to add, when working with group $1B$, the possibility that 
$S(\tl,\tE)=S(l,E)$ and $(\tl,\tE)$ is in group $1B$. 
Since $\tE_q=E_q$, we have that either that $S'(\tl,\tE)=S(\tl,\tE)$, $S'(l,E)=S(l,E)$, or 
$S'(\tl,\tE)=S(\tl,\tE)-1$, $S'(l,E)=S(l,E)-1$. In particular, $S'(\tl,\tE)\leq S'(l,E)\leq r+s$, i.e., the hypotheses in the Lemma are satisfied for $(\tl,\tE)$.

\underline{Case 2: $e_p=r$, $e_q\leq s+1$}. 
Clearly, we may assume $(E,l)$ is not in group $2$ on $\M_{p,q+2}$.
By our assumption that $S'(l,E)\leq r+s$, we cannot have  $e_q=s+1$. Hence, $e_q\leq s$. 
This is parallel to Case 2 in the proof of Theorem \ref{K games}. 
As in that case, using the Koszul game 1 on $\M_{p,q+2}$ for a set $I$ disjoint from $E$ with $|I_p|=r$ and $|I_q|=s+2$, and in addition with $y\notin I_q$ (possible since $e_q\leq s$), 
we generate $F_{l,E}$ with $F_{\tl,\tE}$ with $\tE$ of the form $E\cup J$, with $J\subseteq I$ (in particular, $y\notin \tE$) and  such that either
$S(\tl,\tE)<S(l,E)$ or $S(\tl,\tE)=S(l,E)$, $\tl<l$. Furthermore, we have 
$S'(\tl,\tE)\leq S(\tl,\tE)\leq S(l,E)=S'(l,E)\leq r+s$,
(as $S'(l,E)=S(l,E)$ because $e_q\leq s$) and if $S'(\tl,\tE)=S'(l,E)$, then $\tl<l$. 

\underline{Case 3: $e_p\geq r+1$}. 
Clearly, we may assume $(E,l)$ is not in group $1A$
(resp.,~$1B$).
This is parallel to Case 3 in the proof of Theorem \ref{K games}. 
As in that case, using the same Koszul game 2 on $\M_{p,q+2}$ ($I=I_p\subseteq E_p$, $|I|=r+1$),  we generate $F_{l,E}$ with 
$F_{\tl,\tE}$ with $\tE$ of the form $E\setminus J'$, with $J'$ a set of heavy indices (in particular, $y\notin \tE$) and such that either
$S(\tl,\tE)<S(l,E)$, or $S(\tl,\tE)=S(l,E)$, $\tl<l$, or, when working with group $1A$, $S(\tl,\tE)=S(l,E)$ and $(\tl,\tE)$ is in group $1A$. Since $\tE_q=E_q$, 
we have that either that $S'(\tl,\tE)=S(\tl,\tE)$, $S'(l,E)=S(l,E)$, or 
$S'(\tl,\tE)=S(\tl,\tE)-1$, $S'(l,E)=S(l,E)-1$. In particular, 
$S'(\tl,\tE)\leq S'(l,E)\leq r+s$, i.e., the hypotheses in the Lemma are satisfied for $(\tl,\tE)$.

\underline{Case 4: $e_p=r$, $e_q\geq s+2$}.  
Clearly, we may assume $(E,l)$ is not in group $2$ on $\M_{p,q+2}$.
This is parallel to Case 4 in the proof of Theorem \ref{K games}. 
As in that case, using the Koszul game 2 on $\M_{p,q+2}$ ($I=E$),  we generate $F_{l,E}$ with 
$F_{\tl,\tE}$ with $\tE$ of the form $\tilde{E}\subseteq E$, (in particular, $y\notin \tE$) and such that either
$S(\tl,\tE)<S(l,E)$ or $S(\tl,\tE)=S(l,E)$, $\tl<l$. Note, if $S(\tl,\tE)<S(l,E)$ then 
$S'(\tl,\tE)\leq S(\tl,\tE)<S(l,E)=S'(l,E)+1$
(since $e_q\geq s+2$). In~particular, it follows that $S'(\tl,\tE)\leq S'(l,E)\leq r+s$. 
%If $S'(\tl,\tE)<S'(l,E)$, we are done by induction. If $S'(\tl,\tE)=S'(l,E)$ then 
%$S'(\tl,\tE)=S(\tl,\tE)$, i.e., $|\tE_q|\leq s+1$. As $|\tE_p|\leq e_p=r$, we must go back to Case 1) or 2) and
%reduce the score, or for equal scores, $l$. Again, we are done by induction. 

Assume now $S(\tl,\tE)=S(l,E)$, $\tl<l$. We still need to prove that in this case 
$S'(\tl,\tE)\leq S'(l,E)\leq r+s$. As in Case 4 of the proof of Theorem\ref{K games}, the only way to have  $S(\tl,\tE)=S(l,E)$ is when we are in 
case b) $(\tl,\tE)=(j-e+l,J)$ ($j<e$, so $\tl<l$) and $j_q\geq s+2$ ($s$ is replaced by $s+1$, since we are on $\M_{p,q+2}$ instead of $\M_{p,q}$). Since $|\tE_q|=j_q\geq s+2$, we have $S'(\tl,\tE)=S(\tl,\tE)-1$. Hence, 
$S'(\tl,\tE)=S(\tl,\tE)-1=S(l,E)-1=S'(l,E)\leq r+s$.
\ep

\bp[Proof of Proposition~\ref{score r+s}]
Let $l\geq0$, $E\subseteq P\cup\tQ$, $l+e$ even, $S'(l,E)\leq r+s$.
By~Lemma~\ref{refined induction}, the bundle $F_{l,E}$ on $\M_{p,q+2}$ can be generated by 
bundles $F_{l',E'}$ for $(l',E')$ belonging to group $1A$ (resp., ~$1B$) and $2$, with $y\notin E'$,
and such that $S'(l',E')\leq S'(l,E)$. 
By Remark \ref{same groups}(i), such  $(l',E')$ can be considered also on $\M_{p,q+1}$ and they correspond to groups $1A$ (resp., $1B$) and group $2B$ on $\M_{p,q+1}$.
It follows that the object $R\al_*f^*F_{l,E}$ on $\M_{p,q+1}$ can be generated by the objects
$R\al_*f^*F_{l',E'}$.
By Lemma \ref{basic}, for all such $(l',E')$ on $\M_{p,q+1}$ we have $S'(l',E')\leq r+s$. 
Lemma~\ref{alpha f game} now implies part (1) of Proposition~\ref{score r+s}. Part (2) follows immediately from (1). 
\ep

\begin{defn}\label{F'}
Let $l\in\ZZ$ (positive or negative).
Recall that objects $F_{l,E}$ on the stack $\cM_n$, and therefore also on all GIT quotients without strictly semistable points, 
were defined in Proposition~\ref{sRGASRGASRGASG}.
It was proved there that $F_{l,E}=0$ for $l=-1$ and
$F_{l,E}\cong F_{-l-2,E}[-1]$ for $l\leq -2$.
We would like to define analogous objects on $\M_{p,q+1}$.
Let $\al: W\ra \M_{p,q+1}$ be the universal family. 
For any $l\in\ZZ$ (positive or negative) and $E\subseteq P\cup\tQ$, let 
\begin{equation}\label{F' formula}
N'_{l,E}=\om_{\al}^{\frac{e-l}{2}}(E),\quad F'_{l,E}=R\al_*(N'_{l,E}).
\end{equation}
\end{defn}

\begin{prop}\label{F' vs F}
Let $l\geq-1$, $E\subseteq P\cup\tQ$, $l+e$ even. The bundle $F_{l,E}$ on $\M_{p,q+1}$
as defined in Definition~\ref{allF} and the object
$F'_{l,E}$ of (\ref{F' formula}) are related by quotients (see Definition \ref{related}) that are generated by sheaves of the form 
$\cO_{\de}(-v,u)$, $v>0$, $u\geq 0$, $u, v\leq \frac{S'(l,E)}{2}-l-1$. 
In particular, if $l\geq0$, $S'(l,E)\leq r+s$, $F'_{l,E}$, $F_{l,E}$ are related by quotients in $\cA$.
\end{prop}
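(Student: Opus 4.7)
The plan is to interpolate between $N_{l,E}$ and $N'_{l,E}$ on $W$ by successively adding copies of the boundary divisors $\de_{T\cup\{x\}}$, since $N'_{l,E}=N_{l,E}+\sum_T\al_{T,E,l}\,\de_{T\cup\{x\}}$ by Definition~\ref{allF}, and to read off the quotients after applying $R\al_*$. A crucial simplification for $l\ge -1$ is that the identity $f_T+f_{T^c}=l$ prevents both $\al_T$ and $\al_{T^c}$ from being strictly positive, so for each pair $\{T,T^c\}$ copies of at most one of the two divisors are added; since $\de_{T\cup\{x\}}$ and $\de_{T^c\cup\{x\}}$ are the only boundary divisors of $W$ whose restrictions to $\de_{T\cup\{x\}}$ are nontrivial, pairs may be processed independently.

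For a pair with $\al_T>0$ (and hence $\al_{T^c}=0$), I would add $\de_{T\cup\{x\}}$ in $\al_T$ single steps. Each step yields a short exact sequence whose quotient, restricted to $\de_{T\cup\{x\}}\cong\Bl_p\PP^{m-1}\times\PP^{n-m-2}$, is identified via Lemma~\ref{restrict} and the cancellation $f_T+\al_T=0$ as $(-iH)\boxtimes\cO(\al_T-i)$. Pushing forward along $\al|_{\de_{T\cup\{x\}}}=\pi\times\Id$, where $\pi:\Bl_p\PP^{m-1}\to\PP^{m-2}$ is the $\PP^1$-bundle, Lemma~\ref{Push} gives $R\al_*(Q_i)$ as a direct sum of shifted line bundles $\cO_{\de_T}(-v,\al_T-i)[-1]$ for $v=1,\ldots,i-1$, vanishing when $i=1$ and yielding the form $\cO_{\de_T}(-v,u)$ with $v>0$ and $u=\al_T-i\ge 0$ for $i\ge 2$. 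The numerical bounds follow from Lemma~\ref{bounds+}, which gives $\al_T\le S'(l,E)/2-l$, so that $v,u\le S'(l,E)/2-l-1$ as claimed.

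For the ``in particular'' part, I would first check that under $l\ge 0$ and $S'(l,E)\le r+s$, the Hassett stability condition on $\M_{p,q+1}$ forces every boundary divisor to satisfy $|T_p|=r$, $|T_q|=s+1$, so that every $\de_T$ that appears is a special boundary $\PP^{r+s-1}\times\PP^{r+s-1}$ on which $\cA$ is supported; the bound $v,u\le(r+s)/2-1\le(r+s-1)/2$ then places the $u=0$ quotients $\cO_{\de_T}(-v,0)$ directly among the generators of $\cA$.

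The hard part will be to show that the $u>0$ quotients $\cO_{\de_T}(-v,u)$ also lie in $\cA$, since the generators of $\cA$ have non-positive exponents in both factors. I plan to address this by induction on $u$, using on each $\de_T\cong\PP^{r+s-1}\times\PP^{r+s-1}$ a Koszul-type resolution of $\cO(u)$ on the second factor in terms of the line bundles $\cO(-k)$ for $1\le k\le r+s-1$: the resulting complex has terms of the form $\cO_{\de_T}(-v,-k)$, whose $\cA$-membership is controlled by the combined bound $v+k\le r+s-1$, together with torsion terms on codimension-$2$ subloci of $\de_T$ that can be related back to $\cA$ by combining the Koszul sequence of $\de_T$ itself with line-bundle twists respecting the bounds defining $\cA$.
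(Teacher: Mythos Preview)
Your first two paragraphs reproduce the paper's argument exactly: interpolate from $N_{l,E}$ to $N'_{l,E}$ by adding $\al_T\de_{T\cup\{x\}}$ one copy at a time, use $f_T+f_{T^c}=l\ge -1$ to see that at most one of $\al_T,\al_{T^c}$ is positive so pairs are independent, identify each quotient on $\de_{T\cup\{x\}}$ via Lemma~\ref{restrict} as $(-iH)\boxtimes\cO(\al_T-i)$, push forward by Lemma~\ref{Push}, and bound $\al_T\le S'(l,E)/2-l$ by Lemma~\ref{bounds+}. This is precisely the paper's proof.

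On the ``in particular'': you are right that the paper glosses over why $\cO_{\de}(-v,u)$ with $u>0$ lies in $\cA$, and your instinct to justify it is good. But your last paragraph is confused on several points. There is no resolution of $\cO(u)$ on $\PP^{r+s-1}$ purely in terms of $\cO(-k)$ for $k\ge 1$; the bound ``$v+k\le r+s-1$'' does not match the actual definition of $\cA$ (the generators have independent ranges $1\le a,b\le r+s-1$, not a sum bound); and the boundary divisors of $\M_{p,q+1}$ are pairwise disjoint, so there are no ``codimension-$2$ subloci of $\de_T$'' arising from intersections. The clean argument is the following. Fix $\de=\de_{T,T^c}$ and a small resolution contracting its first factor (e.g.\ one of the maps $\be_z$). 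By Orlov's blow-up theorem the functor $\Phi_v:D^b(\PP^{r+s-1})\to D^b(\M_{p,q+1})$, $F\mapsto i_{\de*}(\cO(-v)\boxtimes F)$, is fully faithful. Since $1\le v\le (r+s)/2-1\le(r+s-1)/2$, the objects $\Phi_v(\cO(-b))=i_{\de*}\cO_\de(-v,-b)$ for $0\le b\le r+s-1$ are \emph{all} among the listed generators of $\cA$ (the case $b=0$ uses the third block). These generate the image of $\Phi_v$, so $\Phi_v(D^b(\PP^{r+s-1}))\subset\cA$, and in particular $i_{\de*}\cO_\de(-v,u)=\Phi_v(\cO(u))\in\cA$ for every $u$. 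If you prefer to avoid Orlov, your induction on $u$ can be salvaged: take the Koszul complex of $r+s$ coordinate sections of $\cO(1)$ on the second factor, which expresses $\cO(u)$ in terms of $\cO(u-1),\ldots,\cO(u-r-s)$; after tensoring with $\cO(-v)$ on the first factor and pushing forward, every term is either a generator of $\cA$ (when the second exponent is in $[-(r+s-1),0]$, using again $v\le(r+s-1)/2$ for the exponent $0$) or covered by induction.

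One minor point on your third paragraph: the fact that every boundary divisor of $\M_{p,q+1}$ satisfies $|T_p|=r$, $|T_q|=s+1$ is just the definition of this Hassett space (Notation~\ref{precise weights}), not a consequence of $l\ge 0$ or the score bound.
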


\bp
By Definition \ref{allF}, $F_{l,E}=R\al_*(N_{l,E})$. 
We compare $F'_{l,E}$ with  $F_{l,E}$ by comparing on $W$ the line bundles  $N'_{l,E}$ and $N_{l,E}$: 
$$N'_{l,E}=N_{l,E}+\sum_{T\sqcup {T}^c=P\cup\tQ, f_{T,E,l}<0}\left(-f_{T,E,l}\right)\de_{T\cup\{y\}}.$$
Since $l\geq -1$ and $e+l$ is even, we cannot have both $f_{T,E,l}<0$,  $f_{T^c,E,l}<0$ (Remark \ref{f_T positive}). Hence, for every partition 
$T\sqcup {T}^c=P\cup\tQ$ at most  one of $\de_{T\cup\{y\},{T}^c}$, $\de_{{T}^c\cup\{y\}, T}$ appears on the right hand side of the equality.
The line bundles $N'_{l,E}$ and $N_{l,E}$ on $W$ are related by quotients of the form
$Q=\big(N'_{l,E}+(-\al_T+i)\de\big)_{|\de}=\big(-iH\big)\boxtimes\cO(\al_{T}-i)$ (use Lemma \ref{restoboundaryofM}), where 
$\de=\de_{T\cup\{y\}, {T}^c},\quad 0<i\leq \al_{T}:=-f_{T,E,l}$.
Since by Lemma \ref{Push}, $R\pi_*(-iH)$ is $0$ if $i=1$ and generated by $\cO(-v)$, with $v=1,\ldots, i-1$ if $i\geq2$, it follows that 
$F'_{l,E}$ and $F_{l,E}$ are related by quotients that are generated by sheaves of the form 
$\cO_{T}(-v)\boxtimes\cO_{{T}^c}(u)$, 
$0\leq u=\al_T-i\leq \al_{T}-1$, 
$0<v<i\leq\al_{T}$. By Lemma \ref{bounds+}, we have $\al_T\leq \frac{S'(l,E)}{2}-l$, hence, $u, v\leq\frac{S'(l,E)}{2}-l-1$. 
\ep

\begin{cor}\label{sdfDAgdsgsDG}
Suppose $r+s$ is even.
Fix a boundary $\de=\de_{T_0,T_0^c}\subset \M_{p,q+1}$ and let $J\subseteq T_0$, $j=|J|$ $(0\leq j\leq r+s)$.  
Then the following pairs 
\bi
\item $F_{j-1,T_0^c\cup J}$ and $F'_{j-1,T_0^c\cup J}$ (if $j>0$),
\item $F'_{-1,T_0^c}$ and  $\cO_{\de}(-\frac{r+s}{2},0)=\cO_{T_0}(-\frac{r+s}{2})\boxtimes\cO_{T_0^c}$, 
\ei
are related by quotients in~$\cA$.
\end{cor}

\bp
This is a particular case of Proposition \ref{F' vs F} (and its proof) for pairs $(l,E)=(j-1,T_0^c\cup J)$, $|J|=j$. Note that 
$S'(l,E)=r+s$. If $j>0$, the first statement follows from Proposition \ref{F' vs F}. 
Consider now the case $j=0$. Set $(l,E)=(-1,T_0^c)$. Recall from the proof of Proposition \ref{F' vs F} that $F'_{l,E}$ and $F_{l,E}=0$ 
are related by quotients that are generated by sheaves
of the form $\cO_{T}(-v)\boxtimes\cO_{{T}^c}(u)$ (for some boundary $\de_{T,T^c}$) 
$0\leq u=\al_T-i\leq \al_{T}-1$, 
$0<v<i\leq\al_{T}$, and $\al_T\leq\frac{S'(l,E)}{2}-l=\frac{r+s}{2}+1$.  Such quotients belong to the generators of $\cA$ when $v<\frac{r+s}{2}$. 
So the only possible exception is when $v=\frac{r+s}{2}$, which implies that $i=\al_T=\frac{S'(l,E)}{2}-l$. In particular, $u=0$. By Lemma \ref{bounds+}, the equality 
$\al_T=\frac{S'(l,E)}{2}-l$ happens if and only if $T=T_0$ and the second statement follows. 
\ep

\begin{cor}\label{bad quotients}
Suppose $r+s$ is even.
Consider a partition $T\sqcup T^c=P\cup \tQ$. The 
sheaves  $\cO_{\de}\left(-\frac{r+s}{2},0\right)$,  $\cO_{\de}\left(0,-\frac{r+s}{2}\right)$,
$\de=\de_{T,T^c}\subseteq \M_{p,q+1}$
are generated by $\cC'_F$.
\end{cor}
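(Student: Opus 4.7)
The plan is to reduce the statement to the generation of $F'_{-1, T_0^c}$ by $\cC'_F$, and then construct $F'_{-1, T_0^c}$ via a short exact sequence on the universal family that expresses it in terms of objects already known to lie in the triangulated subcategory generated by $\cC'_F$.

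By Corollary \ref{sdfDAgdsgsDG} (case $j=0$), $F'_{-1, T_0^c}$ and $\cO_\de(-\frac{r+s}{2}, 0)$ are related by objects in $\cA \subseteq \cC'_F$, so it suffices to show $F'_{-1, T_0^c}$ is generated by $\cC'_F$. Fix any marking $k \in T_0$ and consider the short exact sequence on the universal family $\al: W \to \M_{p, q+1}$,
$$0 \to N'_{-1, T_0^c} \to N'_{0, T_0^c \cup \{k\}} \to N'_{0, T_0^c \cup \{k\}}|_{\sigma_k} \to 0,$$
obtained by multiplying with the canonical section of $\cO(\sigma_k)$ (valid because $k \notin T_0^c$). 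Applying $R\al_*$ produces the distinguished triangle
$$F'_{-1, T_0^c} \to F'_{0, T_0^c \cup \{k\}} \to L_k \to F'_{-1, T_0^c}[1],$$
where $L_k = \sigma_k^* N'_{0, T_0^c \cup \{k\}}$ is the explicit line bundle $\frac{r+s}{2}\psi_k + \sum_{j \in T_0^c} \delta_{kj}$ on $\M_{p, q+1}$.

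A direct check shows $S'(0, T_0^c \cup \{k\}) = r+s$ whether $k$ is heavy or light in $T_0$: adding $k$ to $T_0^c$ decreases exactly one of $\min(e_p, p-e_p)$ or $\min(e_q, q+1-e_q)$ by one, while $l$ remains zero. Hence by Theorem \ref{cor refined induction}, $F_{0, T_0^c \cup \{k\}}$ is generated by $\cC'_F$, and by Proposition \ref{F' vs F} the same holds for $F'_{0, T_0^c \cup \{k\}}$ (the quotients relating them lie in $\cA$).

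It remains to generate $L_k$ by $\cC'_F$. The plan is to identify $L_k$, up to objects in $\cA$, with a bundle $F_{0, E''}$ on $\M_{p, q+1}$ for a carefully chosen even subset $E''$ with $S'(0, E'') \le r+s$, so that Theorem \ref{cor refined induction} finishes the argument. The hardest part is establishing this identification: $L_k$ lives on the full Hassett space $\M_{p, q+1}$, where the universal family has reducible fibers over each boundary divisor $\de_{T, T^c}$, and reconciling its class with the defining formula of $F_{0, E''}$---which includes the boundary twist $\cO\bigl(-\sum_T \alpha_{T, E'', 0}\, \delta_{T \cup \{x\}}\bigr)$ on $W$---requires careful Chern-class bookkeeping via Lemma \ref{restrict} together with the tautological relations between $\psi$-classes and boundary divisors on $\M_{p, q+1}$. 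An alternative approach, should the direct identification prove cumbersome, would be to iterate the short exact sequence above with a different marking $k' \in T_0 \setminus \{k\}$, successively reducing $L_k$ to simpler line bundles on lower-dimensional Hassett strata where generation by $\cC'_F$ is immediate.
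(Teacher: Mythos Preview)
Your reduction to generating $F'_{-1,T_0^c}$ is fine, as is the triangle produced by the single-section short exact sequence on $W$. The gap is in the handling of $L_k$. On the complement of the boundary in $\M_{p,q+1}$ one checks directly that $L_k=\cO(T_0^c\cup\{k\})=F_{0,T_0^c\cup\{k\}}$; hence the only possible $E''$ is $T_0^c\cup\{k\}$. But then your own triangle says that $L_k$ and $F'_{0,T_0^c\cup\{k\}}$ differ by $F'_{-1,T_0^c}$, while Proposition~\ref{F' vs F} says $F'_{0,T_0^c\cup\{k\}}$ and $F_{0,T_0^c\cup\{k\}}$ differ by objects of $\cA$. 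Thus the difference between $L_k$ and $F_{0,T_0^c\cup\{k\}}$ is, modulo $\cA$, exactly $F'_{-1,T_0^c}$---which by Corollary~\ref{sdfDAgdsgsDG} is $\cO_\de(-\tfrac{r+s}{2},0)$ modulo $\cA$. So identifying $L_k$ with some $F_{0,E''}$ up to $\cA$ is precisely equivalent to the statement you are proving; the argument is circular.

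The paper sidesteps this by replacing the single section $\sigma_k$ with the full Koszul complex on all of $T_0$: since the sections $\{\sigma_j\}_{j\in T_0}$ have empty common intersection on $W$ (all markings in $T_0$ coinciding is unstable), the Koszul complex is exact. After dualizing, tensoring with $N'_{-1,T_0^c}$, and applying $R\al_*$, one obtains a long exact sequence on $\M_{p,q+1}$ whose terms are $F'_{j-1,T_0^c\cup J}$ for $J\subseteq T_0$, $|J|=j$. For $j\ge 1$ these all have $l=j-1\ge 0$ and score exactly $r+s$, so Corollary~\ref{sdfDAgdsgsDG} and Theorem~\ref{cor refined induction} apply directly; no residual section-pullback line bundle like $L_k$ ever appears. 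Your ``alternative approach'' of iterating with further markings $k'\in T_0$ is morally heading toward this Koszul complex, but as phrased (``reducing $L_k$ to line bundles on lower-dimensional Hassett strata'') it does not describe a concrete construction that avoids the circularity above.
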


In Corollary \ref{bad quotients} we write both $\cO_\de(a,b)$, $\cO_\de(b,a)$ to emphasize that  $\cO(a)$ could correspond to either the  $T$ or $T^c$ component. We will do the same in Proposition \ref{F' vs beta}. 

\bp[Proof of Corollary  \ref{bad quotients}]
We prove that $\cO_{\de}(-\frac{r+s}{2},0)$ 
(hence, by symmetry, also $\cO_{\de}\left(0,-\frac{r+s}{2}\right)$)
is generated by the
bundles $\{F_{l,E}\}$ on $\M_{p,q+1}$ with score $S'(l,E)=r+s$ and objects in $\cA$. 
In particular, by Proposition \ref{score r+s}(2), it is generated by the collection $\cC'_F$. 

Consider the Koszul resolution of $\bigcap_{i\in T}\de_{iy}=\emptyset$ in $W$: 
$$0\leftarrow \cO\leftarrow\ldots %\oplus_{i\in T}\cO(-\de_{iy})
\leftarrow\ldots\leftarrow
\oplus_{J\subseteq T, |J|=j}\cO(-\sum_{i\in J}\de_{iy})
\leftarrow\ldots\leftarrow\cO(-\sum_{i\in T}\de_{iy})\leftarrow 0.$$
Dualizing and tensoring with $\om_{\al}^{\frac{r+s}{2}+1}\big(\sum_{i\in T^c}\de_{iy}\big)$ and using the notation 
$N'_{l,E}:=\om_{\al}^{\frac{e-l}{2}}(E)$ (see Definition \ref{F'},), we have
the following long exact sequence of line bundles on $W$:
$$0\rightarrow N'_{-1,T^c}\rightarrow 
%\bigoplus_{i\in T}N'_{0,T^c\cup\{i\}}\rightarrow
\ldots\rightarrow\bigoplus_{J\subseteq T, |J|=j}N'_{j-1, T^c\cup J}\rightarrow\ldots\rightarrow N'_{r+s, T^c\cup T}\rightarrow 0.$$
By applying $R\al_*\big(-\big)$, we obtain the following objects  on $\M_{p,q+1}$:
$$F'_{-1,T^c}\quad \oplus_{i\in T}F'_{0,T^c\cup\{i\}}\quad\ldots\quad\oplus_{J\subseteq T, |J|=j}F'_{j-1, T^c\cup J}\quad\ldots
\quad F'_{r+s, T^c\cup T}.$$
All the vector bundles $F_{j-1,T^c\cup J}$ have score $r+s$ and the statement follows by Proposition \ref{score r+s}(2) and Corollary~\ref{sdfDAgdsgsDG}. 
\ep

\begin{notn}
As in Notation  \ref{Upq}, let $\be=\be_z: \M_{p,q+1}\ra \cU_{p,q}$ be the morphism that contracts the $T$ component of any boundary divisor $\de_{T,T^c}$ if $z\in T$. 
When $q+1=0$, we choose $z\in P$ and $\be_z$ is the map $\M_p\ra \M_{p-1,1}$ which lowers the weight of a heavy 
index $z$. 
\end{notn}

\begin{prop}\label{F' vs beta}
Let $l\in\ZZ$ (positive or negative), $E\subseteq P\cup \tQ$, $z\in\tQ$. Then $F'_{l,E}$ and $\be_z^*F_{l,E}$ are related by quotients
that are generated by sheaves of the form
$\cO_{\de}(-v,u)$, 
$\cO_{\de}(u,-v)$, where 
$0\leq u$, 
$0<v\leq \max\left\{\frac{S'(l,E)}{2}-l-1,\frac{S'(l,E)}{2}\right\}$. 
\end{prop}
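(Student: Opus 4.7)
The approach is to chain together the comparison results of Prop.~\ref{F' vs F} and Prop.~\ref{sifjkdfjkwdjk}. By Prop.~\ref{F' vs F}, for $l \geq -1$, the objects $F'_{l,E}$ and $F_{l,E}$ (in the sense of Def.~\ref{allF}) on $\M_{p,q+1}$ are related by quotients of the form $\cO_\de(-v, u)$ with $u \geq 0$ and $0 < v, u \leq S'(l,E)/2 - l - 1$. By Prop.~\ref{sifjkdfjkwdjk}, applied with the convention $T \ni z$ from Notation~\ref{Upq}, the objects $F_{l,E}$ and $\be_z^* F_{l,E}$ are related by quotients of the form $\cO_\de(\bbu, -j)$ with $0 \leq \bbu < m$ and $0 < j \leq m$, where $m := \max_T m_{T,E,l}$; by Lemma~\ref{bounds+}, $m \leq S'(l,E)/2$, and since the final statement does not distinguish the two factors of $\de \cong \PP^{r+s-1} \times \PP^{r+s-1}$, these quotients are also of the prescribed form $\cO_\de(-v', u')$ with $0 < v' \leq S'(l,E)/2$. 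Concatenating the two chains of exact triangles shows that $F'_{l,E}$ and $\be_z^* F_{l,E}$ are related by quotients of the form $\cO_\de(-v, u)$ with $u \geq 0$ and $0 < v \leq \max\{S'(l,E)/2 - l - 1, \, S'(l,E)/2\}$, establishing the proposition for $l \geq -1$.

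For $l \leq -2$, I plan to reduce to the case $l' := -l - 2 \geq 0$ via a shift identity $F'_{l,E} \cong F'_{-l-2,E}[-1]$, obtained from Grothendieck-Verdier duality applied to $\alpha: W \to \M_{p,q+1}$ and analogous to Prop.~\ref{sRGASRGASRGASG} on $\cM_n$; the corresponding shift for $\be_z^* F_{l,E}$ is inherited directly from $\cU_{p,q}$ via that proposition. A direct calculation gives $S'(l',E) = S'(l,E) - 2(l+1)$, so the bound $S'(l',E)/2$ arising from the $l' \geq 0$ case translates to $S'(l,E)/2 - l - 1$, matching the left entry of the $\max$, which dominates for $l \leq -2$.

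The main obstacle is the bookkeeping of the $T$/$T^c$ conventions: Prop.~\ref{F' vs F} produces quotients with the negative degree on one specified factor, while Prop.~\ref{sifjkdfjkwdjk} produces quotients with the negative degree on the opposite factor relative to the rule $T \ni z$. Since the statement of Prop.~\ref{F' vs beta} permits $\cO_\de(-v, u)$ with either assignment of components, both orientations are admissible, but the concatenation must be organized so that the two independent bounds on $v$ from the two propositions are jointly witnessed by the $\max$. A secondary point is checking that the Grothendieck-Verdier-based reduction for $l \leq -2$ preserves the form of the quotients under the shift $[-1]$; this follows since the shift only reindexes triangles without altering the isomorphism class of the cone terms.
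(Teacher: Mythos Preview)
Your chaining argument for $l\ge 0$ is a valid alternative to the paper's direct computation: concatenating Prop.~\ref{F' vs F} (which bounds the negative twist by $S'(l,E)/2-l-1$) with Prop.~\ref{sifjkdfjkwdjk} together with Lemma~\ref{bounds+} (giving the bound $S'(l,E)/2$) does produce exactly the required $\max$. Your observation that the statement does not pin down which factor of $\de_{T,T^c}$ carries the negative twist is correct, and indeed the paper's own proof yields quotients of both orientations.

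However, your plan for $l\le -2$ has a genuine gap. The shift identity $F'_{l,E}\cong F'_{-l-2,E}[-1]$ is \emph{false} on $\M_{p,q+1}$. Grothendieck--Verdier duality for $\al\colon W\to\M_{p,q+1}$ gives $(F'_{l,E})^\vee\cong R\al_*\bigl((N'_{l,E})^\vee\otimes\om_\al\bigr)[1]$, and $(N'_{l,E})^\vee\otimes\om_\al=\om_\al^{(2-e+l)/2}(-E)$ is not of the form $N'_{l',E}$ for any $l'$; in particular it is not $N'_{-l-2,E}$. The reason the shift works on $\cU_{p,q}$ (Prop.~\ref{sRGASRGASRGASG}) is that $\pi$ is a $\bP^1$-bundle, so fibrewise Serre duality identifies $R\pi_*\cO(l)$ with $R\pi_*\cO(-l-2)[-1]$. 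Over the boundary of $\M_{p,q+1}$ the fibres of $\al$ are reducible, and there the fibrewise cohomology of $N'_{l,E}$ and $N'_{-l-2,E}$ need not match up to a shift. Concretely, if $N'_{0,E}$ has bidegree $(2,-2)$ on a reducible fibre $C=C_1\cup C_2$, then $(h^0,h^1)=(2,1)$, whereas $N'_{-2,E}=\om_\al\otimes N'_{0,E}$ has bidegree $(1,-3)$ with $(h^0,h^1)=(1,2)$; so $F'_{-2,E}\not\cong F'_{0,E}[-1]$.

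The paper avoids this by working uniformly in $l\in\ZZ$ directly on $W$: it writes $F'_{l,E}=R\al_*N_1$ and $\be_z^*F_{l,E}=R\al_*N_2$ with $N_2=N_1+\sum_{z\in T}f_{T,E,l}\,\de_{T\cup\{y\}}$, introduces the intermediate line bundle $N_3=N_1+\sum_{z\in T,\,f_{T}>0}f_{T}\,\de_{T\cup\{y\}}$, and peels off one boundary divisor at a time. The resulting quotients on $W$ push forward (via Lemma~\ref{Push}) to sheaves $\cO_\de(u,-i)$ with $0<i\le f_{T}\le S'(l,E)/2$ and $\cO_\de(-v,u)$ with $0<v\le -f_{T}-1\le S'(l,E)/2-l-1$, with no restriction on the sign of $l$. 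To repair your argument you would need to replace the shift identity by this kind of direct line-bundle comparison for negative $l$.
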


The proof of Proposition \ref{F' vs beta} will show in fact that $F'_{l,E}$ and $\be_z^*F_{l,E}$ are related by quotients 
that are generated by sheaves $\cO_T(u)\boxtimes\cO_{T^c}(-v)$, $0\leq u$, $0<v\leq \frac{S'(l,E)}{2}$ and $\cO_T(-v)\boxtimes\cO_{T^c}(u)$, $0\leq u$, $0<v\leq \frac{S'(l,E)}{2}-l-1$, but we will not use this fact. 

\bp[Proof of Proposition \ref{F' vs beta}]
Let $N_1=N'_{l,E}$. Recall from (\ref{F' formula}) that $F'_{l,E}=R\al_*(N_1)$. We have by (\ref{N2 vs N1}) that $\be_z^*F_{l,E}=R\al_*(N_2)$, where
$N_2=N_1+\sum_{z\in T}f_{T,E,l}\de_{T\cup\{y\}}$.
We let 
$$N_3:=N_1+\sum_{z\in T, f_{T,E,l}>0}f_{T,E,l}\de_{T\cup\{y\}}
=N_2+\sum_{z\in T, f_{T,E,l}<0}\big(-f_{T,E,l}\big)\de_{T\cup\{y\}}.$$
The line bundles $N_1$ and $N_3$ are related by quotients 
$\big(N_1+i\de\big)_{|\de}=
\big(f_{T,E,l}-i\big)H\boxtimes \cO_{T^c}(-i)$,
$\de=\de_{T\cup\{y\},T^c}$ ($z\in T$), $0<i\leq f_{T,E,l}$ 
(use Lemma \ref{restrict}).
It follows by Lemma \ref{Push} that $F'_{l,E}=R\al_*(N_1)$ and $R\al_*(N_3)$ are related by quotients of the form 
$Q_1=\cO_T(u)\boxtimes\cO_{T^c}(-i)$, $u\geq 0$, $0<i\leq f_{T,E,l}$.
By Lemma~\ref{bounds+}, $f_{T,E,l}\leq \frac{S'(l,E)}{2}$.  
%(with equality if and only if $E=T$), we have that for all $l\leq -1$, 
%$$|E\cap T|-\frac {e-l}{2}\leq \frac{r+s}{2},$$
%with equality if and only if $l=-1$ and $E=T$ ($e_p=r$, $e_q=s+1$). 

The line bundles $N_2$ and $N_3$ are related by quotients 
$\big(N_2+i\de\big)_{|\de}=\break
\left(N_1-\left(-f_{T,E,l}\right)\de+i\de\right)_{|\de}
=(-iH)\boxtimes \cO_{T^c}\left(-f_{T,E,l}-i\right)$,
$\de=\de_{T\cup\{y\},T^c}$ ($z\in T$),
$0<i\leq -f_{T,E,l}$ 
(use Lemma \ref{restrict}).
Since by Lemma \ref{Push} that $R\pi_*(-iH)$ is either $0$ or generated by $\cO(-v)$ with $v=1,\ldots,i-1$, it follows that 
$F'_{l,E}=R\al_*(N_2)$ and $R\al_*(N_3)$ are related by quotients 
that are generated by sheaves of the form 
$\cO_T(-v)\boxtimes\cO_{T^c}(u)$,
$u\geq 0$,
$0<v\leq  -f_{T,E,l}-1$.
Since 
by Lemma \ref{bounds+}
$-f_{T,E,l}\leq \frac{S'(l,E)}{2}-l$, the result follows.   
\ep

\begin{cor}\label{beta replace}
Let $l\geq0$, $E\subseteq P\cup \tQ$, $\tQ=Q\cup \{z\}$. Assume\break 
$S'(l,E)\leq r+s$. Then 
$F_{l,E}$ and $\be_z^*F_{l,E}$ are related by quotients that are generated by $\cA$ and sheaves of type
$\cO_{\de}\left(0,-\frac{r+s}{2}\right)
=\cO_T\boxtimes\cO_{T^c}\left(-\frac{r+s}{2}\right)$. 
%Furthermore, 
%They are related by quotients in $\cA$ 
In fact the quotients in $\cA$ suffice 
if any of the following holds:
\bi
\item[(i) ] $r+s$ is odd or $S'(l,E)<r+s$;
\item[(ii)] $r+s$ is even, $S'(l,E)=r+s$, $e_q\geq s+1$ and $z\notin E$.
\ei
\end{cor}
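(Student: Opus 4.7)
The plan is to deduce the corollary by applying Proposition~\ref{sifjkdfjkwdjk} under the convention of Notation~\ref{Upq} (so that every boundary divisor $\delta_{T,T^c}$ is split with $z\in T$) and then controlling the resulting quotients through the inequality of Lemma~\ref{bounds+}. Concretely, Proposition~\ref{sifjkdfjkwdjk}(i) exhibits $F_{l,E}$ and $\beta_z^*F_{l,E}$ as related by sheaves $Q=\cO_{\delta_{T,T^c}}(u,-j)$ with $0\le u<m_{T,E,l}$ and $0<j\le m_{T,E,l}$, where the first coordinate refers to the side of $\delta$ marked by $T$. Since $e-l$ is even, $f_{T,E,l}$ and hence $m_{T,E,l}$ are integers, and Lemma~\ref{bounds+} gives the bound $m_{T,E,l}\le S'(l,E)/2\le(r+s)/2$.

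Next I would verify that each such $Q$ belongs to $\cA$ except in one identifiable case. Applying Beilinson's resolution on the first factor of $\delta\cong\bP^{r+s-1}\times\bP^{r+s-1}$ places $\cO_\delta(u,-j)$ in the triangulated envelope of $\{\cO_\delta(-a,-j):0\le a\le r+s-1\}$. For $1\le a\le r+s-1$ and $1\le j\le r+s-1$, the sheaf $\cO_\delta(-a,-j)$ is a generator of $\cA$, while for $a=0$ membership in $\cA$ requires $1\le j\le(r+s-1)/2$. Since $j$ and $r+s$ are integers, this last inequality fails only when $r+s$ is even and $j=(r+s)/2$, so the only potential obstruction is the single sheaf $\cO_\delta(0,-(r+s)/2)$, occurring precisely when $m_{T,E,l}=(r+s)/2$. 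This settles the first assertion of the corollary.

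For case (i) this obstruction is automatically absent: if $r+s$ is odd then $m_{T,E,l}\le(r+s-1)/2$, and if $r+s$ is even with $S'(l,E)\le r+s-1$ then by integrality $m_{T,E,l}\le(r+s)/2-1$; in either situation $j$ never reaches $(r+s)/2$. For case (ii) I would invoke the equality clause of Lemma~\ref{bounds+}: a critical $T$ realizing $m_T=(r+s)/2$ must satisfy either $T_q\subseteq E_q$ or $E_q\subseteq T_q$, with $|T_q|=s+1$. The $\beta_z$-convention forces $z\in T$, hence $z\in T_q$; under $z\notin E$ the containment $T_q\subseteq E_q$ is impossible, while $E_q\subseteq T_q$ combined with $e_q\ge s+1=|T_q|$ forces $E_q=T_q\ni z$, again a contradiction. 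Hence no critical $T$ is selected by the convention, and the exceptional sheaf does not appear.

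The main obstacle I anticipate is this last combinatorial step of case (ii): aligning the $z\in T$ selection rule of Notation~\ref{Upq} with the equality characterization of Lemma~\ref{bounds+} to see that the hypotheses $z\notin E$ and $e_q\ge s+1$ are exactly what excludes every critical $T$. Once this matching is recognized, the remainder of the argument is a direct unfolding of Proposition~\ref{sifjkdfjkwdjk} together with Beilinson's resolution on one factor of $\delta$.
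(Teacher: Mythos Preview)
Your argument is correct and is precisely the approach the paper intends: the corollary is stated there as an immediate consequence of Proposition~\ref{sifjkdfjkwdjk}(i) and Lemma~\ref{bounds+}, and you have simply supplied the details. One small point worth making explicit is that for case~(ii) you are using the per-$T$ bound $j\le m_{T,E,l}$ on the quotients supported on $\delta_{T,T^c}$, which is visible in the \emph{proof} of Proposition~\ref{sifjkdfjkwdjk} (Types I--III all give $j\le f_T$) rather than in its statement, where only the global bound $j\le m$ is recorded; this refinement is exactly what is needed to match the equality clause of Lemma~\ref{bounds+} against the selection rule $z\in T$.
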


\bp
By Lemma  \ref{bounds+}, for all partitions $T\sqcup T^c=P\cup \tQ$ we have $m_{T,E,l}\leq \frac{S'(l,E)}{2}$, with equality if and only if ($T_p\subseteq E_p$ or $E_p\subseteq T_p$) and ($T_q\subseteq E_q$ or $E_q\subseteq T_q$). By Proposition \ref{sifjkdfjkwdjk}(i), $F_{l,E}$ and $\be_z^*F_{l,E}$ are related by quotients in $\cA$ and sheaves of type
$\cO_T(u)\boxtimes\cO_{T^c}(-j)$ where $0<j\leq m$, $0\leq u<m$, $m:=m_{T,E,l}$. The assumption $S'(l,E)\leq r+s$ implies that $m\leq \frac{r+s}{2}$. It follows that $F_{l,E}$ and $\be_z^*F_{l,E}$ are related by quotients in $\cA$ and possibly sheaves of type 
$\cO_T(u)\boxtimes\cO_{T^c}(-\frac{r+s}{2})$, $0\leq u<\frac{r+s}{2}$. But as the generators of $\cA$ and the sheaves $\cO_T\boxtimes\cO_{T^c}(-\frac{r+s}{2})$ generate all
$\cO_T(u)\boxtimes\cO_{T^c}(-\frac{r+s}{2})$ with $u$ positive or negative, the first statement follows. (We use here that 
$D^b(\PP^t)$ is generated by $\cO(-t),\ldots,\cO(-1),\cO$). Clearly, if $r+s$ is odd or if $S'(l,E)<r+s$, then the sheaves
$\cO_T(u)\boxtimes\cO_{T^c}(-\frac{r+s}{2})$ do not appear, which implies (i). To see (ii), note that for the sheaves $\cO_T(u)\boxtimes\cO_{T^c}(-\frac{r+s}{2})$ to appear, we must have $S'(l,E)=r+s$ is even and  $m_{T,E,l}=\frac{S'(l,E)}{2}$. Hence, by Lemma  \ref{bounds+}, $T_q\subseteq E_q$ or $E_q\subseteq T_q$. But if $e_q\geq s+1$, we must have 
$T_q\subseteq E_q$. In particular, $z\in E$. This implies (ii).    
\ep

%\bp
%The first statement and part (i) follow from Proposition \ref{beta game} and Lemma \ref{bound1}, since $S'(l,E)<r+s$ and  $S'(l,E)$ is even.  
%By  Proposition \ref{beta game}, if $S'(l,E)\leq r+s$, the only way quotients not in $\cA$ may appear is if $r+s$ is even, $S'(l,E)=r+s$ and we have 
%equality $j=m$ in Proposition \ref{beta game}. Since $j=m$, $e_q\geq s+1$ imply $z\in E$, part (ii) follows. 
%\ep

\begin{cor}\label{beta final}
Let $l\geq0$, $E\subseteq P\cup\tQ$, $l+e$ even, $S'(l,E)\leq r+s$. 
For any $z\in\tQ$, the bundle $\be_z^*F_{l,E}$ on $\M_{p,q+1}$ is generated by $\cC'_F$.
\end{cor}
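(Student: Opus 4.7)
The plan is to chain together three results that have already been assembled in the preceding pages. First I would apply Corollary \ref{beta replace}: it exhibits $\be_z^*F_{l,E}$ and $F_{l,E}$ on $\M_{p,q+1}$ as related (in the sense of Definition \ref{related}) by a sequence of triangles whose intermediate quotients are either objects of $\cA$ or torsion sheaves of the form $\cO_\de(0,-\tfrac{r+s}{2})$ on boundary divisors $\de=\de_{T,T^c}$. Consequently it suffices to show that $F_{l,E}$ itself and each of these boundary torsion sheaves belong to the triangulated subcategory generated by $\cC'_F$.

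For the bundle $F_{l,E}$, the hypothesis $S'(l,E)\le r+s$ is precisely what Theorem \ref{cor refined induction} assumes, and that theorem delivers generation by $\cC'_F$ immediately; indeed it produces $F_{l,E}$ from $\cA$ together with bundles $\{F_{l',E'}\}$ in groups $1A$ (or $1B$) and $2B$ with $S'(l',E')\le S'(l,E)$, which is exactly what defines $\cC'_F$. For the torsion sheaves $\cO_\de(0,-\tfrac{r+s}{2})$, the trick is to observe that the isomorphism $\de_{T,T^c}\cong\de_{T^c,T}$ interchanges the two $\bP^{r+s-1}$-factors, so that $\cO_{\de_{T,T^c}}(0,-\tfrac{r+s}{2})$ is the same torsion sheaf as $\cO_{\de_{T^c,T}}(-\tfrac{r+s}{2},0)$. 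This is precisely the class of sheaves handled by Corollary \ref{bad quotients}, which dualizes a Koszul resolution on the universal family to write $\cO_\de(-\tfrac{r+s}{2},0)$ as an iterated extension of objects $F'_{j-1,T^c\cup J}$ (converted to honest $F_{l',E'}$'s of score $r+s$ by Proposition \ref{F' vs F} and Corollary \ref{sdfDAgdsgsDG}) and objects of $\cA$, all of which lie in $\cC'_F$.

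Combining the three steps yields $\be_z^*F_{l,E}\in\langle\cC'_F\rangle$. The only subtlety worth double-checking is the case distinction in Corollary \ref{beta replace}: when $r+s$ is odd, or when $S'(l,E)<r+s$, no boundary-sheaf quotients appear and the conclusion is immediate from Theorem \ref{cor refined induction} alone; when $r+s$ is even and $S'(l,E)=r+s$, the boundary sheaves genuinely appear, and the invocation of Corollary \ref{bad quotients} is essential. I do not anticipate any genuine obstacle, since all the hard work, in particular the careful Koszul-complex and windows arguments, is already packaged into the cited results; the present corollary is essentially a bookkeeping synthesis.
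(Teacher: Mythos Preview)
Your proposal is correct and follows essentially the same approach as the paper, which simply cites Corollary~\ref{beta replace}, Theorem~\ref{score r+s}, and Corollary~\ref{bad quotients} without further comment. Your explicit observation that $\cO_{\de_{T,T^c}}(0,-\tfrac{r+s}{2})=\cO_{\de_{T^c,T}}(-\tfrac{r+s}{2},0)$ via swapping the factors is exactly the bridge needed to invoke Corollary~\ref{bad quotients}, and the case distinction you note is accurate.
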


\bp
By Corollary \ref{beta replace}, $F_{l,E}$ and $\be_z^*F_{l,E}$ are related by quotients that are generated by $\cA$ and possibly sheaves of type $\cO_{\de}\left(0,-\frac{r+s}{2}\right)$. It follows by Corollary~\ref{bad quotients} that $F_{l,E}$ and $\be_z^*F_{l,E}$ are related by quotients generated by  $\cC'_F$.  
But by Proposition \ref{score r+s}, $F_{l,E}$ is generated by  $\cC'_F$. Hence, $\be_z^*F_{l,E}$ is generated by  $\cC'_F$. 
\ep

We are now finally ready to prove the following Lemma, which together with Theorem \ref{replace}, will prove the fullness in Theorem \ref{p,q+1 case} (done at the end of this section).  
\begin{lemma}\label{perp}
Assume $G\in D^b(\M_{p,q+1})$ is such that $R\Hom(C,G)=0$, for any $C\in\cC'_F$. Then $G=0$. 
\end{lemma}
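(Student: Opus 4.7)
The plan is to show the right-orthogonal to $\cC'_F$ in $D^b(\M_{p,q+1})$ is zero by reducing, via the forgetful contractions $\be_z:\M_{p,q+1}\to\cU_{p,q}$ for various $z\in\tQ$, to the already-established fullness on $\cU_{p,q}$ (Theorem~\ref{full Upq}). The key input is Corollary~\ref{beta final}, which places each pullback $\be_z^*F_{l,E}$ with $S'(l,E)\le r+s$ inside the subcategory generated by $\cC'_F$.

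Fix $z\in\tQ$. First I would verify that every pair $(l,E)$ in the generating collection of Theorem~\ref{full Upq} --- group~$1A$, group~$2A$ with $z\notin E$, and group~$2B$ with $z\in E$ --- satisfies $S'(l,E)\le r+s$. For $1A$ this is Lemma~\ref{jhvvmgvmv}; for $2A$ and $2B$ it is a direct case analysis on whether $e_q\le s+1$ or $e_q\ge s+2$, using the defining inequalities of those groups. Consequently $R\Hom(\be_z^*F_{l,E},G)=0$, and $(\be_z^*,R\be_{z*})$-adjunction yields $R\Hom_{\cU_{p,q}}(F_{l,E},R\be_{z*}G)=0$; Theorem~\ref{full Upq} then forces $R\be_{z*}G=0$.

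Running the same argument with any second $z'\in\tQ$ gives $R\be_{z'*}G=0$. Recall that $\be_z$ contracts each boundary divisor $\de=\de_{T,T^c}\simeq\PP^{r+s-1}\times\PP^{r+s-1}$ with $z\in T$ onto its $T^c$-factor. Since every such partition has $s+1\ge 1$ light indices on each side, for any $\de$ one can pick $z\in T\cap\tQ$ and $z'\in T^c\cap\tQ$; then $\de$ lies in the exceptional loci of both contractions, from opposite factors. By K\"unneth the intersection of the two kernels, restricted to sheaves on $\de$, is generated by the interior line bundles $\cO_\de(-a,-b)$ with $1\le a,b\le r+s-1$, which are precisely generators of $\cA$. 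Combined with the perpendicularity of $G$ to $\cA$, this forces the contribution of $G$ on each individual $\de$ to vanish: any object of $\langle\cA\rangle$ that is perpendicular to $\cA$ is automatically perpendicular to itself, hence zero.

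The hard part will be promoting this local vanishing to the global conclusion $G=0$, since $G$ may be supported on a union of many mutually intersecting boundary divisors. The plan is to filter $G$ by codimension of support and proceed inductively from the deepest stratum outward: at each stage one chooses $z,z'\in\tQ$ so that every boundary divisor passing through the current stratum contains $z$ and $z'$ on opposite sides, applies the double-contraction analysis above to force $G$'s restriction to lie in $\langle\cA\rangle$, and then kills it by perpendicularity to $\cA$. Iterating upward yields $G=0$.
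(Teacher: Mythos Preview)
Your first half is exactly the paper's argument: use Corollary~\ref{beta final} and adjunction to get $R\be_{z*}G=0$ on $\cU_{p,q}$ for every $z\in\tQ$, then invoke Theorem~\ref{full Upq}. Your check that the generators of Theorem~\ref{full Upq} all satisfy $S'(l,E)\le r+s$ is correct.

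The second half, however, rests on a geometric misconception. The boundary divisors $\de_{T,T^c}\simeq\bP^{r+s-1}\times\bP^{r+s-1}$ in $\M_{p,q+1}$ are the exceptional divisors of the Kirwan resolution $\M_{p,q+1}\to\X_{p,q+1}$ over \emph{isolated} singular points; they are pairwise disjoint. So there is no ``union of many mutually intersecting boundary divisors'' and no need for a stratification induction. Once you know $R\be_{z*}G=0$ for a single $z$, the support of $G$ is contained in the disjoint boundary, and $G$ is a direct sum of complexes each supported on a single $\de$. This is the step the paper records explicitly and it makes your ``hard part'' disappear.

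What remains is the local statement: if $G$ is supported on a single $\de$, $R\be_{z*}G=R\be_{z'*}G=0$ for $z\in T_q$, $z'\in T_q^c$, and $R\Hom(\cO_\de(-a,-b),G)=0$ for $1\le a,b\le r+s-1$, then $G=0$. Your phrase ``by K\"unneth'' is not enough here: K\"unneth on $D^b(\de)$ only tells you about objects of the form $i_*G'$, whereas $G$ is a priori only set-theoretically supported on $\de$ and may involve infinitesimal thickenings. The paper isolates this as a separate Lemma (\ref{DFlopos}) and proves it by applying Orlov's blow-up theorem to the contraction $\be_z$ (which is a genuine blow-up of $\cU_{p,q}$ along a $\bP^{r+s-1}$), so that $R\be_{z*}G=0$ forces $G$ into the admissible subcategory generated by $\{\cO_\de(-a,-b):1\le a\le r+s,\ 1\le b\le r+s-1\}$; one then peels off the $a=r+s$ layer using $R\be_{z'*}G=0$. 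Your conclusion is right, but the justification needs this input.
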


\bp
By Corollary~\ref{beta final}, for any $z\in \tQ$, we have $R\Hom(\be_z^*F_{l,E}, G)=0$ if $S'(l,E)\le r+s$.
It follows  that 
$R\Hom(F_{l,E}, R{\be_z}_*G)=0$ whenever $S'(l,E)\leq r+s$.
Since by Theorem \ref{full Upq} 
and Lemma \ref{basic} 
the collection $\{F_{l,E}\}$ 
on $\cU_{p,q}$ 
with $S'(l,E)\leq r+s$ 
contains as a subcollection a full exceptional collection of $D^b(\cU_{p,q})$, 
it follows that $R{\be_z}_*G=0$ for every $z\in\tQ$. In particular, $G$ has support on the boundary. 
Since the boundary is a disconnected union of components, $G$ is isomorphic to a direct sum of complexes
supported on irreducible boundary divisors.
The result follows from Lemma~\ref{DFlopos}.
\ep

\begin{lemma}\label{DFlopos}
Let $X$ be a smooth variety and let $X\to X_0$ be a contraction of a divisor $E\cong\bP^l\times\bP^l$ with normal bundle $\cO(-1,-1)$.
Let $f^\pm: X\to X^\pm$ be two small resolutions of $X_0$ (contracting $E$ to $\bP^l$ in two directions).
Let $G\in D^b(X)$ and suppose 
$Rf^-_*(G)=Rf^+_*(G)=R\Hom(\cO_E(-a,-b), G)=0$ 
for every $a,b=1,\ldots,l$. Then $G=0$.
\end{lemma}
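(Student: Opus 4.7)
The plan is to realize $G$ in an explicit admissible subcategory of $D^b(X)$ that carries a full exceptional collection, and then use the orthogonality hypothesis to force $G=0$. First I will apply Orlov's blow-up formula to the contraction $f^-:X\to X^-$, which realizes $X$ as the blow-up of $Y^-=\bP^l\subset X^-$ with exceptional divisor $E=\bP^l\times\bP^l$ and relative $\cO(1)=\cO_E(0,1)$. Combined with the mutated Beilinson collection $\cO(-l),\cO(-l+1),\ldots,\cO$ on the base $Y^-=\bP^l$, this yields a semi-orthogonal decomposition
\[
D^b(X)=\langle \cA^-,\,Lf^{-*}D^b(X^-)\rangle,
\]
where $\cA^-$ is generated by the $l(l+1)$ line bundles $\cO_E(-a,-b)$ for $0\le a\le l$, $1\le b\le l$. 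By adjunction, the hypothesis $Rf^-_*G=0$ is equivalent to $G\in\cA^-$. Symmetrically, $Rf^+_*G=0$ places $G$ in the analogous subcategory $\cA^+$, generated by $\cO_E(-a,-b)$ for $1\le a\le l$, $0\le b\le l$.

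The crucial step is to identify the intersection $\cA^-\cap\cA^+$ with the full triangulated subcategory $\cB\subseteq D^b(X)$ generated by the $l^2$ sheaves $\{\cO_E(-a,-b):1\le a,b\le l\}$. The inclusion $\cB\subseteq\cA^-\cap\cA^+$ is immediate from the projection formula and the vanishings $H^*(\bP^l,\cO(-c))=0$ for $1\le c\le l$, which give $Rf^\pm_*\cO_E(-a,-b)=0$ in the stated range. For the reverse inclusion, I would mutate the exceptional collection of $\cA^-$ so that $\cB$ sits as an admissible subcategory with orthogonal complement spanned by the ``extra'' generators $\cO_E(0,-b)$, $1\le b\le l$. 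A direct computation gives $Rf^+_*\cO_E(0,-b)=i^+_*\cO_{Y^+}(-b)\ne 0$, and the objects $i^+_*\cO_{Y^+}(-b)$ for distinct $b$ are linearly independent in $D^b(X^+)$; hence the induced exact functor from this complement into $D^b(X^+)$ is conservative, and $Rf^+_*G=0$ forces the complementary component of $G$ to vanish, putting $G\in\cB$.

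Finally, the $l^2$ sheaves $\{\cO_E(-a,-b):1\le a,b\le l\}$ form a full exceptional collection on the admissible subcategory $\cB$ (inherited from Beilinson on $\bP^l\times\bP^l$ through $i_*$ via the standard triangle for a smooth divisor), so the hypothesis $R\Hom(\cO_E(-a,-b),G)=0$ for all $1\le a,b\le l$ exhibits $G\in\cB$ as right-orthogonal to a full exceptional collection of $\cB$, which forces $G=0$. The main obstacle will be the reverse inclusion $\cA^-\cap\cA^+\subseteq\cB$: the ``extra'' generators $\cO_E(0,-b)$ of $\cA^-$ are not directly orthogonal to $\cB$, so their orthogonal complement inside $\cA^-$ must be extracted through a careful mutation argument; alternatively, one may bypass this step by constructing a tailored three-term semi-orthogonal decomposition of $D^b(X)$ simultaneously adapted to both contractions $f^\pm$, in the spirit of Kawamata--Bondal--Orlov structure theorems for standard flops.
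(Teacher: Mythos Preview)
Your overall architecture is correct and close to the paper's: localize $G$ via Orlov's blow-up theorem into an admissible subcategory of $D^b(X)$ generated by finitely many sheaves $\cO_E(-a,-b)$, then kill it using the remaining hypotheses. The difference is in the order you deploy the three hypotheses, and that difference is exactly where your argument has a gap.

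The gap is your reverse inclusion $\cA^-\cap\cA^+\subseteq\cB$. You propose to mutate so that $\cB$ is admissible in $\cA^-$ with complement ``spanned by the $\cO_E(0,-b)$'', and then argue conservativity of $Rf^+_*$ on that complement from the fact that the images $i^+_*\cO_{Y^+}(-b)$ are nonzero and ``linearly independent''. Two problems. First, once you mutate to make $\cB$ admissible, the orthogonal complement is generated by \emph{mutations} of the $\cO_E(0,-b)$, not by the sheaves themselves, so you cannot simply read off their pushforwards. Second, and more seriously, linear independence of the images of an exceptional collection does not give conservativity: an iterated extension of the generators can very well map to zero even if each generator does not. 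What you need is a filtration argument showing that the pushforwards of the successive graded pieces land in genuinely different ``layers'' that cannot cancel.

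The paper's proof supplies precisely this missing inductive step, but it gets there by using the hypotheses in a different order. It applies Orlov to \emph{one} contraction, say $f^+$, so $G$ lies in the span of $\cO_E(-a,-b)$ with $1\le a\le l{+}1$, $1\le b\le l$. Then the orthogonality hypothesis $R\Hom(\cO_E(-a,-b),G)=0$ for $1\le a,b\le l$ immediately pushes $G$ into the right orthogonal, a subcategory $\cB$ of rank only $l$, generated by $\cO_E(-(l{+}1),-b)$ for $b=1,\ldots,l$. Only now is $Rf^-_*G=0$ invoked, inductively: writing the triangle $X\to Y\to G$ with $Y\simeq\cO_E(-(l{+}1),-i)\otimes K^\bullet$ and $X\in\langle\cO_E(-(l{+}1),-b):b>i\rangle$, one computes $Rf^-_*\cO_E(-(l{+}1),-b)=i^-_*\cO_{\bP^l}(-b)[-l]$, so $Rf^-_*Y$ and $Rf^-_*X$ are (sums of shifts of) $i^-_*\cO_{\bP^l}(-i)$ and $i^-_*\cO_{\bP^l}(-b)$ with $b>i$ respectively. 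Shrinking $X_0$ so that $\Pic X^-\to\Pic\bP^l$ is surjective, one tensors by a line bundle to arrange that $Rf^-_*Y$ becomes copies of $\cO_{\bP^l}$ while $Rf^-_*X$ is generated by $\cO_{\bP^l}(-1),\ldots,\cO_{\bP^l}(-l{+}i)$; applying $R\Gamma$ then forces $K^\bullet=0$. This peeling-off is exactly the ``separation of layers'' that your conservativity claim needs but does not establish. If you want to salvage your symmetric approach, you should replace the linear-independence sentence by this inductive argument applied inside the complement of $\cB$ in $\cA^-$.
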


\bp
By Orlov's blow-up theorem applied to $f^+$, 
$G$ belongs to a subcategory generated by the exceptional collection $\cO_E(-a,-b)$  
for every $a=1,\ldots,l+1$ and $b=1,\ldots,l$, which has a s.o.d. $\langle \cB,\cA\rangle$
with $\cA$ generated by sheaves with $a=1,\ldots,l$ and $\cB$ by sheaves with $a=l+1$.
Since  $R\Hom(\cA, G)=0$ by assumption, we conclude that $G\in\cB$.
We prove that $G=0$ by proving, by  induction on $i$, that $G$ belongs to a subcategory $\cB_i$
generated by %exceptional collection 
$\{\cO_E(-(l+1),-l), \ldots,\cO_E(-(l+1),-i)\}$ for every $i>1$.  
%(and therefore $G=0$).
Applying $Rf^-_*$ to the triangle $H\to Y\to G\to X[1]$ with $Y\cong \cO_E(-(l+1),-i)\otimes K^\bullet$ and $H\in\cB_{i+1}$
(and $K^\bullet$ a complex of vector spaces with trivial differentials) gives
$Rf^-_*H\cong Rf^-_*\cO_E(-(l+1),-i)\otimes K^\bullet=\cO_{\bP^l}(-i)[-l]\otimes K^\bullet$.
On the other hand, $Rf^-_*H$ belongs to a triangulated subcategory generated by $\cO_{\bP^l}(-l)[-l],\ldots,\cO_{\bP^l}(-(i+1))[-l]$.
By shrinking $X_0$ we can assume that the restriction map $\Pic X^-\to\Pic\, \bP^l$ is surjective.
Tensoring with an appropriate line bundle shows that
$\cO_{\bP^l}\otimes K^\bullet$ belongs to a triangulated subcategory generated by $\cO_{\bP^l}(-l+i),\ldots,\cO_{\bP^l}(-1)$.
Computing $R\Gamma$ shows that $K^\bullet=0$. It follows that $Rf^-_*H=0$ and therefore $G\cong H\in\cB_{i+1}$.
\ep

In the remaining part of this section we prove the following Theorem (whose consequence is then the fullness part of Theorem \ref{p,q+1 case}).
\begin{thm}\label{replace}
The exceptional collection $\cC$ of Theorem \ref{p,q+1 case} 
generates the collection $\cC'_F$ (see Notation \ref{C notations}). 
\end{thm}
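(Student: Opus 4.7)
The plan is to prove Theorem~\ref{replace} by induction on the score $S'(l,E)$ for pairs $(l,E)$ in group~$2B$.  Since $\cC$ already contains $\cA$, every vector bundle $F_{l',E'}$ from group~$1A$ (or~$1B$), and every complex $\cTT_{l',E'}$ from group~$2B$, the only elements of $\cC'_F$ that do not belong to $\cC$ by definition are the bundles $F_{l,E}$ with $(l,E)$ in group~$2B$, so it is enough to show that each of these lies in the triangulated subcategory generated by $\cC$.

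The central tool will be an adaptation to $\M_{p,q+1}$ of Koszul game~3 from the proof of Theorem~\ref{K games}.  Fix $(l,E)$ in group~$2B$, so that $R := E_p \subseteq P$ has cardinality $r$.  On the universal family $\al \colon W \to \M_{p,q+1}$, I would form the Koszul resolution of the codimension-$r$ locus where the $r$ sections indexed by $R$ meet the universal marking $y$, tensor with the line bundle that mirrors the tensor factor used in the original Koszul game~3, and push forward by $\al$.  Combined with Lemma~\ref{dual} and Definition~\ref{F'} of the uncorrected bundles $F'_{\tilde l,\tilde E}$, this should produce a bounded complex in $D^b(\M_{p,q+1})$ whose outer terms are (up to shift) $F^{\vee}_{l,E}$ and $\cT^{\vee}_{l,E}$ and whose intermediate terms are direct sums $\bigoplus_{J \subsetneq R} F'^{\vee}_{\tilde l_j, E_q \cup J}$.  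By Lemma~\ref{obviosddd}, $F^{\vee}_{l,E}$ then lies in the triangulated subcategory generated by the other terms, and dualizing the whole argument places $F_{l,E}$ in the subcategory generated by their duals.

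Next I would show that all remaining terms lie in the subcategory generated by $\cC$.  Repeating the score bookkeeping of cases~(a) and~(b) of Koszul game~3 but with $S'$ in place of $S$, one verifies that every intermediate pair $(\tilde l_j, E_q \cup J)$ has $\tilde e_p = |J| < r$ and strictly smaller score than $S'(l,E)$: the decrease is $2l + 2$ in case~(a) and $2(r - |J|)$ in case~(b).  Proposition~\ref{F' vs F} identifies $F'_{\tilde l, \tilde E}$ with $F_{\tilde l, \tilde E}$ modulo objects of $\cA \subseteq \cC$, and Theorem~\ref{cor refined induction} then expresses $F_{\tilde l_j, E_q \cup J}$ in terms of $\cA$ and bundles from groups~$1A$/$1B$ or~$2B$ whose score is at most $S'(\tilde l_j, E_q \cup J) < S'(l,E)$.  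The group~$1A$/$1B$ contributions lie in $\cC$ by definition; the group~$2B$ contributions have strictly smaller score than $S'(l,E)$ and lie in $\cC$ by the induction hypothesis.  For the torsion end, Lemma~\ref{supportT} provides an exact triangle $\cTT_{l,E} \to \cT_{l,E} \to Q$ with $Q$ generated by objects of $\cA$, so $\cT_{l,E}$ and hence $\cT^{\vee}_{l,E}$ belong to the subcategory generated by $\cC$.  This closes the induction; the base case (minimum score $r$ or $r+1$, depending on the parity of $r$) requires no separate argument since $r \ge 2$ forces the intermediate terms to have strictly smaller score, and in that case the expansion provided by Theorem~\ref{cor refined induction} uses only group~$1A$/$1B$ bundles and objects of $\cA$.

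The chief obstacle will be carrying out this Koszul game faithfully on $\M_{p,q+1}$.  Unlike $\M_{p,q}$, which is a smooth GIT quotient, the space $\M_{p,q+1}$ is the divisorial Kirwan resolution of the singular GIT quotient $\X_{p,q+1}$ and carries the distinguished exceptional divisors $\delta_{T,T^c} \simeq \PP^{r+s-1} \times \PP^{r+s-1}$.  One must therefore verify that the push-forward of the tensored Koszul complex produces exactly $\cT_{l,E}$ and the uncorrected bundles $F'_{\tilde l, \tilde E}$ in the claimed shape, with no spurious contributions from these exceptional divisors.  Because the chosen tensor line bundle has been taken without boundary corrections and the support locus $Z_R$ meets each $\delta_{T,T^c}$ only along the expected strata, this verification should reduce to local restriction calculations in the style of Lemmas~\ref{restrict} and~\ref{restrictions_W}; carrying them out carefully will be the technical core of the proof.
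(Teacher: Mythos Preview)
Your overall strategy matches the paper's: reduce to showing $F_{l,E}$ for $(l,E)$ in group~$2B$ is generated by $\cC'$, run a Koszul game~3 on the universal family $W\to\M_{p,q+1}$ to relate $F'_{l,E}$ with $\cT_{l,E}$ via intermediate objects $F'_{\tl,\tE}$, and induct on the score. The identification of the torsion end via Lemma~\ref{supportT} is also correct.

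There is, however, a genuine gap in your treatment of the intermediate terms. When you push forward the tensored Koszul complex by $\al$, the $j$-th term (for $J\subseteq R$, $|J|=j$) is $(F'_{\tl,\tE})^\vee$ with $\tl=j+l-r$, and for $0\le j<r-l$ this index is \emph{negative}. On the GIT quotient $\M_{p,q}$ this is harmless: Proposition~\ref{F negative} gives $F_{\tl,\tE}\simeq F_{-\tl-2,\tE}[-1]$, which is exactly what produces the ``case~(a)'' terms $F^\vee_{r-2-l-j,E_q\cup J}$ you invoke. But this duality relies on the universal family being a $\bP^1$-bundle and \emph{fails} on $\M_{p,q+1}$, whose universal family has reducible fibers over the Kirwan exceptional divisors. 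In particular $F'_{-1,\tE}$ need not vanish there, and $F'_{\tl,\tE}$ for $\tl\le-2$ is not $F'_{-\tl-2,\tE}[-1]$. Consequently neither Proposition~\ref{F' vs F} (which requires $l\ge-1$ and gives the conclusion only for $l\ge0$) nor Theorem~\ref{cor refined induction} (which requires $l\ge0$) can be applied to these terms as you propose, and your ``case~(a)'' bookkeeping does not actually describe the objects that appear.

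The paper fixes this by a detour through the small resolution $\be_z:\M_{p,q+1}\to\cU_{p,q}$. For $\tl\le-1$ one first uses Proposition~\ref{F' vs beta} to relate $F'_{\tl,\tE}$ with $\be_z^*F_{\tl,\tE}$ modulo objects of $\cA$; on the GIT quotient $\cU_{p,q}$ the duality of Proposition~\ref{F negative} \emph{does} hold, so $\be_z^*F_{\tl,\tE}\simeq\be_z^*F_{-\tl-2,\tE}[-1]$ (or $0$ if $\tl=-1$). One then checks $S'(-\tl-2,\tE)\le r+s-1$ and applies Corollary~\ref{beta replace}(i) to pass back from $\be_z^*F_{-\tl-2,\tE}$ to $F_{-\tl-2,\tE}$ on $\M_{p,q+1}$, again modulo $\cA$, after which the induction hypothesis applies. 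This extra step through $\be_z$ is the missing ingredient; your ``chief obstacle'' paragraph anticipates difficulties with the boundary but not this failure of the negative-index duality.
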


\bp
By Lemma \ref{supportT}, $\cT_{l,E}$ and $\cTT_{l,E}$ are related by quotients in $\cA$. Hence, it suffices to prove that 
$\cC'$ generates $\cC'_F$, i.e., that we can ``replace" $\cT_{l,E}$ with $F_{l,E}$ when $(l,E)$ is in group $2B$. 
By Lemma \ref{basic} , it suffices to prove

\begin{claim}\label{main replace}
Let $l\geq0$,  $E\subseteq P\cup \tQ$, $l+e$ even, %$\tQ=Q\cup \{z\}$
%Assume that 
$S'(l,E)\leq r+s$.
Then the bundle $F_{l,E}$ is generated by the collection $\cC'$.
\end{claim}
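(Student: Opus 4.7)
The plan is to prove Claim~\ref{main replace} by induction on the score $S'(l,E)$. The base case is immediate: at $S'(l,E)=0$ one must have $l=0$ and $E\in\{\emptyset,P,\tQ,P\cup\tQ\}$, each lying in group $1A$, so $F_{l,E}\in\cC'$. For the inductive step I split into three cases. If $(l,E)$ lies in group $1A$ or $1B$ then $F_{l,E}\in\cC'$ by definition. If $(l,E)$ lies outside groups $1A$, $1B$, and $2B$, Theorem~\ref{cor refined induction} puts $F_{l,E}$ in the triangulated subcategory generated by $\cA\subseteq\cC'$, by group $1A/1B$ bundles (all in $\cC'$), and by group $2B$ bundles $F_{l',E'}$ with $S'(l',E')\le S'(l,E)$; those of strictly smaller score are handled by the outer induction, while those of equal score fall into the remaining case.

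The crucial case is $(l,E)$ in group $2B$, so $E_p=R\subseteq P$ with $|R|=r$ and $l+e_q\le s$. I would adapt Koszul Game~3 from the proof of Theorem~\ref{K games} to $\M_{p,q+1}$: on $(\PP^1)^{p+q+1}\times\PP^1_x$, twist the Koszul resolution of the diagonal $\Delta_{R\cup\{x\}}$ by $\cO(-\sum_{j\in E_q}e_j+(r-2-l)e_x)$, restrict to the $\PGL_2$-semistable locus for $\M_{p,q+1}$, and take derived pushforward to $(\PP^1)^{p+q+1}$. Lemma~\ref{dual} (which is stated for both $\M_{p,q}$ and $\M_{p,q+1}$) identifies the pushforward of the torsion end of the Koszul complex with $\cT^\vee_{l,E}[r-1]$, while Proposition~\ref{sRGASRGASRGASG} identifies the pushforwards of the free terms with duals of vector bundles $F_{l',E'}$ indexed by subsets $J\subseteq R$. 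The resulting exact complex runs from $\cT^\vee_{l,E}[r-1]$ at one end to $F^\vee_{l,E_q\cup R}[-1]=F^\vee_{l,E}[-1]$ at the other, with intermediate terms of the form $\bigoplus_{|J|=j}F^\vee_{r-2-l-j,E_q\cup J}$ and $\bigoplus_{|J|=j}F^\vee_{l-r+j,E_q\cup J}$ for $J\subsetneq R$.

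Following the score bookkeeping that accompanies Koszul Game~3 in the proof of Theorem~\ref{K games}, one finds $S'(r-2-l-j,E_q\cup J)=r-2-l+e_q<l+r+e_q=S'(l,E)$ for the first type of intermediate term, and $S'(l-r+j,E_q\cup J)=l-r+2j+e_q\le S'(l,E)$ for the second type, with strict inequality precisely for $J\subsetneq R$ (the endpoint $J=R$ recovers $F^\vee_{l,E}$ itself). Thus every intermediate $F^\vee_{l',E'}$ has $S'(l',E')<S'(l,E)$. Dualizing the exact complex and applying Lemma~\ref{obviosddd} places $F_{l,E}$ in the triangulated subcategory generated by $\cT_{l,E}$ together with these intermediate bundles. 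Since $\cT_{l,E}\in\cC'$ and each intermediate $F_{l',E'}$ is generated by $\cC'$ by the inductive hypothesis, we conclude that $F_{l,E}$ is generated by $\cC'$, completing the induction. The main obstacle I anticipate is verifying cleanly the identification of the torsion endpoint with $\cT^\vee_{l,E}$ on $\M_{p,q+1}$: the Koszul twist must descend along $\Delta_{R\cup\{x\}}$ to the class $\tfrac{l+2-e_q-r}{2}\psi_u-\sum_{j\in E_q}\delta_{ju}$ on $Z_R=\M_{\{u\}\cup R'\cup\tQ}$ (matching Lemma~\ref{dual} for $\M_{p,q+1}$) without spurious boundary corrections, which should follow because $Z_R$ meets only those boundary divisors $\delta_{T,T^c}$ of $\M_{p,q+1}$ with $T_p\in\{R,R'\}$.
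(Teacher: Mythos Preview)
Your outer induction scheme and the reduction via Theorem~\ref{cor refined induction} for pairs outside group $2B$ are fine and match the paper. The genuine gap is in the group $2B$ step.

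You write that you will ``restrict to the $\PGL_2$-semistable locus for $\M_{p,q+1}$'' and then identify the pushforwards of the Koszul terms with $F^\vee_{l',E'}$'s via Proposition~\ref{sRGASRGASRGASG}. This does not work: since both $p$ and $q+1$ are even, $\M_{p,q+1}$ is \emph{not} a GIT quotient (equivalently, not an open substack of $\cM_{p+q+1}$); it is the Kirwan blow-up of $\X_{p,q+1}$, see Notation~\ref{fbzfbdfnz}. On $\M_{p,q+1}$ the vector bundles $F_{l,E}$ are defined with boundary corrections $\alpha_{T,E,l}\,\delta_{T\cup\{x\}}$ (Definition~\ref{allF}), and Proposition~\ref{sRGASRGASRGASG} (including the identity $F_{l,E}\simeq F_{-l-2,E}[-1]$ for $l\le -2$) is stated on $\cM_n$ and its open substacks, not on $\M_{p,q+1}$. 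So the derived pushforwards of the Koszul terms on the universal family over $\M_{p,q+1}$ are \emph{not} the duals $F^\vee_{l',E'}$; they are the ``uncorrected'' objects $F'^\vee_{l',E'}$ of Definition~\ref{F'}.

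The paper handles exactly this: it first replaces $F_{l,E}$ by $F'_{l,E}$ up to $\cA$ (Proposition~\ref{F' vs F}), then runs the Koszul complex on $W\to\M_{p,q+1}$ to relate $F'_{l,E}$ and $\cT_{l,E}$ via intermediate $F'_{\tilde l,\tilde E}$'s (Claim~\ref{K4}), and finally, since some of these have $\tilde l<0$, it cannot invoke the $F_{-l-2,E}[-1]$ identity on $\M_{p,q+1}$ but instead passes to $\cU_{p,q}$ via $\beta_z^*$ using Proposition~\ref{F' vs beta} and Corollary~\ref{beta replace} (Claim~\ref{terms of K4 ok}). Your proposal skips both the $F\leftrightarrow F'$ comparison and the treatment of negative $\tilde l$; the obstacle you single out (the torsion endpoint) is actually the easiest part, while the identification of the free terms is where your argument breaks down.
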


We prove the statement by induction on the score $S'(l,E)$ and for equal score, by induction on $l$. 
If $(l,E)$ is in group $1A$ (resp.~$1B$), there is nothing to prove, so we assume this is not the case. 
We consider two cases, depending on whether  $(l,E)$ is or is not in group $2B$. 
Assume $(l,E)$ is not in group $2B$. 
By Proposition \ref{score r+s}(1), the bundle $F_{l,E}$ is generated by $\{F_{l',E'}\}$ with pairs 
$(l',E')$ either in group $1A$ (resp.~$1B$) or $2B$, and in addition with 
$S'(l',E')\leq S'(l,E)$, i.e., we are reduced to prove the statement for $(l,E)$ in group $2B$. 
Consider now the case when $(l,E)$ is in group $2B$. 
We need to prove that $F_{l,E}$ is generated by $\cC'$. 
By Proposition \ref{F' vs F}, it suffices to prove that 
$F'_{l,E}$ is generated by $\cC'$. 
We do this in Lemma \ref{K4} and Lemma \ref{terms of K4 ok}. 

\begin{lemma}\label{K4}
$F'_{l,E}$ and $\cT_{l,E}$ ($E_p=R$), are related up to shifts by objects
$\{F'_{\tl,\tE}\}$ for 
$(\tl,\tE)=(j+l-r, E_q\cup J)$, $J\subseteq R$, $j=|J|,\ 0\leq j<r$. 
(Note that $\tl=j+l-r$ could be negative.)
\end{lemma}

\bp
Let $W\ra\M_{p,q+1}$ be as usual the universal family ($y$ new marking on~$W$). Consider the Koszul complex for 
$\De=\cap_{i\in R}\de_{iy}\subset W$: 
$$
0\leftarrow\cO_{\De}\leftarrow\cO\leftarrow %\bigoplus_{k\in R}\cO(-\de_{ky})\leftarrow
\ldots\leftarrow\bigoplus_{J\subseteq R, j=|J|}\cO(-\sum_{k\in J}\de_{ky})\leftarrow\ldots %\leftarrow$$$$
\leftarrow\cO(-\sum_{k\in R}\de_{ky})\leftarrow 0
$$
We claim that after tensoring with $\om_{\al}^{1-\frac{e_q+r-l}{2}}(-E_q)$ and applying $R\al_*\big(-\big)$, we obtain objects
$$
(\cT_{l,E})^\vee\quad (F'_{l-r,E_q})^\vee
%\quad \bigoplus_{k\in R} (F'_{l-r,E_q\cup\{k\}})^\vee
\quad\ldots
% $$ $$ \ldots
\quad\bigoplus_{J\subseteq R, j=|J|} (F'_{l+j-r, E_q\cup J})^\vee\quad\ldots\quad (F'_{l,E})^\vee,
$$
(up to a shift), 
i.e., that we have 
\begin{equation}\label{***}
R\al_*\big(\om_{\al}^{1-\frac{e_q+r-l}{2}}(-E_q)_{|\De}\big)=(\cT_{l,E})^\vee,
\end{equation}
\begin{equation}\label{+++}
R\al_*\big(\om_{\al}^{1-\frac{e_q+r-l}{2}}(-E_q-J)\big)=(F'_{j+l-r,E_q\cup J})^\vee\quad \text{for all}\quad J\subseteq R,
\end{equation}
(up to a shift). 
To see (\ref{***}), consider the universal family $W_R\ra Z_R$ over $Z_R$ with the section $\si_u$ corresponding to the indices in $R$.  
Note that $\De=\si_u(Z_R)$. 
Then 
$$R\al_*\big(\om_{\al}^{1-\frac{e_q+r-l}{2}}(-E_q)_{|U}\big)=\si_u^*\big(\om_{\al}^{1-\frac{e_q+r-l}{2}}(-E_q)\big)=\big(1-\frac{e_q+r-l}{2}\big)\psi_u-\sum_{k\in E_q}\de_{ku},$$
and this equals $(\cT_{l,E})^\vee$ by Claim \ref{dual} (up to a shift). 
To see  (\ref{+++}): for any $(l',E')$ Grothendieck-Verdier duality (Remark \ref{GV general}) gives 
$\big(F'_{l',E'}\big)^\vee=R\al_*\big(\om_{\al}^{1-\frac{e'-l'}{2}}(-E')\big)$
(up to a shift). It follows that $(F'_{l,E})^\vee$ and $(\cT_{l,E})^\vee$ are related up to shifts by $\{(F'_{\tl,\tE})^\vee\}$. By dualizing, the Lemma follows. 
\ep

\begin{lemma}\label{terms of K4 ok}
The objects $F'_{\tl,\tE}$ 
from Lemma \ref{K4}, i.e., for 
$(\tl,\tE)=(j+l-r, E_q\cup J)$, $J\subseteq R$, $j=|J|,\quad 0\leq j< r$, 
(where $\tl=j+l-r$ could be negative) are generated by $\cC'$. 
\end{lemma}

\bp
The score
$S'(\tl, \tE)=(j+l-r)+j+\min\{e_q,q+1-e_q\}<\break 
l+r+\min\{e_q,q+1-e_q\}=S'(l,E)$.
%with equality if and only $j=r$, i.e., when $(j+l-r, E_q\cup J)=(l,E)$.  
If $\tl\geq0$, by Proposition \ref{F' vs F}, 
$F'_{\tl,\tE}$ and $F_{\tl,\tE}$ are related by quotients in $\cA$ since  $S'(\tl,\tE)< S'(l,E)\leq r+s$. 
As  $S'(\tl,\tE)< S'(l,E)$, $F_{\tl,\tE}$ is generated by $\cC'$ by induction. It follows that $F'_{\tl,\tE}$ is generated by $\cC'$. 

Consider now the case $\tl\leq -1$. We need to prove that $F'_{\tl,\tE}$ 
is generated by $\cC'$. 
We claim that $F'_{\tl,\tE}$ and $\be_z^*F_{\tl,\tE}$ 
(for an arbitrary $z\in \tQ$)
are related by quotients in~$\cA$. 
Indeed, in this case 
$\break\max\left\{\frac{S'(\tl,\tE)}{2}-\tl-1,\frac{S'(\tl,\tE)}{2}\right\}=\frac{S'(\tl,\tE)}{2}-\tl-1
=\frac{-l-2+r+\min\{e_q,q+1-e_q\}}{2}\leq \frac{r+s-1}{2}$
(since $l\geq0$) and  this fact
follows from Proposition \ref{F' vs beta}. 
So it suffices to prove that 
 $\be_z^*F_{\tl,\tE}$ is generated by $\cC'$ when $\tl\leq -1$.
Since  $F_{-1,E}=0$ (see Proposition \ref{F negative}) on $\cU_{p,q}$  for any $E$, 
we may assume $\tl\leq -2$. By Proposition \ref{F negative}, on $\cU_{p,q}$ we have $F_{\tl,\tE}\cong F_{-\tl-2,\tE}[-1]$. 
We have
$S'(-\tl-2,\tE)=(-\tl-2)+j+\min\{e_q,q+1-e_q\}=-l-2+r+\min\{e_q,q+1-e_q\}=S'(l,E)-2l-2\leq r+s-1$,
since $l\geq 0$. Therefore, as $-\tl-2\geq0$, by Corollary  \ref{beta replace}(i), $\be_z^*F_{-\tl-2,\tE}$ and $F_{-\tl-2,\tE}$ are related by quotients in $\cA$. Since 
$S'(-\tl-2,\tE)=S'(l,E)-2l-2
<S'(l,E)\leq r+s$,
by induction, we have that $F_{-\tl-2,\tE}$ (and hence, $\be_z^*F_{-\tl-2,\tE}$), is generated by $\cC'$. It follows that $\be_z^*F_{\tl,\tE}$ (and hence, $F'_{\tl,\tE}$) is generated by $\cC'$.
\ep
This finishes the proof of Theorem \ref{replace}. 
\ep

\bp[\bf Proof of fullness in Theorem \ref{p,q+1 case}]
It suffices to prove that if $G\in D^b(\M_{p,q+1})$ is such that $R\Hom(C, G)=0$ for every $C$ in
the exceptional collection of Theorem \ref{p,q+1 case} then $G=0$.
By Theorem ~\ref{replace}, we have $R\Hom(C, G)=0$ for every  $C\in\cC'_F$. The result follows from Lemma \ref{perp}.
\ep

%%%%%%%%%%%%%%%%%%%%%%%%%%%%%%%%%%%%%%%%%%%%%%%%%%%%%%%%%%%%%%%%%%%%%%%%%
%%%%%%%%%%%%%%%%%%%%%%%%%%%%%%%%%%%%%%%%%%%%%%%%%%%%%%%%%%%%%%%%%%%%%%%%%

\section*{References}

\begin{biblist}

\bib{ABer}{article}{
AUTHOR = {Auel, A}, 
AUTHOR = {Bernardara, M.}, 
TITLE = {Semiorthogonal decompositions and birational geometry of del Pezzo surfaces over arbitrary fields},
JOURNAL = {Proc. Lond. Math. Soc.}, 
VOLUME={117}, 
PAGES={1--64},  
YEAR={2018}
}

\bib{Beilinson}{article}{  
AUTHOR = {Be\u{\i}linson, A. A.},
     TITLE = {Coherent sheaves on {${\bf P}^{n}$} and problems in linear
              algebra},
   JOURNAL = {Funktsional. Anal. i Prilozhen.},
  %FJOURNAL = {Akademiya Nauk SSSR. Funktsional\cprime ny\u{\i} Analiz i ego
  %            Prilozheniya},
    VOLUME = {12},
      YEAR = {1978},
    NUMBER = {3},
     PAGES = {68--69},
}

\bib{BFK}{article}{  
AUTHOR = {Ballard, Matthew}, 
AUTHOR = {Favero, David},
AUTHOR = {Katzarkov, Ludmil},
     TITLE = {Variation of geometric invariant theory quotients and derived
              categories},
   JOURNAL = {J. Reine Angew. Math.},
    VOLUME = {746},
      YEAR = {2019},
     PAGES = {235--303},
      ISSN = {0075-4102},
}

\bib{BaMa}{article}{  
AUTHOR = {Bayer, A.}, 
AUTHOR = {Manin, Yu. I.},
     TITLE = {(Semi)simple exercises in quantum cohomology},
   JOURNAL = {MPIM Preprint Series},
    VOLUME = {27},
      YEAR = {2001},
}

%\bib{BondalOrlov}{inproceedings}{
%AUTHOR = {Bondal, A.},
%AUTHOR = {Orlov, D.},
%    TITLE = {Derived categories of coherent sheaves},
% BOOKTITLE = {Proceedings of the {I}nternational {C}ongress of
%              {M}athematicians, {V}ol. {II} ({B}eijing, 2002)},
%     PAGES = {47--56},
% PUBLISHER = {Higher Ed. Press, Beijing},
%      YEAR = {2002},
%   MRCLASS = {18E30 (14A22 14F05)},
%  MRNUMBER = {1957019},
%MRREVIEWER = {Bal\'{a}zs Szendr\H{o}i},
%}

\bib{BergstromMinabe}{article}{   
    AUTHOR = {Bergstr\"om, Jonas},
    AUTHOR={Minabe, Satoshi},
     TITLE = {On the cohomology of moduli spaces of (weighted) stable
              rational curves},
   JOURNAL = {Math. Z.},
    VOLUME = {275},
      YEAR = {2013},
    NUMBER = {3-4},
     PAGES = {1095--1108},
}

\bib{Bod}{article}{   
 author = {Bodzenta, Agnieszka},
 journal = {Proceedings of the American Mathematical Society},
 number = {5},
 pages = {1909--1923},
 publisher = {American Mathematical Society},
 title = {DG categories and exceptional collections},
% urldate = {2025-11-17},
 volume = {143},
 year = {2015}
}

\bib{Bon}{article}{   
       author = {Bondal, A.~I.},
        title = {Representation of Associative Algebras and Coherent Sheaves},
      journal = {Izvestiya: Mathematics},
         year = {1990},
 %       month = {feb},
       volume = {34},
       number = {1},
        pages = {23-42},
}

%\bib{CFGP}{article}{   
%AUTHOR = {Chan, Melody},
%AUTHOR = {Faber, Carel},
%AUTHOR = {Galatius, Soren},
%AUTHOR = {Payne, Sam}
%  title        ={The $S_n$-equivariant top weight Euler characteristic of $\cM_{g,n}$} 
%    eprint={	arXiv:1904.06367}
%  year         = {2019},
%}

\bib{CT_part_I}{article}{   
AUTHOR = {Castravet, Ana-Maria},
AUTHOR={Tevelev, Jenia},
  title        ={Derived category of moduli of pointed curves - I}, 
   JOURNAL = {Algebraic Geometry},
    VOLUME = {7},
      YEAR = {2020},
    NUMBER = {6},
     PAGES = {722--757},
}

\bib{CT_part_Ib}{article}{   
AUTHOR = {Castravet, Ana-Maria},
AUTHOR={Tevelev, Jenia},
  title        ={Exceptional collections on certain Hassett spaces}, 
   JOURNAL = {\'Epijournal de G\'eom\'etrie Alg\'ebrique},
    VOLUME = {4},
      YEAR = {2020},
     PAGES = {1--34},
}

\bib{DolgachevHu}{article}{
AUTHOR = {Dolgachev, Igor},
AUTHOR = {Hu, Yi},
     TITLE = {Variation of Geometric Invariant Theory Quotients},
   JOURNAL = {Inst. Hautes \'Etudes Sci. Publ. Math.},
   VOLUME = {87} ,
   YEAR = {1998},
   PAGES ={5--56},
}   

\bib{Dolgachev}{article}{
AUTHOR = {Dolgachev, Igor},
     TITLE = {Lectures on invariant theory},
    SERIES = {London Mathematical Society Lecture Note Series},
    VOLUME = {296},
 PUBLISHER = {Cambridge University Press, Cambridge},
      YEAR = {2003},
     PAGES = {xvi+220},
}

\bib{Dubrovin}{article}{
AUTHOR = {Dubrovin, Boris},
 TITLE = {Geometry and analytic theory of {F}robenius manifolds},
 BOOKTITLE = {Proceedings of the ICM, {V}ol. {II} ({B}erlin, 1998)},
   JOURNAL = {Doc. Math.},
     YEAR = {1998},
   NUMBER = {Extra Vol. II},
     PAGES = {315--326},
}

\bib{Elagin}{article}{   
    AUTHOR = {Elagin, A. D.},
     TITLE = {Semi-orthogonal decompositions for derived categories of
              equivariant coherent sheaves},
   JOURNAL = {Izv. Ross. Akad. Nauk Ser. Mat.},
    VOLUME = {73},
      YEAR = {2009},
    NUMBER = {5},
     PAGES = {37--66},
}

%\bib{FM}{article}{   
%AUTHOR = {Fulton, William}, 
%AUTHOR = {MacPherson, Robert},
%     TITLE = {A compactification of configuration spaces},
%   JOURNAL = {Ann. of Math. (2)},
%    VOLUME = {139},
%      YEAR = {1994},
%    NUMBER = {1},
%     PAGES = {183--225},
%}

\bib{FP}{incollection}{
AUTHOR = {Faber, C.},
AUTHOR = {Pandharipande, R.},
     TITLE = {Tautological and non-tautological cohomology of the moduli
              space of curves},
 BOOKTITLE = {Handbook of moduli. {V}ol. {I}},
    SERIES = {Adv. Lect. Math. (ALM)},
    VOLUME = {24},
     PAGES = {293--330},
 PUBLISHER = {Int. Press, Somerville, MA},
      YEAR = {2013},
}

\bib{Getzler}{article}{   
AUTHOR = {Getzler, E.},
     TITLE = {Operads and moduli spaces of genus {$0$} {R}iemann surfaces},
 BOOKTITLE = {The moduli space of curves ({T}exel {I}sland, 1994)},
    SERIES = {Progr. Math.},
    VOLUME = {129},
     PAGES = {199--230},
 PUBLISHER = {Birkh\"{a}user Boston, Boston, MA},
      YEAR = {1995},
}

\bib{GGI}{article}{
author = {Galkin, Sergey},
author = {Golyshev, Vasily},
author = {Iritani, Hiroshi},
title = {{Gamma classes and quantum cohomology of Fano manifolds: Gamma conjectures}},
volume = {165},
journal = {Duke Math. J.},
number = {11},
publisher = {Duke University Press},
pages = {2005 -- 2077},
year = {2016},
}

\bib{GKM}{article}{
AUTHOR = {Gibney, Angela}, 
AUTHOR = {Keel, Sean},
AUTHOR = {Morrison, Ian},
     TITLE = {Towards the ample cone of {$\overline M_{g,n}$}},
   JOURNAL = {J. Amer. Math. Soc.},
    VOLUME = {15},
      YEAR = {2002},
    NUMBER = {2},
     PAGES = {273--294},
}

%\bib{GetzlerKapranov}{article}{   AUTHOR = {Getzler, E.}, AUTHOR = {Kapranov, M. M.},    TITLE = {Modular operads},   JOURNAL = {Compositio Math.},    VOLUME = {110},      YEAR = {1998}, NUMBER = {1},     PAGES = {65--126},  ISSN = {0010-437X},}
	
\bib{Hac}{article}{
AUTHOR = {Hacking, Paul}, 
TITLE = {Compact moduli of plane curves},
JOURNAL = {Duke Math. J.}, VOLUME={24}, NUMBER={2}, PAGES={213--257},  YEAR={2004}
}

\bib{Ha}{article}{   
AUTHOR = {Hassett, Brendan},
     TITLE = {Moduli spaces of weighted pointed stable curves},
   JOURNAL = {Adv. Math.},
    VOLUME = {173},
      YEAR = {2003},
    NUMBER = {2},
     PAGES = {316--352},
}

\bib{HK}{article}{   
AUTHOR = {Hausmann, J.-C.}, 
AUTHOR = {Knutson, A.},
     TITLE = {The cohomology ring of polygon spaces},
   JOURNAL = {Ann. Inst. Fourier (Grenoble)},
    VOLUME = {48},
      YEAR = {1998},
    NUMBER = {1},
     PAGES = {281--321},
}

\bib{HL}{book}{   
    AUTHOR = {Huybrechts, Daniel},
    AUTHOR = {Lehn, Manfred},
     TITLE = {The geometry of moduli spaces of sheaves},
    SERIES = {Cambridge Mathematical Library},
%   EDITION = {Second},
 PUBLISHER = {Cambridge University Press, Cambridge},
      YEAR = {2010},
%       DOI = {10.1017/CBO9780511711985},
%       URL = {https://doi.org/10.1017/CBO9780511711985},
}

\bib{DHL}{article}{   
 AUTHOR = {Halpern-Leistner, Daniel},
     TITLE = {The derived category of a {GIT} quotient},
   JOURNAL = {J. Amer. Math. Soc.},
    VOLUME = {28},
      YEAR = {2015},
    NUMBER = {3},
     PAGES = {871--912},
}

\bib{Huybrechts}{book}{   
    AUTHOR = {Huybrechts, D.},
     TITLE = {Fourier-{M}ukai transforms in algebraic geometry},
    SERIES = {Oxford Mathematical Monographs},
 PUBLISHER = {The Clarendon Press, Oxford University Press, Oxford},
     YEAR = {2006},
     PAGES = {viii+307},
}

\bib{Kapranov}{article}{   
AUTHOR = {Kapranov, M. M.},
     TITLE = {Veronese curves and {G}rothendieck-{K}nudsen moduli space
              {$\overline M_{0,n}$}},
   JOURNAL = {J. Algebraic Geom.},
    VOLUME = {2},
      YEAR = {1993},
    NUMBER = {2},
     PAGES = {239--262},
}

\bib{Keel}{article}{ 
AUTHOR = {Keel, Sean},
     TITLE = {Intersection theory of moduli space of stable {$n$}-pointed
              curves of genus zero},
   JOURNAL = {Trans. Amer. Math. Soc.},
    VOLUME = {330},
      YEAR = {1992},
   NUMBER = {2},
     PAGES = {545--574},
}	

\bib{Kly}{incollection}{   
author={Klyachko, Alexander A.},
title={Spatial Polygons and Stable Configurations of Points in the Projective Line},
bookTitle={Algebraic Geometry and its Applications: Proceedings of the 8th Algebraic Geometry Conference, Yaroslavl'},
year={1994},
publisher={Vieweg+Teubner Verlag},
address={Wiesbaden},
pages={67--84},
}

\bib{KaMi}{article}{   
author = {Kapovich, Michael},
author={ Millson, John J.},
title = {{The symplectic geometry of polygons in Euclidean space}},
volume = {44},
journal = {Journal of Differential Geometry},
number = {3},
publisher = {Lehigh University},
pages = {479 -- 513},
year = {1996},
}

\bib{KeelTevelev}{article}{   
author = {Keel, Sean},
author={Tevelev, Jenia},
title = {Equations for $\overline M_{0,n}$},
journal = {International Journal of Mathematics},
volume = {20},
number = {09},
pages = {1159-1184},
year = {2009},
}

\bib{KuLu}{article}{   
    author = {Kuznetsov, Alexander},
    author={Lunts, Valery A.},
    title = {Categorical Resolutions of Irrational Singularities},
    journal = {International Mathematics Research Notices},
    volume = {2015},
    number = {13},
    pages = {4536-4625},
    year = {2014},
   % month = {05},
    issn = {1073-7928},
    }

\bib{ManinSmirnov1}{article}{
AUTHOR = {Manin, Yu. I.},  
AUTHOR = {Smirnov, M. N.},
     TITLE = {On the derived category of {$\overline M_{0,n}$}},
   JOURNAL = {Izv. Ross. Akad. Nauk Ser. Mat.},
    VOLUME = {77},
      YEAR = {2013},
    NUMBER = {3},
     PAGES = {93--108},
}

%\bib{ManinSmirnov2}{article}{
%AUTHOR = {Manin, Yu. I.},  
%AUTHOR = {Smirnov, M. N.},
%TITLE = {Towards motivic quantum cohomology of {$\overline{M}_{0,S}$}},
%   JOURNAL = {Proc. Edinb. Math. Soc. (2)},
%    VOLUME = {57},
%      YEAR = {2014},
%    NUMBER = {1},
%     PAGES = {201--230},
%}

\bib{MP}{article}{
AUTHOR = {Merkurjev, A. S.}, 
AUTHOR = {Panin, I.A.},
     TITLE = {K-theory of algebraic tori and toric varieties},
   JOURNAL = {K-Theory},
    VOLUME = {12},
      YEAR = {1997},
    NUMBER = {2},
     PAGES = {101--143},
}

\bib{Orlov blow-up}{article}{
AUTHOR = {Orlov, D.},
     TITLE = {Projective bundles, monoidal transformations, and derived
              categories of coherent sheaves},
   JOURNAL = {Izv. Ross. Akad. Nauk Ser. Mat.},
    VOLUME = {56},
      YEAR = {1992},
    NUMBER = {4},
     PAGES = {852--862},
}

%\bib{PY}{article}{
%AUTHOR = {Proudfoot, Nicholas},
%AUTHOR = {Young, Ben},
%     TITLE = {Configuration spaces, {$\rm FS^{op}$}-modules, and
%              {K}azhdan-{L}usztig polynomials of braid matroids},
%   JOURNAL = {New York J. Math.},
%  FJOURNAL = {New York Journal of Mathematics},
%    VOLUME = {23},
%      YEAR = {2017},
%     PAGES = {813--832},
%   MRCLASS = {55R80 (20C30 55N33)},
%  MRNUMBER = {3690232},
%MRREVIEWER = {Gergely B\'{e}rczi},
%       URL = {http://nyjm.albany.edu:8000/j/2017/23_813.html},
%}

%\bib{Smirnov Thesis}{article}{   
%AUTHOR = {Smirnov, M. N.},
%  title        ={Gromov-Witten correspondences, derived categories,
%and Frobenius manifolds},
%JOURNAL ={ Ph.D. thesis, Univ. of Bonn}, 
%  year         = {2013},
%    eprint={http://hss.ulb.uni-bonn.de/2013/3125/3125.pdf}
%}

%\bib{SS}{article}{  
% AUTHOR = {Sam, Steven V.}, 
% AUTHOR = {Snowden, Andrew},
%     TITLE = {Gr\"{o}bner methods for representations of combinatorial
%              categories},
%   JOURNAL = {J. Amer. Math. Soc.},
%    VOLUME = {30},
%      YEAR = {2017},
%    NUMBER = {1},
%     PAGES = {159--203},
%}

    \bib{Teleman}{article}{
    AUTHOR = {Teleman, Constantin},
     TITLE = {The quantization conjecture revisited},
   JOURNAL = {Ann. of Math.},
    VOLUME = {152},
      YEAR = {2000},
%    NUMBER = {1},
     PAGES = {1--43},
}

\bib{Thomason}{article}{
    AUTHOR = {Thomason, R. W.},
     TITLE = {Equivariant resolution, linearization, and {H}ilbert's
              fourteenth problem over arbitrary base schemes},
   JOURNAL = {Adv. in Math.},
    VOLUME = {65},
      YEAR = {1987},
    NUMBER = {1},
     PAGES = {16--34},
}
 
%\bib{Tosteson}{article}{   
%AUTHOR = {Tosteson, P.},
%  title        ={Stability in the homology of Deligne-Mumford compactifications}, 
%JOURNAL ={preprint}, 
%  year         = {2018},
%    eprint={https://arxiv.org/abs/1801.03894}
%}

\end{biblist}

\end{document}